\numberwithin{equation}{section} 
\numberwithin{figure}{section} 
\theoremstyle{plain}
\theoremstyle{plain}
\newtheorem{thm}{Theorem}[section]
\theoremstyle{plain}
\newtheorem{prop}[thm]{Proposition}
\theoremstyle{remark}
\newtheorem{rem}[thm]{Remark}
\theoremstyle{plain}
\newtheorem{cor}[thm]{Corollary}
\theoremstyle{plain}
\newtheorem{lem}[thm]{Lemma}
\theoremstyle{plain}
\newtheorem{defn}[thm]{Definition}
\theoremstyle{plain}
\newtheorem*{propn}{Proposition}
\DeclareFontFamily{OT1}{rsfs}{}
\DeclareFontShape{OT1}{rsfs}{m}{n}{ <-7> rsfs5 <7-10> rsfs7 <10-> rsfs10}{}
\DeclareMathAlphabet{\mycal}{OT1}{rsfs}{m}{n}
\newcommand{\eps}{\varepsilon}
\newcommand{\BJ}[2]{J_{#1} \left(#2 \right)}
\newcommand{\BJp}[2]{J^\prime_{#1}\left(#2\right)}
\newcommand{\BY}[2]{Y_{#1} \left(#2 \right)}
\newcommand{\BYp}[2]{Y^\prime_{#1}\left(#2\right)}
\newcommand{\BH}[2]{H^{(1)}_{#1}\left(#2 \right)}
\newcommand{\BHp}[2]{H^{(1)\prime}_{#1}\left(#2 \right)}
\newcommand{\Rn}{R_n\left(\oeps,\lambda\right)}
\newcommand{\Ro}{R_0\left(\oeps,\lambda\right)}
\newcommand{\Tn}[2]{T_n\left(#1,#2\right)}
\newcommand{\oeps}{\omega_{\eps}}
\newcommand{\forget}[1]{ }
\newcommand{\lneps}{\left|\ln \eps\right|}
\def\dsp{\displaystyle}
\begin{document}

\title[Size Estimates]{On the scattered field generated by a ball inhomogeneity of constant index}

\author{Yves Capdeboscq}
\address{Mathematical Institute, 24-29 St Giles, OXFORD OX1 3LB, UK}

\begin{abstract}
We consider the solution of a scalar Helmholtz equation where the potential (or index) takes two positive values,
one inside a disk of radius $\eps$ and another one outside.  We derive sharp estimates of the size of the scattered field
caused by this disk inhomogeneity, for any frequencies and any contrast. We also provide a broadband estimate,
that is, a uniform bound for the scattered field for any contrast, and any frequencies outside of a set which 
tends to zero with $\eps$.
\end{abstract}

\maketitle

\section{Introduction}

We consider a scalar field satisfying the Helmholtz equation with frequency $\omega>0$ in $\mathbb{R}^2$.
Given a prescribed incident field $u^i$, a non-singular solution of 
\begin{equation}\label{eq:eq-intro-1}
\Delta u^{i}+\omega^{2}q_{0}u^{i}=0 \mbox{ in } \mathbb{R}^{2},
\end{equation}
we are interested in the solution $u_{\eps}\in H^1_{\mbox{loc}}\left(\mathbb{R}^{2}\right)$ of 
\begin{equation}\label{eq:eq-intro-2}
\Delta u_{\eps}+\omega^{2}q_{\eps}u_{\eps}=0\mbox{ in }\mathbb{R}^{2},
\end{equation}
where, for $|x|>\eps$, $u_{\eps}=u^{i}+u_{\eps}^{s}$, and  $q_{\eps}$ equals $q$ 
inside the inhomogeneity and $q_{0}$ outside. 
We take the inhomogeneity to be a disk of radius $\eps$. The coordinate system is chosen so that the 
inhomogeneity is centered at the origin. In other words
\[
q_{\eps}(r):=\left\{ \begin{array}{ll}
q & \mbox{ if }r<\eps\\
q_{0} & \mbox{ if }r>\eps\end{array}\right.
\]
We assume that both $q_{0}$ and $q$ are real and positive. We assume that the scattered field satisfies the classical Silver-M\"uller \cite{MULLER-69, NEDELEC-01} outgoing radiation condition, 
given by
\begin{equation}\label{eq:eq-intro-3}
\frac{\partial}{\partial r}u_{\eps}^{s}-i\omega\sqrt{q_{0}}u_{\eps}^{s}=o\left(\frac{1}{\sqrt{r}}\right),
\end{equation}
where, as usual $r:=|x|$. 
Altogether, the conditions (\ref{eq:eq-intro-1},\ref{eq:eq-intro-2},\ref{eq:eq-intro-3}) 
imply that the incident field $u^i$, the scattered field $u_\eps^s$ and the transmitted field $u_\eps^{t}=u_\eps$ for $r<\eps$, 
admit series expansions in terms of special functions, namely
\begin{eqnarray}
u^{i}(x)&\sim&
\sum\limits_{n=-\infty}^{\infty}a_{n} \BJ{n}{\sqrt{q_{0}}\omega r}\exp\left(i\,n\arctan\left(\frac{x}{r}\right)\right), \label{eq:uinc}\\
u_\eps^{s}(x)&\sim&
\sum\limits_{n=-\infty}^{\infty}a_{n}  \Rn\BH{n}{\sqrt{q_{0}}\omega r}\exp\left(i\,n\arctan\left(\frac{x}{r}\right)\right), \label{eq:usca}\\
u_\eps^{t}(x)&\sim&
\sum\limits_{n=-\infty}^{\infty}a_{n}  \Tn{\oeps}{\lambda} \BJ{n}{\sqrt{q}\omega r}\exp\left(i\,n\arctan\left(\frac{x}{r}\right)\right). \label{eq:utra}
\end{eqnarray}
In the above formulae, $\BJ{n}{x}=\Re(\BH{n}{x})$, and  $x\to\BH{n}{x}$  is the Hankel function of the first kind of order $n$. The rescaled non-dimensional frequency 
$\oeps$, and the contrast factor $\lambda$ are given by
\begin{equation}\label{eq:def-oeps-lambda}
 \oeps:= \sqrt{q_0}\omega\eps \, \mbox{ and } \lambda := \sqrt{\frac{q}{q_0}}.
\end{equation}
The reflection and transmission coefficients $R_n$ and $T_n$ are given by the transmission problem on the boundary of the inhomogeneity, that is, at $r=\eps$.
They are the unique solutions of
\begin{eqnarray*}
\Tn{\oeps}{\lambda}  \BJ{n}{\lambda \oeps} 
&=& 
\BJ{n}{\oeps} + \Rn \BH{n}{\oeps}, \\
\lambda  \Tn{\oeps}{\lambda}  \BJp{n}{\lambda\oeps} &=&  
 \BJp{n}{\oeps} +  \Rn  \BHp{n}{\oeps},
\end{eqnarray*}
which are
\begin{equation}\label{eq:def-RN}
\Rn=- \frac{\displaystyle \Re\left(\BHp{n}{\oeps}\BJ{n}{\lambda\oeps}
-\lambda\BJp{n}{\lambda\oeps}\BH{n}{\oeps}
\right)
 }{\displaystyle  
 \BHp{n}{\oeps}\BJ{n}{\lambda\oeps}
-\lambda\BJp{n}{\lambda\oeps}\BH{n}{\oeps}
},
\end{equation}
and, after a simplification using the Wronskian identity satisfied by $\BJ{n}{\cdot}$ and $\BH{n}{\cdot}$,
\begin{equation}\label{eq:def-TN}
\Tn{\oeps}{\lambda}=\frac{2 i}{\pi}\frac{1}{\displaystyle  
\BHp{n}{\oeps}\BJ{n}{\lambda\oeps}
-\lambda\BJp{n}{\lambda\oeps}\BH{n}{\oeps}
}.
\end{equation}
It is well known that both $R_n$ and $T_n$ are well defined for all $\lambda>0$ and $\oeps>0$, see e.g. \cite{HANSEN-POIGNARD-VOGELIUS-07} for a proof. Note that $R_n=R_{-n}$,
and $T_n=T_{-n}$ for all $n$.

In \eqref{eq:uinc}, \eqref{eq:usca} and \eqref{eq:utra}, the $\sim$ symbol is an equality if the right-hand-side is replaced by its real part, the fields being real. 
By a common abuse of notations, in what follows we will identify $u^i$ and $u_\eps^s$ with the full complex right-hand-side.

Such expansions have been known for almost two centuries. They allow in principle, with the help of modern computers and recent numerical methods,  to compute 
the scattered field accurately, given the incident field $\omega$, $\eps$ and $q/q_0$. Yet, they do not give any insight on the behavior of the scattered field when 
the frequency, the contrast, or the radius $\eps$ vary. When $\eps$ tends to zero, the behavior of the scattered field for this problem has been  
studied recently in \cite{HANSEN-POIGNARD-VOGELIUS-07}.  The cases considered are either $a_{n}=0$ for $n>N_{0}$, 
or $\Im(q)>0$, or full reflection on the boundary of the inclusion, that  is, $u_{\eps}=0$ at $r=\eps$.  In this work, we focus on non lossy inclusions, 
that is, when $\Im(q)=0$, and we provide sharp estimates of the scattered field. These estimates are derived for any sequence $(a_n)$, thus 
for any incident field. They are completely explicit, up to the numerical values of the constants involved.  Such detailed results are 
possible because of the extensive studies of Hankel functions conducted by others. We will quote frequently the classical treatise of Watson \cite{WATSON-25}, 
and we will indirectly refer to the book of Olver \cite{OLVER-74} by frequently citing the NIST Handbook of Mathematical Functions \cite{NIST-10}. Other papers 
related to properties of Bessel functions \cite{BOYD-DUNSTER-86,LANDAU-99,LANDAU-00,MULDOON-SPIGLER-84,PARIS-84,SZASZ-50,THIRUNAN-51} are also cited in the proofs. 
Some additional estimates that we could not find in the literature are provided in Appendix~\ref{ap:A}. 
Some of them could be new, but we have not performed a comprehensive search of the vast literature on that topic. 
However with the exception of  Section~\ref{sec:quasi-res}, our main results are stated in a form that does not require any knowledge of the literature 
related to Bessel functions, except possibly for some universal constants (approximate numerical values are provided).

Let us now discuss the norms we shall use. Given any $f \in C^0(\mathbb R^2)$, its restriction to the circle $|x|=R$ is a periodic function. 
We can therefore define its complex Fourier coefficients 
$$
c_n\left(f(|x|=R)\right) = \int_0^{2\pi} f(R,\theta) e^{- i n \theta } d\theta,
$$
and $f(|x|=R)$ can be measured in terms of the following Sobolev norm
\begin{equation}\label{eq:norm-hs-0}
\left\Vert  f \left(|x|=R\right) \right\Vert _{H^{\sigma}} := 
\sqrt{2\pi} \sqrt{\sum_{n=-\infty}^{\infty} |c_n\left(f(|x|=R)\right)|^2 (1+|n|)^{2\sigma} },
\end{equation}
for any real parameter $\sigma$. By density, this norm can be defined for less regular functions. If $f(|x|=R)$ is $L^2(0,2\pi)$ for example 
it is bounded, for any $\sigma\leq0$. To measure the oscillations of $f$ only, we will use
\begin{equation}\label{eq:norm-hs-0-star}
\left\Vert  f\left(|x|=R\right) \right\Vert _{H_{*}^{\sigma}} 
:=  
\left\Vert  f\left(|x|=R\right) - \frac{1}{2\pi}\int\limits_0^{2\pi} f\left(|x|=R \right) d\theta \right\Vert _{H^{\sigma}}.
\end{equation}
For radius independent estimates, we shall use the semi-norm
\begin{equation}\label{eq:norm-ns-0}
\mathcal{N}^{\sigma} (f) := \sqrt{2\pi} \sqrt{\sum_{n\neq0} \sup_{R>0} |c_n\left(f(|x|=R)\right)|^2 (1+|n|)^{2\sigma} }.
\end{equation}
It is easy to see that this norm is finite for a smooth $f$ with bounded radial variations. Finally, to document the sharpness of our estimates, 
we will provide lower bounds in terms of a semi-norm,
\begin{equation}\label{eq:norm-np-0}
\mathbf{N}_{p}^{\sigma} \left( f \right) :=  \sqrt{2\pi} \sup\limits_{|n| \geq p} \sup_{R>0} |c_n\left(f(|x|=R)\right)| (1+|n|)^{\sigma},
 \end{equation} 
where $p$ is a positive parameter. These norms are satisfy the following inequality
$$
\left\Vert  f \left(|x|=R\right) \right\Vert _{H^{\sigma}} \leq \mathcal{N}^{\sigma} (f), \mbox{ and } \mathbf{N}_{p}^{\sigma} \left( f \right) \leq \mathcal{N}^{\sigma} (f),
$$
and if for all $R$, $f\left(|x|=R\right)$ only has one non-zero Fourier coefficient,
$$
\mathbf{N}_{1}^{\sigma} \left( f \right) =  \mathcal{N}^{\sigma} (f) = \sup_{R>0} \left\Vert  f \left(|x|=R\right) \right\Vert _{H_{*}^{\sigma}}.
$$
We choose these three (semi-)norms $\left\Vert  \cdot \right\Vert _{H^{\sigma}}$, $\mathcal{N}^{\sigma}$ and $\mathbf{N}_p^{\sigma}$ because they are
compatible with expansions \eqref{eq:uinc}, \eqref{eq:usca} and \eqref{eq:utra}. For example,
\begin{equation}\label{eq:norm-hs}
\left\Vert  u_{\eps}^{s}\left(|x|=R\right) \right\Vert _{H^{\sigma}} :=   
\sqrt{2\pi} \left(\sum_{n=-\infty}^{\infty}\left| \Rn a_{n}\right|^2
\left(1+\left|n\right|\right)^{2\sigma}  \left|\BH{n}{\sqrt{q_{0}}\omega R}\right|^{2}\right)^{\frac{1}{2}},
\end{equation} 
and 
\begin{equation}\label{eq:norm-hs-n0}
\mathcal{N}^{\sigma} \left( u ^{i} \right) :=  \sqrt{2\pi} \left(\sum_{n \neq 0 }\left|a_{n}\right|^{2} \sup_{x>0}\left|\BJ{n}{x}\right|^2\left(1+\left|n\right|\right)^{2\sigma}\right)^{1/2}.
 \end{equation} 
Furthermore, it is known \cite{LANDAU-00} that for all $n\neq0$
\begin{equation}\label{eq:ineq-landau}
\frac{4}{7} \frac{1}{(|n|+1)^{1/3}} \leq \sup_{x>0} \left|\BJ{n}{x}\right| \leq \frac{6}{7} \frac{1}{(|n|+1)^{1/3}},
\end{equation}
therefore $\mathcal{N}^{\sigma} \left( u ^{i} \right)$ has upper and lower bounds depending on $a_n$ only, namely,
\begin{equation}\label{eq:ineq-landau-2}
\frac{8\pi}{7} \sum_{n \neq 0 }\left|a_{n}\right|^{2} \left(1+\left|n\right|\right)^{2\sigma -2/3} \leq \left(\mathcal{N}^{\sigma} \left( u ^{i} \right)\right)^2
\leq \frac{16\pi}{7} \sum_{n \neq 0 }\left|a_{n}\right|^{2} \left(1+\left|n\right|\right)^{2\sigma -2/3}.
\end{equation}

The motivation for this work comes from imaging.  In electrostatics, the small volume asymptotic expansion for a diametrically bounded conductivity 
inclusion is now well established, and the first order expansion has been shown to be valid for any contrast \cite{NGUYEN-VOGELIUS-09}. It is natural to 
wonder whether such expansion could also hold for non-zero frequencies, even in a simple case.  

Section~\ref{sec:lleq1} addresses the case $\lambda\leq 1$. 
In Theorem~\ref{thm:os-ls} we derive  perturbation-type estimates when $\lambda\oeps<1$, that is, proportional to $(\lambda-1)\oeps^2$ at first order, 
for all $x$ such that $|x|\geq \eps$. This can be seen as a generalization of the electrostatic case. We show that the range of frequencies for which 
this result applies is sharp. 
In Theorem~\ref{thm:ob-ls}, we provide an upper estimate for the scattered field valid for all frequencies and all $|x|\geq \eps$, 
and we document its sharpness by providing a lower bound  for the supremum of the scattered field for all frequencies.
Section~\ref{sec:lgeq1}, \ref{sec:quasi-res}, \ref{sec:ffield} address the case $\lambda\geq1$.
Theorem~\ref{thm:os-lb} is similar to Theorem~\ref{thm:os-ls} and applies when $\lambda\sqrt{\ln \lambda +1}\,\oeps<1$, and when $\lambda\,\oeps<1$ if 
there is no zero-order term.
In Section~\ref{sec:quasi-res}, we provide a detailed study of quasi-resonances. These are frequencies located just after the perturbative range, at which 
the near-field becomes arbitrarily large. Theorem~\ref{thm:ob-lr} provides lower bounds for the near field in this regime. We also provide numerical examples 
of quasi-resonant modes.
Section~\ref{sec:ffield} provides far field estimates, that is, for $x$ such that $|x| \lambda >\eps$, valid for all frequencies. As in Theorem~\ref{thm:os-lb}, 
we show that the bounds provided are sharp.

Another inspiration for this work is recent results concerning the so-called cloaking-by-mapping method for the Helmholtz equation. 
In \cite{KOHN-ONOFREI-VOGELIUS-WEINSTEIN-10}, the authors show that cloaks can be constructed using lossy layers, and that non-lossy 
media could not be made invisible to some particular frequencies (the quasi-resonant frequencies). In Section~\ref{sec:bband}, we show in 
Lemma~\ref{lem:highcontrast-Lemma} that if an interval around these frequencies is removed, contrast independent estimates for the near-field
can be obtained. 
When $\lambda=\eps^{-1}$, the following proposition is proved as a corollary of Lemma~\ref{lem:highcontrast-Lemma}.
\begin{propn}
Assume $\eps<1/7$, and $\lambda=\eps^{-1}$. Then, for any $\alpha,\beta>0$, there exists a set $I_1$ depending on $\eps,\alpha$ and $\beta$ and a set $I_0$ depending
on $\eps$ and $\beta$ which satisfies
$$
|I_1| \leq \eps^{\beta} \lneps,\quad |I_0|\leq  \frac{\ln \lneps}{\left(\lneps+1\right)^{\beta}},
$$
such that for all $R\geq\eps$,
$$
\sup\limits_{\sqrt{q_0}\omega \in(0,\infty)\,\setminus \, I_1} \left\Vert  u_{\eps}^{s}\left(|x|=R\right) \right\Vert^2 _{H_{*}^{\sigma}}  
\leq \frac{18}{\alpha} \sqrt{\frac{\eps^{1-2\beta}}{R}}\mathcal{N}^{\sigma + 2 + \alpha}\left(u^i\right),
$$
and
$$
\sup\limits_{\sqrt{q_0}\omega \in(0,\infty)\,\setminus \, I_0} \left|\frac{1}{2\pi}\int\limits_0^{2\pi} u_{\eps}^s\left(|x|=R \right)\right|  \leq   
12 \frac{1}{\sqrt{\left(\lneps+1\right)^{3/2 -2\beta} R}}. 
$$
\end{propn}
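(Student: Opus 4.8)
The plan is to deduce the proposition directly from Lemma~\ref{lem:highcontrast-Lemma} by specialising the contrast to the critical value $\lambda=\eps^{-1}$ and translating every bound of that lemma, which is stated through $\lambda$ and $\ln\lambda$, into a bound through $\eps$ and $\lneps$. The first step is to check the hypotheses: since $\eps<1/7$ we have $\lambda=\eps^{-1}>7$, so we are within the high-contrast regime to which Lemma~\ref{lem:highcontrast-Lemma} applies, and the identity $\ln\lambda=\lneps$ (valid because $\eps<1$) is what carries every logarithmic factor from the lemma to the proposition.

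For the oscillating part, Lemma~\ref{lem:highcontrast-Lemma} provides, for each $\alpha,\beta>0$, an exceptional frequency set --- a finite union of short intervals around the quasi-resonances of the modes $n\neq 0$ studied in Section~\ref{sec:quasi-res} --- of total length bounded by $\lambda^{-\beta}\ln\lambda$, which under $\lambda=\eps^{-1}$ is exactly $|I_1|\le\eps^{\beta}\lneps$; the dependence of $I_1$ on $\alpha$ enters through the $\alpha$-regularised mode sum used to locate and weight these intervals. Outside $I_1$ the lemma estimates $\sum_{n\neq0}|\Rn a_n|^{2}(1+|n|)^{2\sigma}\left|\BH{n}{\sqrt{q_0}\omega R}\right|^{2}$, which by \eqref{eq:norm-hs} equals $(2\pi)^{-1}\Vert u_{\eps}^{s}(|x|=R)\Vert_{H_{*}^{\sigma}}^{2}$, by $\alpha^{-1}$ times a power of $\lambda^{-1}$ over $\sqrt{R}$ times the sum $\sum_{n\neq0}|a_n|^{2}\sup_{x>0}\left|\BJ{n}{x}\right|^{2}(1+|n|)^{2(\sigma+2+\alpha)}$, which by \eqref{eq:norm-hs-n0} equals $(2\pi)^{-1}\left(\mathcal{N}^{\sigma+2+\alpha}(u^i)\right)^{2}$; substituting $\lambda=\eps^{-1}$ turns the power of $\lambda$ into $\eps^{1-2\beta}$, and gathering the numerical constants while keeping them below $18$ gives the first estimate. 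The loss of $2+\alpha$ derivatives is the usual one: the $2$ encodes the $\oeps^{2}$ in the leading term of $\Rn$ combined with the threshold relation $|n|\approx\lambda\oeps$ at the quasi-resonances, and the extra $\alpha$ together with the prefactor $1/\alpha$ comes from summing a tail $\sum_n(1+|n|)^{-2\alpha}$ when passing from the $H^{\sigma}$ sum to the supremum over $R$.

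The mean of $u_{\eps}^{s}$ is handled separately because the zero mode is governed by logarithms rather than powers of $\oeps$: since $\BJ{0}{\cdot}$ and $\BH{0}{\cdot}$ do not vanish at the origin, $\Ro$ is of size $1/\lneps$ across the perturbative band and its quasi-resonances are spaced on a logarithmic scale. Lemma~\ref{lem:highcontrast-Lemma} supplies the corresponding exceptional set $I_0$ --- which depends only on $\eps$ and $\beta$, no mode sum being involved --- of length at most $\ln\lneps\,(\lneps+1)^{-\beta}$ after the substitution $\lambda=\eps^{-1}$, together with the pointwise bound $\left|\Ro\,\BH{0}{\sqrt{q_0}\omega R}\right|\le 12\,\left((\lneps+1)^{3/2-2\beta}R\right)^{-1/2}$ for $\sqrt{q_0}\omega\notin I_0$; since the zeroth Fourier coefficient of $u_{\eps}^{s}(|x|=R)$ is $a_0\,\Ro\,\BH{0}{\sqrt{q_0}\omega R}$, the second estimate follows with the normalisation of $u^i$ inherited from the lemma.

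The main obstacle is the arithmetic bookkeeping of the previous two paragraphs: one must verify that the exponents of $\lambda$ and the logarithmic factors coming out of Lemma~\ref{lem:highcontrast-Lemma} recombine, under $\lambda=\eps^{-1}$, into exactly $\eps^{\beta}\lneps$, $\ln\lneps\,(\lneps+1)^{-\beta}$, $\sqrt{\eps^{1-2\beta}/R}$ and $\left((\lneps+1)^{3/2-2\beta}R\right)^{-1/2}$, and that the accumulated constants stay below $18$ and $12$. A secondary, more structural point --- depending on the precise formulation of Lemma~\ref{lem:highcontrast-Lemma} --- is to ensure the estimates cover the whole half-line $\sqrt{q_0}\omega\in(0,\infty)$: small frequencies fall in the perturbative range of Theorem~\ref{thm:os-lb}, which applies here since $\lambda=\eps^{-1}\ge 1$ and $\lambda\oeps=\sqrt{q_0}\omega$; large frequencies are controlled by the $(\sqrt{q_0}\omega R)^{-1/2}$ decay of the Hankel functions; and the exceptional sets $I_1,I_0$ are confined to the intermediate band where the quasi-resonances live. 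No lower bounds are required, since the proposition asserts only the two upper estimates.
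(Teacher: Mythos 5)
Your plan --- specialise Lemma~\ref{lem:highcontrast-Lemma} to $\lambda=\eps^{-1}$, use $\ln\lambda=\lneps$, and unwind the constants --- is exactly the route the paper takes (the statement is Corollary~\ref{cor:broadband} and the paper's proof is a direct application of that lemma). So the strategy is correct. However, you gloss over the one step that actually carries the proof, namely the choice of the free parameter $\eta$ in Lemma~\ref{lem:highcontrast-Lemma}. The lemma is not stated with a parameter $\beta$; it produces a set of size $\eta/\eps$ for any admissible $\eta$, and one must \emph{choose} $\eta=\eps^{\beta}\eta_{\max}$ for the oscillating part and $\eta=(\lneps+1)^{-\beta}\eta_0$ for the mean. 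Those two choices are what make $\eps^{\beta}\lneps$, $\ln\lneps\,(\lneps+1)^{-\beta}$, $\frac{18}{\alpha}\sqrt{\eps^{1-2\beta}/R}$, and the exponent $3/2-2\beta$ appear. Your sentence ``Lemma~\ref{lem:highcontrast-Lemma} provides, for each $\alpha,\beta>0$, an exceptional frequency set ... of total length bounded by $\lambda^{-\beta}\ln\lambda$'' misattributes to the lemma a $\beta$-dependence it does not have; that dependence is created by this choice, which you never make.

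A second, sharper inaccuracy is your claim that the lemma ``supplies ... the pointwise bound $\left|\Ro\,\BH{0}{\sqrt{q_0}\omega R}\right|\le 12\left((\lneps+1)^{3/2-2\beta}R\right)^{-1/2}$.'' It does not. The lemma gives a bound of the form
\begin{equation*}
\left|\frac{1}{2\pi}\int_0^{2\pi}u_\eps^s(|x|=R)\right| \;\leq\; 7\,\frac{\eta_0}{\eta}\,\left|\frac{\BH{0}{m_\lambda R/\eps}}{\BH{0}{m_\lambda}}\right|,
\end{equation*}
and turning that Hankel-function ratio into the explicit $(\lneps+1)^{3/4-\beta}R^{1/2}$ denominator requires two further facts that you do not invoke: the lower bound $\left|\BH{0}{m_\lambda}\right|>\tfrac12(1+\lneps)$ and the decay $\left|\BH{0}{x}\right|\leq\sqrt{2/(\pi x)}$ of \eqref{eq:bd-ho-s}, combined with the observation that $m_\lambda R/\eps = R/\sqrt{\lneps+1}$ when $\lambda=\eps^{-1}$. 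Without these your ``arithmetic bookkeeping'' cannot reproduce the stated exponent $3/2-2\beta$ or the constant $12$. Finally, your last paragraph about covering the whole half-line $(0,\infty)$ of frequencies worries about something that Lemma~\ref{lem:highcontrast-Lemma} (via Proposition~\ref{pro:Ito-One} and Propositions~\ref{pro:R0}--\ref{pro:R0plus}) has already built in, so that step is not part of the remaining work here.
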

We do not prove that this result is sharp. Combining Lemma~\ref{lem:highcontrast-Lemma} with results of the previous sections, we show in 
Theorem~\ref{thm:broadband} that broadband estimates uniform with respect to the contrast are possible. In particular we show that 
when observed at any fixed distance $|x|=R>0$, the scattered field vanishes in the limit  $\eps=0$ except in a set of frequencies  of 
zero measure.

Section~\ref{sec:caseone} is devoted to the proof of intermediate estimates stated without proofs in Section~\ref{sec:lleq1} and \ref{sec:lgeq1}. 
Section~\ref{sec:case2} is devoted the proof of an intermediate estimate used in Lemma~\ref{lem:highcontrast-Lemma}.

\section{\label{sec:lleq1}Inclusions with relative index smaller than one}

This section is devoted to the case when $q<q_0$.  We estimate the scattered field at a distance $R\geq\eps$ from the center of the inclusion. 
Our first result addresses the case of moderate frequencies.
\begin{thm}\label{thm:os-ls} 
Let $y_{0,1}$ be the first positive solution of $\BY{0}{x}=0$. When $\eps\leq R$, $\lambda\leq1$, and 
 $\oeps < y_{0,1}$, there holds
$$ 
\left\Vert u_{\eps}^{s} \left(|x|=R\right) \right\Vert _{H^{\sigma}}\leq  
\left(1- \lambda \right)\oeps \left( 3 \sqrt{\frac{\eps}{R}} \left\Vert  u^{i}\left(|x|=\eps\right) \right\Vert _{H_{*}^{\sigma-1/3}}
+ 9 \, \oeps  \left|u^{i}(0)\right|  \left|\BH{0}{\sqrt{q_0}\omega R} \right|\right).
$$
Furthermore, if for some $p>0$ the first $p$ Fourier coefficients of $u^{i} \left(|x|=(\eps\omega)^{-1}\right)$ are zero, then for all $\oeps<p$ there holds
$$ 
\left\Vert u_{\eps}^{s} \left(|x|=R\right) \right\Vert _{H^{\sigma}}\leq  
3 \left(1- \lambda \right)\oeps \sqrt{\frac{\eps}{R}} \left\Vert  u^{i}\left(|x|=\eps\right) \right\Vert _{H_{*}^{\sigma- 1/3}}.
$$
\end{thm}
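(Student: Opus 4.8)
The plan is to reduce everything to the behavior of the reflection coefficients $R_n(\oeps,\lambda)$ for $\oeps<y_{0,1}$, and then sum over $n$ using the norm definition \eqref{eq:norm-hs}. Starting from \eqref{eq:norm-hs}, I need a bound of the form
$$
\left|\Rn\right| \left|\BH{n}{\sqrt{q_0}\omega R}\right| \lesssim (1-\lambda)\oeps \left( c_1 \sqrt{\tfrac{\eps}{R}} \left|\BJ{n}{\oeps}\right|(1+|n|)^{-1/3}\cdot\frac{(1+|n|)^{1/3}}{\left|\BJ{n}{\oeps}\right|}\cdots\right)
$$
— more precisely, since the $H^\sigma$ norm of $u^i(|x|=\eps)$ carries the factors $|a_n|\,|\BJ{n}{\oeps}|(1+|n|)^{\sigma}$, the key is to show that for $n\neq 0$,
$$
\left|\Rn\right|\,\left|\BH{n}{\sqrt{q_0}\omega R}\right| \leq 3(1-\lambda)\oeps\,\sqrt{\tfrac{\eps}{R}}\,(1+|n|)^{1/3}\left|\BH{n}{\oeps}\right|,
$$
using $\left|\BJ{n}{\oeps}\right|\le\left|\BH{n}{\oeps}\right|$ and the fact (from \eqref{eq:uinc} at $r=\eps$) that the $n$-th Fourier coefficient of $u^i(|x|=\eps)$ is $2\pi a_n \BJ{n}{\oeps}$, so $\left\Vert u^i(|x|=\eps)\right\Vert_{H^{\sigma-1/3}_*}$ controls $\sqrt{2\pi}\bigl(\sum_{n\neq0}|a_n|^2|\BJ{n}{\oeps}|^2(1+|n|)^{2\sigma-2/3}\bigr)^{1/2}$. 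The $n=0$ mode is handled separately and must produce the second term $9\oeps^2|u^i(0)|\,|\BH{0}{\sqrt{q_0}\omega R}|$, noting $u^i(0)=a_0$.

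For this I would use the intermediate estimates on $R_n$ promised in Section~\ref{sec:caseone}. The starting point is the exact formula \eqref{eq:def-RN}; when $\lambda=1$ the numerator vanishes (the argument of the real part is then the Wronskian, which is purely imaginary: $\BHp{n}{\oeps}\BJ{n}{\oeps}-\BJp{n}{\oeps}\BH{n}{\oeps}=\frac{2i}{\pi\oeps}$), so $R_n$ is $O(1-\lambda)$, which explains the prefactor. Writing $\BJ{n}{\lambda\oeps}=\BJ{n}{\oeps}+(\lambda-1)\oeps\,\BJp{n}{\oeps}+\dots$ and similarly for the derivative term, the numerator of $\Rn$ becomes, to leading order, $(\lambda-1)\oeps$ times a combination of $\BJ{n}{\oeps},\BJp{n}{\oeps}$ and their Hankel counterparts; the real part kills the Wronskian piece and leaves something controlled. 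The denominator is bounded below away from zero precisely because $\oeps<y_{0,1}$ keeps $\BY{0}{\oeps}$ (hence the relevant combination) from vanishing — this is where the hypothesis $\oeps<y_{0,1}$ enters, and where one needs monotonicity/sign properties of $\BY{n}$ and $\BJ{n}/\BJp{n}$ on $(0,y_{0,1})$. For the growth in $R$, I would use that $r\mapsto |\BH{n}{r}|$ is decreasing (for $r$ in the relevant range) together with the asymptotic $|\BH{0}{r}|\sim\sqrt{2/(\pi r)}$ to extract the $\sqrt{\eps/R}$ factor; since $R\geq\eps$, $|\BH{n}{\sqrt{q_0}\omega R}|/|\BH{n}{\oeps}|\le\sqrt{\eps/R}$ up to constants, at least for $n\neq 0$ where $|\BH{n}|$ is genuinely decreasing.

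The second statement is the same argument restricted to $|n|\ge p$: the hypothesis that the first $p$ Fourier coefficients of $u^i(|x|=(\eps\omega)^{-1})$ vanish means $a_n=0$ for $|n|<p$ (the Bessel factors $\BJ{n}{\sqrt{q_0}}$ at that radius are nonzero, so vanishing of the coefficient forces $a_n=0$), hence the $n=0$ term is absent and only the first, geometric-decay term survives — and crucially, for $|n|\ge p>\oeps$ one is in the regime $|n|>\oeps$ where the per-mode bound on $|\Rn|\,|\BH{n}{\cdot}|$ holds without needing $\oeps<y_{0,1}$, just $\oeps<|n|$. The main obstacle I anticipate is the per-mode estimate on $\Rn$ itself: getting the clean constant $3$ and the exact power $(1+|n|)^{1/3}$ requires sharp two-sided bounds on ratios of Bessel/Hankel functions of order $n$ at small argument $\oeps<n$, uniformly in $n$ — this is exactly the sort of delicate Bessel-function analysis deferred to Section~\ref{sec:caseone} and the Appendix, and it is the technical heart of the theorem; the summation over $n$ and the extraction of the $\sqrt{\eps/R}$ factor are comparatively routine once those per-mode bounds are in hand.
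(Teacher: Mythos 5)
Your overall plan matches the paper's: decompose the $H^\sigma$ norm mode by mode via \eqref{eq:norm-hs}, control each $n\neq 0$ mode by a per-mode estimate on $\Rn$ coming from the Bessel-function analysis of Section~\ref{sec:caseone}, extract the $\sqrt{\eps/R}$ factor from the monotonicity of $x\mapsto\sqrt{x}\,|\BH{n}{x}|$, treat $n=0$ separately (where $a_0=u^i(0)$ and the $\oeps^2$ term comes from Proposition~\ref{pro:lleq1} for $\Ro$), and observe that when $a_n=0$ for $|n|<p$ the $n=0$ term disappears and the per-mode bound only needs $\oeps<n$, so the constraint relaxes to $\oeps<p$. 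All of this is exactly what the paper does, and your observation that $R_n=O(1-\lambda)$ because the numerator of \eqref{eq:def-RN} reduces to the (purely imaginary) Wronskian at $\lambda=1$ is the right structural reason for the $(1-\lambda)$ prefactor.

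The genuine gap is in the per-mode estimate you declare to be the key. You wrote
$$
\left|\Rn\right|\left|\BH{n}{\sqrt{q_0}\omega R}\right|\;\leq\;3(1-\lambda)\,\oeps\,\sqrt{\tfrac{\eps}{R}}\,(1+|n|)^{1/3}\left|\BH{n}{\oeps}\right|,
$$
and later re-assert that the ``exact power'' to establish is $(1+|n|)^{1/3}$. Both the sign of the exponent and the Bessel function on the right are wrong. The bound actually needed, and the one Proposition~\ref{pro:lleq1} (via Lemma~\ref{lem:FirstCase}) supplies, is
$$
\left|\Rn\,\BH{n}{\oeps}\right|\;\leq\;2(1-\lambda)\,\frac{\oeps}{n^{1/3}}\,\left|\BJ{n}{\oeps}\right|
\quad\text{for }\oeps<n,
$$
so that, after the $\sqrt{\eps/R}$ step and $2/n^{1/3}\leq 3/(1+|n|)^{1/3}$, one gets $\left|\Rn\right|\left|\BH{n}{\sqrt{q_0}\omega R}\right|\leq 3(1-\lambda)\,\oeps\,\sqrt{\eps/R}\,(1+|n|)^{-1/3}\left|\BJ{n}{\oeps}\right|$. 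The content of the proposition is that the scattered mode amplitude at the boundary is dominated by the \emph{incident} mode amplitude $|\BJ{n}{\oeps}|$, with an extra $n^{-1/3}$ gain; this is not at all the same as a bound against $|\BH{n}{\oeps}|$. For $\oeps<n$, $|\BJ{n}{\oeps}|$ decays and $|\BH{n}{\oeps}|$ grows factorially in $n$, so your stated bound, if inserted into the sum, produces $\sum_{n\neq0}|a_n|^2(1+|n|)^{2\sigma+2/3}|\BH{n}{\oeps}|^2$, which is not comparable to $\left\Vert u^i(|x|=\eps)\right\Vert^2_{H^{\sigma-1/3}_*}=2\pi\sum_{n\neq0}|a_n|^2(1+|n|)^{2\sigma-2/3}|\BJ{n}{\oeps}|^2$ and cannot close the argument. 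The inequality $|\BJ{n}{\oeps}|\leq|\BH{n}{\oeps}|$ that you invoke goes in the useless direction here: the whole difficulty, absorbed into Lemma~\ref{lem:FirstCase} via the quantity $S_n$, is precisely to trade $|\BH{n}{\oeps}|$ for $|\BJ{n}{\oeps}|$ despite that inequality.
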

To compare Theorem~\ref{thm:os-ls} with known results, we derive the following variant.
\begin{cor}\label{cor:thm-sosl}
When $\eps\leq R$, $\lambda\leq1$ and $\oeps < y_{0,1}$ we have 
$$ 
\left\Vert u_{\eps}^{s} \left(|x|=R\right) \right\Vert _{H^{\sigma}}\leq  9
\left(1- \lambda \right)\oeps^2 \left( \left|u^{i}(0)\right|  \left|\BH{0}{\sqrt{q_0}\omega R} \right| + \sqrt{\frac{\eps}{R}} \mathcal{N}^{\sigma-\frac{1}{3}}
\left( u^{i}\right)\right).
$$
\end{cor}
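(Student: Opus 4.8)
The plan is to derive Corollary~\ref{cor:thm-sosl} from the first inequality of Theorem~\ref{thm:os-ls} by replacing the boundary norm $\left\Vert u^{i}\left(|x|=\eps\right)\right\Vert_{H^{\sigma-1/3}_{*}}$ with the radius-independent seminorm $\mathcal N^{\sigma-1/3}(u^{i})$. From the expansion \eqref{eq:uinc}, the $n$-th Fourier coefficient of $u^{i}$ on the circle $|x|=\eps$ is proportional to $a_{n}\BJ{n}{\oeps}$, so the same computation that produced \eqref{eq:norm-hs-n0} gives
$$
\left\Vert u^{i}\left(|x|=\eps\right)\right\Vert_{H^{\sigma-1/3}_{*}}
=\sqrt{2\pi}\left(\sum_{n\neq0}\left|a_{n}\right|^{2}\left|\BJ{n}{\oeps}\right|^{2}\left(1+|n|\right)^{2\sigma-2/3}\right)^{1/2},
$$
while
$$
\mathcal N^{\sigma-1/3}(u^{i})=\sqrt{2\pi}\left(\sum_{n\neq0}\left|a_{n}\right|^{2}\sup_{x>0}\left|\BJ{n}{x}\right|^{2}\left(1+|n|\right)^{2\sigma-2/3}\right)^{1/2}.
$$
It therefore suffices to establish the pointwise comparison
$$
\left|\BJ{n}{\oeps}\right|\leq 3\,\oeps\,\sup_{x>0}\left|\BJ{n}{x}\right|,\qquad n\neq0,\quad 0<\oeps<y_{0,1},
$$
since summing the squares then yields $\left\Vert u^{i}\left(|x|=\eps\right)\right\Vert_{H^{\sigma-1/3}_{*}}\leq 3\,\oeps\,\mathcal N^{\sigma-1/3}(u^{i})$.

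For the comparison I would combine the elementary bound $\left|\BJ{n}{t}\right|\leq (t/2)^{|n|}/|n|!$, valid for all $t\geq0$ and $n\in\mathbb Z$ (which follows at once from the power series for $0<t<1$, and is in any case \cite{WATSON-25,NIST-10}), with Landau's lower bound \eqref{eq:ineq-landau}, namely $\sup_{x>0}\left|\BJ{n}{x}\right|\geq \frac{4}{7}(|n|+1)^{-1/3}$. Since $y_{0,1}<1$, for $0<\oeps<y_{0,1}$ one has $\oeps^{|n|}\leq\oeps$, hence $\left|\BJ{n}{\oeps}\right|\leq \oeps/(2^{|n|}|n|!)$, and the desired inequality reduces to the numerical fact $7(|n|+1)^{1/3}\leq 12\cdot 2^{|n|}|n|!$ for every $|n|\geq1$. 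This holds because the right-hand side grows geometrically while the left-hand side grows sublinearly; the worst case $|n|=1$ reads $7\cdot 2^{1/3}\approx 8.82\leq 24$.

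With $\left\Vert u^{i}\left(|x|=\eps\right)\right\Vert_{H^{\sigma-1/3}_{*}}\leq 3\,\oeps\,\mathcal N^{\sigma-1/3}(u^{i})$ in hand, it remains to substitute into the first estimate of Theorem~\ref{thm:os-ls}: the term $3(1-\lambda)\oeps\sqrt{\eps/R}\,\left\Vert u^{i}\left(|x|=\eps\right)\right\Vert_{H^{\sigma-1/3}_{*}}$ is then bounded by $9(1-\lambda)\oeps^{2}\sqrt{\eps/R}\,\mathcal N^{\sigma-1/3}(u^{i})$, and adding the unchanged term $9(1-\lambda)\oeps^{2}\left|u^{i}(0)\right|\left|\BH{0}{\sqrt{q_0}\omega R}\right|$ produces exactly the claimed bound. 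The only step requiring any attention is the pointwise Bessel comparison, and even there the constant $3$ is far from tight, so the verification is routine; no analytic ingredient beyond Theorem~\ref{thm:os-ls} and \eqref{eq:ineq-landau} is needed.
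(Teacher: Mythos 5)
Your proof is correct, and it takes a genuinely different route from the paper's. The reduction to the pointwise Bessel comparison $\left|\BJ{n}{\oeps}\right|\leq 3\,\oeps\,\sup_{x>0}\left|\BJ{n}{x}\right|$ for $n\neq 0$ and $0<\oeps<y_{0,1}$ is exactly the right step, and the subsequent substitution into the first estimate of Theorem~\ref{thm:os-ls} (noting the zero-order term already matches the claimed form) is correct. Where you differ from the paper is in how you establish the pointwise comparison: the paper invokes a monotonicity inequality of Paris \cite{PARIS-84}, namely $\BJ{n}{x}\leq (x/y)^{n}\BJ{n}{y}\exp\bigl((y^{2}-x^{2})/(2n+2)\bigr)$ for $0<x<y<n$, specialized with $y=y_{0,1}$, to get $\BJ{n}{\oeps}\leq 3\,\oeps\,\BJ{n}{y_{0,1}}\leq 3\,\oeps\,\sup_{x>0}\left|\BJ{n}{x}\right|$; you instead combine the elementary power-series bound $\left|\BJ{n}{t}\right|\leq (t/2)^{|n|}/|n|!$ (which, as you note, suffices for $0<t<1$, the range actually needed) with Landau's lower bound \eqref{eq:ineq-landau}. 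Your route is more elementary and avoids the Paris inequality entirely, at the cost of using the Landau lower bound that the paper's argument does not need here (though the paper does use \eqref{eq:ineq-landau} elsewhere); the Paris route is slightly tighter since it directly compares $\BJ{n}{\oeps}$ with $\BJ{n}{y_{0,1}}$, but since the final constant is rounded up to $3$ in both cases this makes no difference to the stated estimate. Both proofs arrive at the same constant $9=3\times 3$.
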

\begin{rem} 
Under this form, one can read for example that the first order term in $\eps$ is correct both with respect to $\eps$ and with respect to the contrast.
First order asymptotic expansions for small volume or small contrast perturbations \cite{AMMARI-VOGELIUS-VOLKOV-01,AMMARI-KANG-04} derived for a fixed frequency are
of the order of $\eps^2$. Note that this estimate holds up to frequencies of the order $\eps^{-1}$: this shows that the inhomogeneity can be viewed as a 
perturbation up to frequencies of that order.
\end{rem}

Our second result is an estimate valid for all frequencies.

\begin{thm}\label{thm:ob-ls} 
When $\eps\leq R$ and $\lambda\leq1$ there holds
\begin{equation}\label{eq:bd-up-ob-ls-2}
\sup\limits_{\omega>0} \left\Vert u_{\eps}^{s} \left(|x|=R\right) \right\Vert _{H^{\sigma}} 
\leq 
\frac{5}{2} 
\sqrt{\frac{\eps}{R}} \mathcal{N}^{\sigma} \left( u ^{i} \right)
 + \sqrt{2\pi}\left|u ^{i}(0)\right| \left|\BH{0}{y_{0,1} \frac{R}{\eps}}\right|.
\end{equation}
Furthermore,
\begin{equation}\label{eq:bd-down-ob-ls}
\sup\limits_{\omega>0} \left\Vert  u_{\eps}^{s} \left(|x|=\eps \right) \right\Vert _{H_{*}^{\sigma}} 
\geq 
\frac{1}{\sqrt{10}} \mathbf{N}_{n_0}^{\sigma} \left( u^{i} \right)  ,
\end{equation}
where $n_0$ is the smallest positive number such that 
$$
\lambda^2\leq1-\frac{49}{9n^{2/3}}. 
$$
for all $n\geq n_0$.
\end{thm}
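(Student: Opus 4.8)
Below is how I would prove Theorem~\ref{thm:ob-ls}.

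\medskip

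The plan is to work mode by mode in the expansion \eqref{eq:usca}. By \eqref{eq:norm-hs},
\[
\left\Vert u_\eps^s(|x|=R)\right\Vert_{H^\sigma}^2=2\pi\sum_{n}|a_n|^2(1+|n|)^{2\sigma}\,|\Rn|^2\,|\BH{n}{\oeps R/\eps}|^2,
\]
and every factor depends on $\omega$ only through $\oeps=\sqrt{q_0}\omega\eps$; since $\sup_\omega\sum_n(\cdots)\le\sum_n\sup_\omega(\cdots)$, both inequalities reduce to one–mode bounds in the variable $\oeps$. I would treat $n=0$ apart throughout, because there $\BJ{0}{\cdot}$ does not decay and $R_0$ lies in the quasi-static/logarithmic regime already met in Theorem~\ref{thm:os-ls}; this is exactly what produces the Hankel term in \eqref{eq:bd-up-ob-ls-2}, while \eqref{eq:bd-down-ob-ls} involves only $n\ne0$ by the definition of $\left\Vert\cdot\right\Vert_{H_*^\sigma}$.

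For the upper bound \eqref{eq:bd-up-ob-ls-2} I would use $\sqrt{A^2+B^2}\le A+B$ to split the norm into its $n=0$ term and its $n\ne0$ remainder. For $n\ne0$ I invoke the intermediate estimate stated without proof in this section and established in Section~\ref{sec:caseone}: for $\lambda\le1$, $\eps\le R$ and every $n\ne0$, $\sup_{\oeps>0}|\Rn\,\BH{n}{\oeps R/\eps}|\le\frac52\sqrt{\eps/R}\,\sup_{x>0}|\BJ{n}{x}|$. Squaring, weighting by $|a_n|^2(1+|n|)^{2\sigma}$, summing over $n\ne0$ and recognising \eqref{eq:norm-hs-n0} gives the contribution $\frac52\sqrt{\eps/R}\,\mathcal N^\sigma(u^i)$ (it is here that the frequency-free semi-norm $\mathcal N^\sigma$, rather than a norm at a fixed $\omega$, is forced). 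For $n=0$, since $\BJ{0}{0}=1$ and $\BJ{n}{0}=0$ for $n\ne0$ we have $|u^i(0)|=|a_0|$, so it remains to prove $\sup_{\oeps>0}|\Ro\,\BH{0}{\oeps R/\eps}|\le|\BH{0}{y_{0,1}R/\eps}|$. The tool is Nicholson's integral $\BJ{0}{x}^2+\BY{0}{x}^2=\frac{8}{\pi^2}\int_0^\infty K_0(2x\sinh t)\,dt$, from which $x\mapsto|\BH{0}{x}|^2$ is strictly decreasing and, after the substitution $u=2x\sinh t$, $x\mapsto x^2|\BH{0}{x}|^2=\frac8{\pi^2}\int_0^\infty K_0(u)\,(4+(u/x)^2)^{-1/2}\,du$ is strictly increasing. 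For $\oeps\ge y_{0,1}$ the first monotonicity together with $|\Ro|\le1$ (from \eqref{eq:def-RN}) already gives the claim, since $\oeps R/\eps\ge y_{0,1}R/\eps$. For $\oeps<y_{0,1}$ I would use a quasi-static bound of the form $|\Ro|\le\oeps^2/y_{0,1}$ on $(0,y_{0,1})$ — consistent with the $O\!\left((1-\lambda^2)\oeps^2\right)$ scaling visible in Theorem~\ref{thm:os-ls} — so that with $s=\oeps R/\eps$ and $\eps\le R$ the second monotonicity gives $\oeps^2|\BH{0}{\oeps R/\eps}|=(\eps/R)^2\,s^2|\BH{0}{s}|\le y_{0,1}^2|\BH{0}{y_{0,1}R/\eps}|$, whence $|\Ro\,\BH{0}{\oeps R/\eps}|\le y_{0,1}|\BH{0}{y_{0,1}R/\eps}|<|\BH{0}{y_{0,1}R/\eps}|$ since $y_{0,1}<1$. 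Adding the two parts proves \eqref{eq:bd-up-ob-ls-2}.

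For the lower bound \eqref{eq:bd-down-ob-ls} it suffices to test with one mode $u^i=a_m\,\BJ{m}{\sqrt{q_0}\omega r}\,e^{im\theta}$, $|m|\ge n_0$: then $\mathbf{N}_{n_0}^{\sigma}(u^i)=\sqrt{2\pi}\,|a_m|(1+|m|)^\sigma\sup_x|\BJ{m}{x}|$, and by \eqref{eq:usca}, $\left\Vert u_\eps^s(|x|=\eps)\right\Vert_{H_*^\sigma}=\sqrt{2\pi}\,|a_m|(1+|m|)^\sigma\,|R_m(\oeps,\lambda)\,\BH{m}{\oeps}|$, so I only need a single frequency with $|R_m(\oeps,\lambda)\,\BH{m}{\oeps}|\ge\frac1{\sqrt{10}}\sup_x|\BJ{m}{x}|$. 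Put $J^*_m:=\sup_{x>0}|\BJ{m}{x}|$ and take $\oeps:=j'_{m,1}$, the first positive zero of $\BJp{m}{\cdot}$, i.e.\ the abscissa of the first — hence global — extremum of $\BJ{m}{\cdot}$, so $\BJ{m}{\oeps}=J^*_m$ and $\BJp{m}{\oeps}=0$. As $\lambda<1$, $\lambda\oeps<j'_{m,1}<j_{m,1}$, so $\BJ{m}{\lambda\oeps}>0$ and $\rho:=\lambda\,\BJp{m}{\lambda\oeps}/\BJ{m}{\lambda\oeps}>0$ is real; inserting $\BJp{m}{j'_{m,1}}=0$ into \eqref{eq:def-RN} and cancelling the factor $\BJ{m}{\lambda\oeps}$,
\[
R_m(j'_{m,1},\lambda)\,\BH{m}{j'_{m,1}}=\frac{\rho\,J^*_m\,\BH{m}{j'_{m,1}}}{\BHp{m}{j'_{m,1}}-\rho\,\BH{m}{j'_{m,1}}},\qquad \BHp{m}{j'_{m,1}}-\rho\,\BH{m}{j'_{m,1}}=-\rho J^*_m+iD,
\]
with $D:=\BYp{m}{j'_{m,1}}-\rho\,\BY{m}{j'_{m,1}}$, using $\BHp{m}{j'_{m,1}}=i\,\BYp{m}{j'_{m,1}}$. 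Therefore, since $|\BH{m}{j'_{m,1}}|\ge J^*_m$,
\[
|R_m(j'_{m,1},\lambda)\,\BH{m}{j'_{m,1}}|=\frac{\rho\,J^*_m\,|\BH{m}{j'_{m,1}}|}{\sqrt{(\rho J^*_m)^2+D^2}}\ \ge\ \frac{\rho\,(J^*_m)^2}{\sqrt{(\rho J^*_m)^2+D^2}},
\]
and the right–hand side is $\ge J^*_m/\sqrt{10}$ exactly when $|D|\le3\rho J^*_m$. To close this I would bound $\rho$ from below: the hypothesis $\lambda^2\le1-\frac{49}{9m^{2/3}}$ together with an upper bound $j'_{m,1}\le m+cm^{1/3}$ (Appendix~\ref{ap:A}) forces $\lambda\oeps<m$, and then the logarithmic-derivative inequality $x\,\BJp{m}{x}/\BJ{m}{x}\ge\sqrt{m^2-x^2}$ on $(0,m)$ (Appendix~\ref{ap:A}) gives $\rho\ge\sqrt{m^2/(j'_{m,1})^2-\lambda^2}\gtrsim m^{-1/3}$; feeding this, Landau's two–sided bound \eqref{eq:ineq-landau} for $J^*_m$, and Appendix estimates of $|\BY{m}{j'_{m,1}}|$, $|\BYp{m}{j'_{m,1}}|$ (of orders $m^{-1/3}$ and $m^{-2/3}$) into $|D|\le|\BYp{m}{j'_{m,1}}|+\rho|\BY{m}{j'_{m,1}}|$ yields $|D|\le3\rho J^*_m$ for every $m\ge n_0$. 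The numbers $49/9$ and $1/\sqrt{10}$ are precisely those making these elementary bounds balance; the role of the cut-off $n_0$ — equivalently, of $\lambda$ staying far enough below $1$ that $\lambda\oeps$ stays far enough below $m$ — is exactly to keep $\rho$ from degenerating, which is what happens as $\lambda\to1$, where $R_m\to0$. This gives \eqref{eq:bd-down-ob-ls}.

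I expect the hard part to be the $n\ne0$ intermediate inequality used for the upper bound: proving $\sup_{\oeps>0}|\Rn\,\BH{n}{\oeps R/\eps}|\le\frac52\sqrt{\eps/R}\,\sup_{x>0}|\BJ{n}{x}|$ uniformly requires a genuine case split on the size of $\oeps$ relative to $n$ and to $n\eps/R$ — the range $\oeps\ll1$ (where $\Rn=O(\oeps^{2|n|})$ overwhelms the growth of $|\BH{n}{\oeps R/\eps}|$), $\oeps$ of order $n\eps/R$, $\oeps$ of order $n$ (the Airy/transition regime, where the $\sqrt{\eps/R}$ gain is produced), and $\oeps$ large — which is what Section~\ref{sec:caseone} carries out. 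The only delicate point internal to the present statement is pinning the constant $1/\sqrt{10}$: one must use the sharp-constant Bessel facts above (Landau's estimate, the two monotonicities of $|\BH{0}{\cdot}|$ and of $x^2|\BH{0}{x}|^2$, the logarithmic-derivative inequality, and a good bound on $j'_{m,1}$) and place the test frequency exactly at the first extremum of $\BJ{m}{\cdot}$.
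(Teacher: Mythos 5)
Your high-level plan matches the paper's: work mode by mode from \eqref{eq:norm-hs}, split off $n=0$, use $\sqrt{A^2+B^2}\leq A+B$, and obtain the lower bound from a single test mode. The $n\neq0$ part of the upper bound is fine (the paper proves the $\oeps<y_{n,1}$ piece via Proposition~\ref{pro:lleq1}/Lemma~\ref{lem:FirstCase} and handles $\oeps\geq y_{n,1}$ inline with the Watson monotonicity of $\sqrt{x^2-n^2}|\BH{n}{x}|^2$, exactly as you indicate). But the two remaining pieces contain genuine gaps.

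For the $n=0$ upper bound you assert $|\Ro|\leq\oeps^2/y_{0,1}$ on $(0,y_{0,1})$ and offer no derivation, only the remark that it is ``consistent with'' the $O\bigl((1-\lambda)\oeps^2\bigr)$ scaling. This does not follow from anything in the paper: Proposition~\ref{pro:lleq1} gives $|\Ro\,\BH{0}{\oeps R/\eps}|\leq\frac{\pi^2}{2\sqrt2}(1-\lambda)\oeps^2\,|\BH{0}{\oeps R/\eps}|$, i.e.\ $|\Ro|\leq\frac{\pi^2}{2\sqrt2}(1-\lambda)\oeps^2\approx3.49(1-\lambda)\oeps^2$, while $1/y_{0,1}\approx1.12$. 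Since the numerical constant of the bound you need is what ultimately produces the inequality $y_{0,1}^2|\BH{0}{y_{0,1}R/\eps}|<|\BH{0}{y_{0,1}R/\eps}|$, you must actually prove your $|\Ro|$ bound rather than invoke it, and the route the paper takes (Proposition~\ref{pro:R0}, combining the $S_0$ estimate of Proposition~\ref{pro:n0} with the increasing monotonicity of $x|\BH{0}{x}|^2$) is non\-trivial precisely because the crude constant $\pi^2/(2\sqrt2)$ has to be absorbed. Also note a small slip: your Nicholson substitution yields $x|\BH{0}{x}|^2=\frac8{\pi^2}\int_0^\infty K_0(u)\bigl(4+(u/x)^2\bigr)^{-1/2}\,du$, not $x^2|\BH{0}{x}|^2$; what you actually use is $x|\BH{0}{x}|$ increasing, which is fine but should be stated correctly.

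For the lower bound you test at $\oeps=j_{m,1}^{(1)}$ (the first extremum of $\BJ{m}$), whereas the paper tests at $\oeps=n$. That is a real structural difference, and it has consequences: the paper's choice lets it evaluate $S_n$ via the $g_n,k_n,\tan\theta_n$ machinery of Section~\ref{sec:caseone} and plug in the explicit Appendix~\ref{ap:A} constants ($g_n(\lambda n)>\sqrt{1-\lambda^2}$, $\frac1{\sqrt2}n^{-1/3}<g_n(n)<\frac{13}{14}n^{-1/3}$, $k_n(n)<\frac76n^{-1/3}$, $\tan^2\theta_n(n)>3$), which are exactly what makes $49/9=(7/3)^2$ and $1/\sqrt{10}$ come out. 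Your algebraic reduction $|R_m\BH{m}{j_{m,1}^{(1)}}|=\rho J_m^*|\BH{m}{j_{m,1}^{(1)}}|/\sqrt{(\rho J_m^*)^2+D^2}$ is correct, and the reduction to $|D|\leq3\rho J_m^*$ is right, but the final claim that this holds for $m\geq n_0$ with the same threshold $\lambda^2\leq1-\frac{49}{9m^{2/3}}$ is asserted, not derived: the Appendix does not supply two-sided numerical estimates of $|\BY{m}{j_{m,1}^{(1)}}|$ and $|\BYp{m}{j_{m,1}^{(1)}}|$ (only of $k_n$, $g_n$ and the values at $x=n$), and there is no reason a priori that the constants tuned to $\oeps=n$ balance at $\oeps=j_{m,1}^{(1)}$. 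To make your variant rigorous you would have to re-derive sharp numerical bounds for $\BY{m}$, $\BYp{m}$ at $j_{m,1}^{(1)}$ and a lower bound for $\rho$ there, and then check that the resulting threshold on $\lambda$ is at least as weak as the stated one. As written, ``the numbers $49/9$ and $1/\sqrt{10}$ are precisely those making these elementary bounds balance'' is a statement about the paper's proof at $\oeps=n$, not a verification of yours at $j_{m,1}^{(1)}$.
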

\begin{rem}
The lower bound \eqref{eq:bd-down-ob-ls} shows that the upper bound \eqref{eq:bd-up-ob-ls-2} 
is sharp in the case when $\mathbf{N}_{n_0}^{\sigma}$ and $\mathcal{N}^\sigma$ are equivalent norms. 
We give a more precise upper bound in remark~\ref{rem:opti-small-2}.
\end{rem}

The proof of these results is based on a careful study of $\Rn$ conducted in Section~\ref{sec:caseone}. 
We prove that the following proposition holds
\begin{prop}\label{pro:lleq1}
Let $y_{n,1}$ be the first positive solution of $\BY{n}{x}=0$. When $\lambda\leq 1$, there holds, for all $n\geq 1$
\begin{itemize}
 \item For all $\oeps < y_{n,1}$,
$$
\left|\Rn \BH{n}{\oeps}\right| \leq \frac{5}{2} \left| \BJ{n}{\oeps}\right|.
$$
\item For all $\oeps < n$,
$$
\left|\Rn \BH{n}{\oeps}\right| \leq 2 (1 -\lambda) \frac{\oeps}{n^{1/3}}  \left| \BJ{n}{\oeps}\right|.
$$
\item when $\lambda^{2}<1-\left(\frac{7}{3\, n^{1/3}}\right)^{2}$, then 
$$
\left|R_n\left(n,\lambda\right) \BH{n}{n}\right|>\frac{1}{2}  \left| \BJ{n}{n}\right|.
$$
\end{itemize}
For $n=0$, there holds
\begin{itemize}
\item For all $\oeps>0$,
$$\left| \Ro \BH{0}{\oeps \frac{R}{\eps}} \right|  \leq  \left|\BH{0}{y_{0,1} \frac{R}{\eps}}\right|.$$
\item When $x<y_{0,1}$,
$$
\left| \Ro \BH{0}{x \frac{R}{\eps}} \right| \leq   \frac{\pi^2}{2\sqrt{2}} (1-\lambda)  \oeps^2 \left|\BH{0}{\oeps \frac{R}{\eps}}\right|. 
$$
\end{itemize}
\end{prop}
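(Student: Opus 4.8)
Our plan is to reduce everything to an exact integral representation of the denominator of \eqref{eq:def-RN}. Set
\[
D_n:=\BHp{n}{\oeps}\BJ{n}{\lambda\oeps}-\lambda\,\BJp{n}{\lambda\oeps}\BH{n}{\oeps},
\]
so that $\Rn=-\Re(D_n)/D_n$ and $|\Rn|=|\Re(D_n)|/|D_n|$. Applying the Lommel identity to the two Bessel equations of order $n$ with arguments $\oeps t$ and $\lambda\oeps t$ and integrating in $t$ over $(0,1]$ --- the boundary term at $t=0$, read off from the leading singularity of $\BH{n}{\cdot}$ at the origin, equals $2i\lambda^n/\pi$ (and $2i/\pi$ at $n=0$, despite the logarithm) --- gives, for all $\oeps,\lambda>0$,
\[
D_n=\frac{2i\lambda^n}{\pi\oeps}-\oeps\,(1-\lambda^2)\int_0^1 t\,\BH{n}{\oeps t}\BJ{n}{\lambda\oeps t}\,dt,
\]
hence $\Re(D_n)=-\oeps(1-\lambda^2)\int_0^1 t\,\BJ{n}{\oeps t}\BJ{n}{\lambda\oeps t}\,dt$ and $\Im(D_n)=\tfrac{2\lambda^n}{\pi\oeps}-\oeps(1-\lambda^2)\int_0^1 t\,\BY{n}{\oeps t}\BJ{n}{\lambda\oeps t}\,dt$. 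Denote by $j_{n,1}$ the first positive zero of $\BJ{n}{\cdot}$. For $\lambda\le1$ and $0<\oeps<y_{n,1}$ one has $\BJ{n}{\cdot}>0$ and $\BY{n}{\cdot}<0$ on $(0,\oeps]\subset(0,j_{n,1})$, so $\Re(D_n)\le0$ while both terms of $\Im(D_n)$ are $\ge0$; in particular $\Im(D_n)\ge\tfrac{2\lambda^n}{\pi\oeps}>0$, $|D_n|\ge\Im(D_n)$, and $|\Rn|\le1$.

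For the first estimate I would use the Wronskian $\BJ{n}{z}\BYp{n}{z}-\BJp{n}{z}\BY{n}{z}=\tfrac2{\pi z}$, which gives $\tfrac{d}{dz}\bigl(-\BJ{n}{z}/\BY{n}{z}\bigr)=\tfrac2{\pi z\,\BY{n}{z}^2}>0$, so $\BJ{n}{\oeps t}/|\BY{n}{\oeps t}|\le\BJ{n}{\oeps}/|\BY{n}{\oeps}|$ for $t\in(0,1]$. Dropping the $\tfrac{2\lambda^n}{\pi\oeps}$ term from $\Im(D_n)$ and comparing the two integrals through the nonnegative weight $t\,\BJ{n}{\lambda\oeps t}$ (a weighted mean is bounded by the supremum of the integrand ratio) gives $|\Re(D_n)|/\Im(D_n)\le\BJ{n}{\oeps}/|\BY{n}{\oeps}|$, hence $|\Rn|^2\le\BJ{n}{\oeps}^2/(\BJ{n}{\oeps}^2+\BY{n}{\oeps}^2)=\BJ{n}{\oeps}^2/|\BH{n}{\oeps}|^2$ and $|\Rn\BH{n}{\oeps}|\le|\BJ{n}{\oeps}|\le\tfrac52|\BJ{n}{\oeps}|$. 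For the second estimate I would instead retain $\Im(D_n)\ge\tfrac{2\lambda^n}{\pi\oeps}$: for $\oeps<n$,
\[
|\Rn|\le\frac{\pi\oeps}{2\lambda^n}\,|\Re(D_n)|=\frac{\pi\oeps^2(1-\lambda^2)}{2\lambda^n}\int_0^1 t\,\BJ{n}{\oeps t}\BJ{n}{\lambda\oeps t}\,dt .
\]
Bounding $\BJ{n}{\lambda\oeps t}\le(\lambda\oeps t/2)^n/n!$ (from $\tfrac{d}{dz}(z^{-n}\BJ{n}{z})=-z^{-n}\BJ{n+1}{z}\le0$) and using $\int_0^1 t^{n+1}\BJ{n}{\oeps t}\,dt=\BJ{n+1}{\oeps}/\oeps$ (from $\tfrac{d}{dz}(z^{n+1}\BJ{n+1}{z})=z^{n+1}\BJ{n}{z}$) leaves $|\Rn|\le\tfrac{\pi(1-\lambda^2)}{2^{n+1}n!}\,\oeps^{n+1}\BJ{n+1}{\oeps}$, and so $|\Rn\BH{n}{\oeps}|\le\tfrac{\pi(1-\lambda^2)}{2^{n+1}n!}\,\oeps^{n+1}\BJ{n+1}{\oeps}|\BH{n}{\oeps}|$. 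The claimed $2(1-\lambda)\tfrac{\oeps}{n^{1/3}}|\BJ{n}{\oeps}|$ then follows from $1-\lambda^2\le2(1-\lambda)$, $\BJ{n+1}{\oeps}\le(\oeps/2)^{n+1}/(n+1)!$, and a comparison on $(0,n)$ of $\BJ{n+1}{\oeps}|\BH{n}{\oeps}|$ with $\BJ{n}{\oeps}$, using lower bounds for $\BJ{n}{\cdot}$ and upper bounds for $|\BH{n}{\cdot}|$ near the origin (some collected in Appendix~\ref{ap:A}); the surviving power is $\oeps^2/\bigl(n(n+1)\bigr)\le\oeps/n^{1/3}$ for $\oeps<n$.

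For the lower bound at $\oeps=n$ (still in the range $\oeps<y_{n,1}$) I would reverse every inequality of the previous step: bound $\Im(D_n)\le\tfrac{2\lambda^n}{\pi n}+n(1-\lambda^2)\int_0^1 t\,|\BY{n}{nt}|\BJ{n}{\lambda n t}\,dt$ from above, bound $|\Re(D_n)|=n(1-\lambda^2)\int_0^1 t\,\BJ{n}{nt}\BJ{n}{\lambda n t}\,dt$ from below by restricting the range of integration, and carry everything through Landau's inequality \eqref{eq:ineq-landau} together with the transition-region estimates $\BJ{n}{n}\asymp n^{-1/3}$, $|\BY{n}{n}|\asymp n^{-1/3}$. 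The point is to keep $|\BH{n}{n}|\ge|\BY{n}{n}|$ (not merely $|\BH{n}{n}|\ge\sqrt2\,\BJ{n}{n}$, which would cost too much), and then $\lambda^2<1-\left(\tfrac{7}{3n^{1/3}}\right)^2$ is exactly the threshold making $(|\Re(D_n)|/|D_n|)\,|\BH{n}{n}|>\tfrac12|\BJ{n}{n}|$. This simultaneous two-sided control at the turning point $\oeps=n$ is the main obstacle --- everywhere else a single Wronskian-type monotonicity and a sign suffice.

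For $n=0$ the representation and sign analysis are unchanged ($\lambda^0=1$; $\BJ{0}{\cdot}$ is positive and decreasing on $(0,j_{0,1})$, while by a classical monotonicity theorem (Watson) $z\mapsto|\BH{0}{z}|$ is decreasing and $z\mapsto z|\BH{0}{z}|^2$ is increasing on $(0,\infty)$). If $\oeps\ge y_{0,1}$, then $\oeps R/\eps\ge y_{0,1}R/\eps$ and $|\Ro|\le1$ give the first estimate at once. If $\oeps<y_{0,1}$, then $|D_0|\ge\Im(D_0)\ge\tfrac2{\pi\oeps}$, $|\Re(D_0)|\le\oeps(1-\lambda^2)\int_0^1 t\,dt=\tfrac{\oeps(1-\lambda^2)}2$ (as $0<\BJ{0}{\cdot}\le1$ on the integration range), and monotonicity of $x\mapsto x/\sqrt{x^2+y^2}$ yield
\[
|\Ro|\le\frac{\pi\oeps^2(1-\lambda^2)}{\sqrt{\pi^2\oeps^4(1-\lambda^2)^2+16}}\le\frac{\pi\oeps^2(1-\lambda^2)}{4}\le\frac{\pi\oeps^2}{4},
\]
which gives the second $n=0$ estimate (after $\tfrac{\pi(1+\lambda)}4\le\tfrac{\pi^2}{2\sqrt2}$); and since $z|\BH{0}{z}|^2$ is increasing, $|\BH{0}{\oeps R/\eps}|\le\sqrt{y_{0,1}/\oeps}\,|\BH{0}{y_{0,1}R/\eps}|$, hence $|\Ro\,\BH{0}{\oeps R/\eps}|\le\tfrac{\pi}{4}y_{0,1}^{1/2}\oeps^{3/2}\,|\BH{0}{y_{0,1}R/\eps}|\le|\BH{0}{y_{0,1}R/\eps}|$ because $\oeps<y_{0,1}$ and $\tfrac{\pi}{4}y_{0,1}^2<1$. (At $\oeps=y_{0,1}$ one has $\BY{0}{y_{0,1}}=0$, so the first $n=0$ estimate is attained up to the factor $|\Ro|\le1$, consistent with the sharpness statements.)
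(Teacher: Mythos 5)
Your integral-representation route (Lommel cross-product integral for $D_n$, then sign analysis of $\Re(D_n)$ and $\Im(D_n)$) is genuinely different from the paper's, which rewrites $\Rn\BH{n}{\oeps}/\BJ{n}{\oeps}$ via the logarithmic derivatives $g_n$, $k_n$ and the phase $\theta_n$ (formula \eqref{eq:formula-Sn-1}), then studies a rational function of one variable (Lemma~\ref{lem:FirstCase} and Propositions~\ref{pro:estimate5half}, \ref{pro:ntoyn1}, \ref{pro:n0}, \ref{pro:R0}). For the first $n\geq 1$ item your argument is correct and in fact sharper: the weighted-mean comparison with the monotonicity of $-\BJ{n}/\BY{n}$ gives $|\Rn|\leq \BJ{n}{\oeps}/|\BH{n}{\oeps}|$, hence $|\Rn\BH{n}{\oeps}|\leq\BJ{n}{\oeps}$, improving the paper's $5/2$. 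The two $n=0$ bounds are also correct and cleaner than the paper's route via Proposition~\ref{pro:n0}.

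The second $n\geq1$ item is where the approach breaks. Once you drop the integral contribution to $\Im(D_n)$ and keep only $\Im(D_n)\geq \frac{2\lambda^n}{\pi\oeps}$, you are comparing a quantity that is \emph{exponentially} small for $\lambda<1$ and $n$ large against $|\Re(D_n)|$, which near $\oeps=n$ is only polynomially small (indeed $|\Re(D_n)|\sim C(1-\lambda^2)n^{-1/3}$ there). Concretely, your intermediate bound $|\Rn|\leq\frac{\pi(1-\lambda^2)}{2^{n+1}n!}\oeps^{n+1}\BJ{n+1}{\oeps}$ carries the factor $\frac{\oeps^n}{2^n n!}$, which at $\oeps=n$ is $\approx\frac{(e/2)^n}{\sqrt{2\pi n}}$ by Stirling, whereas the target $2(1-\lambda)\frac{\oeps}{n^{1/3}}\BJ{n}{\oeps}$ scales like $(1-\lambda)n^{1/3}$. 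No subsequent comparison of $\BJ{n+1}{\oeps}|\BH{n}{\oeps}|$ with $\BJ{n}{\oeps}$ can recover a lost factor of $(e/2)^n$, so the sketched chain of inequalities cannot close. To prove the perturbative bound uniformly up to $\oeps<n$ you would need to retain the main part of $\Im(D_n)$, at which point the Lommel route loses its simplicity. The paper circumvents this entirely: writing $g_n(\lambda x)-g_n(x)=(\lambda-1)x\,g_n'(\xi)$ and using the concavity of $g_n$ on $(0,j_{n,1})$ to get $|g_n'(\xi)|\leq g_n(n)^2\leq\frac{c_n^2}{n^{2/3}}$, together with $k_n(x)\geq\frac{3}{5}n^{-1/3}$ and $1+\tan^{-2}\theta_n\leq\frac43$, yields exactly the $O\bigl((\lambda-1)x\,n^{-1/3}\bigr)$ decay.

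The third $n\geq1$ item (the lower bound $|S_n(n)|>\tfrac12$ under $\lambda^2<1-(\tfrac{7}{3n^{1/3}})^2$) is only described, not proved. You identify correctly that you would need matching two-sided bounds on $\Re(D_n)$ and $\Im(D_n)$ at the turning point, but you do not carry them out; in particular you would also need a lower bound on $\int_0^1 t\,\BJ{n}{nt}\BJ{n}{\lambda n t}\,dt$ of the right size, which is not routine. The paper again uses the $g_n$, $k_n$ formula: with $g_n(\lambda n)>\sqrt{1-\lambda^2}$, $\frac{1}{\sqrt2}n^{-1/3}<g_n(n)<\frac{13}{14}n^{-1/3}$, $k_n(n)<\frac76 n^{-1/3}$, and $\tan^2\theta_n(n)>3$, the lower bound drops out of the explicit rational expression for $|S_n(n)|^2$, and the threshold $\sqrt{1-\lambda^2}>\frac{7}{3n^{1/3}}$ is precisely what makes the numerator dominate.

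So: first item and the two $n=0$ items are fine (and in one case sharper) via a different method; the second item has a fatal loss in the estimate $\Im(D_n)\geq\frac{2\lambda^n}{\pi\oeps}$; the third item is not completed.
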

\begin{proof}
 This follows from Lemma~\ref{lem:FirstCase} and Proposition~\ref{pro:R0}.
\end{proof}

\begin{proof}[Proof of Theorem~\ref{thm:os-ls}]
From formula~\eqref{eq:norm-hs}, we have
\begin{align*}
\left\Vert  u_{\eps}^{s}\left(|x|=R\right) \right\Vert^2 _{H^{\sigma}} =&   
2\pi \sum_{|n|\neq 0} \left|a_{n}\right|^2 \left(1+\left|n\right|\right)^{2\sigma} 
 \left|\Rn |\BH{n}{\oeps}\right|^2  \left|\frac{\BH{n}{\sqrt{q_{0}}\omega R}}{\BH{n}{\oeps}}\right|^{2} \\
&+ 2\pi \left|a_{0}\right|^2 \left|\Ro\BH{0}{\sqrt{q_{0}}\omega R}\right|^{2}.
\end{align*}
Note that $y_{0,1}<1$. Proposition~\ref{pro:lleq1} shows that when $\oeps \leq n$ and $n\geq1$,
$$
\left|\Rn \BH{n}{\oeps}\right| 
\leq 2             \left(1- \lambda\right) \frac{\oeps}{n^{1/3}}      \left|\BJ{n}{\oeps}\right|
\leq 3             \left(1- \lambda\right) \frac{\oeps}{(1+|n|)^{1/3}}\left|\BJ{n}{\oeps}\right|,
$$
When $n\neq0$, $\sqrt{x}\left|H_{n}^{1}(x)\right|$ is decreasing \cite[13.74]{WATSON-25}, therefore
$$
R \left|\BH{n}{\sqrt{q_{0}}\omega R}\right|^{2} \leq \eps \left|\BH{n}{\oeps}\right|^{2},
$$
On the other hand, $a_0 = u^{i}(0)$, and Proposition~\ref{pro:lleq1} shows that
$$
\left|\Ro\BH{0}{\sqrt{q_{0}}\omega R}\right| 
\leq  
\left(1- \lambda \right)\frac{\pi^2}{2\sqrt{2}}\oeps^2 \left|\BH{0}{\sqrt{q_0}\omega R}\right|
$$
Combining these estimates we have
\begin{align*}
\left\Vert  u_{\eps}^{s}\left(|x|=R\right) \right\Vert^2 _{H^{\sigma}} &\leq \left( 3 \left(1- \lambda \right)\oeps \sqrt{\frac{\eps}{R}} \right)^2   
2\pi \sum_{|n|\neq 0} \left|a_{n}\right|^2 \left(1+\left|n\right|\right)^{2\sigma-2/3} 
 \left|\BJ{n}{\oeps}\right|^2  \\
&+\left(  u^{i}(0)  \left(1- \lambda \right)\oeps^2  \right)^2   \frac{\pi^5}{4}  \left|\BH{0}{\sqrt{q_0}\omega R}\right|^2 \\
&= \left( 3 \left(1- \lambda \right)\oeps\right)^2 \left(
 \frac{\eps}{R} \left\Vert  u^{i}\left(|x|=\eps\right) \right\Vert _{H_{*}^{\sigma-\frac{1}{3}}}^2
+  9  \, \oeps^2 \left|u^{i}(0)\right|^2  \left|\BH{0}{\sqrt{q_0}\omega R} \right|^2\right),
\end{align*}
and the conclusion follows. 
If for some $p>0$ the first $p$ Fourier coefficients of $u_{\eps}^{s} \left(|x|=(\eps\omega)^{-1}\right)$ are zero, $a_n=0$ for $n=0,\ldots, p-1$, and the argument above 
proves our claim.
\end{proof}

\begin{proof}
[Proof of Corollary~\ref{cor:thm-sosl}]
For all $n\geq 1$, it is known that \cite{PARIS-84} for $0<x<y<n$,
$$
\BJ{n}{x} \leq \frac{x^n}{y^n} \BJ{n}{y} \exp \left( \frac{y^2-x^2}{2n+2}\right).
$$
In particular,
$$
\BJ{n}{\oeps} \leq \frac{\oeps}{y_{0,1}} \BJ{n}{y_{0,1}} \exp \left( \frac{y_{0,1}^2}{4}\right) \leq 3 \, \oeps \, \BJ{n}{y_{0,1}}.
$$
This implies that
$$
\left\Vert  u^{i}\left(|x|=\oeps\right) \right\Vert _{H_{*}^{\sigma-\frac{1}{3}}} 
\leq  3 \, \oeps \, \left\Vert  u^{i}\left(|x|=y_{0,1}\right) \right\Vert _{H_{*}^{\sigma-\frac{1}{3}}}.
$$
inserting this upper bound in the estimate provided by Theorem~\ref{thm:os-ls} proves our claim.
\end{proof}

\begin{proof}[Proof of Theorem~\ref{thm:ob-ls}]
Starting from the formula \eqref{eq:norm-hs}, 
using the monotonicity 
of $x \left|\BH{n}{x}\right|^{2}$ 
for $n\geq1$ as in the proof of Theorem~\ref{thm:os-ls}, we obtain
\begin{align*}
\left\Vert  u_{\eps}^{s}\left(|x|=R\right) \right\Vert^2 _{H^{\sigma}} 
&\leq&   
2\pi \sum_{|n|\neq 0} \left|a_{n}\right|^2 \left(1+\left|n\right|\right)^{2\sigma} 
 \left|\Rn |\BH{n}{\oeps}\right|^2  \frac{\eps}{R} \\
&&+ 2\pi \left|a_{0}\right|^2 \left|\Ro\BH{0}{\sqrt{q_{0}}\omega R}\right|^{2}.
\end{align*}
Thanks to Proposition~\ref{pro:lleq1},
\begin{equation}\label{eq:bd-pf-obls-0}
 \left|\Ro\BH{0}{\sqrt{q_{0}}\omega R}\right| \leq \left|\BH{0}{y_{0,1} \frac{R}{\eps}}\right|.
\end{equation}
and when $n\geq1$, and  $\oeps < y_{n,1}$,
\begin{equation}\label{eq:bd-pf-obls-1}
 \left|\Rn |\BH{n}{\oeps}\right| \leq \frac{5}{2} \left|\BJ{n}{\oeps}\right| \leq  \frac{5}{2} \sup_{x>0} \left|\BJ{n}{x}\right|.
\end{equation}
On the other hand, the definition of $R_n$ \eqref{eq:def-RN} shows that $\left|\Rn \right|\leq1$. 
It is known \cite[13.74]{WATSON-25} that for $x>n\geq0$, 
$$
x\to\sqrt{x^{2}-n^{2}}\left|H_{n}(x)\right|^2
$$
is an increasing function of $x$, with limit $2/\pi$. 
Consequently, since $y_{n,1}>n+ \frac{13}{14}n^{1/3}$, \cite[15.3]{WATSON-25}, \cite{NIST-10}, 
for all $\oeps \geq   y_{n,1}$ we have
\begin{equation}\label{eq:bd-pf-obls-3}
\left|\Rn \BH{n}{\oeps}\right| \leq\sqrt{\frac{2}{\pi}\frac{1}{\sqrt{\oeps^{2}-n^{2}}}}\leq \frac{4}{5} \frac{1}{(1+n)^{1/3}} 
\leq \frac{7}{5}  \sup_{x>0} \left|\BJ{n}{x}\right|,
\end{equation}
thanks to \eqref{eq:ineq-landau}.
Combining  \eqref{eq:bd-pf-obls-1}, and \eqref{eq:bd-pf-obls-3}  we have obtained that for all $\oeps>0$, 
$$
2\pi \sum_{|n|\neq 0} \left|a_{n}\right|^2 \left(1+\left|n\right|\right)^{2\sigma} 
 \left|\Rn |\BH{n}{\oeps}\right|^2  \leq \frac{5}{2} \mathcal{N}^{\sigma} \left( u ^{i} \right)^2,
$$
which concludes our proof of the upper bound \eqref{eq:bd-up-ob-ls-2}.

Turning to the lower bound, we have
$$
\sup\limits_{\omega>0} \left\Vert u_{\eps}^{s} \left(|x|=\eps\right) \right\Vert _{H_{*}^{\sigma}} 
 \geq   
\sup_{|n|\geq n_0} \sup\limits_{\omega>0}\sqrt{2\pi} \left|a_n \left(1+\left|n\right|\right)^\sigma \Rn \BH{n}{\oeps }\right|.
$$
We know from Proposition~\ref{pro:lleq1} that provided $\lambda^2 < 1 - \frac{49}{9n^{2/3}}$,
$$
 \left|R_n\left(n,\lambda\right) |\BH{n}{n}\right| \geq \frac{1}{2} \BJ{n}{n} 
$$
Since it is known \cite[8.54]{WATSON-25}  that $n\to n^{\frac{1}{3}}\BJ{n}{n}$ is increasing,
$$
\left|R_n\left(n,\lambda\right) \BH{n}{n}\right| > \frac{\BJ{1}{1}}{2n^{\frac{1}{3}}} \geq \frac{1}{\sqrt{10}} \sup_{x>0} \left| \BJ{n}{x} \right|,
$$
where in the last inequality we used a variant of \eqref{eq:ineq-landau}, \cite{LANDAU-00}.  Choosing $\omega$ such that $\oeps=n$, we obtain
$$
\sup\limits_{\omega>0} \left\Vert u_{\eps}^{s} \left(|x|=\eps\right) \right\Vert _{H_{*}^{\sigma}} 
\geq    \frac{1}{\sqrt{10}}   \mathbf{N}_{n_0}^{\sigma} \left( u^{i} \right),
$$
as announced.
\end{proof}

\begin{rem}\label{rem:opti-small-2}
The lower bound was obtained for $\oeps=n$, that is, in the special case when the order and argument are equal. 
This is precisely the upper limit for $\oeps$ in Theorem~\ref{thm:os-ls}.
When the argument is much larger than the order, one should expect a decay gain of $1/2$ and not $1/3$. The bound given 
by Theorem~\ref{thm:ob-lb} is of this form, and applies here also (when $\lambda$ is replaced by $1$).
\end{rem}

\section{\label{sec:lgeq1}Inclusions with relative index larger than one: the perturbative regime}

In this section, we consider the case when $q>q_0$ in the case of moderate frequencies and moderate contrast.
Our result is expressed in terms of a threshold $m_\lambda$ which depends on the contrast, given by
\begin{equation}\label{eq:mlambda}
m_{\lambda} := \frac{1}{\lambda \sqrt{\ln \lambda +1}}
\end{equation}

\begin{thm}\label{thm:os-lb}
Suppose $R \geq \eps$, and $\lambda\geq1$. When 
$$
\oeps < \min\left(\frac{1}{2}, m_{\lambda}\right)
$$
we have
$$ 
\left\Vert u_{\eps}^{s} \left(|x|=R\right) \right\Vert _{H^{\sigma}}\leq  
 \left(1- \lambda \right)\oeps \left( 3 \sqrt{\frac{\eps}{R}} \left\Vert  u^{i}\left(|x|=\eps\right) \right\Vert _{H_{*}^{\sigma-\frac{1}{3}}}
+ 23 \, \oeps \lambda \left|u^{i}(0)\right|  \left|\BH{0}{\sqrt{q_0}\omega R} \right|\right).
$$
Furthermore, if for some $p>0$ the first $p$ Fourier coefficients of $u_{\eps}^{s} \left(|x|=(\eps\omega)^{-1}\right)$ are zero, then for all $\oeps<\lambda^{-1} p$ 
there holds,
$$ 
\left\Vert u_{\eps}^{s} \left(|x|=R\right) \right\Vert _{H^{\sigma}}\leq  
3 \left(1- \lambda \right)\oeps \sqrt{\frac{\eps}{R}} \left\Vert  u^{i}\left(|x|=\eps\right) \right\Vert _{H_{*}^{\sigma-\frac{1}{3}}}
$$
\end{thm}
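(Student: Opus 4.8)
The plan is to reproduce the argument used for Theorem~\ref{thm:os-ls}, feeding it the $\lambda\ge1$ counterpart of Proposition~\ref{pro:lleq1}. The substantive part, to be carried out in Section~\ref{sec:caseone}, is to establish two bounds on the reflection coefficients: first, for every $n\ge1$ and every $\oeps<\lambda^{-1}n$,
\[
\left|\Rn\,\BH{n}{\oeps}\right|\le 2(\lambda-1)\,\frac{\oeps}{n^{1/3}}\,\left|\BJ{n}{\oeps}\right|,
\]
and second, for $n=0$ and every $\oeps<\min(1/2,m_\lambda)$,
\[
\left|\Ro\,\BH{0}{\oeps\,\frac{R}{\eps}}\right|\le C\,(\lambda-1)\,\lambda\,\oeps^{2}\,\left|\BH{0}{\oeps\,\frac{R}{\eps}}\right|,
\]
with $C$ an explicit universal constant, $C\le 23/\sqrt{2\pi}$. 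Since $m_\lambda=\lambda^{-1}(\ln\lambda+1)^{-1/2}\le\lambda^{-1}\le\lambda^{-1}n$ for every $n\ge1$, the hypothesis $\oeps<\min(1/2,m_\lambda)$ makes the first bound available for all $n\ge1$ at once.

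Granting these, I would conclude exactly as in the proof of Theorem~\ref{thm:os-ls}. Starting from \eqref{eq:norm-hs}, split $\left\Vert u_{\eps}^{s}(|x|=R)\right\Vert_{H^{\sigma}}^{2}$ into its $n\ne0$ part and its $n=0$ part. For $n\ne0$, the monotonicity of $x\mapsto\sqrt{x}\,\left|\BH{n}{x}\right|$ (\cite[13.74]{WATSON-25}, $n\ne0$) yields $R\,\left|\BH{n}{\sqrt{q_{0}}\omega R}\right|^{2}\le\eps\,\left|\BH{n}{\oeps}\right|^{2}$; inserting the first bound, and replacing $n^{1/3}$ by $(1+|n|)^{1/3}$ at the cost of turning the constant $2$ into $3$, bounds this part by $\bigl(3(\lambda-1)\oeps\bigr)^{2}\,\frac{\eps}{R}\,\left\Vert u^{i}(|x|=\eps)\right\Vert_{H_{*}^{\sigma-1/3}}^{2}$. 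For $n=0$, since $a_{0}=u^{i}(0)$, the second bound controls the remaining term by $2\pi\,C^{2}(\lambda-1)^{2}\lambda^{2}\oeps^{4}\,\left|u^{i}(0)\right|^{2}\,\left|\BH{0}{\sqrt{q_{0}}\omega R}\right|^{2}$. Adding these, taking square roots, using $\sqrt{a^{2}+b^{2}}\le a+b$, and factoring out $(\lambda-1)\oeps$ gives the first inequality, the constant $23$ absorbing $\sqrt{2\pi}\,C$. If the first $p$ Fourier coefficients of $u^{i}(|x|=(\eps\omega)^{-1})$ vanish, then $a_{n}=0$ for $0\le n\le p-1$, the $n=0$ term drops out and the sum runs over $|n|\ge p$; only the first bound for $n\ge p$ is then used, which requires merely $\oeps<\lambda^{-1}p$, and the second inequality follows.

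The entire difficulty is concentrated in the $n=0$ bound, i.e.\ the estimate on $\Ro$ with the threshold $m_\lambda$; the reduction above is routine. In contrast with the case $\lambda\le1$, where the inner argument $\lambda\oeps<\oeps$ remains small, here $\lambda\oeps$ is only $O(1)$ — it satisfies $(\lambda\oeps)^{2}(\ln\lambda+1)<1$ precisely when $\oeps<m_\lambda$ — and the small-argument expansions of $\BJ{0}{\cdot}$ and $\BY{0}{\cdot}$ carry logarithmic terms. The plan for this step is to expand the numerator and denominator of \eqref{eq:def-RN} at $n=0$ keeping the $\ln(\lambda\oeps)$ and $\ln\oeps$ contributions explicit, to show that the denominator stays bounded away from zero on $\oeps<\min(1/2,m_\lambda)$, and to check that the leading behaviour of $\Ro$ is $O\bigl((\lambda-1)\lambda\oeps^{2}\bigr)$ with all logarithmic factors absorbed through $m_\lambda$; the cap $\oeps<1/2$ is what keeps the remainders in these expansions harmless, in particular keeping $\lambda\oeps$ away from the first positive zero of $\BJ{0}{\cdot}$ even when $\lambda$ is close to $1$. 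The analogous, and easier, work behind the $n\ge1$ bound uses that $\BJ{n}{\lambda\oeps}$ is comparable to $(\lambda\oeps)^{n}/n!$ together with $\lambda\oeps<n$ — which is exactly what forces the admissible range $\oeps<\lambda^{-1}n$ and produces the prefactor $\lambda-1$, matching the statement.
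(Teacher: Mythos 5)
Your plan is the paper's proof: the paper itself dispatches Theorem~\ref{thm:os-lb} in one line by saying the argument is, mutatis mutandis, that of Theorem~\ref{thm:os-ls} with Proposition~\ref{pro:lgeq1} replacing Proposition~\ref{pro:lleq1}, and your reduction reconstructs exactly that — the split into $n=0$ and $n\neq0$, the monotonicity of $\sqrt{x}\,|\BH{n}{x}|$ to pull $\sqrt{\eps/R}$ out, the inflation of $n^{1/3}$ to $(1+|n|)^{1/3}$ with the constant $2\to3$, the observation $m_\lambda\le\lambda^{-1}\le\lambda^{-1}n$ so the one hypothesis covers all $n\ge1$, $\sqrt{a^2+b^2}\le a+b$, and dropping the $n=0$ term when $a_0=\dots=a_{p-1}=0$. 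Your sketch of the $n=0$ bound is also in the right spirit, though the paper goes through the auxiliary functions $S_0,g_0,k_0$ (Propositions~\ref{pro:n0} and~\ref{pro:R0}) rather than directly expanding \eqref{eq:def-RN}; both routes are small-argument analyses of $\BJ{0},\BY{0}$ with the logarithms tracked, with $\oeps<\min(\tfrac12,m_\lambda)$ ensuring $\lambda\oeps\le 1$ so the $g_0$ expansion remains valid.

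One quantitative point you should flag and not paper over. You write that it suffices to have the $n=0$ bound with constant $C\le 23/\sqrt{2\pi}\approx 9.2$, and indeed that is what the stated constant $23$ forces. But the bound that Proposition~\ref{pro:lgeq1} (equivalently, Proposition~\ref{pro:R0}) actually delivers is
\[
\left|\Ro\,\BH{0}{\oeps\tfrac{R}{\eps}}\right|\le \frac{5\pi^2}{4}\,(\lambda-1)\lambda\,\oeps^2\left|\BH{0}{\oeps\tfrac{R}{\eps}}\right|,
\]
and $\tfrac{5\pi^2}{4}\approx 12.34 > 23/\sqrt{2\pi}$. Running the reduction with this value yields a coefficient $\sqrt{2\pi}\cdot\tfrac{5\pi^2}{4}\approx 30.9$, not $23$. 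So $C\le 23/\sqrt{2\pi}$ is not available from the paper's own propositions and should not be asserted without proof; if you carry out the proof using Proposition~\ref{pro:lgeq1} as stated you obtain the theorem with $23$ replaced by $31$. This looks like a slip in the paper's numerical constant rather than a defect of the method, but as a proof of the statement as written your plan currently has a gap at exactly that point.
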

The proof of Theorem~\ref{thm:os-lb} is, {\it mutatis mutandis}, the same as that of Theorem~\ref{thm:os-ls}, using Proposition~\ref{pro:lgeq1}, 
proved in Section~\ref{sec:caseone} in lieu of Proposition~\ref{pro:lleq1}, and we omit it.
\begin{prop}\label{pro:lgeq1}
When $\lambda\geq 1$, there holds, for all $n\geq 1$
\begin{itemize}
\item For all $\oeps \leq \lambda^{-1} y_{n,1}^{(1)}$,
$$
\left|\Rn \BH{n}{\oeps}\right| \leq \frac{5}{2} \left| \BJ{n}{\oeps}\right|.
$$
\item For all $\oeps < \lambda^{-1}n$,
$$
\left|\Rn \BH{n}{\oeps}\right| \leq 2 (1 -\lambda) \frac{\oeps}{n^{1/3}}  \left| \BJ{n}{\oeps}\right|.
$$
\end{itemize}
For $n=0$, there holds
\begin{itemize}
\item For all $\oeps$ such that $\min\left(\frac{1}{2},m_{\lambda}\right)\oeps\leq 1$,
$$
\left| \Ro \BH{0}{x \frac{R}{\eps}} \right| \leq   \frac{5\pi^2}{4} (\lambda-1) \lambda  \oeps^2 \left|\BH{0}{\oeps \frac{R}{\eps}}\right|. 
$$
\item For all $\oeps$, we have
$$
\left| R_{0}^{\eps}\BH{0}{x \frac{R}{\eps}} \right| \leq \sqrt{5} \left|\BH{0}{\min\left(\frac{1}{2},m_\lambda\right) \frac{R}{\eps}} \right|.
$$
\end{itemize}
\end{prop}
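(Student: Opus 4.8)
The plan is to reduce the estimate to the real and imaginary parts of the denominator appearing in \eqref{eq:def-RN}. Set $D_n:=\BHp{n}{\oeps}\BJ{n}{\lambda\oeps}-\lambda\,\BJp{n}{\lambda\oeps}\BH{n}{\oeps}$, so that $\Rn=-\Re(D_n)/D_n$, $|\Rn|=|\Re D_n|/|D_n|$, and hence
$$
\left|\Rn\BH{n}{\oeps}\right|=\frac{|\Re D_n|\,\left|\BH{n}{\oeps}\right|}{|D_n|}\ \leq\ \frac{|\Re D_n|\,\left|\BH{n}{\oeps}\right|}{|\Im D_n|}.
$$
Since $\BJ{n}{\sqrt{q_0}\omega r}$, $\BY{n}{\sqrt{q_0}\omega r}$ and $\BJ{n}{\sqrt{q}\omega r}$ all solve the same radial Bessel equation, integrating $\frac{d}{dr}\bigl(r(u'v-uv')\bigr)=\omega^{2}(q-q_0)\,ruv$ on $(0,\eps)$ and reading off the contribution at $r=0$ (it is $0$ for $u=\BJ{n}{\cdot}$ and $\tfrac{2}{\pi}\lambda^{n}$ for $u=\BY{n}{\cdot}$, with $\lambda^{0}=1$) gives the Lommel-type identities
$$
\Re D_n=\frac{\lambda^{2}-1}{\oeps}\int_0^{\oeps}\!s\,\BJ{n}{s}\BJ{n}{\lambda s}\,ds,\qquad
\oeps\,\Im D_n=\frac{2\lambda^{n}}{\pi}+(\lambda^{2}-1)\int_0^{\oeps}\!s\,\BY{n}{s}\BJ{n}{\lambda s}\,ds.
$$
Thus $\Re D_n=O(\lambda^{2}-1)$ and $\Im D_n$ is a fixed positive term plus a correction, and everything hinges on keeping that correction under control.

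For $n\geq1$ with $\oeps$ and $\lambda\oeps$ both below $n$, insert the Debye uniform asymptotics: write $\oeps=n\operatorname{sech}\alpha$, $\lambda\oeps=n\operatorname{sech}\beta$ (so $\cosh\alpha=\lambda\cosh\beta$, $\beta\le\alpha$). One gets $\Re D_n\approx\BJ{n}{\oeps}\BJ{n}{\lambda\oeps}\,\frac{\lambda^{2}-1}{\sinh\alpha+\lambda\sinh\beta}$, $\Im D_n\approx\frac{e^{g}(\sinh\alpha+\lambda\sinh\beta)}{\pi n\sqrt{\tanh\alpha\tanh\beta}}>0$ with $g=n\bigl((\tanh\beta-\beta)-(\tanh\alpha-\alpha)\bigr)\ge0$, $\left|\BH{n}{\oeps}\right|\approx\bigl(\pi n\tanh\alpha\,\BJ{n}{\oeps}\bigr)^{-1}$, and $\BJ{n}{\lambda\oeps}/\BJ{n}{\oeps}\approx e^{g}\sqrt{\tanh\alpha/\tanh\beta}$; using the identity $\sinh^{2}\alpha-\lambda^{2}\sinh^{2}\beta=\lambda^{2}-1$ the three factors recombine into
$$
\frac{\left|\Rn\BH{n}{\oeps}\right|}{\left|\BJ{n}{\oeps}\right|}\ \lesssim\ \frac{\lambda^{2}-1}{(\sinh\alpha+\lambda\sinh\beta)^{2}}\ \leq\ 1,
$$
which is the first bullet, with the gap between $1$ and $\tfrac52$ left to absorb non-Debye corrections. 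For the second bullet one keeps $\sinh\alpha=\sqrt{n^{2}-\oeps^{2}}/\oeps$ rather than bounding it away, which turns the right-hand side into $(\lambda^{2}-1)\oeps^{2}/(n^{2}-\oeps^{2})\lesssim 2(\lambda-1)\oeps\,n^{-1/3}$ in the bulk $\lambda\oeps\le n$, while in the turning-point layer $|\lambda\oeps-n|\lesssim n^{1/3}$ the Airy uniform asymptotics produce the same $n^{-1/3}$ scale; \eqref{eq:ineq-landau} then lets one replace $\BJ{n}{\oeps}$ by $\sup_{x>0}\BJ{n}{x}$ if desired. The only reason a threshold $y^{(1)}_{n,1}$ enters is to certify that $\Im D_n$ does not vanish (no quasi-resonance) for $\lambda\oeps$ up to that value; this is checked from the representation above together with the signs of $\BJ{n}{\cdot}$, $\BJp{n}{\cdot}$, $\BY{n}{\cdot}$, $\BYp{n}{\cdot}$ on the corresponding intervals.

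The case $n=0$ behaves differently because the fixed term $\tfrac{2}{\pi}\lambda^{n}$ does not grow with $\lambda$ while $\BY{0}{s}\sim\tfrac{2}{\pi}\ln(s/2)$ is only logarithmically large. Here $|\Re D_0|\leq\tfrac12(\lambda^{2}-1)\oeps$ (since $|\BJ{0}{\cdot}|\leq1$), and the correction $(\lambda^{2}-1)\bigl|\int_0^{\oeps}s\,\BY{0}{s}\BJ{0}{\lambda s}\,ds\bigr|$ is of size $(\lambda^{2}-1)\oeps^{2}\left|\ln\oeps\right|$; the hypotheses $\oeps<\tfrac12$ and $\oeps<m_\lambda$ are precisely what forces this below $\tfrac{2}{\pi}$ — via $\lambda^{2}m_\lambda^{2}=(\ln\lambda+1)^{-1}$ and the monotonicity of $t\mapsto t^{2}|\ln t|$ on $(0,e^{-1/2})$ — so that $|\oeps\,\Im D_0|\gtrsim1$; dividing and using $R\geq\eps$ with the monotonicity of $t\mapsto\left|\BH{0}{t}\right|$ gives the first $n=0$ estimate with the factor $(\lambda-1)\lambda\oeps^{2}$. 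The last $n=0$ estimate, valid for all $\oeps$, follows by splitting at $\oeps=\min(\tfrac12,m_\lambda)$: below it one feeds in the estimate just proved and dominates $(\lambda-1)\lambda\oeps^{2}\left|\BH{0}{\oeps R/\eps}\right|$ by a multiple of $\left|\BH{0}{\min(\tfrac12,m_\lambda)R/\eps}\right|$ using that $t\mapsto\left|\BH{0}{t}\right|$ and $t\mapsto\sqrt{t}\left|\BH{0}{t}\right|$ are monotone; above it one simply uses $|\Ro|\leq1$ and the monotonicity of $\left|\BH{0}{\cdot}\right|$; the constant $\sqrt5$ absorbs the mismatch.

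The main obstacle is the quantitative lower bound on $|\Im D_n|$ outside the clean Debye window: the turning-point layer $|\lambda\oeps-n|\lesssim n^{1/3}$ (needing Airy uniform asymptotics, and where the $n^{-1/3}$ of the second bullet is the intrinsic size), the oscillatory range $n<\lambda\oeps<y^{(1)}_{n,1}$, and the small orders $n$, where only the power-series bounds for Bessel functions are available — exactly the ``careful study of $\Rn$'' that the paper defers to Section~\ref{sec:caseone}. Granting those, the Lommel identities, the recombination of factors, and the $n=0$ case-splitting are routine.
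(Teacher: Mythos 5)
Your route through the Lommel integral identities for $\Re D_n$ and $\Im D_n$, combined with Debye and Airy uniform asymptotics, is genuinely different from the paper's. The paper never writes down the $\Re D_n$, $\Im D_n$ integrals at all: it introduces $S_n(x)=-\Rn\BH{n}{x}/\BJ{n}{x}$ and rewrites it entirely in terms of the logarithmic-derivative quotients $g_n$, $k_n$ and the phase $\theta_n$, so that the whole estimate reduces to (a) the algebraic study of a one-parameter family $u\mapsto(u-a)^2(1+\tan^2\theta_n)/\bigl((u-a)^2+\tan^2\theta_n(u+b)^2\bigr)$ and (b) elementary monotonicity and explicit pointwise bounds for $g_n$, $k_n$, $g_n'$ assembled in Proposition~\ref{pro:propsg}. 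The constant $\tfrac52$ comes cleanly from Proposition~\ref{pro:estimate5half} ($\tfrac25<k_n/g_n<\tfrac53$), and the $n^{-1/3}$ in the second bullet comes from the mean-value-theorem step $g_n(\lambda x)-g_n(x)=(\lambda-1)x\,g_n'(\xi)$ together with the two-sided bounds $k_n>\tfrac35 n^{-1/3}$ and $|g_n'|\le g_n(n)^2\lesssim n^{-2/3}$. Nothing Debye-flavoured and no asymptotic turning-point layer ever appear.

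That comparison exposes exactly where your argument has a genuine gap, and in fact a step that fails as stated. In the ``bulk'' Debye window your second bullet rests on the chain
\[
\frac{\lambda^2-1}{(\sinh\alpha+\lambda\sinh\beta)^2}\le\frac{(\lambda^2-1)\oeps^2}{n^2-\oeps^2}\ \lesssim\ 2(\lambda-1)\frac{\oeps}{n^{1/3}},
\]
but the last inequality is false near the endpoint $\oeps\to n/\lambda$ for $\lambda$ close to $1$: there $(\lambda^2-1)\oeps^2/(n^2-\oeps^2)\to1$ while $2(\lambda-1)\oeps n^{-1/3}\approx 2(\lambda-1)n^{2/3}$, which is $<1$ once $\lambda-1<\tfrac12 n^{-2/3}$. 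The $n^{-1/3}$ scale is not produced by the leading Debye factor; in the paper it comes from the lower bound on $k_n$ (which you have no analogue of), and in your picture it should come from the Airy layer you only mention. More broadly, the quantitative lower bound on $|\Im D_n|$ in the turning-point region $|\lambda\oeps-n|\lesssim n^{1/3}$, in the oscillatory range $n<\lambda\oeps<y^{(1)}_{n,1}$, and for small $n$ is left unproved, and you acknowledge this yourself. That is not a loose end to tidy: it is the entire content of Lemma~\ref{lem:FirstCase} and Proposition~\ref{pro:propsg}. Replacing $\approx$, $\lesssim$ and ``left to absorb non-Debye corrections'' by explicit constants like $\tfrac52$ and $2$, uniformly in $n\ge1$ and in $\lambda$, is precisely the hard part the paper spends Section~\ref{sec:caseone} and Appendix~\ref{ap:A} on.

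Your $n=0$ sketch is closer to self-contained and the hypothesis $\oeps<\min(\tfrac12,m_\lambda)$ is used in the right way, but the numerical checks (that the $Y_0$ correction stays below $\tfrac2\pi$ uniformly in $\lambda$, and that the splitting plus the monotonicities of $|\BH{0}{\cdot}|$ and $\sqrt{t}|\BH{0}{t}|$ give exactly the factors $\tfrac{5\pi^2}{4}$ and $\sqrt5$) are asserted rather than carried out; the paper's Propositions~\ref{pro:n0} and~\ref{pro:R0} do carry them out, again through $g_0$, $k_0$ and explicit two-sided bounds rather than through the integral representation.
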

\begin{proof}
 This is follows from Lemma~\ref{lem:FirstCase} and Proposition~\ref{pro:R0}. 
\end{proof}

In contrast with the case $\lambda\leq1$, the range of frequencies for which Theorem~\ref{thm:os-lb} is 
valid becomes increasingly small as the contrast increases. The two extreme contrast cases, $\lambda=0$ and $\lambda=\infty$ are therefore of a very different nature. As
we will see in Section~\ref{sec:quasi-res}, the range of frequencies for which Theorem~\ref{thm:os-ls} applies is sharp: the behavior of the near field is drastically 
different when $\oeps$ is larger.

\section{\label{sec:quasi-res}Inclusions with relative index larger than one: quasi-resonances}

In this section, we investigate the behavior of the scattered field when $q>q_0$, and the product of the effective frequency and the contrast $\lambda\oeps$ is bounded.
In such a case, a quasi-resonance phenomenon occurs: near the inclusion, the scattered field becomes extremely large, for some frequencies.
 We refer to such frequencies as quasi-resonant frequencies. They are defined in Definition~\ref{def:qr}. 

Before we proceed, we remind the readers of usual notations and known properties of zeros of Bessel functions.

When $n\geq 0$, the $k$-th positive root of $\BJ{n}{x}=0$ is written $j_{n,k}$. The first positive root of $\BY{n}{x}=0$ is written $y_{n,1}$. 
When $n\geq1$, the $k$-th positive solution of $\BJp{n}{x}=0$ is written $j_{n,k}^{(1)}$. When $n=0$, we count non-negative solutions, that is,  $j_{0,1}^{(1)}=0$.
The first positive root of $\BYp{n}{x}=0$ is noted $y_{n,1}^{(1)}$
It is known \cite[15.3]{WATSON-25} that 
\begin{align} 
\mbox{when } n\geq 1,\quad & j_{n,1}        = n+ a_{n,1}      n^{1/3}, \mbox{ with } a_{n,1}      >\lim_{n\to \infty}a_{n,1}       \approx 1.86, \nonumber\\
&j_{n,1}^{(1)}  = n+ a_{n,1}^{(1)}n^{1/3}, \mbox{ with } a_{n,1}^{(1)}                >\lim_{n\to \infty}a_{n,1}^{(1)} \approx0.81,\nonumber \\
&y_{n,1}        = n + b_{n,1}     n^{1/3}, \mbox{ with } b_{n,1}                      >\lim_{n\to \infty}b_{n,1}       \approx0.93, \label{eq:abn-wats}\\
&y_{n,1}^{(1)}  = n + b_{n,1}^{(1)} n^{1/3}, \mbox{ with } b_{n,1}^{(1)}              >\lim_{n\to \infty}b_{n,1}^{(1)} \approx 1.82, \nonumber\\
\mbox{when }n=0,\quad & j_{0,1}    \approx2.40, \quad j_{0,1}^{(1)} =0, \quad  y_{0,1} \approx 0.894 \quad y_{0,1}^{(1)} \approx 2.20.\nonumber
\end{align}
The zeros of $\BJ{n}{\cdot}$ and $\BY{n}{\cdot}$ are interlacing  \cite[10.21]{NIST-10} and we have 
\begin{equation}\label{eq:Dixon-JnYn}
n\leq j_{n,1}^{(1)} <y_{n,1}<y_{n,1}^{(1)}<j_{n,1}<\ldots<j_{n,k}^{(1)}<j_{n,k}<\ldots
\end{equation}
and the first inequality is strict when $n>0$. Using the notations 
\begin{equation}\label{eq:def-MnThetan}
M_{n}(x)=\sqrt{\BJ{n}{x}^{2}+\BY{n}{x}^{2}},\theta_{n}(x)=\arg(J_{n}(x)+iY_{n}(x)),
\end{equation}
we have \cite[10.18]{NIST-10} 
\begin{equation}\label{eq:prothetan}
\lim_{x\to 0+} \theta_{n}(x) = -\frac{\pi}{2},\quad \frac{d}{dx}  \theta_{n}(x)>0, 
\quad  \theta_{n}(x)\approx x -\frac{2n+1}{4} \pi \mbox{ for } x \mbox{ large}.
\end{equation}
This shows in particular that for $n$ fixed, the size of the intervals $(j_{n,1}^{(1)},j_{n,1})$ is strictly decreasing, and tends to $\pi/2$. 

\begin{defn}\label{def:qr}
For any $n\geq0$, the triplet $(n,x,\lambda)$ is called quasi-resonant if
$$
0<x< y_{n,1},
$$
and if the reflection coefficient given by \eqref{eq:def-RN} is of maximal amplitude, that is, 
$$
R_n\left(x,\lambda\right)=-1.
$$
\end{defn}
When $(n,\oeps,\lambda)$ is quasi-resonant, problem~\eqref{eq:eq-intro-2} has the following particular solution
\begin{equation}\label{eq:res-sol}
u_\eps =  
        \begin{cases}
         \frac{\BY{n}{\oeps}}{\BJ{n}{\lambda\oeps}} 
                \BJ{n}{\lambda\oeps \frac{|x|}{\eps} }\exp\left(i\,n \arctan\left(\frac{x}{|x|}\right)\right)& \mbox{ when } |x|\leq\eps \\
          \BY{n}{\oeps \frac{|x|}{\eps}}\exp\left(i\,n \arctan\left(\frac{x}{|x|}\right)\right) &\mbox{ when } |x|\geq\eps.
         \end{cases}
\end{equation}
Note that $u_\eps$ is not truly a resonance, since $\BY{n}{\cdot}$ does not satisfy the outgoing radiation condition. The solution $u_\eps$ 
contains an incident field given by
$$
u^{i} =  \BJ{n}{\oeps \frac{|x|}{\eps} }\exp\left(i\,n \arctan\left(\frac{x}{|x|}\right) + \frac{\pi}{2} \right).
$$
The almost resonant behavior of this solution is apparent in the near field. The amplitude of the incident field at $|x|=\eps$ is $\BJ{n}{\oeps}$, 
whereas the amplitude of the scattered field is given by $|\BH{n}{\oeps}|$. 
Suppose for example that  $\oeps\approx{n}{K}$, with $K>1$ fixed - as Proposition~\ref{pro:Dixon} below shows, this is the generic case.
Then at $|x|=\eps$ the amplitude of $u_\eps$ grows geometrically with $n$,  \cite[10.19]{NIST-10} 
$$
\lim_{n\to\infty}  |\BY{n}{\oeps}|^\frac{1}{n} = \left( K +\sqrt{K-1}\right) e^{\sqrt{1- \frac{1}{K}}},
$$
whereas the amplitude of the incident field and its normal derivative at decays with the inverse rate,
$$
\lim_{n\to\infty}  |\BJ{n}{\oeps}|^\frac{1}{n} = \lim_{n\to\infty}  |\frac{d}{dx}\BJ{n}{\oeps}|^\frac{1}{n} = \frac{1}{K +\sqrt{K-1}} e^{-\sqrt{1- \frac{1}{K}}}.
$$
The size of the scattered field is therefore not controlled by the size of the incident field: this  behavior can be compared to that of a resonant mode. 
Note that the amplitude of the scattered field is also large compared to the maximal amplitude of the incident field anywhere, as the uniform bound
\eqref{eq:ineq-landau-2} indicates. The lower bound for the maximal value of the incident field is the motivation from the restriction $\oeps< y_{n,1}$ in the definition of quasi-resonances. 
Indeed, when  $\oeps>y_{n,1}$, using the bound \cite[13.74]{WATSON-25} 
$$
\left|\BH{n}{\oeps}\right|\leq\sqrt{\frac{2}{\pi\sqrt{\oeps^2 -n^2}}}, 
$$
we obtain, for all $n\geq0$
\begin{equation}\label{eq:bd-bignx}
\left|\BH{n}{\oeps}\right|\leq \frac{7}{5} \sup_{x>0} \left|\BJ{n}{x} \right|,
\end{equation}
therefore the scattered field, and in turn the full field, is comparable to the maximal amplitude of the incident field in this regime.

The following variant of Dixon's Theorem on interlacing zeros \cite[15.23]{WATSON-25} proves the existence of quasi-resonances.
\begin{prop}\label{pro:Dixon}
For any $n\geq0$ and $\lambda > j_{n,1}/y_{n,1}$, in every interval
$$
U_{n,k}=\left(\frac{j_{n,k}^{(1)}}{\lambda} \frac{j_{n,k}}{\lambda}\right)
\mbox{such that } U_{n,k} \subset \left(\frac{j_{n,1}^{(1)}} {\lambda},y_{n,1}\right)
$$
there exists a unique frequency $\omega_{n,k}$ such that the triplet $(n,\omega_{n,k},\lambda)$ is quasi-resonant. 
There are no quasi-resonances in the interval $(0, j_{n,1}^{(1)}/\lambda)$ when $n\geq1$, or when $\lambda<j_{n,1}/y_{n,1}$.
\end{prop}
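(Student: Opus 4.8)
The plan is to convert the quasi-resonance condition $R_n(x,\lambda)=-1$ into a real scalar equation, and then count its solutions on $(0,y_{n,1})$ by a Dixon-type sign comparison.

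First I would note that the common denominator in \eqref{eq:def-RN} and \eqref{eq:def-TN},
\[
W(x):=\BHp{n}{x}\BJ{n}{\lambda x}-\lambda\BJp{n}{\lambda x}\BH{n}{x},
\]
satisfies, using $\BH{n}{\cdot}=\BJ{n}{\cdot}+i\BY{n}{\cdot}$, that $\Re W(x)$ is exactly the numerator of $-R_n(x,\lambda)$ in \eqref{eq:def-RN}, while $\Im W(x)=g(x)$, where
\[
g(x):=\BYp{n}{x}\BJ{n}{\lambda x}-\lambda\BJp{n}{\lambda x}\BY{n}{x}.
\]
Since $T_n=\frac{2i}{\pi}W(x)^{-1}$ is defined for all $x>0$ and $\lambda>0$, the function $W$ never vanishes, so $|R_n(x,\lambda)|=|\Re W(x)|/|W(x)|\le1$ with equality precisely when $g(x)=0$, and in that case $W(x)$ is a nonzero real, which forces $R_n(x,\lambda)=-1$. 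Thus $(n,x,\lambda)$ is quasi-resonant if and only if $0<x<y_{n,1}$ and $g(x)=0$, and the whole proposition becomes a statement about the zeros of $g$ on $(0,y_{n,1})$. The only non-elementary input here is that $W$ never vanishes, i.e.\ the already-quoted solvability of the transmission problem.

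Next I would use that $g$ is, up to sign, the Wronskian of the two solutions $x\mapsto\BY{n}{x}$ and $x\mapsto\BJ{n}{\lambda x}$ of the Bessel-type equations $u''+x^{-1}u'+(1-n^2x^{-2})u=0$ and $v''+x^{-1}v'+(\lambda^2-n^2x^{-2})v=0$: eliminating the second derivatives gives
\[
\bigl(x\,g(x)\bigr)'=(\lambda^2-1)\,x\,\BJ{n}{\lambda x}\,\BY{n}{x},
\]
while the small-argument expansions give $x\,g(x)\to\frac{2}{\pi}\lambda^{n}>0$ as $x\to0^+$. On $(0,y_{n,1})$ one has $\BY{n}{x}<0$, and since $y_{n,1}<y_{n,1}^{(1)}$ by \eqref{eq:Dixon-JnYn} also $\BYp{n}{x}>0$. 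Fixing $\lambda>j_{n,1}/y_{n,1}$ (hence $\lambda>1$ by \eqref{eq:Dixon-JnYn}) and $k$ with $U_{n,k}\subset(j_{n,1}^{(1)}/\lambda,y_{n,1})$, on $U_{n,k}$ the argument $\lambda x$ runs over $(j_{n,k}^{(1)},j_{n,k})$, which by \eqref{eq:Dixon-JnYn} contains no zero of $J_n$; hence $\BJ{n}{\lambda x}$ has constant sign there and, by the identity above, $x\,g$ is strictly monotone on $U_{n,k}$. Evaluating $g$ at the endpoints of $U_{n,k}$, where respectively $\BJp{n}{\lambda x}$ and $\BJ{n}{\lambda x}$ vanish, reduces it to $\BYp{n}{x}\,\BJ{n}{j_{n,k}^{(1)}}$ and to $-\lambda\,\BJp{n}{j_{n,k}}\,\BY{n}{x}$; reading off the signs from the sign alternation of $J_n$ at its successive critical points and of $J_n'$ at its successive zeros, these are opposite, so $g$ vanishes exactly once on $U_{n,k}$, at a point $x_{n,k}$, and $\omega_{n,k}:=x_{n,k}/(\sqrt{q_0}\,\eps)$ is the asserted unique quasi-resonant frequency. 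The case $n=0$, $k=1$ is slightly degenerate because then the left endpoint of $U_{0,1}$ is $0$; there one replaces the missing endpoint value by the limit $x\,g(x)\to\frac{2}{\pi}>0$, and nothing else changes.

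For the non-existence statements: if $n\ge1$ and $0<x<\min(j_{n,1}^{(1)}/\lambda,\,y_{n,1})$, then $\lambda x<j_{n,1}^{(1)}$ gives $\BJ{n}{\lambda x}>0$ and $\BJp{n}{\lambda x}>0$, and combined with $\BYp{n}{x}>0$, $\BY{n}{x}<0$ this makes $g(x)$ a sum of two positive terms, so $g(x)>0$ and there is no quasi-resonance in $(0,j_{n,1}^{(1)}/\lambda)$. If instead $1\le\lambda<j_{n,1}/y_{n,1}$, then $\lambda x<j_{n,1}$ for all $x\in(0,y_{n,1})$, so $\BJ{n}{\lambda x}>0$ there, the identity makes $x\,g$ non-increasing on $(0,y_{n,1})$, and $y_{n,1}\,g(y_{n,1})=y_{n,1}\,\BYp{n}{y_{n,1}}\,\BJ{n}{\lambda y_{n,1}}>0$ since $\BY{n}{y_{n,1}}=0$; hence $g>0$ on $(0,y_{n,1})$ and there is no quasi-resonance of order $n$. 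I expect the only real difficulty to be the sign bookkeeping: one has to go carefully through the interlacing relations \eqref{eq:Dixon-JnYn} to pin down the sign of $\BJ{n}{\lambda x}$ on each $U_{n,k}$, the signs of $J_n$ at its critical points and of $J_n'$ at its zeros, and hence the sign of $(x\,g)'$ and of $g$ at each endpoint, while keeping the degenerate case $j_{0,1}^{(1)}=0$ separate; there is no analytic obstacle once the identity is in hand.
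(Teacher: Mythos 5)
Your proof is correct but takes a genuinely different route from the paper's. The paper rewrites the quasi-resonance condition as $g_n(\lambda x)=-k_n(x)$ in terms of the logarithmic-derivative functions $g_n,k_n$ of \eqref{eq:defgnkn}--\eqref{eq:defg0k0}, whose Riccati equation \eqref{eq:difeqgn} underlies most of the paper's Bessel-function analysis. Existence on each $U_{n,k}$ then comes from $g_n(\lambda\,\cdot)$ sweeping from $0$ to $-\infty$ while $k_n>0$; uniqueness is obtained by evaluating the derivative of $g_n(\lambda\,\cdot)+k_n$ at any zero, which simplifies (using $g_n^2=k_n^2$ there) to $\frac{x}{\max(n,1)}(1-\lambda^2)<0$, so all crossings have the same orientation; and the non-existence for $\lambda<j_{n,1}/y_{n,1}$ is deduced from this one-sided crossing property together with the divergence of $x^{-1}(k_n+g_n(\lambda\,\cdot))$ at both ends of $(0,y_{n,1})$. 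You instead work directly with the cross-Wronskian $x\,g(x)$ and its identity $(xg)'=(\lambda^2-1)\,x\,\BJ{n}{\lambda x}\,\BY{n}{x}$, which is the classical Sturm/Dixon comparison tool; this gives strict monotonicity of $xg$ on each $U_{n,k}$ immediately and reduces existence and uniqueness to a single check of endpoint signs. Your route is more self-contained and closer in spirit to the textbook proof of Dixon's theorem; the paper's keeps the argument inside the $g_n$/$k_n$ machinery that is re-used throughout Sections~\ref{sec:caseone} and \ref{sec:case2}, which buys notational economy downstream. One small remark: your non-existence argument is phrased for $1\leq\lambda<j_{n,1}/y_{n,1}$, while the statement also admits $\lambda<1$; in that case $\lambda^2-1$, and hence $(xg)'$, reverses sign, and one uses instead the limit $xg\to\frac{2}{\pi}\lambda^n>0$ as $x\to0^+$ together with $xg$ now increasing on $(0,y_{n,1})$ to conclude $g>0$ there. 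The fix is immediate, and everything else in your argument is sound.
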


The proof is given at the end of this section. To illustrate this result, we consider the case when $\lambda=2$ and $n=30$. 
The quasi-resonances are to be found in the interval  $(j_{30,1}^{(1)}/2,y_{30,1})\approx (16.28,32,98)$. 
There are $8$  such frequencies. 
The first one is $\omega_{30,1}\approx 17.4211682$, and the last one is $\omega_{30,8}\approx 31.4683226$.
 Figure~\ref{fig:qr-2} shows two plots on a logarithmic scale. The red line shows the radial component of full field $u_\eps$, 
corresponding to a relative index $\lambda=2$, an effective frequency $\omega_{30,1}$. The blue line shows the radial component of 
the  incident field $u^i$,  $\BJ{30}{\omega_{30,1} \cdot}$.  Note that the blow-up region is concentrated around $\eps$. 
At $|x|=\lambda\eps=2\eps$, the full field and the  incident field are of the same order of magnitude. This is the far field regime 
discussed in Section~\ref{sec:ffield}.

\begin{figure}
 \begin{center}
  \includegraphics[width=10cm]{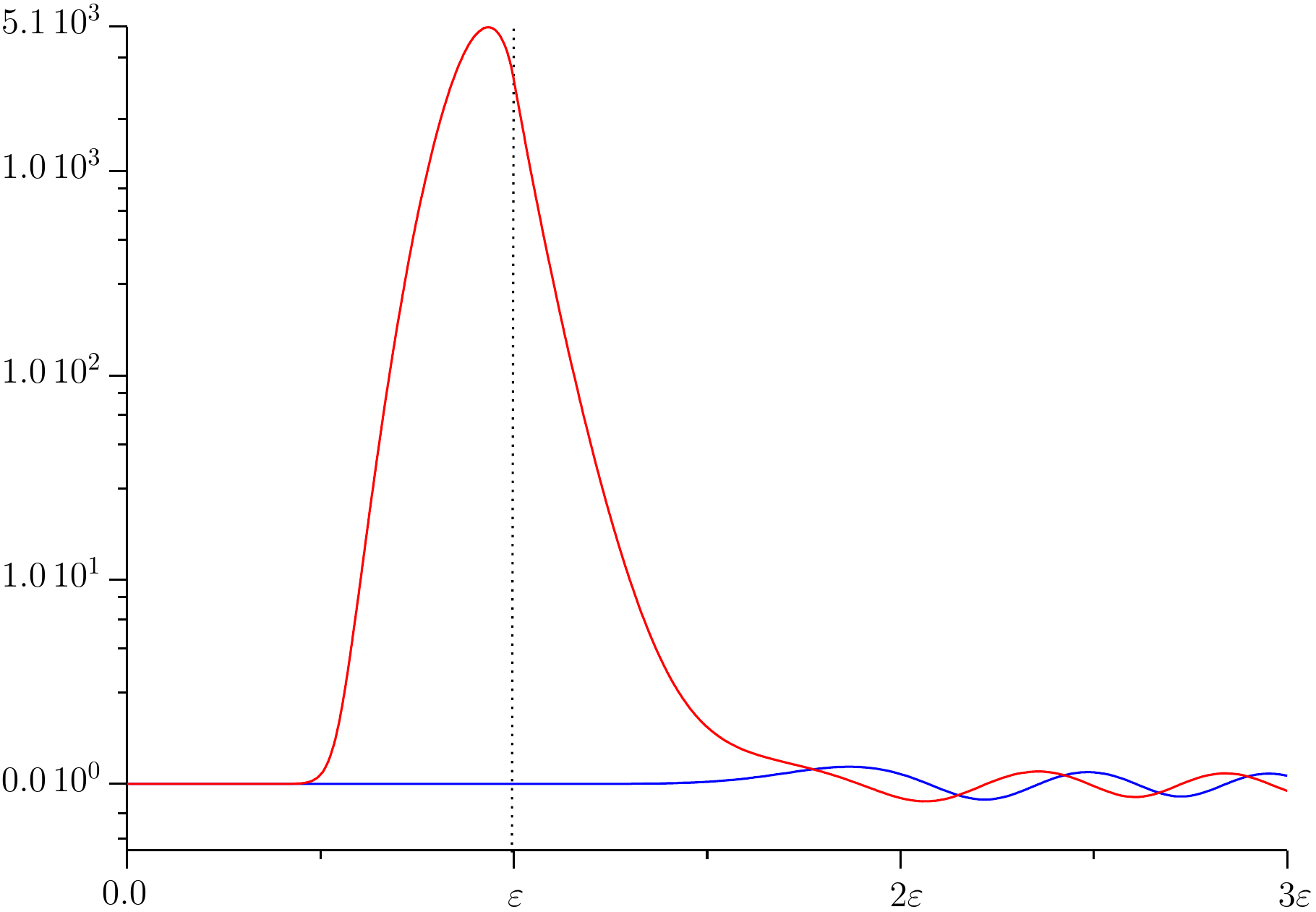}
 \end{center}
\caption{\label{fig:qr-2}First quasi-resonant solution for $\lambda=2$ and $n=30$.}
\end{figure}

Figure~\ref{fig:qr-3} shows a plot the radial component of full field $u_\eps$ in red, 
corresponding to a relative index $\lambda=2$, an effective frequency $\omega_{30,8}$, 
and the radial component of the incident field $u^i$, $\BJ{30}{\omega_{30,8} \cdot}$, in blue. 

This last quasi-resonance, situated close to the upper bound $y_{30,1}$, does not show a blow-up around $|x|=\eps$. This vindicates the choice to limit 
the definition of quasi-resonances to the interval $(0,y_{n,1})$.

\begin{figure}
 \begin{center}
  \includegraphics[width=10cm]{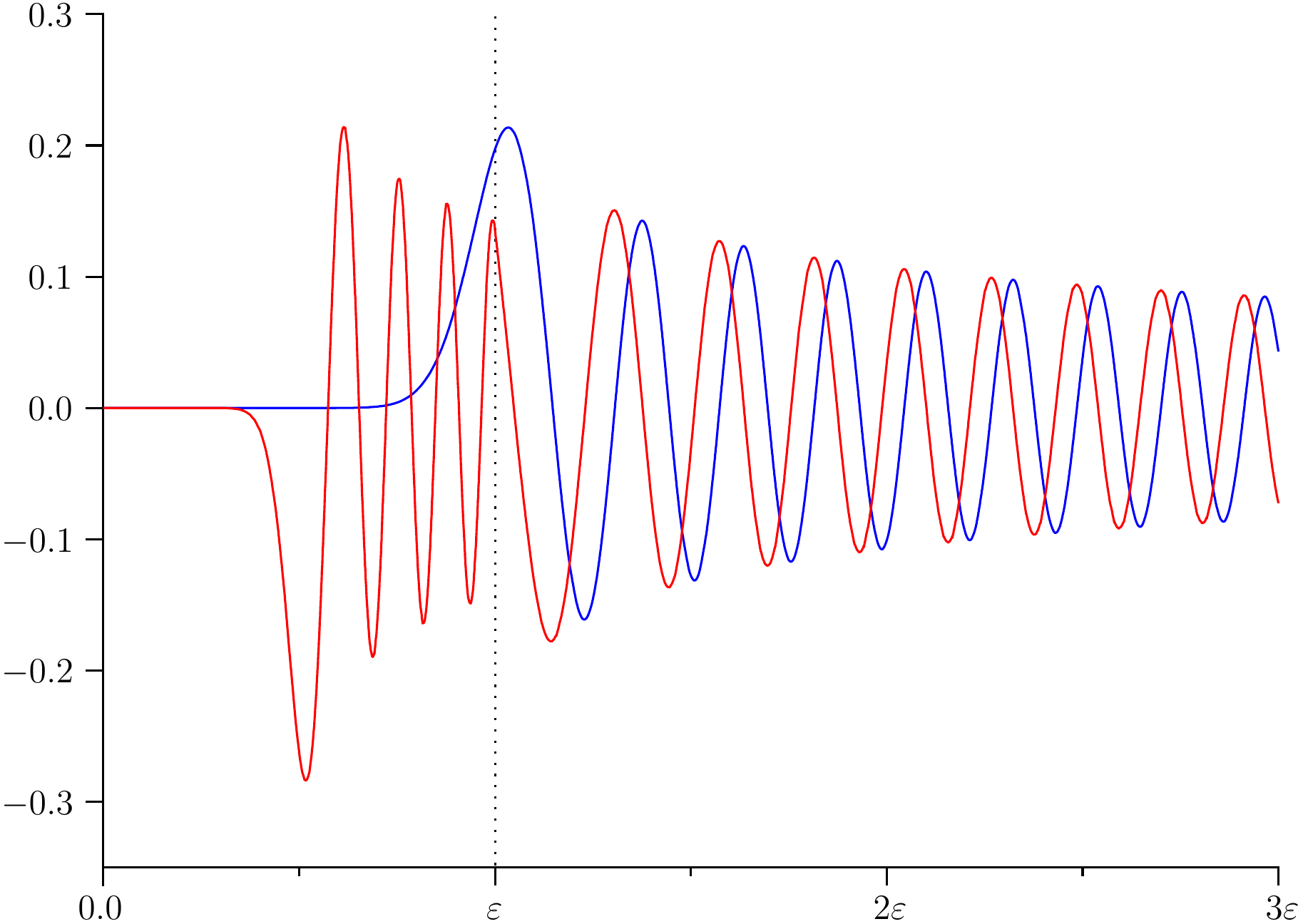}
 \end{center}
\caption{\label{fig:qr-3}Last quasi-resonant solution for $\lambda=2$ and $n=30$.}
\end{figure}

Quasi-resonances provide lower-bounds for frequency independent  scattering estimates, as the following Theorem shows.
\begin{thm}\label{thm:ob-lr}
Given $\lambda>1$, let $n_0$ be the smallest integer such that
$$
\lambda > \frac{j_{n_0,1}}{y_{n_0,1}}.
$$
Then, for any $p\geq n_0$,
\begin{equation}\label{eq:lowerbound-blowup-1}
\sup \left\{ \left\Vert u_{\eps}^{s} \left(|x|=R\right) \right\Vert _{H^{\sigma}} ,0<\lambda \oeps \leq j_{p,1}\right\}
\geq \sup\limits_{ n_0 \leq n \leq  p} \left( |a_{|n|}| (1+|n|)^\sigma \BH{n}{\frac{j_{n,1}}{\lambda} \frac{R}{\eps}} \right).
\end{equation}
Furthermore, if $\lambda >\exp(2)$,
\begin{eqnarray}\label{eq:lowerbound-blowup-0}
&& \sup \left\{ \left\Vert u_{\eps}^{s} \left(|x|=R\right) \right\Vert _{H^{\sigma}},
0< \lambda \oeps < \frac{\sqrt{2}}{\sqrt{\ln \lambda}}\left(1+\frac{1}{2\sqrt{\ln\lambda}}\right)\right\} \nonumber \\
&& \geq   |a_0|  \BH{0}{\frac{\sqrt{2}}{\lambda\sqrt{\ln \lambda}}\left(1+\frac{1}{2\sqrt{\ln\lambda}}\right) \frac{R}{\eps}}.
\end{eqnarray}
\end{thm}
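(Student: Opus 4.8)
\emph{Strategy.} My plan is to obtain both lower bounds by evaluating $\left\Vert u_{\eps}^{s}(|x|=R)\right\Vert_{H^{\sigma}}$ at \emph{quasi-resonant} frequencies, where $|R_{n}|$ is maximal. The starting point is that if $(n,\oeps,\lambda)$ is quasi-resonant, i.e.\ $\Rn=-1$, then keeping only the $n$-th term in the series \eqref{eq:norm-hs} gives, using $\sqrt{q_{0}}\omega R=\oeps R/\eps$ and $\sqrt{2\pi}\ge1$,
\[
\left\Vert u_{\eps}^{s}(|x|=R)\right\Vert_{H^{\sigma}}\ \ge\ \bigl|a_{|n|}\bigr|\,(1+|n|)^{\sigma}\left|\BH{n}{\oeps R/\eps}\right| .
\]
So for each mode I will produce a quasi-resonant triplet whose frequency lies in the prescribed window, and then push the argument of the Hankel function up to the value occurring on the right-hand side of \eqref{eq:lowerbound-blowup-1} (resp.\ \eqref{eq:lowerbound-blowup-0}), using that $t\mapsto\left|\BH{n}{t}\right|$ is non-increasing on $(0,\infty)$ for every $n\ge0$ --- from \cite[13.74]{WATSON-25} for $n\ge1$, and from a Nicholson-type integral for $J_{n}(t)^{2}+Y_{n}(t)^{2}$ when $n=0$.

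\emph{The first bound.} Fix $n$ with $n_{0}\le n\le p$. Since $k\mapsto j_{k,1}/y_{k,1}$ is non-increasing and $\lambda>j_{n_{0},1}/y_{n_{0},1}$, one has $\lambda>j_{n,1}/y_{n,1}$, hence $j_{n,1}/\lambda<y_{n,1}$, so the first Dixon interval $U_{n,1}=\bigl(j_{n,1}^{(1)}/\lambda,\ j_{n,1}/\lambda\bigr)$ sits inside $\bigl(j_{n,1}^{(1)}/\lambda,\ y_{n,1}\bigr)$. Proposition~\ref{pro:Dixon} then supplies a quasi-resonant triplet $(n,\omega_{n,1},\lambda)$ with $\oeps=\sqrt{q_{0}}\omega_{n,1}\eps\in U_{n,1}$; in particular $\lambda\oeps<j_{n,1}\le j_{p,1}$, so this frequency is admissible, and $\oeps R/\eps<\tfrac{j_{n,1}}{\lambda}\tfrac{R}{\eps}$. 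Feeding this into the displayed inequality and using the monotonicity of $\left|\BH{n}{\cdot}\right|$ bounds the supremum from below by $\bigl|a_{|n|}\bigr|(1+|n|)^{\sigma}\bigl|\BH{n}{\frac{j_{n,1}}{\lambda}\,\frac{R}{\eps}}\bigr|$, and taking the supremum over $n_{0}\le n\le p$ gives \eqref{eq:lowerbound-blowup-1}.

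\emph{The second bound.} Write $\rho_{\lambda}:=\tfrac{\sqrt{2}}{\sqrt{\ln\lambda}}\bigl(1+\tfrac{1}{2\sqrt{\ln\lambda}}\bigr)$; since $\lambda>e^{2}$ one checks $\rho_{\lambda}<j_{0,1}$, so $\rho_{\lambda}/\lambda\in(0,j_{0,1}/\lambda)\subset(0,y_{0,1})$. I would first recast quasi-resonance for $n=0$: setting $W(t):=\BHp{0}{t}\BJ{0}{\lambda t}-\lambda\BJp{0}{\lambda t}\BH{0}{t}$, which never vanishes since $T_{0}$ is well defined (\eqref{eq:def-TN}), formula \eqref{eq:def-RN} reads $R_{0}(t,\lambda)=-\Re(W(t))/W(t)$, so $R_{0}(t,\lambda)=-1$ exactly when
\[
\Im(W(t))\ =\ -\BY{1}{t}\BJ{0}{\lambda t}+\lambda\BJ{1}{\lambda t}\BY{0}{t}\ =\ 0 .
\]
On $(0,j_{0,1}/\lambda)$ this quantity is positive for $t$ near $0$ (there $-\BY{1}{t}\BJ{0}{\lambda t}\sim\tfrac{2}{\pi t}$ dominates). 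If I can show $\Im(W(\rho_{\lambda}/\lambda))<0$, the intermediate value theorem yields a zero $t_{0}\in(0,\rho_{\lambda}/\lambda)$ of $\Im W$; since $W(t_{0})\ne0$, choosing $\omega$ with $\oeps=t_{0}$ makes $(0,\oeps,\lambda)$ quasi-resonant with $\lambda\oeps=\lambda t_{0}<\rho_{\lambda}$, so this frequency lies in the window of \eqref{eq:lowerbound-blowup-0}. Now, dividing by $\BJ{0}{\rho_{\lambda}}>0$ and using $\BY{0}{\rho_{\lambda}/\lambda}<0<-\BY{1}{\rho_{\lambda}/\lambda}$, the inequality $\Im(W(\rho_{\lambda}/\lambda))<0$ is equivalent to
\[
\frac{\BJ{1}{\rho_{\lambda}}}{\BJ{0}{\rho_{\lambda}}}\ >\ \frac{1}{\lambda}\,\frac{\bigl|\BY{1}{\rho_{\lambda}/\lambda}\bigr|}{\bigl|\BY{0}{\rho_{\lambda}/\lambda}\bigr|} ,
\]
which, bounding the left side below by $\BJ{1}{\rho_{\lambda}}\ge\tfrac{\rho_{\lambda}}{2}-\tfrac{\rho_{\lambda}^{3}}{16}$ (and $\BJ{0}{\rho_{\lambda}}\le1$) and the right side above through the small-argument behaviour of $Y_{0},Y_{1}$ at $y=\rho_{\lambda}/\lambda$, reduces to a scalar inequality of the form
\[
\Bigl(\tfrac{\rho_{\lambda}^{2}}{2}-\tfrac{\rho_{\lambda}^{4}}{16}\Bigr)\bigl(\ln\lambda+\ln2-\gamma-\ln\rho_{\lambda}\bigr)\ >\ 1+o(1) .
\]
Since $\tfrac{\rho_{\lambda}^{2}}{2}\ln\lambda=\bigl(1+\tfrac{1}{2\sqrt{\ln\lambda}}\bigr)^{2}$, the left side equals $1+\tfrac{1}{\sqrt{\ln\lambda}}+O\!\bigl(\tfrac{\ln\ln\lambda}{\ln\lambda}\bigr)$, so the surplus $\tfrac{1}{\sqrt{\ln\lambda}}$ --- exactly the correction factor built into $\rho_{\lambda}$ --- leaves room to spare for every $\lambda>e^{2}$. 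Finally, applying the displayed mode inequality with $n=0$ at the frequency $\oeps=t_{0}$, and then using $t_{0}<\rho_{\lambda}/\lambda$ together with the monotonicity of $\left|\BH{0}{\cdot}\right|$ to enlarge the Hankel argument, gives \eqref{eq:lowerbound-blowup-0}.

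\emph{Main obstacle.} Everything except the localization $t_{0}<\rho_{\lambda}/\lambda$ is routine. The hard part will be converting the small-argument heuristics above into rigorous two-sided bounds --- a lower bound for $J_{1}$ on $(0,\rho_{\lambda}]$, and matching upper and lower bounds for $|Y_{1}|$ and $|Y_{0}|$ near the origin --- with constants sharp enough to verify the final scalar inequality for \emph{all} $\lambda>e^{2}$, not merely asymptotically. The precise shape of $\rho_{\lambda}$ (the factor $1+\tfrac{1}{2\sqrt{\ln\lambda}}$ on top of the leading term $\sqrt{2/\ln\lambda}$) and the hypothesis $\lambda>e^{2}$ are exactly what supply the slack absorbing these error terms.
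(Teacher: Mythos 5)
Your argument is correct and follows essentially the same route as the paper: extract a single mode, evaluate at a quasi-resonant frequency where $|R_n|=1$, then push the Hankel argument up by monotonicity, using Proposition~\ref{pro:Dixon} for the $n\geq n_0$ modes and a localization of $\omega_{0,1}$ for the $n=0$ mode. The only cosmetic difference is that the paper packages the localization $\omega_{0,1}<\frac{\sqrt{2}}{\lambda\sqrt{\ln\lambda}}(1+\frac{1}{2\sqrt{\ln\lambda}})$ as a separate statement (Lemma~\ref{lem:muzeroone}, whose proof works with $g_0(\lambda\cdot)+k_0(\cdot)$ --- equivalent up to a positive factor to your $\Im W$), and it too dispenses with the final numerical verification by declaring it ``tedious but straightforward,'' so you and the author are on exactly the same footing regarding the step you flag as the main obstacle.
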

\begin{rem}
 Note that $j_{n,1}/y_{n,1} = 1+ O(n^{-2/3})$ so for any $\lambda>1$, $n_0$ exists and is finite. Since $j_{n,1}/n = 1+ O(n^{-2/3})<4$, 
the lower bound \eqref{eq:lowerbound-blowup-1} also matches the end of the perturbative regime described in Theorem~\ref{thm:os-lb}, which required
$\oeps< n/\lambda$. 

As we noted earlier, the lower bound blows-up geometrically with $n$. Thus, taking $p=\infty$, if the coefficients $a_n$ decay only polynomially with $n$, then
$$
\sup_{\omega>0} \left\Vert u_{\eps}^{s} \left(|x|=\eps\right) \right\Vert _{H^{\sigma}}=\infty
$$
for any $s>-\infty$. This is the case for plane waves, for example, since
$$
\exp(i\omega x\cdot \zeta) = \sum_{-\infty}^{\infty} \BJ{n}{\omega|\zeta| |x|} 
\exp( i n ( \arg\left(\frac{x}{|x|}- \frac{\zeta}{|\zeta|}\right) +\frac{\pi}{2})).
$$
Estimate \eqref{eq:lowerbound-blowup-0} shows that even low frequencies are affected by quasi-resonances. However, the blow-up is milder. 
Indeed, $\BJ{0}{x}$ tends to $1$ close to the origin, whereas
$$
\left|\BH{0}{\frac{\sqrt{2}}{\lambda\sqrt{\ln \lambda}}\left(1+\frac{1}{2\sqrt{\ln\lambda}}\right)}\right|\approx \frac{2}{\pi} \ln(\lambda). 
$$
This quasi-resonance also occurs just after the perturbative regime, which applies when $\oeps <m_\lambda$. 
We may therefore argue that the estimates provided by Theorem~\ref{thm:ob-ls} are optimal in terms of frequency range, up to a multiplicative factor of at most $4$.
\end{rem}

\begin{proof}[Proof of Theorem~\ref{thm:ob-lr}]
Starting from formula~\eqref{eq:norm-hs}, we have
\begin{eqnarray*}
\sup_{\oeps \leq j_{p,1}/\lambda}\left\Vert  u_{\eps}^{s}\left(|x|=R\right) \right\Vert _{H^{\sigma}} 
&\geq&
\sup_{\oeps \leq j_{p,1}/\lambda}
\sup_{n_0 \leq n \leq p}   
\sqrt{2\pi} \left| \Rn a_{|n|}\right|
\left(1+\left|n\right|\right)^{\sigma}  \left|\BH{n}{\oeps \frac{R}{\eps}}\right|\\
&\geq&
\sup_{n_0 \leq n \leq p}   
\sqrt{2\pi} \left|a_{|n|}\right|
\left(1+\left|n\right|\right)^{\sigma}  \left|\BH{n}{\omega_{n,1} \frac{R}{\eps}}\right|.
\end{eqnarray*}
Where we used that $|R_n(\omega_{n,1},\lambda)|=1$. Since $x\to\left|\BH{n}{x}\right|$ is decreasing and from Proposition~\ref{pro:Dixon} 
$\omega_{n,1} < j_{n,1}/\lambda$,  we obtain \eqref{eq:lowerbound-blowup-1}.
The second bound \eqref{eq:lowerbound-blowup-0} is proved similarly, using the monotonicity of  $x\to\left|\BH{0}{x}\right|$, 
and Lemma~\ref{lem:muzeroone} below which shows that when $\lambda > \exp(2)$, 
$$
\omega_{0,1} < \frac{\sqrt{2}}{\lambda \sqrt{\ln \lambda}}\left(1+\frac{1}{2\sqrt{\ln\lambda}}\right).
$$
\end{proof}

Let us now turn to the proof of Proposition~\ref{pro:Dixon}. 
From the definition of $R_n$ \eqref{eq:def-RN}, it is clear that $R_n(x,\lambda)=-1$ if and only if 
$$
\Im\left(\displaystyle  
 \BHp{n}{x}\BJ{n}{\lambda x}
-\lambda\BJp{n}{\lambda x }\BH{n}{x}\right) = \BYp{n}{x} \BJ{n}{\lambda x} - \lambda \BJp{n}{\lambda x }\BY{n}{x}=0
$$
When $0<x<y_{n,1}$, inequalities \eqref{eq:Dixon-JnYn} show that $\BYp{n}{x}<0$ and $\BYp{n}{x}>0$. 
Dixon's Theorem \cite[15.23]{WATSON-25} shows that $\BJ{n}{x}$ and $\BJp{n}{x}$ have no common zeros. 
Thus quasi-resonances cannot occur at any $j_{n,k}/\lambda$ or $j_{n,k}^{(1)}/\lambda$. 
Lastly, note that when $n>1$,  in the set $(0, j_{n,1}^{(1)}/\lambda)$, both  $\BJ{n}{\lambda x}$ and $ \BJp{n}{\lambda x }$ are positive, 
thus no quasi-resonance can occur. 
The quasi-resonances can only be in the sets $\left(\frac{j_{n,k}^{(1)}}{\lambda} \frac{j_{n,k}}{\lambda}\right)$, and are the solutions of 
\begin{equation}\label{eq:eq-reson-1}
\lambda \frac{\displaystyle \BJp{n}{\lambda x }}{\displaystyle \BJ{n}{\lambda x }} =  
\frac{\displaystyle \BYp{n}{ x }}{\displaystyle \BY{n}{ x }}. 
\end{equation}
Figure~\ref{fig:qr-1} shows a plot of $x\to 2 \BJp{30}{2 x }/ \BJ{30}{2 x }$, in blue, and $x\to \BYp{n}{ x }/\displaystyle \BY{n}{ x }$, 
in red, in the interval $(j_{30,1}^{(1)}/2,y_{30,1})\approx (16.28,32,98)$. The dashed lines represent the solutions of $\BJ{30}{2x}=0$ in this interval.
The eight red dots on the horizontal axis mark the quasi-resonant frequencies corresponding to $n=30$ and $\lambda=2$.  
\begin{figure}
 \begin{center}
  \includegraphics[width=10cm]{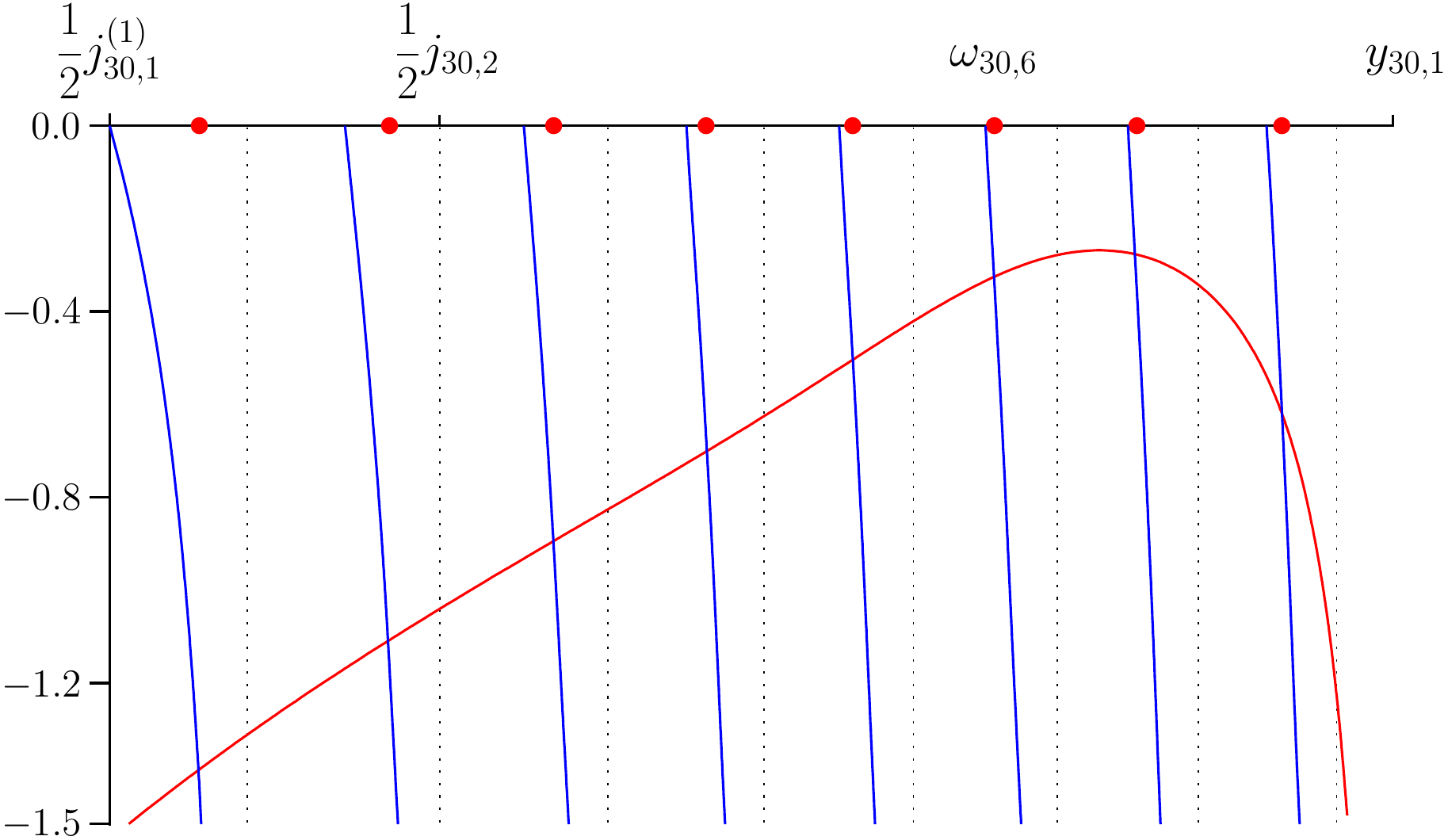}
 \end{center}
\caption{\label{fig:qr-1}Quasi-resonant frequencies for $\lambda=2$ and $n=30$.}
\end{figure}
To study the solutions of \eqref{eq:eq-reson-1}, we introduce, when $n\geq1$
\begin{equation}\label{eq:defgnkn}
g_n:=x\to \frac{x}{n} \frac{\dsp \BJp{n}{x}}{\dsp \BJ{n}{x}} \mbox{ and }k_n:=x\to-\frac{x}{n} \frac{\dsp \BYp{n}{x}}{\dsp \BY{n}{x}},
\end{equation}
 when $n=0$,
\begin{equation}\label{eq:defg0k0}
g_{0}(x):= x\to x \frac{J_{0}^{\prime}(x)}{J_{0}(x)}
\mbox{ and }k_{0}(x):=x\to -x \frac{Y_{0}^{\prime}(x)}{Y_{0}(x)}.
\end{equation}
and we rewrite \eqref{eq:eq-reson-1} as
\begin{equation}\label{eq:eq-reson-2}
 g_n(\lambda x) = - k_n(x).
\end{equation}
\begin{proof}[Proof of Proposition~\ref{pro:Dixon}]
From the recurrence relation satisfied by Bessel functions, we derive that for all $n\geq0$,
\begin{equation}\label{eq:difeqgn}
g_{n}^{\prime}(x)=\frac{n}{x}-\frac{x}{n_{+}}-  \frac{n_{+}}{x}g_{n}^{2}(x),
\end{equation}
with the notation $n_{+}=\max(n,1)$.
In particular, for $x>n$, $g_n$ is decreasing. Thus, on each interval $U_{n,k}$, $x\to g_n(\lambda x)$ decreases from $0$ to $-\infty$. When $n\geq 1$, 
on $[j_{n,1}^{(1)}/\lambda,y_{n,1})$, $k_n$ is positive, thus there exists at least one solution to \eqref{eq:eq-reson-2}. 
When $n=0$, since $(k_0(x)+g_0(x))/x$ tends to $\infty$ as $x$ tends to zero, therefore at least one solution exists in $(0,j_{0,1}/\lambda)$. 

To show uniqueness, we compute that the derivative of $g_n (\lambda\cdot) +k_n(\cdot)$ is
$$
\lambda g_n ^\prime (\lambda x) + k_n^\prime (x) = \frac{x}{n_{+}} \left(1-\lambda^2\right) + \frac{n_{+}}{x}\left(k_n^2(x) - g_n^2(\lambda x)\right),
$$
so at any point where $g_n(\lambda x) = -k_n(x)$, we have 
$$
\lambda g_n ^\prime (\lambda x) + k_n^\prime (x) = \frac{x}{n_{+}} \left(1-\lambda^2\right) < 0.
$$
Thanks to the Intermediate Value Theorem, there can therefore only be one solution. 
Finally, note that when $\lambda<j_{n,1}/y_{n,1}$, $x^{-1}(k_n(x)+g_n(\lambda x))$ tends to $+\infty$ both at $x=0$ and $x=y_{n,1}$, therefore 
there cannot be a unique solution of $k_n(x))+g_n(\lambda x)=0$ in this interval, and consequently there is none.
\end{proof}  

We conclude this section by an upper and lower estimate of $\omega_{0,1}$.

\begin{lem}\label{lem:muzeroone}
The quasi-resonant triplet $(0, \omega_{0,1},\lambda)$ satisfies
\begin{equation}\label{eq:bdmu0one}
\frac{\sqrt{2}}{\lambda \sqrt{\ln \lambda}}\left(1-\frac{1}{2\sqrt{\ln(\lambda)}}\right)
< \omega_{0,1} < 
\frac{\sqrt{2}}{\lambda \sqrt{\ln \lambda}}\left(1+\frac{1}{2\sqrt{\ln\lambda}}\right),
\end{equation} 
for all $\lambda\geq \exp(2)$.
\end{lem}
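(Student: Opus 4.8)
The plan is to reduce the two-sided estimate \eqref{eq:bdmu0one} to a sign computation for the function $\phi(x):=g_{0}(\lambda x)+k_{0}(x)$ (the function appearing in \eqref{eq:eq-reson-2}) at two explicit points. Recall from the proof of Proposition~\ref{pro:Dixon} that $\omega_{0,1}$ is the unique zero of $\phi$ in $(0,j_{0,1}/\lambda)$, that $\phi>0$ near the origin, and that $\phi'<0$ at every zero of $\phi$; hence $\phi>0$ on $(0,\omega_{0,1})$ and $\phi<0$ on $(\omega_{0,1},j_{0,1}/\lambda)$. Writing $a:=\frac{\sqrt 2}{\lambda\sqrt{\ln\lambda}}\bigl(1-\frac{1}{2\sqrt{\ln\lambda}}\bigr)$ and $b:=\frac{\sqrt 2}{\lambda\sqrt{\ln\lambda}}\bigl(1+\frac{1}{2\sqrt{\ln\lambda}}\bigr)$, one checks using $\lambda\geq\exp(2)$ that $0<a<b$, that $\lambda b<j_{0,1}$ and $b<y_{0,1}$, so both points lie in $(0,j_{0,1}/\lambda)$. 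By the sign structure of $\phi$ it then suffices to prove $\phi(b)<0$ and $\phi(a)>0$, equivalently
\[
k_{0}(b)<\bigl|g_{0}(\lambda b)\bigr| \qquad\text{and}\qquad k_{0}(a)>\bigl|g_{0}(\lambda a)\bigr|.
\]

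First I would record explicit bounds on $g_{0}$. Taking $n=0$ (so $n_{+}=1$) in \eqref{eq:difeqgn} gives $g_{0}'(x)=-x-g_{0}(x)^{2}/x$ with $g_{0}(0)=0$; integrating yields $g_{0}(z)\leq -z^{2}/2$ on $(0,j_{0,1})$, and a one-step bootstrap (first $g_{0}(z)\geq -z^{2}$ on $[0,\sqrt 2)$, then re-substituting) gives $g_{0}(z)\geq -\frac{z^{2}}{2}-\frac{z^{4}}{4}$ there. Since $\lambda a<\lambda b=\frac{\sqrt 2}{\sqrt{\ln\lambda}}\bigl(1+\frac{1}{2\sqrt{\ln\lambda}}\bigr)$ is $\leq 1$ when $\lambda\geq\exp(2)$, these apply at $z=\lambda a$ and $z=\lambda b$, so that $|g_{0}(\lambda b)|\geq \frac12(\lambda b)^{2}$ and $|g_{0}(\lambda a)|\leq \frac12(\lambda a)^{2}\bigl(1+\frac12(\lambda a)^{2}\bigr)$.

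Next I would pin down $k_{0}$ near the origin. The Wronskian $J_{0}(x)Y_{0}'(x)-J_{0}'(x)Y_{0}(x)=\frac{2}{\pi x}$ gives the identity $k_{0}(x)=-g_{0}(x)-\frac{2}{\pi J_{0}(x)Y_{0}(x)}$, which reduces matters to the single product $J_{0}(x)Y_{0}(x)$. Inserting $1-\frac{x^{2}}{4}\leq J_{0}(x)\leq 1$ and the ascending series $Y_{0}(x)=\frac{2}{\pi}\bigl[(\ln(x/2)+\gamma)J_{0}(x)+S(x)\bigr]$ with $0<S(x)\leq \frac{x^{2}}{4}$ (see \cite[10.8.1]{NIST-10}), one obtains explicit two-sided bounds valid for $0<x\leq\frac12$ of the form $\frac{1}{\ln(2/x)-\gamma}\leq -\frac{2}{\pi J_{0}(x)Y_{0}(x)}\leq \frac{1}{\ln(2/x)-\gamma}\cdot\frac{1}{1-c\,x^{2}\ln(2/x)}$ with $c$ an absolute constant, so that $k_{0}(x)$ equals $(\ln(2/x)-\gamma)^{-1}$ up to an additive $O(x^{2})$ (from $-g_{0}(x)$) and a multiplicative correction which, at the scale $x\asymp(\lambda\sqrt{\ln\lambda})^{-1}$, is $1+O((\ln\lambda)^{-1})$.

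Finally I would substitute $x=a,b$ and use $\ln(2/x)=\ln\lambda+\frac12\ln\ln\lambda+\frac12\ln 2-\gamma-\ln\bigl(1\pm\frac{1}{2\sqrt{\ln\lambda}}\bigr)$ (plus sign at $b$, minus at $a$). After clearing denominators, $k_{0}(b)<|g_{0}(\lambda b)|$ amounts to an inequality of the shape $\ln\lambda\leq \bigl(\ln(2/b)-\gamma\bigr)\bigl(1+\tfrac{1}{2\sqrt{\ln\lambda}}\bigr)^{2}(1+\mathrm{corr})$, which holds because $\ln(2/b)-\gamma=\ln\lambda+\frac12\ln\ln\lambda+O(1)$ while the squared factor contributes a gain of order $\sqrt{\ln\lambda}$; symmetrically $k_{0}(a)>|g_{0}(\lambda a)|$ reduces to $\ln(2/a)-\gamma\leq \ln\lambda\bigl(1-\tfrac{1}{2\sqrt{\ln\lambda}}\bigr)^{-2}\bigl(1+\tfrac12(\lambda a)^{2}\bigr)^{-1}(1-\mathrm{corr})$, whose right side is $\ln\lambda+\sqrt{\ln\lambda}-O(1)$. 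The crux of the proof — and its only genuine difficulty — is this last step: one must verify, with honest inequalities valid for \emph{every} $\lambda\geq\exp(2)$, that the term $\frac12\ln\ln\lambda$ produced by $\ln(2/x)$, together with the bounded constants $\gamma,\frac12\ln 2$ and the $O((\ln\lambda)^{-1})$ relative errors coming from the $g_{0}$- and $k_{0}$-bounds, stay strictly inside the $\sqrt{\ln\lambda}$-scale margin furnished by the factors $\bigl(1\pm\frac{1}{2\sqrt{\ln\lambda}}\bigr)^{\pm2}$. Since $t\mapsto\sqrt{t}-\frac12\ln t$ is increasing and positive for $t\geq 2$, the comparison holds at $t=\ln\lambda\geq2$; the hypothesis $\lambda\geq\exp(2)$ is exactly what makes this work, and the lemma follows.
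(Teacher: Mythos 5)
Your proposal follows essentially the same route as the paper: by Proposition~\ref{pro:Dixon} it suffices to show that $\phi(x):=g_0(\lambda x)+k_0(x)$ is positive at $f_-(\lambda)$ and negative at $f_+(\lambda)$, which you do via small-argument two-sided bounds on $g_0$ and $k_0$ (yours obtained by integrating \eqref{eq:difeqgn} and by the Wronskian identity $k_0+g_0=-\tfrac{2}{\pi J_0Y_0}$, the paper's stated in Proposition~\ref{pro:propg0k0}), before noting the final comparison reduces to $\tfrac12\ln\ln\lambda+O(1)<\sqrt{\ln\lambda}$. Both you and the paper leave the last quantitative verification at the level of a ``tedious but straightforward'' check, so the proposals match in substance and in the detail actually supplied.
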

\begin{proof}
Introducing the functions $f_{+}$ and $f_{-}$ given by
$$
f_\pm(\lambda):= \frac{\sqrt{2}}{\lambda\sqrt{\ln\lambda}}\left(1\pm\frac{1}{2\sqrt{\ln(\lambda)}}\right),
$$
thanks to Proposition~\ref{pro:Dixon}, it is  sufficient to  check that 
$$
g_0(\lambda f_{+}(\lambda))+k_0 ( f_{+}(\lambda)) <0, \mbox{ and } g_0(\lambda f_{-}(\lambda))+k_0 ( f_{-}(\lambda)) >0.
$$
The following bounds
$$
-\frac{1}{2}x^2 - \frac{1}{12} x^4 \leq g_0(x) \leq -\frac{1}{2} x^2 -\frac{1}{16} x^4 \mbox{ for all } 0\leq x \leq 1,
$$
and 
$$
-\frac{1}{\gamma +\ln\left(\frac{x}{2}\right)} +\frac{1}{2}x^2
 \leq k_0(x) \leq 
 -\frac{1}{\gamma +\ln\left(\frac{x}{2}\right)} + x^2 \mbox{ for all } 0\leq x\leq \frac{1}{4}.
$$
can be derived using the asymptotic expansions of Bessel functions around $x=0$ given in \cite{NIST-10}. The proof 
\eqref{eq:bdmu0one} becomes a study of a function of one variable, $\lambda$. 
We omit this tedious but straightforward calculation. It is easy to visually confirm this result using a modern 
scientific computation software, using the built-in formulae of $\BJ{0}{x}$, $\BJ{1}{x}$, $\BY{0}{x}$ and $\BY{1}{x}$ 
to compute $g_0$ and $k_0$, and then verify for example that  $\pm g_0(\lambda f_{\pm}(\lambda))+\pm k_0 ( f_{\pm}(\lambda)) <0$, 
and that both expressions are of order $(\ln \lambda)^{-3/2}$ for large $\lambda$. The lower bound $\lambda\geq \exp(2)$ is not 
optimal: it is convenient because of the form of the ansatz for $k_0$ given above. 
Numerically, it appears that \eqref{eq:bdmu0one} holds almost up threshold value $j_{0,1}/y_{0,1}$ (up to $1.003$ times that value).
\end{proof}

\section{\label{sec:ffield}Inclusions with relative index larger than one: far-field estimates}

As we could notice on Figure~\ref{fig:qr-2}, the effect of quasi-resonances is localized close to $|x|=\eps$. 
We now show that in the far field, that is, when $\lambda \eps \leq|x|$, estimates valid for all frequencies can 
be derived in the spirit of Theorem~\ref{thm:ob-ls}.

\begin{thm}\label{thm:ob-lb}
When  $1\leq\lambda$ and $\eps \lambda <R$ there holds
$$
\sup_{\omega>0} \left\Vert u_{\eps}^{s} \left(|x|=R\right) \right\Vert _{H^{\sigma}} 
\leq 
\frac{5}{2} \sqrt{\lambda \frac{\eps}{R}} \left\Vert u^{i} \left(|x|=\eps\right) \right\Vert _{H^{\sigma}_{*}} 
 + 2\sqrt{\sum_{n\neq 0 } \left|a_{n}\right|^2 \frac{\left(1+\left|n\right|\right)^{2\sigma}}{\left(j_{n,1}^{(1)} \frac{R}{\eps}\right)^2 -n^{2}}}
$$
\begin{equation}\label{eq:bd-up-oblb-1}
 + \sqrt{10\pi}\left|u ^{i}(0)\right| \left|\BH{0}{\min\left(\frac{1}{2},m_{\lambda}\right) \frac{R}{\eps}} \right|,
\end{equation}
Furthermore,
\begin{equation}\label{eq:bd-down-oblb-2}
\sup\limits_{\omega>0} \left\Vert u_{\eps}^{s} \left(|x|=R\right) \right\Vert _{H^{\sigma}_{*}} 
\geq
 \frac{2}{5} \sqrt{\lambda\frac{\eps}{R}} \mathbf{N}_{1}^{\sigma-\frac{1}{6}}\left(u^i\right).
\end{equation}
\end{thm}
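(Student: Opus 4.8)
The plan is to mimic the proof of Theorem~\ref{thm:ob-ls}, replacing Proposition~\ref{pro:lleq1} by Proposition~\ref{pro:lgeq1} and locating the quasi-resonances with Proposition~\ref{pro:Dixon}. Starting from \eqref{eq:norm-hs} I would split off the $n=0$ term from the $n\neq0$ terms. The zero mode is immediate: the last item of Proposition~\ref{pro:lgeq1} gives $\left|R_0 H_0^{(1)}(\oeps R/\eps)\right|\leq\sqrt5\,\left|H_0^{(1)}(\min(1/2,m_{\lambda})R/\eps)\right|$, and since $a_0=u^i(0)$ and \eqref{eq:norm-hs} carries the factor $\sqrt{2\pi}$, this contributes precisely the last term of \eqref{eq:bd-up-oblb-1}.

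For $n\neq0$ (treating $n\geq1$ only, as $R_n=R_{-n}$) I would split, for each mode, the frequency axis into a perturbative window and its complement. On the window $\oeps\leq\lambda^{-1}y_{n,1}^{(1)}$, the first item of Proposition~\ref{pro:lgeq1} gives $\left|R_n H_n^{(1)}(\oeps)\right|\leq\frac52\left|J_n(\oeps)\right|$; combined with the monotonicity of $x\mapsto\sqrt x\,\left|H_n^{(1)}(x)\right|$ (Watson \cite[13.74]{WATSON-25}), which yields $\left|H_n^{(1)}(\oeps R/\eps)\right|^2\leq\frac\eps R\left|H_n^{(1)}(\oeps)\right|^2$, and with $1\leq\lambda$, the $n$-th summand is at most $\frac{25}{4}\lambda\frac\eps R\left|J_n(\oeps)\right|^2$ (hence also $\frac{25}{4}\lambda\frac\eps R\sup_x\left|J_n(x)\right|^2$), so summation contributes the first term on the right of \eqref{eq:bd-up-oblb-1}. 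On the complementary window I would combine the universal bound $\left|R_n\right|\leq1$ with the facts that, by Proposition~\ref{pro:Dixon}, every quasi-resonance of mode $n$ lies in $\left(j_{n,1}^{(1)}/\lambda,\,y_{n,1}\right)$ — so $\oeps>j_{n,1}^{(1)}/\lambda$ on this window, hence, since $R>\lambda\eps$, $\oeps R/\eps>j_{n,1}^{(1)}>n$ — and that $x\mapsto\sqrt{x^2-n^2}\,\left|H_n^{(1)}(x)\right|^2$ increases to $2/\pi$ on $(n,\infty)$; refining this crude bound with the quasi-resonant estimates for $R_n$ from Section~\ref{sec:caseone} produces the middle term. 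Recombining the three square-summable pieces via $\sqrt{a^2+b^2+c^2}\leq a+b+c$ gives \eqref{eq:bd-up-oblb-1}.

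For the lower bound I would fix $|n|$ so large that mode $n$ carries a quasi-resonance (i.e. $\lambda>j_{|n|,1}/y_{|n|,1}$), choose $\omega$ with $\oeps=\omega_{|n|,1}$ so that $R_n=-1$ and hence $\left\Vert u_\eps^s(|x|=R)\right\Vert_{H_{*}^{\sigma}}\geq\sqrt{2\pi}\,|a_n|(1+|n|)^\sigma\left|H_n^{(1)}(\omega_{|n|,1}R/\eps)\right|$, and then bound the Hankel factor below. Since $\omega_{|n|,1}<j_{|n|,1}/\lambda$ (Proposition~\ref{pro:Dixon}) and $x\mapsto\left|H_n^{(1)}(x)\right|$ is decreasing, the argument may be replaced by $j_{|n|,1}R/(\lambda\eps)$; as $j_{|n|,1}R/(\lambda\eps)\geq j_{|n|,1}>|n|$, the monotonicity of $\sqrt{x^2-n^2}\,\left|H_n^{(1)}(x)\right|^2$ together with $H_n^{(1)}=J_n+iY_n$ and $J_n(j_{|n|,1})=0$ gives $\left|H_n^{(1)}(j_{|n|,1}R/(\lambda\eps))\right|^2\geq\frac{\lambda\eps}{R}\cdot\frac{\sqrt{j_{|n|,1}^2-n^2}}{j_{|n|,1}}\,Y_n(j_{|n|,1})^2$. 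Finally, inserting the Wronskian value $Y_n(j_{n,1})=-2/(\pi j_{n,1}J_n'(j_{n,1}))$ together with known explicit bounds on $\left|J_n'(j_{n,1})\right|$ and the Landau inequality \eqref{eq:ineq-landau} for $\sup_x\left|J_n(x)\right|$, one verifies $\frac{\sqrt{j_{n,1}^2-n^2}}{j_{n,1}}Y_n(j_{n,1})^2\geq\frac{4}{25}(1+|n|)^{-1/3}\sup_x\left|J_n(x)\right|^2$, and taking the supremum over the admissible $|n|$ gives \eqref{eq:bd-down-oblb-2}.

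The step I expect to be the main obstacle is the middle term of the upper bound. Across the quasi-resonant window $\left|R_n\right|$ reaches $1$, so all the decay must be extracted from the Hankel function sampled at the dilated argument $\oeps R/\eps$ rather than at $\oeps$; producing the sharp weight $\left((j_{n,1}^{(1)}R/\eps)^2-n^2\right)^{-1}$ uniformly over that window needs genuinely more than $\left|R_n\right|\leq1$ and the limit $\sqrt{x^2-n^2}\left|H_n^{(1)}(x)\right|^2\leq2/\pi$, and rests on the finer control of $R_n$ (and of $R_n H_n^{(1)}(\oeps)$) in the quasi-resonant regime carried out in Section~\ref{sec:caseone} (Lemma~\ref{lem:FirstCase}). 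A secondary, purely computational, nuisance is tracking the explicit constants $\frac52,\,2,\,\sqrt{10\pi}$ and $\frac25$, which forces the use of the sharp forms of the Bessel inequalities throughout rather than order-of-magnitude estimates.
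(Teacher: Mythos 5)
Your overall architecture — split off $n=0$ with the last item of Proposition~\ref{pro:lgeq1}, handle perturbative modes with its first item plus the Watson monotonicity of $\sqrt{x}|\BH{n}{x}|$, handle the rest with $|\Rn|\leq1$, and prove the lower bound at a quasi-resonance via Proposition~\ref{pro:Dixon} — is the paper's. But the step you single out as the main obstacle, the middle term, is where you depart, and your proposed fix would not work.

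The paper does \emph{not} invoke Lemma~\ref{lem:FirstCase} or any other estimate from Section~\ref{sec:caseone} on the non-perturbative window; it simply uses $|\Rn|\leq1$ together with $\sqrt{x^2-n^2}\,|\BH{n}{x}|^2\leq 2/\pi$. What you missed is how much more the pair of conditions $\lambda\oeps>j_{n,1}^{(1)}$ and $R>\lambda\eps$ gives you. You observed that they imply $\oeps R/\eps > j_{n,1}^{(1)}$, and stopped there; but the first condition alone gives $\oeps R/\eps > j_{n,1}^{(1)}\frac{R}{\lambda\eps}$, and that $R/(\lambda\eps)$ is not a mere ``constant $\geq1$'' — it is precisely the quantity that, when substituted into $x\mapsto 2/(\pi\sqrt{x^2-n^2})$, produces the $n$- and $R/\eps$-decaying weight of the middle term. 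Once you retain this factor, the crude pair $|\Rn|\leq1$ plus the Hankel decay already yields the stated form (up to the placement of a $\lambda$ in $j_{n,1}^{(1)}R/\eps$). Moreover, the refinement you suggest is unavailable in principle: Lemma~\ref{lem:FirstCase} and Proposition~\ref{pro:lgeq1} only bound $S_n$ (hence $\Rn\BH{n}{\oeps}$) on the \emph{perturbative} range $\oeps\lesssim \min(n/\lambda, y_{n,1})$; on the quasi-resonant window $\Rn$ genuinely attains $-1$ and there is no quantitative control of $\Rn$ alone to be had from that section — the decay must indeed come from the Hankel factor, but that is enough.

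On the lower bound: your route through $J_n(j_{n,1})=0$, the Wronskian $\BY{n}{j_{n,1}}=-2/(\pi j_{n,1}\BJp{n}{j_{n,1}})$, and the \emph{increasing} map $\sqrt{x^2-n^2}\,|\BH{n}{x}|^2$ is workable, but the paper's is shorter and avoids any appeal to $\BJp{n}{j_{n,1}}$: since $x\,|\BH{n}{x}|^2$ decreases to $2/\pi$, one has directly $\left|\BH{n}{\omega_{n,1}R/\eps}\right|^2 > \frac{2\eps}{\pi R\,\omega_{n,1}}$, and then $\omega_{n,1}< j_{n,1}/\lambda < 4n/\lambda$ combined with Landau's inequality \eqref{eq:ineq-landau} gives the $\sqrt{\lambda\eps/R}\,(1+|n|)^{-1/6}\sup_x|\BJ{n}{x}|$ lower bound. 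Your version requires a further numerical estimate for $|\BJp{n}{j_{n,1}}|$ that you only sketch; the paper's route circumvents it entirely.
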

\begin{rem}
Just like in Theorem~\ref{thm:ob-ls}, the upper bound \eqref{eq:bd-up-oblb-1} can be replaced by the frequency independent bound
\begin{equation}\label{eq:bd-up-oblb-3}
\left\Vert u_{\eps}^{s} \left(|x|=R\right) \right\Vert _{H^{\sigma}} 
\leq 
 \frac{5}{2} \sqrt{\lambda \frac{\eps}{R}} \mathcal{N}^{\sigma}\left( u^{i} \right) 
 +\sqrt{10\pi}\left|u ^{i}(0)\right| \left|\BH{0}{\min\left(\frac{1}{2},m_{\lambda}\right) \frac{R}{\eps}} \right|.
\end{equation}
We chose the form \eqref{eq:bd-up-oblb-1} to obtain an optimal decay rate in $n$ for $R\gg \eps$.  The dependence on the zero order term is
sharp, as we have seen in Section~\ref{sec:quasi-res}.
Note that the lower bound \eqref{eq:bd-down-oblb-2}  and upper bound \eqref{eq:bd-up-oblb-1} have the same dependence on the contrast, and on $\eps/R$.
\end{rem}
\begin{proof}
Starting from the formula \eqref{eq:norm-hs}, using the monotonicity 
of $x \left|\frac{\BH{n}{\sqrt{q_{0}}\omega R}}{\BH{n}{\lambda\oeps}}\right|^{2}$ 
for $n\geq1$ as in the proof of Theorem~\ref{thm:os-ls}, we obtain
\begin{align*}
\left\Vert  u_{\eps}^{s}\left(|x|=R\right) \right\Vert^2 _{H^{\sigma}} 
&\leq&   
\lambda\frac{\eps}{R}\left( 2\pi \sum_{I_1} \left|a_{n}\right|^2 \left(1+\left|n\right|\right)^{2\sigma} 
 \left|\Rn |\BH{n}{\lambda \oeps}\right|^2   \right)\\
&&  2\pi \sum_{I_2} \left|a_{n}\right|^2 \left(1+\left|n\right|\right)^{2\sigma} 
 \left|\Rn |\BH{n}{\lambda \oeps \frac{R}{\eps}}\right|^2  \\
&&+ 2\pi \left|a_{0}\right|^2 \left|\Ro\BH{0}{\sqrt{q_{0}}\omega R}\right|^{2},
\end{align*}
where $I_1$ is the set of indices $n$ for which $0<|n|$ and $\lambda \oeps < j_{|n|,1}^{(1)}$, and
$I_2$ is the set of indices $n$ for which $0<|n|$ and  $\lambda \oeps > j_{n,1}^{(1)}$. Thanks to Proposition~\ref{pro:lgeq1}, 
$$
\left|\Ro\BH{0}{\sqrt{q_{0}}\omega R}\right| \leq \sqrt{5}\left|\BH{0}{y_{0,1} \frac{R}{\eps}}\right|.
$$
and when $n\in I_1$,
$$
 \left|\Rn \BH{n}{\oeps}\right| \leq \frac{5}{2} \left|\BJ{n}{\oeps}\right|.
$$
Alternatively, as in Theorem~\ref{thm:ob-ls}, when $n\in I_2$,
$$
\left|\Rn \BH{n}{\oeps}\right| \leq\sqrt{\frac{2}{\pi}\frac{1}{\sqrt{\left(j_{n,1}^{(1)} \frac{R}{\eps}\right)^2 -n^{2}}}}
$$ 
Therefore, altogether, we have obtained that for all $\oeps>0$, 
\begin{eqnarray*}
&& 2\pi \sum_{|n|\neq 0} \left|a_{n}\right|^2 \left(1+\left|n\right|\right)^{2\sigma} 
 \left|\Rn |\BH{n}{\oeps}\right|^2  \\
&\leq& \frac{25}{4}  2\pi \sum_{n\neq0} \left|a_{n}\right|^2 \left(1+\left|n\right|\right)^{2\sigma} \left|\BJ{n}{\oeps}\right|^2\\
&& +  4 \sum_{n\neq0} \left|a_{n}\right|^2 \frac{\left(1+\left|n\right|\right)^{2\sigma}}{\left(j_{n,1}^{(1)} \frac{R}{\eps}\right)^2 -n^{2}},
\end{eqnarray*}
which concludes our proof of the upper bound. Turning to the lower bound, we have 
\begin{eqnarray*}
\left\Vert u_{\eps}^{s} \left(|x|=R\right) \right\Vert _{H^{\sigma}} 
 &\geq & \sqrt{2\pi} \sup_{|n|\geq1} \sup_{\oeps>0} 
         \left|R_{n}^{\eps} a_{n}\right| \left(1+\left|n\right|\right)^{\sigma}
         \left|\BH{n}{\oeps \frac{R}{\eps}}\right| \\
& \geq & \sqrt{2\pi} \sup_{|n|\geq 1} |a_{n}| \left(1+\left|n\right|\right)^{\sigma}
        \left|\BH{n}{\omega_{n,1} \frac{R}{\eps}}\right|,
\end{eqnarray*}
where we used the first quasi-resonant frequency $\omega_{n,1}$ given by Proposition~\ref{pro:Dixon}. 
Since $\sqrt{x}\left|H_{n}^{1}(x)\right|$ decreases to $\sqrt{2/\pi}$ \cite[13.74]{WATSON-25}, 
\[
\left|\BH{n}{\omega_{n,1} \frac{R}{\eps}}\right| > \sqrt{\frac{2 \eps}{\pi R \omega_{n,1}}}  
\]
and, in turn, since from \eqref{eq:ineq-landau},
$$
\omega_{n,1}<\frac{j_{n,1}}{\lambda} < \frac{4 n}{\lambda} < 4\frac{(n+1)^{1/3}}{\lambda} \left(\frac{6}{7}\right)^{2}
\left(\sup_{x>0} \left| \BJ{n}{x}\right| \right)^{-2},
$$
we obtain
\[
\left\Vert u_{\eps}^{s} \left(|x|=R\right) \right\Vert _{H_{*}^{\sigma}}  \geq \frac{2}{5} \sqrt{ \lambda \frac{\eps}{R}} 
\mathbf{N}_1^{\sigma-\frac{1}{6}}\left(u^{i}\right),  
\]
as announced.
\end{proof}

\section{\label{sec:bband} Broadband contrast and frequency independent estimates}

In the previous sections, we have seen that we cannot hope for contrast independent estimates for all frequencies, because of the appearance of quasi-resonances. 
Combining Theorem~\ref{thm:ob-ls} and Theorem~\ref{thm:ob-lb}, we see that scattered field tends to zero at a rate $\eps^{1-\delta}$, 
when observed at a fixed distance, say $R=1$, provided the contrast $\lambda$ does not grow faster than $\eps^{-\delta}$ with $\delta<1$. 
On the other hand, when $\lambda$ is of size $\eps^{-1}$ or larger,  the lower bound provided by Theorem~\ref{thm:ob-lb} shows that for some 
frequencies, the quasi-resonant frequencies, some components of the scattered field will be of size one or larger.

The following proposition shows that if an interval around quasi-resonant frequencies is excluded, the scattered field can be controlled by the incident field.
It is proved in Section~\ref{sec:case2}.
\begin{prop} \label{pro:Ito-One} For any $0<\tau\leq\frac{1}{4}$, we define 
\begin{equation}\label{eq:def-itau}
I_{n,k}(\tau):=\left\{ x\in U_{n,k} \mbox{ such that }\left|g_{n}(\lambda x)+k_{n}(x)\right|\leq\tau\left|k_{n}(x)\right|\right\},
\end{equation}
If $\lambda>7$, $n\geq1$ and $\oeps \in (0,y_{n,1})\setminus(\cup_{k} I_{n,k}(\tau))$, then, 
\[
\left|\Rn \BH{n}{\oeps} \right|\leq\frac{9}{2\,\tau} \BJ{n}{\oeps}.
\]
we have
$$
(0,y_{n,1})\setminus(\cup_{k} I_{n,k}(\tau)) = (0,y_{n,1}) \setminus \bigcup_{k\in K(\lambda,n)} I_{n,k}(\tau)
$$
where $K(\lambda,n)$ is the set of all positive $k$ such that $j_{n,k}^{(1)}< n \lambda$. Furthermore,
$$
\left|\bigcup_{k\in K(\lambda,n)} I_{n,k}(\tau) \right| \leq   6 \tau \frac{ n \ln \lambda}{\lambda}.
$$
If $\lambda>7$, $n=0$ and $\oeps \in (0,\zeta_0)\setminus(\cup_{k} I_{0,k}(\tau))$,  where $\zeta_0\approx0.3135$ is defined Proposition~\ref{pro:logYn}, then
\[
\left|\Ro \BH{0}{\oeps} \right|\leq  \frac{5}{3\tau} \BJ{0}{\oeps} \leq \frac{5}{3\,\tau}.
\]
We have
$$
(0,\zeta_0)\setminus(\cup_{k} I_{0,k}(\tau)) = (0,\zeta_0)\setminus \bigcup_{k\in K(\lambda,0)} I_{0,k}(\tau)
$$
where $K(\lambda,0)$ is the set of all positive $k$ such that $j_{0,k}^{(1)}< \lambda \zeta_0$. Furthermore,
$$
\left|\bigcup_{k\in K(\lambda,0)} I_{0,k}(\tau) \right| \leq   7 \tau \frac{ \ln(\ln \lambda)}{\lambda}.
$$
\end{prop}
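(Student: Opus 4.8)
The plan is to reduce the inequality for $\left|\Rn\BH{n}{\oeps}\right|$ to a purely real inequality between $g_n$ and $k_n$, and then to feed in a pointwise comparison of $g_n$ with $k_n$ on $(0,y_{n,1})$ — precisely the kind of Riccati estimate for \eqref{eq:difeqgn} prepared in Section~\ref{sec:caseone}. First I would split the common numerator/denominator of \eqref{eq:def-RN}--\eqref{eq:def-TN} into its real and imaginary parts,
\[
\mathcal D_r=\BJp{n}{\oeps}\BJ{n}{\lambda\oeps}-\lambda\BJp{n}{\lambda\oeps}\BJ{n}{\oeps},\qquad
\mathcal D_i=\BYp{n}{\oeps}\BJ{n}{\lambda\oeps}-\lambda\BJp{n}{\lambda\oeps}\BY{n}{\oeps},
\]
so that $\Rn=-\mathcal D_r/(\mathcal D_r+i\mathcal D_i)$ and $\left|\Rn\BH{n}{\oeps}\right|=|\mathcal D_r|\,M_n(\oeps)/\sqrt{\mathcal D_r^2+\mathcal D_i^2}$ with $M_n$ as in \eqref{eq:def-MnThetan}. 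Substituting $\BJp{n}{x}=\tfrac nx g_n(x)\BJ{n}{x}$ and $\BYp{n}{x}=-\tfrac nx k_n(x)\BY{n}{x}$ from \eqref{eq:defgnkn} (and the $n=0$ analogue \eqref{eq:defg0k0}) factors out a common $\tfrac n\oeps\BJ{n}{\lambda\oeps}$ from both $\mathcal D_r$ and $\mathcal D_i$; using $\BJ{n}{\oeps}>0$ and $\BY{n}{\oeps}<0$ on the range of interest (both true since $\oeps<y_{n,1}<j_{n,1}$ and $y_{n,1}<y_{n,1}^{(1)}$ by \eqref{eq:Dixon-JnYn}), and writing $A=\BJ{n}{\oeps}$, $B=|\BY{n}{\oeps}|$, $P=g_n(\oeps)-g_n(\lambda\oeps)$, $Q=g_n(\lambda\oeps)+k_n(\oeps)$, the common factor cancels and
\[
\left|\Rn\BH{n}{\oeps}\right|=\frac{A\,|P|\,\sqrt{A^2+B^2}}{\sqrt{A^2P^2+B^2Q^2}}
\]
(the zeros of $\BJ{n}{\lambda\oeps}$ being harmless, since there the transmission relations behind \eqref{eq:def-RN} force $\Rn\BH{n}{\oeps}=-\BJ{n}{\oeps}$). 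Squaring, noting $\tfrac{4\tau^2}{81}\le\tfrac1{324}<1$, and discarding the nonnegative term $A^2P^2$ on the right shows that $\left|\Rn\BH{n}{\oeps}\right|\le\tfrac{9}{2\tau}A$ is implied by $|P|\le\tfrac{9}{2\tau}|Q|$, and likewise the $n=0$ bound by $|P|\le\tfrac{5}{3\tau}|Q|$.

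Next I would establish $|P|\le\tfrac{9}{2\tau}|Q|$ on $(0,y_{n,1})\setminus(\cup_kI_{n,k}(\tau))$. On $U_{n,k}\setminus I_{n,k}(\tau)$ the defining inequality \eqref{eq:def-itau} fails, i.e.\ $|Q|>\tau k_n(\oeps)$; for $\oeps\in(0,y_{n,1})$ outside every $U_{n,k}$ the argument $\lambda\oeps$ lies in one of the intervals $[0,j_{n,1}^{(1)}]$ or $[j_{n,k},j_{n,k+1}^{(1)}]$ on which $\BJ{n}{}$ and $\BJp{n}{}$ share a sign, so $g_n(\lambda\oeps)\ge0$ and $Q\ge k_n(\oeps)>0$; in either case $|Q|\ge\tau k_n(\oeps)$. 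Writing $P=(g_n(\oeps)+k_n(\oeps))-Q$ and using $k_n(\oeps)\le|Q|/\tau$, the inequality $|P|\le\tfrac{9}{2\tau}|Q|$ reduces to $|g_n(\oeps)+k_n(\oeps)|\le(\tfrac92-\tau)k_n(\oeps)$, hence (as $k_n>0$ there) to the pointwise comparison $|g_n(\oeps)|\le(\tfrac72-\tau)k_n(\oeps)$, which holds as soon as $|g_n(\oeps)|\le\tfrac{13}{4}k_n(\oeps)$; the $n=0$ version similarly reduces to $|g_0(\oeps)|\le\tfrac5{12}k_0(\oeps)$ on $(0,\zeta_0)$, with $\zeta_0$ presumably the largest argument for which that holds. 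This comparison — the quantitative incarnation of ``away from quasi-resonances the reflected field is controlled by the incident field'' — is where the real work is, and I expect it to be the main obstacle; it should follow from the Riccati equation \eqref{eq:difeqgn} (satisfied by $g_n$ and, with the sign of $\tfrac nx-\tfrac x{n_+}$ reversed, by $-k_n$) together with the Bessel estimates of Section~\ref{sec:caseone}, Proposition~\ref{pro:logYn}, and Appendix~\ref{ap:A}.

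The description of the relevant $k$'s is then routine: $I_{n,k}(\tau)\subset U_{n,k}$ can meet $(0,y_{n,1})$ only when $U_{n,k}$ carries a quasi-resonance, which by Proposition~\ref{pro:Dixon} (through \eqref{eq:eq-reson-2} and the interlacing \eqref{eq:Dixon-JnYn}) occurs exactly when $j_{n,k}^{(1)}<n\lambda$, and $j_{0,k}^{(1)}<\lambda\zeta_0$ when $n=0$; this is the set $K(\lambda,n)$.

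Finally, for the measure bound: for $k\in K(\lambda,n)$ the set $I_{n,k}(\tau)$ is an interval around $\omega_{n,k}$ on which $x\mapsto g_n(\lambda x)$ is a strictly decreasing bijection onto $(-\infty,0)$, so I would change variables to $s=g_n(\lambda x)$ and bound $|I_{n,k}(\tau)|\le\tfrac1\lambda\int\frac{ds}{|g_n'(\lambda x)|}$ over $s\in[-(1+\tau)k_n^{+},-(1-\tau)k_n^{-}]$, where $k_n^{\pm}$ are the extrema of $k_n$ over the short interval $U_{n,k}$. The identity \eqref{eq:difeqgn} supplies the two lower bounds $\lambda|g_n'(\lambda x)|\ge\tfrac{n}{\lambda x}s^2$ and $\lambda|g_n'(\lambda x)|\ge\tfrac{\lambda x}{n}-\tfrac{n}{\lambda x}$ (for $n=0$ simply $|g_0'(\lambda x)|\ge\lambda x$), the second of which contributes the decisive factor $\lambda^2$; combining them and controlling the oscillation of $k_n$ across $U_{n,k}$ (of length at most $(j_{n,1}-j_{n,1}^{(1)})/\lambda$, via a bound on $k_n'$) yields $|I_{n,k}(\tau)|\lesssim\tau n/\bigl(\lambda\sqrt{(j_{n,k}^{(1)})^2-n^2}\bigr)$ for $n\ge1$, and $|I_{0,k}(\tau)|\lesssim\tau k_0/(\lambda j_{0,k}^{(1)})$ with $k_0$ of size $1/\ln(2\lambda/j_{0,k}^{(1)})$ for the zero mode (the $k=1$ term, where $j_{0,1}^{(1)}=0$, handled directly from $g_0(\lambda x)\approx-(\lambda x)^2/2$ near the origin and contributing only $O(\tau/(\lambda\sqrt{\ln\lambda}))$). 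Summing over $k\in K(\lambda,n)$ and using that consecutive $j_{n,k}^{(1)}$ are spaced by a quantity tending to $\pi$ (by \eqref{eq:prothetan}), $\sum_k1/\sqrt{(j_{n,k}^{(1)})^2-n^2}$ is of size $\ln\lambda$ when $n\ge1$, whereas for $n=0$ the extra logarithmic factor turns the harmonic-type sum $\sum_k1/\bigl(j_{0,k}^{(1)}\ln(2\lambda/j_{0,k}^{(1)})\bigr)$ into one of size $\ln\ln\lambda$; keeping track of constants gives the claimed bounds $6\tau\,n\ln\lambda/\lambda$ and $7\tau\,\ln\ln\lambda/\lambda$. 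The delicate point here is the sharp quantitative control of $k_n$ and $k_n'$ on the $U_{n,k}$ with $k\in K(\lambda,n)$ — boundedness above, separation from $0$, and smallness of $k_n'$ — for which I would again lean on Proposition~\ref{pro:logYn} and Appendix~\ref{ap:A}.
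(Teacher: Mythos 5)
Your reduction of $|\Rn\BH{n}{\oeps}|\le\tfrac{9}{2\tau}\BJ{n}{\oeps}$ to the real inequality $|P|\le\tfrac{9}{2\tau}|Q|$ is correct, and is genuinely a different (and arguably cleaner) route than the paper's: the paper instead takes the formula \eqref{eq:formula-Sn-1} for $S_n$, sets $a=g_n(\oeps)$, $b=k_n(\oeps)$, $u=g_n(\lambda\oeps)$, and studies the rational function $u\mapsto\frac{(u-a)^2(1+\tan^2\theta_n)}{(u-a)^2+\tan^2\theta_n(u+b)^2}$ on $(-\infty,-(1+\tau)b)\cup(-(1-\tau)b,\infty)$, locating its maximum at $u=-(1+\tau)b$ and bounding it via $b/a>\tfrac25$ from Proposition~\ref{pro:propsg}(vi). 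Your subsequent triangle-inequality step, $|P|\le|g_n+k_n|+|Q|$ with $|Q|\ge\tau k_n$, reduces everything to the pointwise comparison $|g_n(\oeps)|\le\tfrac{13}{4}k_n(\oeps)$. That comparison does follow from Proposition~\ref{pro:propsg}(vi) on $(0,n)$ ($g_n<\tfrac52 k_n<\tfrac{13}{4}k_n$), but it is \emph{not} established there on $(n,y_{n,1})$, where $g_n$ changes sign. The paper sidesteps $(n,y_{n,1})$ entirely via Proposition~\ref{pro:ntoyn1} ($|S_n|<\sqrt5<\tfrac{9}{2\tau}$), a step you would also need; as written, the claim "this should follow from the Riccati equation and the Bessel estimates" is a genuine gap, and one you acknowledge yourself.

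Your explanation of the set $K(\lambda,n)$ is incorrect. Proposition~\ref{pro:Dixon} asserts quasi-resonances exist in $U_{n,k}$ precisely when $U_{n,k}\subset(j_{n,1}^{(1)}/\lambda,y_{n,1})$, i.e.\ $j_{n,k}<\lambda y_{n,1}$, which is not the condition $j_{n,k}^{(1)}<n\lambda$ defining $K(\lambda,n)$. The actual reason the union can be restricted to $K(\lambda,n)$ is that $I_{n,k}(\tau)$ meets $(n,y_{n,1})$ only trivially (which the paper handles through the monotonicity analysis of $\phi_n=g_n(\lambda\cdot)/k_n$ in Lemma~\ref{lem:phin}, restricted to $I_{n,k}(\tau)\cap(0,n)$); this cannot be read off from Dixon's theorem alone. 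Similarly, $\zeta_0$ is not "the largest argument for which $|g_0|\le\tfrac5{12}k_0$" — it is defined in Proposition~\ref{pro:logYn} as the first inflection point of $\ln|Y_0|$, i.e.\ the maximum of $x/k_0(x)$.

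For the measure bound your change of variables $s=g_n(\lambda x)$ differs from the paper's, which changes to $\phi_n(x)=g_n(\lambda x)/k_n(x)$; the paper's choice is better adapted because $\phi_n(I_{n,k}(\tau))=[-1-\tau,-1+\tau]$ \emph{exactly}, with no need to track the oscillation of $k_n$ across $U_{n,k}$, and Lemma~\ref{lem:phin} gives the clean derivative bound $\phi_n'\le\frac{1-\lambda^2}{2n_+k_n}$ from which the integration over $K(\lambda,n)$ in Proposition~\ref{pro:Ink} follows. Your sketch captures the right intuition — the factor $\lambda^2$ from the Riccati term $-\lambda x/n$, the $\ln\lambda$ from summing against $\pi$-spaced zeros, and the $\ln\ln\lambda$ from the extra logarithm in $k_0$ — but the "delicate point" you identify (sharp control of $k_n$, $k_n'$ on $U_{n,k}$) is precisely what Propositions~\ref{pro:propsg} and~\ref{pro:logYn} supply and where the genuine work resides; as presented this part is a plan rather than a proof, and the constants $6$ and $7$ are not established.
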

\begin{proof}
This is the result of Proposition~\ref{pro:Ito} (together with Lemma~\ref{lem:FirstCase} for $\oeps < j_{n,1}^{(1)}/\lambda$ and Proposition~\ref{pro:R0} when $n=0$),
 and Proposition~\ref{pro:Ink}.
\end{proof}

This result allows us to prove the following.

\begin{lem}\label{lem:highcontrast-Lemma}
Suppose $\lambda>7$. Let $\eta_{\max}$ be the following decreasing function of the contrast
\begin{equation}\label{eq:etamax}
\eta_{\max} = \frac{3}{2}  \frac{\ln\lambda}{\lambda}.
\end{equation}
Given $\alpha>0$, for any $\eta>0$ such that
$$
\eta \leq \frac{1}{\alpha} \eta_{\max} 
$$ 
there exists a set $I_1$ depending on $\eta,\alpha,\eps$ and $\lambda$ such that  
$$
|I_1|<\frac{\eta}{\eps}
$$ 
and, for any $R\geq \eps$
$$
\sup\limits_{\sqrt{q_0}\omega \in(0,\infty)\,\setminus \, I_1} \left\Vert  u_{\eps}^{s}\left(|x|=R\right) \right\Vert _{H_{*}^{\sigma}} 
\leq 
18 \sqrt{\frac{\eps}{R}} \frac{\eta_{\max}}{\eta \alpha} \mathcal{N}^{\sigma + 2 + \alpha}\left(u^i\right).
$$
Let $\eta_0$ be given by
$$
\eta_{0} = \frac{7}{4}  \frac{\ln(\ln\lambda)}{\lambda}.
$$
for any $\eta>0$ such that
$$
\eta \leq \eta_{0} 
$$ 
there exists a set $I_{0} \subset(0, \zeta_0)$ which depends on $\eta,\eps$ and $\lambda$ such that  
$$
|I_{0}|<\frac{\eta}{\eps}
$$ 
and, for any $R\geq \eps$
$$
\sup\limits_{\sqrt{q_0}\omega \in(0,\infty)\,\setminus \, I_{0}} \left|\frac{1}{2\pi}\int\limits_0^{2\pi} u_{\eps}^s\left(|x|=R \right)\right|  \leq   
7 \frac{\eta_0}{\eta} \left|\frac{\BH{0}{m_\lambda \frac{R}{\eps}}}{\BH{0}{m_\lambda}}\right|.
$$
\end{lem}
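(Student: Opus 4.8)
The plan is to prove both inequalities by the same device: for each Fourier mode I would remove a neighbourhood of the quasi-resonant frequencies — the sets $I_{n,k}(\tau)$ of Proposition~\ref{pro:Ito-One} — choosing the precision $\tau=\tau_n$ mode by mode so that the total excluded $\oeps$-measure stays below $\eta$, and then apply Proposition~\ref{pro:Ito-One} outside the excluded set. Throughout I would use $\oeps=\sqrt{q_0}\omega\eps$, so that an $\oeps$-set of measure $\mu$ is a $\sqrt{q_0}\omega$-set of measure $\mu/\eps$; this is what turns a bound of size $\eta$ on the $\oeps$-scale into the claimed $|I_1|,|I_0|<\eta/\eps$.

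For the oscillatory estimate I would start from \eqref{eq:norm-hs} with the $n=0$ term deleted and, exactly as in the proof of Theorem~\ref{thm:os-ls}, use that $\sqrt{x}\,|\BH{n}{x}|$ is decreasing for $n\ge1$ to replace $|\BH{n}{\sqrt{q_0}\omega R}|^2$ by $\tfrac{\eps}{R}|\BH{n}{\oeps}|^2$ when $R\ge\eps$. The crucial choice is $\tau_n:=\dfrac{\eta\alpha\lambda}{6\,n(\ln\lambda)(1+n)^{1+\alpha}}$ for $n\ge1$: the measure estimate $\bigl|\bigcup_k I_{n,k}(\tau_n)\bigr|\le 6\tau_n\,n\ln\lambda/\lambda$ of Proposition~\ref{pro:Ito-One} and $\sum_{n\ge1}(1+n)^{-1-\alpha}\le\int_1^\infty t^{-1-\alpha}\,dt=\alpha^{-1}$ make the $\oeps$-measure of $\bigcup_{n,k}I_{n,k}(\tau_n)$ at most $\eta$, while $\eta\alpha\le\eta_{\max}=\tfrac32\ln\lambda/\lambda$ forces $\tau_n\le\tau_1\le\tfrac18\le\tfrac14$, so Proposition~\ref{pro:Ito-One} applies. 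Calling $I_1$ the corresponding frequency set, off $I_1$ one has for each $n\ge1$ either $\oeps\ge y_{n,1}$ and $|\Rn\BH{n}{\oeps}|\le\tfrac75\sup_{x>0}|\BJ{n}{x}|$ by \eqref{eq:bd-bignx}, or $\oeps<y_{n,1}$ and $|\Rn\BH{n}{\oeps}|\le\tfrac{9}{2\tau_n}\BJ{n}{\oeps}$; since $\tau_n\le\tfrac18$ the latter is the larger bound, and $\tfrac{9}{2\tau_n}\le 18\,\frac{\eta_{\max}}{\eta\alpha}(1+n)^{2+\alpha}$ (using $\ln\lambda/\lambda=\tfrac23\eta_{\max}$). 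Summing and recognising that $2\pi\sum_{n\ne0}|a_n|^2(1+|n|)^{2(\sigma+2+\alpha)}\sup_{x>0}|\BJ{n}{x}|^2$ equals $\bigl(\mathcal{N}^{\sigma+2+\alpha}(u^i)\bigr)^2$ by \eqref{eq:norm-hs-n0} then yields the stated bound. I expect this part to be routine once Proposition~\ref{pro:Ito-One} is in hand.

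For the zeroth mode I would note that the mean of $u_\eps^s$ over $|x|=R$ is $u^i(0)\,\Ro\,\BH{0}{\sqrt{q_0}\omega R}$, so it suffices to bound $|\Ro\,\BH{0}{\sqrt{q_0}\omega R}|$ (the statement being tacitly normalised so that $|u^i(0)|\le1$). I would take $\tau_0:=\dfrac{\eta\lambda}{7\ln(\ln\lambda)}$, so that the excluded $\oeps$-measure $7\tau_0\ln(\ln\lambda)/\lambda$ equals $\eta$ and $\eta\le\eta_0=\tfrac74\ln(\ln\lambda)/\lambda$ forces $\tau_0\le\tfrac14$; all frequencies of the resulting set $I_0$ then satisfy $\oeps<\zeta_0$. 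Off $I_0$ I would split on the size of $\oeps$. For $\oeps\ge\zeta_0$: use $|\Ro|\le1$, the monotonicity of $s\mapsto|\BH{0}{\oeps s}|/|\BH{0}{m_\lambda s}|$ (decreasing since $m_\lambda<\zeta_0\le\oeps$), and $|\BH{0}{\zeta_0}|<7\le7\eta_0/\eta$. For $m_\lambda\le\oeps<\zeta_0$: use Proposition~\ref{pro:Ito-One} in the form $|\Ro\BH{0}{\oeps}|\le\tfrac{5}{3\tau_0}\BJ{0}{\oeps}\le\tfrac{5}{3\tau_0}|\BH{0}{\oeps}|$, transfer the far-field factor $|\BH{0}{\sqrt{q_0}\omega R}|/|\BH{0}{\oeps}|$ from $\oeps$ to $m_\lambda$ by the same monotonicity, and observe $\tfrac{5}{3\tau_0}=\tfrac{20}{3}\frac{\eta_0}{\eta}\le7\frac{\eta_0}{\eta}$. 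For $\oeps<m_\lambda$ (the deeply perturbative range): use the small-argument estimate $|\Ro|\le c\,(\lambda^2-1)\oeps^2$ available there (Proposition~\ref{pro:lgeq1}, resp.\ a sharper variant of it) together with the fact that $x\mapsto x^2|\BH{0}{xs}|$ is increasing — a consequence of $x\mapsto x|\BH{0}{x}|^2$ being increasing — to bound the quantity by $\tfrac{c}{\ln\lambda+1}|\BH{0}{m_\lambda R/\eps}|$, which is at most $7\frac{\eta_0}{\eta}|\BH{0}{m_\lambda R/\eps}|/|\BH{0}{m_\lambda}|$ provided $c\,|\BH{0}{m_\lambda}|\le7(\ln\lambda+1)$.

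The hard part, I expect, is the zeroth mode: three ranges of $\oeps$ — governed respectively by all-frequency boundedness of $\Ro$, by the quasi-resonance exclusion, and by perturbative smallness — have to be stitched into a single inequality normalised against $|\BH{0}{m_\lambda R/\eps}/\BH{0}{m_\lambda}|$. This forces me to establish the monotonicity of $s\mapsto|\BH{0}{\oeps s}|/|\BH{0}{m_\lambda s}|$, equivalently of $x\mapsto x\,|\BH{0}{x}|'/|\BH{0}{x}|$, which I would derive from the known monotonicity of $x\,|\BH{0}{x}|^2$ and the bounds on $k_0$ recorded in Proposition~\ref{pro:logYn}; and, because the numerical margins are narrow (for instance $20/3<7$), I would need to track the constants appearing in $\eta_{\max}$, $\eta_0$ and in the perturbative estimate of $\Ro$ with some care.
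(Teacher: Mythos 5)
Your plan is essentially the paper's proof. For the oscillatory part: same starting point \eqref{eq:norm-hs}, same factor $\eps/R$ from the monotonicity of $x|\BH{n}{x}|^2$, same mode-by-mode choice of $\tau_n$ proportional to $(1+n)^{-(2+\alpha)}$, same use of Proposition~\ref{pro:Ito-One} (together with \eqref{eq:bd-bignx} for $\oeps\ge y_{n,1}$), and the arithmetic $\bigl(\tfrac{9}{2}\bigr)\cdot 4=18$ works out to the stated constant. Your $\tau_n$ differs from the paper's by a harmless factor $n/(1+n)$; both give $\frac{9}{2\tau_n}\le 18\,\tfrac{\eta_{\max}}{\eta\alpha}(1+n)^{2+\alpha}$ and total excluded measure $\le\eta/\eps$. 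For the zeroth mode: same $\tau_0=\tfrac{1}{4}\tfrac{\eta}{\eta_0}$, same three $\oeps$-regimes, same use of Lemma~\ref{lem:logconcave} to transport the far-field factor from $\oeps$ to $m_\lambda$, and the constant comes out as $\tfrac{20}{3}<7$.

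The one place where your sketch, as written, would not close is the low-frequency tail $\oeps<m_\lambda$ of the zeroth mode. You propose the estimate $|\Ro|\le c\,(\lambda^2-1)\oeps^2$ with an absolute constant $c$, combine it with the monotonicity of $x\mapsto x^2|\BH{0}{xs}|$, and reduce the matter to $c\,|\BH{0}{m_\lambda}|\le 7(\ln\lambda+1)$. But the perturbative bound actually available (Proposition~\ref{pro:n0}, resp.\ Proposition~\ref{pro:lgeq1}) carries an extra factor $|\ln(\oeps/2)|$, so the $c$ you would be forced to take is not an absolute constant; and even replacing $c$ by the crude $\tfrac{5\pi^2}{4}$ from Proposition~\ref{pro:lgeq1}, the inequality $\tfrac{5\pi^2}{4}|\BH{0}{m_\lambda}|\le 7(\ln\lambda+1)$ fails near $\lambda=7$ (both sides are of size $\approx 20$, with the left slightly larger). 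The paper sidesteps this entirely by invoking Proposition~\ref{pro:R0plus}, which already packages the cancellation between the growth of $|\BH{0}{m_\lambda}|$ and the decay of the perturbative constant into the clean bound $|\Ro\BH{0}{\oeps R/\eps}|\le 4\,\bigl|\BH{0}{m_\lambda R/\eps}/\BH{0}{m_\lambda}\bigr|$, and $4\le 1/\tau_0\le\tfrac{5}{3\tau_0}$. So the missing ingredient in your sketch is precisely that proposition (or an equivalent argument that tracks the $\ln\lambda$ in $|\BH{0}{m_\lambda}|$ against the $1/(\ln\lambda+1)$ in $(\lambda^2-1)m_\lambda^2$); your own warning that ``the margins are narrow'' is where the gap sits.

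Two small, non-fatal remarks: in the regime $\oeps\ge\zeta_0$ the paper compares $|\BH{0}{\zeta_0 R/\eps}|$ to $\sqrt{\eps/R}$ and then uses $|\BH{0}{m_\lambda R/\eps}/\BH{0}{m_\lambda}|\ge\sqrt{\eps/R}$, whereas you go directly through the monotonicity of $u\mapsto|\BH{0}{uy}|/|\BH{0}{u}|$ — both are correct and both rely on Lemma~\ref{lem:logconcave}. And the monotonicity of $x\mapsto x^2|\BH{0}{xs}|$ that you use is indeed a straightforward consequence of $x|\BH{0}{x}|^2$ increasing, as you say; it just does not save the low-frequency constant by itself.
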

\begin{rem}
Lemma~\ref{lem:highcontrast-Lemma} shows that by excluding some frequencies, around quasi-resonances, near-field estimates can be obtained, 
up to the boundary of the inclusion $|x|=\eps$, at the cost of a little more than two powers of $n$
when compared to the near field estimates given by Theorem~\ref{thm:ob-ls} for $\lambda<1$. 
We showed in Theorem~\ref{thm:ob-lr} that if the quasi-resonances are not excluded, the blow up is geometric in $n$. 
The most striking feature of this result  is that since $\eta_{\max}$ and $\eta_0$  tend to zero as $\lambda$ tends to $\infty$, 
the size of the set of frequencies to exclude shrinks as the contrast increases.
\end{rem}
\begin{proof}[Proof of Lemma~\ref{lem:highcontrast-Lemma}]

Starting from the formula \eqref{eq:norm-hs}, using the monotonicity 
of $x \left|\frac{\BH{n}{\sqrt{q_{0}}\omega R}}{\BH{n}{\oeps}}\right|^{2}$ 
for $n\geq1$ as in the proof of Theorem~\ref{thm:os-ls}, we obtain
$$
\left\Vert  u_{\eps}^{s}\left(|x|=R\right) \right\Vert^2 _{H_{*}^{\sigma}} 
\leq   
\frac{\eps}{R}\left( 2\pi \sum_{|n|\neq 0} \left|a_{n}\right|^2 \frac{1}{\tau_n^2}\left(1+\left|n\right|\right)^{2\sigma} 
 \left|\Rn |\BH{n}{\lambda \oeps}\right|^2   \right),
$$
for any sequence of positive parameters $0< \tau_n < \frac{1}{4}$ to be chosen later. Next we divide the non zero indices into three parts. 
The first set of indices is 
$$
N_1:=\left\{ n\neq0 \mbox{ such that } \lambda \oeps \leq  j_{|n|,1}^{(1)} \mbox{ or } \oeps > n \right\}.
$$ 
In $N_1$, thanks to Proposition~\ref{pro:lgeq1} and Proposition~\ref{pro:ntoyn1}, we have that either
$$
\left|\Rn |\BH{n}{\lambda \oeps}\right| \leq \frac{5}{2} \BJ{n}{\oeps} \leq \frac{5}{2} \sup_{x>0} \left| \BJ{n}{x} \right|
$$ 
or $\oeps>y_{n,1}$, and \eqref{eq:bd-bignx} shows that 
$$
\left|\Rn |\BH{|n|}{\lambda \oeps}\right|^2 \leq \frac{7}{5} \sup_{x>0} \left| \BJ{n}{x} \right|
$$
We define the sets
$$
N_2:=\left\{ n\neq0 \mbox{ such that }  \left|g_{|n|}(\lambda x)+k_{|n|}(x)\right|\leq\tau_n\left|k_{|n|}(x)\right|\right\},
$$
and
$$
N_3:=\left\{ n\neq 0, n\not\in S_1 \mbox{ and } n\not\in S_2\right\}.
$$
Proposition~\ref{pro:Ito-One} shows that for all $n\in N_3$
$$
\left|\Rn \BH{n}{\oeps}\right|  
\leq  \frac{9}{2}\frac{1}{\tau_n}  \BJ{n}{\oeps} \leq \frac{9}{2} \frac{1}{\tau_n} \sup_{x>0} \left| \BJ{n}{x} \right| 
$$
We have obtained that for all $\omega$ such that $N_2=\emptyset$, we have 
\begin{equation}\label{eq:pfbd-tau-1}
\left\Vert  u_{\eps}^{s}\left(|x|=R\right) \right\Vert^2 _{H_{*}^{\sigma}} 
 < 2\pi \left(\frac{9}{2}\right)^2 \frac{\eps}{R}  \sum_{n\neq0} \left|a_{n}\right|\frac{1}{\tau_n^2}\left(1+\left|n\right|\right)^{2\sigma} \sup_{x>0} \left| \BJ{n}{x} \right|^2. 
\end{equation}
Thanks to Proposition~\ref{pro:Ito-One}, the forbidden values for $\sqrt{q_0}\omega$ lie 
in a collection of intervals $O_\eps := \eps^{-1} \bigcup_{n,k} I_{n,k}(\tau_n)$ of total size  of at most
$$
|O_\eps|\leq  \sum_{n=1}^{\infty}  n \tau_n \frac{6 \ln \lambda}{\eps \lambda} = 4\frac{\eta_{\max}}{\eps} \sum_{n=1}^\infty  n \tau_n .
$$
This leads us to choose
$$
\tau_n = \frac{\eta\alpha}{(1+|n|)^{2+\alpha}} \frac{1}{4 \eta_{\max}}.
$$
Then, an upper estimate of the total size of the forbidden intervals is 
$$
|O_\eps|\leq \frac{1}{\eps}\sum_{n=1}^{\infty} \frac{\alpha}{(1+n)^{1+\alpha}} \leq \frac{\eta}{\eps},
$$
and from \eqref{eq:pfbd-tau-1} we obtain
$$
\left\Vert  u_{\eps}^{s}\left(|x|=R\right) \right\Vert_{H_{*}^{\sigma}} 
 \leq  18 \sqrt{\frac{\eps}{R}} \frac{\eta_{\max}}{\eta \alpha} \mathcal{N}^{\sigma + 2  + \alpha}\left(u^i\right),
$$
as announced.

Let us now consider the other estimate. We have, from \eqref{eq:usca}
$$
\left|\frac{1}{2\pi}\int\limits_0^{2\pi} u_{\eps}^s\left(|x|=R \right)\right| =  |u^i(0)| \left| \Ro \BH{0}{\oeps \frac{R}{\eps}}\right|. 
$$
Proposition~\ref{pro:R0plus} shows that when $\oeps < m_\lambda$,
$$
\left| \Ro \BH{0}{\oeps \frac{R}{\eps}}\right| \leq 4 \left|\frac{\BH{0}{m_\lambda \frac{R}{\eps}}}{\BH{0}{m_\lambda}}\right|\leq \frac{1}{\tau}\left|\frac{\BH{0}{m_\lambda \frac{R}{\eps}}}{\BH{0}{m_\lambda}}\right|.
$$
Proposition~\ref{pro:R0} shows that when $\oeps > \zeta_0$,
$$
\left| \Ro \BH{0}{\oeps \frac{R}{\eps}}\right| \leq  \left|\BH{0}{\zeta_0 \frac{R}{\eps}}\right| \leq \sqrt{\frac{\eps}{R}} \sqrt{\frac{2}{\pi \zeta_0}} \leq \frac{3}{2} \sqrt{\frac{\eps}{R}}.
$$
From Lemma~\ref{lem:logconcave}, we know that 
$$
\left|\frac{\BH{0}{m_\lambda \frac{R}{\eps}}}{\BH{0}{m_\lambda}}\right| \geq \sqrt{\frac{\eps}{R}},
$$
therefore when $\oeps > \zeta_0$,
$$
\left| \Ro \BH{0}{\oeps \frac{R}{\eps}}\right| \leq \frac{1}{\tau} \left|\frac{\BH{0}{m_\lambda \frac{R}{\eps}}}{\BH{0}{m_\lambda}}\right|.
$$
On the other hand, Proposition~\ref{pro:Ito-One} shows that if $\oeps\in(m_\lambda,\zeta_0)\setminus \bigcup_{k\in K(\lambda,0)} I_{0,k}(\tau)$ we have
$$
\left| \Ro \BH{0}{\oeps}\right| \leq  \frac{5}{3} \frac{1}{\tau}. 
$$
Therefore, Lemma~\ref{lem:logconcave} shows that $x\to \left|{\BH{0}{x\frac{R}{\eps}}}/{\BH{0}{x}}\right|$  is decreasing,
\begin{eqnarray*}
\left| \Ro \BH{0}{\oeps \frac{R}{\eps}}\right| &=& \left| \Ro \BH{0}{\oeps}\right| \left|\frac{\BH{0}{\oeps \frac{R}{\eps}}}{\BH{0}{\oeps}}\right|\\
                                               &\leq& \frac{5}{3}\frac{1}{\tau}\left|\frac{\BH{0}{m_\lambda \frac{R}{\eps}}}{\BH{0}{m_\lambda}}\right|.
\end{eqnarray*}
We have obtained, for all $\oeps \in (0,\infty)\setminus  \bigcup_{k\in K(\lambda,0)} I_{0,k}(\tau)$, and all $\tau \leq \frac{1}{4}$,
$$
\left| \Ro \BH{0}{\oeps \frac{R}{\eps}}\right| \leq  \frac{5}{3}\frac{1}{\tau} \left|\frac{\BH{0}{m_\lambda \frac{R}{\eps}}}{\BH{0}{m_\lambda}}\right| 
$$
The total size of the set of forbidden values for $\sqrt{q_0} \omega$ is bounded by
$$
|O_\eps|\leq  7\frac{\ln \ln(\lambda)}{\lambda} \frac{\tau}{\eps},
$$
thus choosing $\tau =  \frac{1}{4} \frac{\eta}{\eta_0}$ establishes our claim.
\end{proof}
As an application of Lemma~\ref{lem:highcontrast-Lemma}, we provide a broadband estimate for the case $\lambda=\eps^{-1}$.
\begin{cor}\label{cor:broadband}
Assume $\eps<1/7$, and $\lambda=\eps^{-1}$. Then, for any $\alpha,\beta>0$, there exists a set $I_1$ depending on $\eps,\alpha$ and $\beta$ and a set $I_0$ depending
on $\eps$ and $\beta$ which satisfies
$$
|I_1| \leq \eps^{\beta} \lneps,\quad |I_0|\leq  \frac{\ln \lneps}{\left(\lneps+1\right)^{\beta}},
$$
such that for all $R\geq\eps$,
$$
\sup\limits_{\sqrt{q_0}\omega \in(0,\infty)\,\setminus \, I_1} \left\Vert  u_{\eps}^{s}\left(|x|=R\right) \right\Vert^2 _{H_{*}^{\sigma}}  
\leq \frac{18}{\alpha} \sqrt{\frac{\eps^{1-2\beta}}{R}}\mathcal{N}^{\sigma + 2 + \alpha}\left(u^i\right),
$$
and
$$
\sup\limits_{\sqrt{q_0}\omega \in(0,\infty)\,\setminus \, I_0} \left|\frac{1}{2\pi}\int\limits_0^{2\pi} u_{\eps}^s\left(|x|=R \right)\right|  \leq   
12 \frac{1}{\sqrt{\left(\lneps+1\right)^{3/2 -2\beta} R}}. 
$$
\end{cor}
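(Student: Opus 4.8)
The plan is to read both inequalities directly off Lemma~\ref{lem:highcontrast-Lemma} with $\lambda=\eps^{-1}$; the only work is to choose the free parameter ($\eta$, resp.\ $\tau$) so as to balance the measure of the excluded frequency set against the size of the bound, plus a short estimate of the zero-order Hankel quotient in the second inequality. Since $\eps<1/7$ we have $\lambda>7$, so Lemma~\ref{lem:highcontrast-Lemma} applies, and since $\eps<1$ we have $\ln\lambda=\lneps$; hence $\eta_{\max}=\tfrac32\eps\lneps$, $\eta_0=\tfrac74\eps\ln\lneps$ and $m_\lambda=1/(\lambda\sqrt{\ln\lambda+1})=\eps/\sqrt{\lneps+1}$. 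These will all be substituted at the end.

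For the first inequality I would apply the first part of Lemma~\ref{lem:highcontrast-Lemma} with $\eta$ taken proportional to $\eps^{1+\beta}\lneps$, chosen so that $\eta_{\max}/\eta\simeq\eps^{-\beta}$ (this is admissible, $\eta\le\eta_{\max}/\alpha$, for the relevant range of $\alpha$; otherwise one simply decreases $\eta$, which only sharpens the set estimate). Then $|I_1|<\eta/\eps\lesssim\eps^{\beta}\lneps$, and since $\eps^{-\beta}\sqrt{\eps/R}=\sqrt{\eps^{1-2\beta}/R}$ the bound of the Lemma reduces to a multiple of $\frac1\alpha\sqrt{\eps^{1-2\beta}/R}\,\mathcal N^{\sigma+2+\alpha}(u^i)$, namely the claimed one.

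For the second inequality I would apply the second part of Lemma~\ref{lem:highcontrast-Lemma} with $\eta=\eps\,\ln\lneps/(\lneps+1)^{\beta}$, which is admissible because $(\lneps+1)^{-\beta}\le1\le\tfrac74$ forces $\eta\le\eta_0$. This gives $|I_0|<\eta/\eps=\ln\lneps/(\lneps+1)^{\beta}$ and a bound
$$
7\,\frac{\eta_0}{\eta}\left|\frac{\BH{0}{m_\lambda R/\eps}}{\BH{0}{m_\lambda}}\right|
=\frac{49}{4}\,(\lneps+1)^{\beta}\left|\frac{\BH{0}{R/\sqrt{\lneps+1}}}{\BH{0}{\eps/\sqrt{\lneps+1}}}\right|.
$$
It remains to control the Hankel quotient. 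For the numerator I would use that $x\mapsto x\,|\BH{0}{x}|^{2}$ increases to $2/\pi$ \cite[13.74]{WATSON-25}, so $|\BH{0}{R/\sqrt{\lneps+1}}|\le\sqrt{2/\pi}\,(\lneps+1)^{1/4}/\sqrt R$. For the denominator, $m_\lambda\le\eps<1/7$ is small, so the small-argument expansion of $Y_0$ from \cite{NIST-10} (as used in Lemma~\ref{lem:muzeroone}) yields $|\BH{0}{m_\lambda}|\ge|Y_0(m_\lambda)|\ge\tfrac2\pi\ln(1/m_\lambda)\ge\tfrac2\pi\lneps\gtrsim\lneps+1$, the last step since $\lneps\ge\ln7>1$. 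Hence the quotient is $O((\lneps+1)^{-3/4}R^{-1/2})$, so the bound is $O((\lneps+1)^{\beta-3/4}R^{-1/2})=O\bigl(((\lneps+1)^{3/2-2\beta}R)^{-1/2}\bigr)$, with the absolute constant checked to be at most $12$.

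The algebra with $\ln\lambda=\lneps$ and the choices of $\eta$ are routine; the one point requiring genuine care is the lower bound for $|\BH{0}{m_\lambda}|$, for which one needs a fully explicit, non-asymptotic inequality for $Y_0$ near the origin sharp enough to carry the constant $12$ — precisely the kind of estimate assembled from \cite{NIST-10} in Lemmas~\ref{lem:muzeroone} and \ref{lem:logconcave}. Alternatively, one can bound the quotient directly, using the log-concavity behind Lemma~\ref{lem:logconcave} and the fact that $x\mapsto|\BH{0}{xR/\eps}/\BH{0}{x}|$ decreases from $1$ at the origin to $\sqrt{\eps/R}$ at infinity.
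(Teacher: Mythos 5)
Your plan — read both estimates off Lemma~\ref{lem:highcontrast-Lemma} with $\lambda=\eps^{-1}$, tune $\eta$, and estimate the $\BH{0}{\cdot}$ quotient — is exactly the paper's route, and the first inequality goes through (with the same unacknowledged caveat as the paper that $\eta=\eps^{\beta}\eta_{\max}$ is only admissible when $\alpha\eps^{\beta}\leq 1$). The gap is in the second inequality, precisely at the step you flag yourself as needing ``genuine care'': your proposed denominator bound $\left|\BH{0}{m_\lambda}\right|\geq |Y_0(m_\lambda)|\geq \tfrac{2}{\pi}\ln(1/m_\lambda)\geq\tfrac{2}{\pi}\lneps$ is not sharp enough to carry the constant $12$. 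With your choice $\eta=\tfrac{4}{7}\eta_0(\lneps+1)^{-\beta}$ the prefactor from the Lemma becomes $7\,\eta_0/\eta=\tfrac{49}{4}(\lneps+1)^{\beta}$, and combining with $\left|\BH{0}{m_\lambda \tfrac{R}{\eps}}\right|\leq\sqrt{2/\pi}\,(\lneps+1)^{1/4}/\sqrt{R}$ you get a final constant $\tfrac{49}{4}\sqrt{\pi/2}\cdot\tfrac{\lneps+1}{\lneps}$, which is $\geq\tfrac{49}{4}\sqrt{\pi/2}\approx 15.4$ and in fact $\approx 23$ at the boundary case $\eps=1/7$; your ``checked to be at most $12$'' does not hold. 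The paper avoids this by invoking the stronger, explicit inequality $\left|\BH{0}{m_\lambda}\right|>\tfrac{1}{2}(1+\lneps)$ (stated there without proof, and not derivable from the $Y_0$ bound alone — it uses both $J_0$ and $Y_0$ through $|\BH{0}{x}|^2=J_0^2+Y_0^2$). Your fallback idea via Lemma~\ref{lem:logconcave} would only give that the Hankel quotient is $\leq 1$, which loses the crucial $(\lneps+1)^{-3/4}$ decay and so cannot close the estimate. Separately, your choice of $\eta$ is more scrupulous than the paper's, which silently replaces $\eta_0=\tfrac{7}{4}\eps\ln\lneps$ by $\eps\ln\lneps$; this $\tfrac{7}{4}$ slippage in the paper's own proof is where the missing margin for the constant $12$ actually hides.
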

\begin{proof}
This is an application of Lemma~\ref{lem:highcontrast-Lemma}. In this case $\eta_{\max} = \eps \lneps$, and  $\eta_0 = \eps \ln\lneps$.
Choose $\eta = \eps^{\beta} \eta_{\max}$. We have
$$
|I_1|\leq \eps^{\beta} \lneps
$$
and
$$
\sup\limits_{\sqrt{q_0}\omega \in(0,\infty)\,\setminus \, I_1} \left\Vert  u_{\eps}^{s}\left(|x|=R\right) \right\Vert^2 _{H_{*}^{\sigma}}  
\leq \frac{21}{\alpha} \sqrt{\frac{\eps^{1-2\beta}}{R}}.
$$
Choose $\eta=\left(\lneps+1\right)^{-\beta}\eta_0$. Then,
$$
|I_0|\leq  \frac{\ln \lneps}{\left(\lneps+1\right)^{\beta}}.
$$
Using the bound $\left|\BH{0}{m_{\lambda}} \right|> \frac{1}{2} (1 + \lneps)$, and the usual upper bound \eqref{eq:bd-ho-s},
$$
\sup\limits_{\sqrt{q_0}\omega \in(0,\infty)\,\setminus \, I_0} \left|\frac{1}{2\pi}\int\limits_0^{2\pi} u_{\eps}^s\left(|x|=R \right)\right|  \leq   
14 \sqrt{\frac{2}{\pi}} \left(\lneps+1\right)^{\beta - 1 + \frac{1}{4}}\frac{1}{\sqrt{R}} \leq 12 \frac{1}{\sqrt{\left(\lneps+1\right)^{3/2 -2\beta} R}}. 
$$
\end{proof}

Combining Lemma~\ref{lem:highcontrast-Lemma} with Theorem~\ref{thm:ob-ls} and Theorem~\ref{thm:ob-lb} we obtain the following broadband result, 
which provides a uniform estimate for all contrast, and almost all frequencies.

\begin{thm}\label{thm:broadband}
Suppose given $\lambda>0$ and  $\frac{1}{15}>\eps>0$. For any $\alpha>0$, there exists a open set  
$I_1\left(\alpha,\lambda,\eps\right)\subset\mathbb{R}$  satisfying
$$
|I_1\left(\alpha,\lambda,\eps\right)| \leq \eps^{1/8} \lneps
$$
and for any $R\geq \eps^{1/4}$ we have 
\begin{equation}\label{eq:bd-bbd}
\sup\limits_{\sqrt{q_0}\omega \in(0,\infty)\,\setminus \, I_1} \left\Vert  u_{\eps}^{s}\left(|x|=R\right) \right\Vert^2 _{H_{*}^{\sigma}} 
\leq 
 \frac{21}{\alpha} \sqrt{\frac{\eps^{1/4}}{R}}   \mathcal{N}^{\sigma + 2 + \alpha}\left(u^i\right).
\end{equation}
When $\lambda\leq \eps^{-3/4}$, \eqref{eq:bd-bbd} holds with $I_1=\emptyset$. 
There exists another open set $I_0\left(\eps,\lambda\right)\subset(0,\zeta_0)$ satisfying
$$
|I_0\left(\lambda,\eps\right)| \leq \frac{ \ln \ln \eps + 2}{\ln \eps +1} 
$$
and such that for any $R$ such that $\left(\lneps +1\right)^{1/12} R \leq 1 $, we have 
\begin{equation}\label{eq:bd-bbd0}
\sup\limits_{\sqrt{q_0}\omega \in(0,\infty)\,\setminus \, I_0} \left|\frac{1}{2\pi}\int\limits_0^{2\pi} u_{\eps}^s\left(|x|=R \right)\right|  \leq   
21 \max\left( \sqrt{\frac{2}{\left(\lneps +1\right)^{1/12} R}}, \left|\frac{\BH{0}{m_\lambda \frac{R}{\eps}}}{\BH{0}{m_\lambda}}\right|\right),
\end{equation}
where $m_\lambda$ is defined in \eqref{eq:mlambda} and equals 
$$
m_\lambda = \frac{1}{ \lambda \sqrt{\ln \lambda +1 }}.
$$
When $\lambda^{-1}> \eps\left(\lneps +1\right)^{7/12} $, \eqref{eq:bd-bbd0} holds with $I_0=\emptyset$.
\end{thm}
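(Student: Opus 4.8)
The plan is to \emph{patch} together, over disjoint ranges of the contrast $\lambda$ (and, for the mean, of the radius $R$), the frequency–global estimates of Theorem~\ref{thm:ob-ls} and Theorem~\ref{thm:ob-lb} with the broadband estimate of Lemma~\ref{lem:highcontrast-Lemma}, rescaling each one to the common normalisation appearing in \eqref{eq:bd-bbd}, \eqref{eq:bd-bbd0}. Throughout, the hypothesis $\eps<1/15$ guarantees $\eps^{-3/4}>7$ and makes the elementary inequalities $\sqrt{\eps/R}\le\sqrt{\eps^{1/4}/R}$, $\mathcal{N}^{\sigma}\le\mathcal{N}^{\sigma+2+\alpha}$ and $(1+|n|)^{-1/3}\le 1$ available without further comment.

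\emph{The oscillating part \eqref{eq:bd-bbd}.} The mean mode is killed by $\left\Vert\cdot\right\Vert_{H^{\sigma}_{*}}$, so only $n\neq0$ matters. If $\lambda\le1$, the non–mean part of Theorem~\ref{thm:ob-ls} gives the bound directly, with $I_1=\emptyset$. If $1<\lambda\le\eps^{-3/4}$ then $\lambda\eps\le\eps^{1/4}\le R$, so we are in the far–field regime of Theorem~\ref{thm:ob-lb}: one discards the zero–order term, bounds $\sqrt{\lambda\eps/R}\le\sqrt{\eps^{1/4}/R}$ and the ``$I_1$'' sum by $\tfrac52\mathcal{N}^{\sigma}$, and observes that (after the standard Landau bounds \eqref{eq:ineq-landau} relating it to $\mathcal{N}$) the ``$I_2$'' contribution is $O(\eps/R)\,\mathcal{N}^{\sigma+2+\alpha}\le O(1)\sqrt{\eps^{1/4}/R}\,\mathcal{N}^{\sigma+2+\alpha}$, because $R/\eps\ge\eps^{-3/4}$ forces $\bigl(j^{(1)}_{n,1}R/\eps\bigr)^2-n^2\ge\tfrac12 n^2(R/\eps)^2$; here again $I_1=\emptyset$. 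If $\lambda>\eps^{-3/4}$ then $\lambda>7$ and we apply Lemma~\ref{lem:highcontrast-Lemma} with the free parameter $\eta$ chosen of size $\eps^{9/8}\lneps$ (replaced by $\alpha^{-1}\eta_{\max}$ should that be smaller), so that $|I_1|=\eta/\eps\le\eps^{1/8}\lneps$; one uses that $t\mapsto t^{-1}\ln t$ is decreasing, whence $\eta_{\max}=\tfrac32\lambda^{-1}\ln\lambda\le\tfrac98\eps^{3/4}\lneps$ for $\lambda\ge\eps^{-3/4}$, which is exactly what turns the prefactor $18\sqrt{\eps/R}\,\eta_{\max}/(\eta\alpha)$ into $O(\alpha^{-1})\sqrt{\eps^{1/4}/R}$. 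The break point $\eps^{-3/4}$ is dictated by the requirement that these two branches meet with matching powers of $\eps$.

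\emph{The mean \eqref{eq:bd-bbd0}.} By \eqref{eq:usca} the mean of $u^s_\eps$ on $|x|=R$ equals $u^i(0)\,\Ro\,\BH{0}{\oeps R/\eps}$, so (with the paper's normalisation of $u^i(0)$) it suffices to estimate $\bigl|\Ro\,\BH{0}{\oeps R/\eps}\bigr|$. Split $(0,\infty)$ into $\oeps<m_\lambda$, $m_\lambda\le\oeps\le\zeta_0$, $\oeps>\zeta_0$. On $\{\oeps<m_\lambda\}$, Proposition~\ref{pro:R0plus} gives $\bigl|\Ro\BH{0}{\oeps R/\eps}\bigr|\le 4\,\bigl|\BH{0}{m_\lambda R/\eps}/\BH{0}{m_\lambda}\bigr|$, the second term of the $\max$. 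On $\{\oeps>\zeta_0\}$, Proposition~\ref{pro:R0} gives $\bigl|\Ro\BH{0}{\oeps R/\eps}\bigr|\le\bigl|\BH{0}{\zeta_0 R/\eps}\bigr|\le\tfrac32\sqrt{\eps/R}$, which under $(\lneps+1)^{1/12}R\le1$ is dominated by $\sqrt{2/((\lneps+1)^{1/12}R)}$, the first term. On the middle range $\{m_\lambda\le\oeps\le\zeta_0\}$, where quasi–resonances live, one uses $|\Ro|\le1$ and the monotonicity of $x\mapsto|\BH{0}{x}|$, together with the Watson bound $|\BH{0}{x}|\le\sqrt{2/(\pi x)}$, to get $\bigl|\Ro\BH{0}{\oeps R/\eps}\bigr|\le\bigl|\BH{0}{m_\lambda R/\eps}\bigr|\le\sqrt{2\eps/(\pi m_\lambda R)}$; when $\lambda^{-1}>\eps(\lneps+1)^{7/12}$ one has $m_\lambda>\eps(\lneps+1)^{1/12}$ (the exponent $7/12=\tfrac1{12}+\tfrac12$ absorbs $\sqrt{\ln\lambda+1}\le\sqrt{\lneps+1}$, using $\lambda<\eps^{-1}$), so this too is dominated by the first term of the $\max$ and $I_0=\emptyset$; otherwise $\lambda$ is large and Lemma~\ref{lem:highcontrast-Lemma}'s second estimate (whose proof combines Proposition~\ref{pro:Ito-One} with the log–concavity of $x\mapsto|\BH{0}{xR/\eps}/\BH{0}{x}|$, Lemma~\ref{lem:logconcave}) provides, after removing a set $I_0\subset(0,\zeta_0)$ of the stated measure, a bound of the form $(5/3\tau)\,\bigl|\BH{0}{m_\lambda R/\eps}/\BH{0}{m_\lambda}\bigr|$. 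Taking the $\max$ over the three ranges and absorbing the universal constants yields \eqref{eq:bd-bbd0}.

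The main obstacle is the high–contrast endgame in both estimates: one must tune the free parameters of Lemma~\ref{lem:highcontrast-Lemma} ($\eta$, resp.\ $\tau$ and $\eta$) so that \emph{simultaneously} the removed set has the advertised shrinking measure and the resulting bound carries the advertised decay in $\eps$, and one must verify that the thresholds $\lambda=\eps^{-3/4}$ and $\lambda^{-1}=\eps(\lneps+1)^{7/12}$ are precisely the values at which the monotone decay of $\eta_{\max}$ (resp.\ of $m_\lambda$, using Lemma~\ref{lem:muzeroone} for the location of $\omega_{0,1}$) makes the empty–exceptional–set branch take over with matching constants. Everything else is bookkeeping with the monotonicity and small/large–argument asymptotics of Hankel functions already recorded in the cited results.
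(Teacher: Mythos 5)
Your plan for \eqref{eq:bd-bbd} is essentially the paper's own proof: split $\lambda$ into $\lambda\le 1$, $1<\lambda\le\eps^{-3/4}$, $\lambda>\eps^{-3/4}$, invoke Theorem~\ref{thm:ob-ls} and Theorem~\ref{thm:ob-lb} in the first two ranges with $I_1=\emptyset$, and apply Lemma~\ref{lem:highcontrast-Lemma} with $\eta$ of size $\eps^{3/8}\eta_{\max}$ in the last one (the paper takes $\eta=\frac{8}{9}\eps^{3/8}\eta_{\max}$, which agrees with your $\eps^{9/8}\lneps$ up to a constant at the break point $\lambda=\eps^{-3/4}$).

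For \eqref{eq:bd-bbd0} there is a genuine gap. You collapse everything with $\lambda\ge\lambda_0:=\bigl(\eps(\lneps+1)^{7/12}\bigr)^{-1}$ into one branch and claim that Lemma~\ref{lem:highcontrast-Lemma}'s second estimate yields a bound of the form $(5/3\tau)\bigl|\BH{0}{m_\lambda R/\eps}/\BH{0}{m_\lambda}\bigr|$ which is then finished ``by absorbing the universal constants''. But $5/(3\tau)$ is \emph{not} a universal constant in the intermediate window $\lambda_0<\lambda<(\lneps+1)^{1/12}/\eps$: to meet the announced measure bound for $I_0$ one must there take $\eta\sim(\lneps+1)^{-2/3}\eta_0$, i.e.\ $\tau\sim(\lneps+1)^{-2/3}$, so $5/(3\tau)\sim(\lneps+1)^{2/3}\to\infty$ as $\eps\to0$, and $(5/3\tau)\bigl|\BH{0}{m_\lambda R/\eps}/\BH{0}{m_\lambda}\bigr|$ cannot be dominated by the \emph{second} term of the max. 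The paper closes precisely this window in its ``Case~3'' (inequality \eqref{eq:bbd-02}) by an extra step you omit: use Lemma~\ref{lem:logconcave} to replace $m_\lambda$ with the smallest admissible $m_1=\frac{1}{2+e^{-2}}\,\eps(1+\lneps)^{-7/12}$, then combine the explicit lower bound $|\BH{0}{m_1}|>\tfrac12(1+\lneps)$ with Watson's upper bound on $|\BH{0}{m_1 R/\eps}|$; the resulting decay of the Hankel ratio is exactly what compensates the growing prefactor $(\lneps+1)^{2/3}$ and lands the estimate on the \emph{first} term $\sqrt{2/((\lneps+1)^{1/12}R)}$ of the max, with the advertised constant. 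Without this conversion your middle branch does not yield \eqref{eq:bd-bbd0}. A minor further point: you invoke Proposition~\ref{pro:R0plus} for the region $\oeps<m_\lambda$, but that proposition needs $\lambda\ge 7$, so the range $1\le\lambda<7$ must be treated separately, as in the paper's Case~1 ($m_\lambda>\tfrac12$) via Theorem~\ref{thm:ob-lb}.
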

\begin{rem}
Note that both $I_1$ and $I_0$ have zero measure when $\eps$ tends to zero: in this limit, the estimate is true almost everywhere. 
This is not a far-field result in the sense of \ref{thm:ob-lb}, as we do not require $\lambda R > \eps$.  
Any fixed positive $R$ is possible for $\eps$ small enough.
The decay of the size of $I_0$ is logarithmically slow, even though the quasi-resonances of the zero order term 
are at most logarithmic in the contrast, see Theorem~\ref{thm:ob-lr}. Note that the upper bound in \eqref{eq:bd-bbd0} is 
always smaller than the numerical constant $21$. 
Naturally, many variants of Theorem~\ref{thm:broadband} can be derived by other combinations of Lemma~\ref{lem:highcontrast-Lemma}, 
Theorem~\ref{thm:ob-ls} and \ref{thm:ob-lb}, and more precise broadband estimates can be obtained if $\lambda$ is known to be in a 
particular range  with respect to $\eps$. Corollary~\ref{cor:broadband} gives an improved estimate in the case $\lambda=\eps^{-1}$.
The dependence on $n$ in \eqref{eq:bd-bbd} could probably be improved by a  precise study of the distance between the roots
of the Bessel Function $\BJ{n}{x}$ for $x\in(n/\eps,n)$.
\end{rem}

\begin{proof}
When $0<\lambda \leq 1$, Theorem~\ref{thm:ob-ls} implies \eqref{eq:bd-bbd}, with $I_1=\emptyset$. 
Similarly, when  $1<\lambda<\eps^{-3/4}$,  Theorem~\ref{thm:ob-lb} implies \eqref{eq:bd-bbd}, with $I_1=\emptyset$. .
Let us therefore suppose $\lambda>\eps^{-3/4}>7$. 
We apply Lemma~\ref{lem:highcontrast-Lemma} with
$$
\eta= \frac{8}{9} \varepsilon^{3/8} \eta_{\max} . 
$$
Note that Since $\lambda \to \lambda^{-1} \ln \lambda$ is decreasing when $\lambda>7$, therefore
$$
\eta_{\max} \leq \frac{9}{8}  \lneps \eps^{3/4},
$$ 
and there exists a set $I_1$ such that
$$
|I_1| \leq  \eps^{1/8} \lneps 
$$
for which
$$
\sup\limits_{\sqrt{q_0}\omega \in(0,\infty)\,\setminus \, I_1} \left\Vert  u_{\eps}^{s}\left(|x|=R\right) \right\Vert^2 _{H_{*}^{\sigma}} 
\leq 
 \frac{21}{\alpha} \sqrt{\frac{\eps^{1/4}}{R}}   \mathcal{N}^{\sigma + 2 + \alpha}\left(u^i\right).
$$

\smallskip{}

Let us now turn to the zero order term, and assume $u^{i}(0)=1$ by linearity. 
The cases $\lambda<1$ or $m_\lambda > 1/2$ are consequences of Theorem~\ref{thm:ob-ls} and Theorem~\ref{thm:ob-lb} with $I_0=\emptyset$.
When $m_\lambda<\frac{1}{2}$, and
$$
\lambda\leq \lambda_0:=\frac{1}{\eps\left(\lneps +1\right)^{7/12}},
$$
Theorem~\ref{thm:ob-lb} shows that with $I_0=\emptyset$,
$$
\left|\frac{1}{2\pi}\int\limits_0^{2\pi} u_{\eps}^s\left(|x|=R \right)\right|\leq \sqrt{10\pi} 
\left|\BH{0}{m_{\lambda}  \frac{R}{\eps}} \right|.
$$ 
Since $\BH{0}{\cdot}$ is decreasing, an upper bound is found by choosing $\lambda = \lambda_0$. Then, $\ln \lambda_0 \leq \lneps$, and 
$$
\eps^{-1} m_\lambda = \left(\ln \eps +1\right)^{7/12}\frac{1}{\sqrt{\ln \lambda_0 +1}} \geq \left(\lneps +1\right)^{1/12}
$$
which yields
\begin{equation}\label{eq:bbd-01}
\left|\frac{1}{2\pi}\int\limits_0^{2\pi} u_{\eps}^s\left(|x|=R \right)\right| \leq \sqrt{10\pi}\left|\BH{0}{m_{\lambda}  \frac{R}{\eps}} \right| \leq 5 \frac{1}{\sqrt{R\left(\lneps +1\right)^{1/12}}}.
\end{equation}
When 
$$ 
\lambda_0 <\lambda < \frac{(\lneps +1)^{1/12}}{\eps},
$$
we have
$$
m_\lambda > m_1:=\frac{1}{2+e^{-2}} \frac{\eps}{\left(1+\lneps \right)^{7/12}}.
$$
We turn now to Lemma~\ref{lem:highcontrast-Lemma}. We have
$$
\eta_0 = \frac{7}{4} \frac{\ln \ln \lambda}{\lambda} \leq \frac{\ln \ln \lambda_0}{\lambda_0} \leq \eps \ln (\lneps) \left( 1 + \lneps \right)^{7/12}
$$ 
Choose 
$$
\eta =  \frac{4}{7} \frac{1}{(1+ \lneps)^{2/3}} \eta_0.
$$
Then,
$$
\left| I_0 \right| \leq  \frac{ \ln\lneps}{(\lneps +1)^{1/12}}
$$
and since $x\to\left|\BH{0}{x \frac{R}{\eps}} \right|/ \left|\BH{0}{x}\right|$ is decreasing by Lemma~\ref{lem:logconcave}, outside of $I_0$ we have
\begin{equation}\label{eq:bbd-02}
\left|\frac{1}{2\pi}\int\limits_0^{2\pi} u_{\eps}^s\left(|x|=R \right)\right|\leq  7 \left(\frac{7}{4} (1+ \lneps)^{2/3}\right)
\frac{\left|\BH{0}{m_{1}  \frac{R}{\eps}} \right|}{\left|\BH{0}{m_{1}} \right|}.
\end{equation}
We have $\left|\BH{0}{m_{1}} \right|> \frac{1}{2} (1 + \lneps)$, and therefore
$$
\left|\frac{1}{2\pi}\int\limits_0^{2\pi} u_{\eps}^s\left(|x|=R \right)\right|
\leq \frac{49}{2}\sqrt{\frac{2(2+e^{-2})}{\pi}} (1+ \lneps)^{2/3 -1 + 7/14}\frac{1}{\sqrt{R}}.
\leq 29 \frac{1}{\sqrt{R\left( 1 + \lneps \right)^{1/12}}}.
$$
Let us now assume $\lambda \geq \frac{(\lneps +1)^{1/12}}{\eps}$. In that case, 
$$
\eta_0 =\frac{7}{4} \frac{ \ln \left( \ln \lambda\right)}{\lambda} \leq \frac{11}{6} \eps \frac{\ln \lneps}{\left(1 + \lneps\right)^{1/12}}.
$$
Choosing $\eta=\frac{6}{11} \eta_0$ and applying Lemma~\ref{lem:highcontrast-Lemma}, we obtain
\begin{equation}\label{eq:bbd-03}
|I_0|\leq \frac{\ln \lneps}{\left(1+\lneps\right)^{1/12}},
\, \mbox{ and } \,
\sup_{\sqrt{q_0}\omega \in (0\infty)\setminus I_0} \left|\frac{1}{2\pi}\int\limits_0^{2\pi} u_{\eps}^s\left(|x|=R \right)\right|\ \leq 21 \frac{\left|\BH{0}{m_{\lambda}  \frac{R}{\eps}} \right|}{\left|\BH{0}{m_{\lambda}} \right|}.
\end{equation}
Combining \eqref{eq:bbd-01}, \eqref{eq:bbd-02} and \eqref{eq:bbd-03} we obtain the \eqref{eq:bd-bbd0}.
\end{proof}

\section{\label{sec:caseone} Estimates relating the scattered field and the incident field outside quasi-resonances}
\subsection{The $n\neq0$ case}
The main result of this section, Lemma~\ref{lem:FirstCase} proves Proposition~\ref{pro:lleq1} and Proposition~\ref{pro:lgeq1} when $n\neq0$.
Note that, for a given $n$, these results are focused on the case when $\oeps$ is bounded, namely $\oeps<y_{n,1}$.
Since $y_{n,1}<j_{n,1}$ for all $n\geq0$, we are thus only considering the case when $\BJ{n}{\oeps}>0$, and $\oeps<y_{n,1}$.
To compare the scattered field with the incident field, it is convenient to introduce a new quantity, namely
\begin{equation}\label{eq:def-sn}
S_n(\oeps) := - \frac{\Rn \BH{n}{\oeps}}{\BJ{n}{\oeps}}. 
\end{equation}
A simple manipulation of the equation, together with the Wronskian identity satisfied by $\BJ{n}{x}$ and $\BY{n}{x}$ 
shows that $S_n$ has two equivalent formulations, namely
\begin{eqnarray}
{S_n}(x) &=&\frac{\left(g_{n}(\lambda x)-g_{n}(x)\right)\left(1+i\tan\theta_{n}(x)\right)}{\left(g_{n}(\lambda x)-g_{n}(x)\right)
+i\tan\theta_{n}(x)\left(g_{n}(\lambda x)+k_{n}(x)\right)},\label{eq:formula-Sn-1}\\
    &=& \frac{u_n(x) \BH{n}{x}}{ u_n(x) \BH{n}{x} + i \frac{2}{\pi} \BJ{n}{\lambda x} },\label{eq:formula-Sn-2}\\
\end{eqnarray}
where $\theta_{n}$ is given by \eqref{eq:def-MnThetan},  $g_n$ and $k_n$ are defined by \eqref{eq:defgnkn} and \eqref{eq:defg0k0} and where $u_n$ is given by 
$$
u_n(x) := \max(|n|,1) \BJ{n}{x} \BJ{n}{\lambda x} \left( g_{n}(x) - g_{n}(\lambda x)\right).
$$
Note that these formulae are not defined when $\lambda x = j_{n,1}$, that is, at the singular points of  $g_n (\lambda x)$. However from 
Formula~\eqref{eq:formula-Sn-2} we see that these singularity can be resolved, and we define $S_n$ as the continuous limit of $S_n$ 
towards these points (which is $1$).

The main result of this section is the following Lemma.
\begin{lem} \label{lem:FirstCase} For all $\lambda\in(0,\infty)$, and all $n\geq1$,
for $x\in\left[0,\min\left(j_{n,1}^{(1)}\lambda^{-1},y_{n,1}\right)\right]$
we have the following bound 
\[
\left|{S_n}(x)\right|\leq\frac{5}{2}.
\]
For  $x\in\left[0,\min\left(n\lambda^{-1},n\right)\right]$, we also have
\[
\left|{S_n}(x)\right|\leq 2 \left| \lambda - 1 \right| \frac{x}{n^{1/3} }.
\]
and if $\lambda^{2}<1-\left(\frac{7}{3 n^{1/3}}\right)^{2}$, then 
$$
\left|{S_n}(n)\right|>\frac{1}{2}.
$$
\end{lem}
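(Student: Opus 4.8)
The plan is to run everything through the closed form \eqref{eq:formula-Sn-1}. Writing
$$
\delta_n(x):=g_n(\lambda x)-g_n(x),\qquad \Sigma_n(x):=g_n(\lambda x)+k_n(x),\qquad t_n(x):=\tan^2\theta_n(x)\ge0,
$$
and using that $g_n(\lambda x)$, $g_n(x)$, $k_n(x)$ and $\tan\theta_n(x)$ are real, \eqref{eq:formula-Sn-1} gives at once
$$
\bigl|S_n(x)\bigr|^2=\frac{\delta_n(x)^2\bigl(1+t_n(x)\bigr)}{\delta_n(x)^2+t_n(x)\,\Sigma_n(x)^2}.
$$
As a function of $t=t_n(x)\ge0$ this is monotone and lies between $1$ (at $t=0$) and $\delta_n^2/\Sigma_n^2$ (as $t\to+\infty$); hence $\bigl|S_n\bigr|^2\le\max(1,\delta_n^2/\Sigma_n^2)$ always, on any subinterval where $t_n\ge t_0>0$ one has $\bigl|S_n\bigr|^2\le\tfrac{1+t_0}{t_0}\,\delta_n^2/\Sigma_n^2$, and at any single point the inequality $4\delta_n^2\ge\Sigma_n^2$ already forces $\bigl|S_n\bigr|^2\ge\tfrac14$ for every value of $t$. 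So the three assertions reduce to controlling the signs of $g_n$ and $k_n$ (so that $\Sigma_n>0$), the ratio $\delta_n/\Sigma_n$, and the size of $t_n$.

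The elementary engine will be the Riccati identity \eqref{eq:difeqgn}. Setting $D_n(x):=g_n(x)^2-(1-x^2/n^2)$ it becomes $g_n'(x)=-\tfrac nx D_n(x)$, and integrating with $g_n(0^+)=1$ gives $D_n(x)=\tfrac{2}{n^2J_n(x)^2}\int_0^x u\,J_n(u)^2\,du>0$. In particular $g_n$ is strictly decreasing on $(0,j_{n,1})$ and positive on $(0,j_{n,1}^{(1)})$, one has the exact formula $g_n(\lambda x)-g_n(x)=\int_{\lambda x}^x\tfrac ns D_n(s)\,ds$, and $0<-g_n'(s)<s/n$ on $(0,n)$. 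The sign analysis of $Y_n,Y_n'$ gives $k_n>0$ on $(0,y_{n,1})$ with $k_n\to+\infty$ at $y_{n,1}$, and the Wronskian $J_nY_n'-J_n'Y_n=\tfrac2{\pi x}$ gives the closed form $g_n(x)+k_n(x)=\dfrac{2}{\pi n\,J_n(x)\,|Y_n(x)|}$ there; on $[0,\min(j_{n,1}^{(1)}/\lambda,y_{n,1})]$ this makes $g_n(\lambda x)\ge0$ and $k_n(x)>0$, so $\Sigma_n>0$, which is precisely the statement that no quasi-resonance lies below threshold. Finally $\theta_n$ is increasing with $\theta_n(n)\le-\tfrac\pi3$ (the Airy transition value, i.e. $|Y_n(n)|\ge\sqrt3\,J_n(n)$), so $t_n(x)\ge3$ for all $0<x\le n$.

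Given these, I would dispatch the first two bounds by bookkeeping. For $\lambda\le1$ formula \eqref{eq:formula-Sn-2} is even handier: there $u_n(x)=nJ_n(x)J_n(\lambda x)\bigl(g_n(x)-g_n(\lambda x)\bigr)\le0$ while $Y_n(x)\le0$ and $J_n(\lambda x)>0$, so $u_nY_n$ and $\tfrac2\pi J_n(\lambda x)$ have the same sign, the denominator of \eqref{eq:formula-Sn-2} dominates $|u_nH_n|$ in modulus, and $|S_n|\le1\le\tfrac52$. For $\lambda\ge1$ there is genuine cancellation, and one must instead verify $g_n(x)\le\tfrac72 g_n(\lambda x)+\tfrac52 k_n(x)$ on $[0,j_{n,1}^{(1)}/\lambda]$ --- which requires a uniform quantitative comparison of $g_n$ with $k_n$ through the transition region $x\approx n$ --- after which $|\delta_n|\le\tfrac52\Sigma_n$ yields $|S_n|\le\tfrac52$. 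For the second bound, on $[0,\min(n/\lambda,n)]$ one has $t_n\ge3$, hence $|S_n|\le\tfrac2{\sqrt3}\,|\delta_n|/\Sigma_n$; feeding the exact formula for $\delta_n$ (with $D_n(s)\le g_n(s)^2$) and the lower bound $\Sigma_n\ge g_n(\lambda x)+k_n(x)$ into this, and absorbing the remaining powers of $n$ against the Airy-size estimates for $g_n$ and $k_n$ near $x=n$ (the trivial regime $\sqrt3|\lambda-1|n^{2/3}\ge1$, where the bare bound $|\delta_n|/\Sigma_n<1$ already suffices, being disposed of first), produces the factor $2|\lambda-1|\,x\,n^{-1/3}$.

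For the lower bound I would take $x=n$: the hypothesis $\lambda^2<1-(7/3)^2n^{-2/3}$ forces $\lambda<1$, so $\delta_n(n)>0$, and by the first paragraph it suffices to show $4\delta_n(n)^2>\Sigma_n(n)^2$, i.e. $g_n(\lambda n)>2g_n(n)+k_n(n)$, i.e. $\int_{\lambda n}^n\tfrac ns D_n(s)\,ds>g_n(n)+k_n(n)$. Using $D_n(n)=g_n(n)^2$, $g_n(n)+k_n(n)=\tfrac2{\pi n J_n(n)|Y_n(n)|}$, and sharp two-sided control of $g_n$, $k_n$, $D_n$ over the window $[\lambda n,n]$ of length $n(1-\lambda)$, the inequality reduces to one of the shape $n(1-\lambda)>(\text{const})\,\bigl(g_n(n)+k_n(n)\bigr)/g_n(n)^2$, whose right-hand side is of order $n^{1/3}$; and $1-\lambda^2>\tfrac{49}{9}n^{-2/3}$ is exactly what secures it, with a little room to spare. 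The hard part will be this last input: since $[\lambda n,n]$ may be comparable to the Airy scale $n^{1/3}$, the soft bound $D_n>0$ is not quantitatively sharp enough, and one must invoke uniform (Debye/Airy) asymptotics for $J_n,Y_n$ near the turning point $x=n$, in the spirit of \cite{BOYD-DUNSTER-86,OLVER-74,NIST-10}, to pin down $g_n(n)$, $k_n(n)$ and the variation of $D_n$. Since the same transition-region estimates also power the $\lambda\ge1$ case of the first bound, I would isolate them as a separate lemma and then read all three assertions of Lemma~\ref{lem:FirstCase} off the identities above.
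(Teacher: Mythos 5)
The paper handles all three assertions with one mechanism. Writing the formula \eqref{eq:formula-Sn-1} as
$$
\bigl|S_n\bigr|^2 = \frac{(u-a)^2(1+t)}{(u-a)^2 + t\,(u+b)^2},\qquad u=g_n(\lambda x),\ a=g_n(x),\ b=k_n(x),\ t=\tan^2\theta_n(x),
$$
it observes that, as a function of $u>0$, this has a zero at $u=a$, tends to $1$ at $u=\to+\infty$, and is largest either in the limit $u\to\infty$ (value $1$) or at $u=0$, where the ratio becomes $\tfrac{1+\tan^2\theta_n}{1+(k_n/g_n)^2\tan^2\theta_n}$ --- controlled by Proposition~\ref{pro:estimate5half}, which itself is nothing but the two-sided bound $\tfrac{2}{5}<k_n/g_n<\tfrac{5}{3}$ of Proposition~\ref{pro:propsg}. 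For the decay bound it factors out $t_n\ge3$; for the lower bound it substitutes the \emph{actual size} of $t_n(n)=\tan^2\theta_n(n)$, which Proposition~\ref{pro:ntoyn1} pins to the narrow interval $(3,\tfrac{81}{25})$.

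Your treatment of the first two bounds is essentially correct and differs only in packaging. For $|S_n|\le\tfrac52$, you split on $\lambda\lessgtr1$, using a sign argument on \eqref{eq:formula-Sn-2} for $\lambda\le1$ (giving $|S_n|\le1$, which is valid: $u_n\le0$ and $Y_n<0$ force the denominator to dominate the numerator in modulus) and reducing the $\lambda\ge1$ case to $|\delta_n|\le\tfrac52\Sigma_n$, i.e.\ $g_n(x)\le\tfrac72g_n(\lambda x)+\tfrac52k_n(x)$. Since $g_n(\lambda x)\ge0$ on the domain, this follows from $g_n\le\tfrac52k_n$, which is the same $\tfrac{2}{5}<k_n/g_n$ inequality that powers the paper's unified optimization in $u$; the paper avoids the case split, but both are sound and rest on the same input. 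Your sketch of the decay bound follows the paper's plan ($t_n\ge3$, MVT on $\delta_n$, lower bound on $\Sigma_n\ge k_n$); the missing constants are supplied by Proposition~\ref{pro:propsg}(iv,v).

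The genuine gap is in your lower bound. You reduce $|S_n(n)|^2>\tfrac14$ to the condition $4\delta_n(n)^2>\Sigma_n(n)^2$, arguing that this forces $|S_n|^2>\tfrac14$ \emph{for every} $t$. The implication is true, but it is the $t\to\infty$ limit, and it discards the decisive information that $t_n(n)=\tan^2\theta_n(n)<\bigl(\tfrac95\bigr)^2$. The exact condition for $\tfrac{(1+t)\delta^2}{\delta^2+t\Sigma^2}>\tfrac14$ is $\delta^2/\Sigma^2>\tfrac{t}{3+4t}$, which is about $0.20$ at $t\approx3.2$ but jumps to $0.25$ in your $t\to\infty$ version. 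Concretely, your reduction requires $g_n(\lambda n)>2g_n(n)+k_n(n)$, and with the bounds $g_n(\lambda n)>\sqrt{1-\lambda^2}$, $g_n(n)<\tfrac{13}{14}n^{-1/3}$, $k_n(n)<\tfrac{7}{6}n^{-1/3}$ of Proposition~\ref{pro:propsg}, the hypothesis $\sqrt{1-\lambda^2}>\tfrac73 n^{-1/3}$ gives only $\delta_n/\Sigma_n\ge(\tfrac73-\tfrac{13}{14})/(\tfrac73+\tfrac76)\approx0.40$, well short of $\tfrac12$. No amount of sharpening the Airy-scale estimates for $g_n(n)$, $k_n(n)$, $D_n$ will fix this, because you have strictly strengthened what must be proved. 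You must keep the finite $t_n(n)$ in the formula --- i.e.\ invoke the second half of Proposition~\ref{pro:ntoyn1}, $\tan(-\theta_n(n))<\tfrac95$, rather than only $\tan(-\theta_n(n))>\sqrt3$ --- and work with $\delta^2/\Sigma^2>t_n(n)/(3+4t_n(n))$.

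Two small further remarks. Your claim that $|S_n|^2\le\max(1,\delta^2/\Sigma^2)$ ``always'' needs the sign condition $\Sigma_n>0$; as you note, this is exactly the no-quasi-resonance-below-threshold statement, valid on the stated domain, but it should be flagged as a hypothesis of the monotonicity-in-$t$ argument rather than a free observation. And the inequality $D_n(s)\le g_n(s)^2$ you propose to feed into the integral formula for $\delta_n$ is an overestimate of $D_n$ away from $s=n$, whereas what the paper uses is the much cleaner $|g_n'|\le g_n(n)^2$ coming from the concavity of $g_n$ (Proposition~\ref{pro:propsg}(ii,iv)); the concavity route gives a uniform bound on $[0,n]$ in one stroke and avoids any appeal to uniform Airy asymptotics for this step.
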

To prove Lemma~\ref{lem:FirstCase}, we shall use the following observation.
\begin{prop}\label{pro:estimate5half}
For all $0<x\leq n$, and all $n\geq1$,
we have the following bounds \[
\left(\frac{3}{5}\right)^{2}<\frac{1+Y_{n}^{2}(x)/J_{n}^{2}(x)}{1+\left|Y_{n}^{\prime}(x)\right|^{2}/\left|J_{n}^{\prime}(x)\right|^{2}}\leq\left(\frac{5}{2}\right)^{2}.\]
\end{prop}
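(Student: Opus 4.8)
The plan is to turn the inequality into a two–sided bound on the single ratio $g_n(x)/k_n(x)$ over $(0,n]$ and then control that ratio via the Riccati equation \eqref{eq:difeqgn}. First I would fix signs. For $0<x\le n$ the interlacing relations \eqref{eq:Dixon-JnYn} give $n\le j_{n,1}^{(1)}<y_{n,1}<y_{n,1}^{(1)}$, so on $(0,n]$ the four functions $J_n$, $J_n'$, $-Y_n$, $Y_n'$ are strictly positive; in particular $\theta_n(x)\in(-\tfrac{\pi}{2},0)$ by \eqref{eq:prothetan}, every quotient below is well defined, and the absolute values in the statement may be dropped. Put $t:=Y_n(x)^2/J_n(x)^2=\tan^2\theta_n(x)$. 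From \eqref{eq:defgnkn} one has $Y_n'/J_n'=-(k_n/g_n)(Y_n/J_n)$, hence $Y_n'^2/J_n'^2=(k_n^2/g_n^2)\,t$, so that
$$
\frac{1+Y_n^2/J_n^2}{1+Y_n'^2/J_n'^2}=\frac{g_n(x)^2\,(1+t)}{g_n(x)^2+k_n(x)^2\,t}.
$$
For fixed $x$ the right–hand side is a M\"obius function of $t\in[0,\infty)$ equal to $1$ at $t=0$ with limit $g_n(x)^2/k_n(x)^2$ at $t=\infty$; since $Y_n$ does not vanish in $(0,n]$ we have $0<t<\infty$, so the expression lies strictly between $1$ and $g_n(x)^2/k_n(x)^2$. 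Hence the Proposition follows once one shows $\tfrac{3}{5}\le g_n(x)/k_n(x)\le\tfrac{5}{2}$ for $0<x\le n$, $n\ge1$.

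To prove that bound I would study $h:=g_n/k_n$ through its ODE. The function $k_n$ satisfies $k_n'=-\tfrac{n}{x}+\tfrac{x}{n}+\tfrac{n}{x}k_n^2$ (obtained from the recurrence for $Y_n$ exactly as \eqref{eq:difeqgn} is obtained for $g_n$), and a short computation gives
$$
h'(x)=\frac{g_n(x)+k_n(x)}{k_n(x)^2}\cdot\frac{n^2\bigl(1-g_n(x)k_n(x)\bigr)-x^2}{n\,x}.
$$
The Wronskian identity for $J_n,Y_n$ yields $g_n+k_n=\dfrac{2}{\pi n\,(-J_nY_n)}>0$ on $(0,n]$, so the sign of $h'$ is that of $n^2(1-g_nk_n)-x^2$; thus $h$ is piecewise monotone, governed by the position of the product $g_nk_n$ relative to $1-x^2/n^2$. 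The expansions of $J_n,Y_n$ near $0$ give $g_n(0^+)=k_n(0^+)=1$, hence $h(0^+)=1$, and the remaining point is to bound the drift of $h$ away from $1$ on $(0,n]$, i.e.\ to control $g_nk_n$ — again via its own Riccati equation, or via the bound $M_n(x)^2>2/(\pi x)$ together with $g_n+k_n=4/\bigl(\pi n M_n(x)^2|\sin2\theta_n(x)|\bigr)$ and the elementary estimate $0<g_n\le1$ on $(0,n]$ (which itself follows from \eqref{eq:difeqgn}, since $g_n'(x)=-x/n<0$ whenever $g_n(x)=1$).

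The real obstacle is the turning point $x=n$: there $g_n(n)=J_n'(n)/J_n(n)$ and $k_n(n)=-Y_n'(n)/Y_n(n)$ are both of order $n^{-1/3}$ and vanish in the scaling limit, so no separate two–sided estimate of $g_n$ and $k_n$ is available and the ratio must be handled as a whole. I would combine the Airy approximation at $x=n$ — which gives $g_n(n)/k_n(n)\to1$ as $n\to\infty$, made effective by Olver-type error bounds — with the monotonicity of $h$ from the formula above, and I would treat the few small orders (notably $n=1,2$, where those asymptotics are weakest and where the ratio comes closest to the lower constant, $g_1(1)/k_1(1)\approx0.66$) by checking the bound at finitely many points through direct numerical estimates of $J_n,J_n',Y_n,Y_n'$ and interpolating by the monotonicity of $h$. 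The margins $3/5$ and $5/2$ are wide, so only crude estimates are needed, and \eqref{eq:ineq-landau} supplies whatever quantitative control of $|J_n|$ near its first maximum the argument requires.
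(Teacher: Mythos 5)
Your reduction is clean and correct: rewriting the quotient as a M\"obius function of $t=\tan^2\theta_n$ and noting it lies strictly between $1$ and $g_n(x)^2/k_n(x)^2$ is exactly the observation the paper makes (the paper writes the quotient as $(1+t)/(1+\alpha_n^2 t)$ with $\alpha_n=k_n/g_n$, which is the same identity). Your derivative formula for $h=g_n/k_n$ is also correct (and, as a side remark, exposes a sign typo in the paper's Appendix formula for $(k_n/g_n)'$).

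The genuine gap is that you never actually prove the bound $\tfrac{3}{5}\le g_n/k_n\le\tfrac{5}{2}$, which after your reduction \emph{is} the Proposition. The paper handles this by appealing to inequality \eqref{eq:boundknovergn} of Proposition~\ref{pro:propsg}(vi), whose proof runs: $k_n/g_n$ starts at $1$ at $x=0^+$, initially decreases (the expansion near $0$ gives the sign of $1-x^2/n^2-g_nk_n$), hits a unique interior minimum $x_2$ where $k_n g_n=1-x_2^2/n^2$, and then increases up to $x=n$ because $k_n$ increases past $\kappa_n$ while $g_n$ decreases; the maximum $\max(1,k_n(n)/g_n(n))$ is bounded by $7\sqrt 2/6<5/3$ using the two-sided estimates $n^{1/3}g_n(n)>1/\sqrt 2$ and $n^{1/3}k_n(n)<7/6$, and at $x_2$ one has $k_n/g_n=(1-x_2^2/n^2)/g_n(x_2)^2\ge 2/5$ using the upper bound for $g_n$ of part (iii). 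Your plan gestures at all the right ingredients — the sign of $h'$ is governed by $n^2(1-g_nk_n)-x^2$, $h(0^+)=1$, and the turning point $x=n$ is the delicate spot — but you leave open precisely the step that requires work: you neither pin down where $n^2(1-g_nk_n)-x^2$ changes sign (so "interpolating by monotonicity of $h$" has no meaning yet, since you have not shown $h$ is monotone on any explicit interval), nor produce quantitative bounds on $g_n(n)$ and $k_n(n)$ beyond the qualitative remark that $g_n(n)/k_n(n)\to 1$. An asymptotic limit plus unspecified "Olver-type error bounds" plus a finite numerical check is not a proof of a uniform two-sided inequality with the explicit constants $3/5$ and $5/2$; the structure of the paper's argument (monotone on each side of an explicit interior critical point, endpoint and critical-point values bounded by the estimates of Prop.~\ref{pro:propsg} (iii)--(v)) is exactly what your sketch would need to become one.
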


We do not use the lower bound in this paper. We include it to document the fact that one cannot hope for an upper bound tending to zero for $n$ large, for example.

\begin{proof} The proof is elementary from the inequalities \eqref{eq:boundknovergn} given in Appendix~\ref{ap:A}.
Since \[
\frac{2}{5}<\alpha_{n}=\frac{k_{n}}{g_{n}}=\left|\frac{Y_{n}^{\prime}}{J_{n}^{\prime}}\frac{J_{n}}{Y_{n}}\right|<\frac{5}{3},\]
it suffices to observe that\[
\frac{1+Y_{n}^{2}(x)/J_{n}^{2}(x)}{1+\left|Y_{n}^{\prime}(x)\right|^{2}/\left|J_{n}^{\prime}(x)\right|^{2}}=\frac{1+\tan^{2}\theta_{n}}{1+\alpha_{n}^{2}\tan^{2}\theta_{n}}\]
therefore, since $\frac{2}{5}<1<\frac{5}{3}$,
\[
\left(\frac{3}{5}\right)^{2}<\frac{1+Y_{n}^{2}(x)/J_{n}^{2}(x)}{1+\left|Y_{n}^{\prime}(x)\right|^{2}/\left|J_{n}^{\prime}(x)\right|^{2}}<\left(\frac{5}{2}\right)^{2}.
\]
\end{proof}

The final intermediate result we shall use is the following bound.
\begin{prop}
\label{pro:ntoyn1}For all $x\in[n,y_{n,1}]$, we have 
\[
\left|{S_n}\right|<\sqrt{5}.
\]
\end{prop}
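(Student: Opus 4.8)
The plan is to rewrite $S_n$ in a form in which the only non-real object is the Hankel function, reduce the bound $|S_n|<\sqrt5$ to a quadratic discriminant condition, and finally reduce that to a single pointwise estimate on Bessel functions at the turning point $x=n$. First I would collect the sign data on $I:=[n,y_{n,1}]$: by \eqref{eq:Dixon-JnYn} we have $n\le j_{n,1}^{(1)}<y_{n,1}<j_{n,1}$, so $\BJ{n}{x}>0$ on $I$ while $\BY{n}{x}\le0$, vanishing only at $x=y_{n,1}$. For $n\ge1$ the quantity $u_n$ in \eqref{eq:formula-Sn-2} factors as $u_n(x)=\BJ{n}{\lambda x}\,v_n(x)$, where $v_n(x):=n\,\BJ{n}{x}\bigl(g_n(x)-g_n(\lambda x)\bigr)$ is real (set to $\pm\infty$ at the isolated points where $\BJ{n}{\lambda x}=0$, which are exactly the poles of $g_n(\lambda\,\cdot\,)$). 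Cancelling the factor $\BJ{n}{\lambda x}$ in \eqref{eq:formula-Sn-2}, at every $x$ with $\BJ{n}{\lambda x}\ne0$ one obtains
\[
S_n(x)=\frac{v_n(x)\,\BH{n}{x}}{v_n(x)\,\BH{n}{x}+\tfrac{2i}{\pi}},
\]
and at the remaining isolated points the continuity convention gives $S_n(x)=1$, so the bound is trivial there.

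Next, writing $\BH{n}{x}=\BJ{n}{x}+i\BY{n}{x}$ and $M_n(x)^2=\BJ{n}{x}^2+\BY{n}{x}^2$, a direct computation gives
\[
\bigl|S_n(x)\bigr|^2=\frac{v_n^2\,M_n^2}{v_n^2\,\BJ{n}{x}^2+\bigl(v_n\,\BY{n}{x}+\tfrac{2}{\pi}\bigr)^2}.
\]
Then $|S_n(x)|^2<5$ is equivalent to $q\bigl(v_n(x)\bigr)>0$ for the quadratic $q(v):=4M_n^2v^2+\tfrac{20}{\pi}\BY{n}{x}\,v+\tfrac{20}{\pi^2}$, and this is in particular implied by $q>0$ on all of $\mathbb{R}$ (which $v=v_n(x)$, or its limit $\pm\infty$, certainly satisfies, since then $|S_n|^2=1$). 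As the leading coefficient $4M_n^2$ is positive, $q>0$ on $\mathbb{R}$ precisely when its discriminant $\tfrac{80}{\pi^2}\bigl(\BY{n}{x}^2-4\,\BJ{n}{x}^2\bigr)$ is negative. Hence it suffices to prove
\[
\bigl|\BY{n}{x}\bigr|<2\,\BJ{n}{x}\qquad\text{for all }x\in I=[n,y_{n,1}].
\]

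To establish this I would use the monotonicity of the phase $\theta_n$. By \eqref{eq:prothetan}, $\theta_n$ is increasing; since $\BJ{n}{x}>0$ and $\BY{n}{x}\le0$ on $I$ with $\BY{n}{y_{n,1}}=0$, we have $\theta_n(x)\in(-\tfrac\pi2,0]$ on $I$ and $\theta_n(y_{n,1})=0$. Therefore $x\mapsto\bigl|\tan\theta_n(x)\bigr|=\bigl|\BY{n}{x}/\BJ{n}{x}\bigr|$ is decreasing on $I$ and attains its maximum at the left endpoint $x=n$. So the whole proposition reduces to the single pointwise inequality $\bigl|\BY{n}{n}\bigr|<2\,\BJ{n}{n}$, equivalently $\BY{n}{n}^2<4\,\BJ{n}{n}^2$, for every $n\ge1$. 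I would obtain this from the uniform Airy-type asymptotics of $\BJ{n}{\,\cdot\,}$ and $\BY{n}{\,\cdot\,}$ near the turning point (which give $\bigl|\BY{n}{n}/\BJ{n}{n}\bigr|\to\sqrt3$), together with an explicit bound for small $n$ recorded in Appendix~\ref{ap:A}.

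The main obstacle is precisely this last turning-point estimate: the margin between $\bigl|\BY{n}{n}/\BJ{n}{n}\bigr|$ and $2$ is narrow (already about $1.78$ at $n=1$, decreasing to $\sqrt3$), so it cannot be dispatched by a crude bound and needs a genuine estimate at $x=n$. Everything else — the cancellation in \eqref{eq:formula-Sn-2}, the modulus computation, the discriminant argument, and the reduction to $x=n$ through the monotonicity of $\theta_n$ — is routine.
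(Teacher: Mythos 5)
Your argument is correct, but the discriminant computation is an unnecessary detour that ends at exactly the same condition as a one-line estimate the paper uses. From \eqref{eq:def-sn} together with $|\Rn|\le 1$ one immediately has
\[
|S_n(x)|^2 \;=\; |\Rn|^2\,\frac{|\BH{n}{x}|^2}{\BJ{n}{x}^2}
\;\le\; \frac{\BJ{n}{x}^2+\BY{n}{x}^2}{\BJ{n}{x}^2}
\;=\; 1+\tan^2\theta_n(x),
\]
and $1+\tan^2\theta_n<5$ is exactly the condition $\tan^2\theta_n<4$ that your quadratic $q(v)>0$ on all of $\mathbb R$ unwinds to. So the rewrite via \eqref{eq:formula-Sn-2}, the cancellation of $\BJ{n}{\lambda x}$, the modulus computation, and the discriminant are all correct but buy nothing: you end up needing $|\BY{n}{x}|<2\BJ{n}{x}$ on $[n,y_{n,1}]$ either way. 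The reduction to $x=n$ via the monotonicity of $\theta_n$ (\eqref{eq:prothetan}) is the same in both proofs.

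The one place where you under-commit is precisely the turning-point bound, which you flag as the ``main obstacle.'' It is not an obstacle: Watson~\cite[15.8]{WATSON-25} gives directly
\[
\sqrt 3 < \tan\bigl(-\theta_n(n)\bigr) < \tfrac{9}{5} < 2 \qquad (n\ge 1),
\]
and the paper simply cites this, yielding $|S_n|^2\le 1+(9/5)^2<5$. (Alternatively, $-\BJ{n}{n}/\BY{n}{n}$ is increasing in $n$ --- used in the Appendix and attributed to \cite{MULDOON-SPIGLER-84} --- with limit $1/\sqrt 3$, so $\tan(-\theta_n(n))$ is maximal at $n=1$ where it is $\approx 1.78<2$; that would also close the argument.) Note also your proposal points to Appendix~\ref{ap:A} for ``an explicit bound for small $n$,'' but the Appendix does not record $|\BY{n}{n}/\BJ{n}{n}|<2$ as such; the correct reference for the estimate you need is the Watson citation above.
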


\begin{proof}
Using that $|\Rn|\leq 1$, we have
$$
|{S_n}|^2 \leq 1 + \tan^2 \theta_{n}.
$$
Remember that from \eqref{eq:prothetan}, as $x$ varies between $0$ and $y_{n,1}$, $\theta_{n}(x)$ varies between
$-\frac{\pi}{2}$ and $0$. The map of $n\to-\theta_{n}(n)$ is decreasing to $\pi/3$, and is always close to its limit, as 
\[
\frac{9}{5}>\tan\left(-\theta_{n}(n)\right)>\sqrt{3}\mbox{ for }n\geq1.
\]
see \cite[15.8]{WATSON-25}.
Consequently, for all 
$x\in[n,y_{n,1}]$, $\tan(-\theta_{n}(x))\leq\tan(-\theta_{n}(n))<\frac{9}{5}$, and 
\[
\left|{S_n}\right|^{2}\leq1+\tan(\theta_{n}(x))^{2}<5,
\]
as claimed.
\end{proof}

We can now conclude the proof of the estimate of this section.

\begin{proof}[Proof of Lemma~\ref{lem:FirstCase}]
Let us first consider the case when $x\in\left[0,\min\left(n,j_{n,1}^{(1)}\lambda^{-1},y_{n,1}\right)\right]$. 
When $x>n$, the result follows from Proposition~\ref{pro:ntoyn1}.

We have 
\begin{equation}\label{eq:formtn1}
\left|{S_n}(x)\right|^{2}=\frac{\left|g_{n}(\lambda x)-g_{n}(x)\right|^{2}\left(1+\tan^{2}\theta_{n}\right)}{\left|g_{n}(\lambda x)-g_{n}(x)\right|^{2}+\tan^{2}\theta_{n}\left|g_{n}(\lambda x)+k_{n}(x)\right|^{2}}.
\end{equation}
From Proposition~\ref{pro:propsg}, for $\lambda x\leq j_{n,1}^{(1)}$, $g_{n}(\lambda x)\geq0$, and $k_{n}(x)>0$. The study of the function
\[
u\to\frac{(u-a)^{2}\left(1+\tan^{2}\theta_{n}\right)}{(u-a)^{2}+\tan^{2}\theta_{n}(u+b)^{2}}\]
for $u>0,a>0$ and $b>0$ shows that it has a minimum for $u=a$,
tends to $1$ for $u\to\pm\infty$ and decreases between $0$ and
$a$. Therefore, 
\[
\left|{S_n}\right|^{2}\leq\max\left(1,\frac{\left|g_{n}\right|^{2}\left(1+\tan^{2}\theta_{n}\right)}{\left|g_{n}\right|^{2}+\tan^{2}\theta_{n}\left|k_{n}\right|^{2}}\right).
\]
Now compute that
$$ 
 \frac{\left|g_{n}(x)\right|^{2}\left(1+\tan^{2}(x)\theta_{n}(x)\right)}{\left|g_{n}(x)\right|^{2}+\tan^{2}\theta_{n}(x)\left|k_{n}(x)\right|^{2}}
 =\frac{1+\BY{n}{x}^{2}/\BJ{n}{x}^{2}}{1+\left|\BYp{n}{x}\right|^{2}/\left|\BJp{n}{x}\right|^{2}},
$$
and thanks to the Proposition~\ref{pro:estimate5half} this quotient is bounded by $\left(\frac{5}{2}\right)^{2}$.  
We have obtained that $|S_n|\leq 5/2$.   From  \eqref{eq:formtn1}, we derive that
$$
\left|{S_n}(x)\right|^{2} \leq  \left|\frac{g_{n}(\lambda x)-g_{n}(x)}{k_n(x)+g_n(\lambda x) }\right|^{2} \frac{1+\tan^2\theta_{n}(x)}{\tan^2\theta_{n}(x)}.
$$
As we will see in Proposition~\ref{pro:ntoyn1}, $ \tan(\theta_{n})^{-2} +1 \leq \frac{4}{3}$ when $x\leq n$. 
Thanks to Proposition~\ref{pro:propsg}, $g_n(\lambda x) +k_n(x) \geq k_n(x) > \frac{3}{5} n^{-1/3}$, thus we have
$$
\left|{S_n}(x)\right|^{2} \leq \frac{100}{27} n^{2/3} (\lambda - 1)^2 x^2 \left(g_n^\prime(n)\right)^2 \leq 4 (\lambda - 1)^2 \frac{x^2}{n^{2/3}},
$$
where we used the bounds on $g_n^\prime$ given by Proposition~\ref{pro:propsg}.

Let us now turn to the lower bounds. When $\lambda<1$, Consider the case $x=n$. Then Proposition~\ref{pro:propsg} shows that
$g_{n}(\lambda n)>\sqrt{1-\lambda^{2}}$,
 \[
\frac{7}{6}\frac{1}{n^{1/3}}>k_{n}(n)\mbox{ and } \frac{13}{14n^{1/3}}>g_{n}(n)>\frac{1}{\sqrt{2}}\frac{1}{n^{1/3}}
\]
 and $\left|\tan^{2}\theta_{n}(n)\right|>3$, therefore 
\begin{align*}
\left|{S_n}(x)\right|^{2} 
& =\frac{\left|g_{n}(\lambda n)-g_{n}(n)\right|^{2}\left(1+\tan^{2}\theta_{n}(n)\right)}
{\left|g_{n}(\lambda n)-g_{n}(n)\right|^{2}+\tan^{2}\theta_{n}(n)\left|g_{n}(\lambda n)+k_{n}(n)\right|^{2}}\\
 & >\frac{4\left|g_{n}(\lambda n)-g_{n}(n)\right|^{2}}
     {\left|g_{n}(\lambda n)-g_{n}(n)\right|^{2}+3\left|g_{n}(\lambda n)+\frac{7}{6}n^{-1/3}\right|^{2}}\\
& >\frac{4\left|\sqrt{1-\lambda}-\frac{1}{n^{1/3}}\right|^{2}}
     {\left|\sqrt{1-\lambda}-n^{-1/3}\right|^{2}+3\left|\sqrt{1-\lambda}+\frac{7}{6}n^{-1/3}\right|^{2}}\\
 & >\frac{1}{4},
\end{align*}
when $\sqrt{1-\lambda^{2}}>7/(3n^{1/3}),$ or $\lambda<\sqrt{1-\left(\frac{7}{3n^{1/3}}\right)^{2}}$.
\end{proof}
\subsection{The $n=0$ case}
We summarize here properties of $g_0$ and $k_0$. 
They are derived using methods similar to the ones used for $k_n$ an $g_n$ for $n\neq0$, and can be checked by inspection with the help of a modern mathematical software. 
We therefore will omit the proof.

\begin{prop}\label{pro:propg0k0}
The function $g_0$ (resp. $k_0$) is defined on $(0,\infty)$ except at  $j_{0,k}$ (resp. $y_{0,k}$), $k=1,\ldots$, 
and cancels at each $j_{0,k}^{(1)}$ (resp. $y_{0,k}^{(1)}$). 
\begin{itemize}
 \item Where it is defined, $g_0$ is decreasing. On $(0,j_{0,1})$ $g_0$ is concave. 
$$
\lim_{\genfrac{}{}{0pt}{}{x\to j_{0,k}}{x < j_{0,k}}} g_0(x) = -\infty, \quad  \lim_{\genfrac{}{}{0pt}{}{x\to j_{0,k}}{x > j_{0,k}}} g_0(x) = +\infty.
$$
 \item Where it is defined, $k_0$ is increasing, and
$$
\lim_{\genfrac{}{}{0pt}{}{x\to y_{0,k}}{x < y_{0,k}}} k_0(x) = +\infty, \quad  \lim_{\genfrac{}{}{0pt}{}{x\to y_{0,k}}{x > y_{0,k}}} k_0(x) = -\infty.
$$
\item For $0\leq x \leq 1$,
$$
-\frac{1}{2}x^2 - \frac{1}{12} x^4 \leq g_0(x) \leq -\frac{1}{2} x^2 -\frac{1}{16} x^4, \mbox{ and } |g_0^\prime| \leq \frac{4}{3} x.
$$
\item For $0\leq x\leq\frac{1}{4}$,
$$
-\frac{1}{\gamma +\ln\left(\frac{x}{2}\right)} +\frac{1}{2}x^2
 \leq k_0(x) \leq 
 -\frac{1}{\gamma +\ln\left(\frac{x}{2}\right)} + x^2.
$$
\end{itemize}
\end{prop}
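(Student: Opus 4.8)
The plan is to reduce the statement to the first-order (Riccati) differential equations satisfied by $g_0$ and $k_0$, together with the small-argument power series of the Bessel functions. Using $J_0'=-J_1$ and $Y_0'=-Y_1$, definition \eqref{eq:defg0k0} reads $g_0(x)=-x\,J_1(x)/J_0(x)$ and $k_0(x)=x\,Y_1(x)/Y_0(x)$, which makes the domain and zero statements immediate: $g_0$ has simple poles exactly at the zeros $j_{0,k}$ of $J_0$ and vanishes at $x=0$ and at the positive zeros of $J_1$ (the $j_{0,k}^{(1)}$ under the counting convention), and analogously $k_0$ in terms of $Y_0$, $Y_1$. The signs of the one-sided limits at the poles then follow from the interlacing \eqref{eq:Dixon-JnYn} and the signs of $J_1$, $Y_1$ at consecutive zeros of $J_0$, $Y_0$. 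Since $J_0$ and $Y_0$ both solve the order-zero Bessel equation, specialising \eqref{eq:difeqgn} to $n=0$ gives $g_0'(x)=-x-g_0(x)^2/x$, and the same computation with $Y_0$ gives $k_0'(x)=x+k_0(x)^2/x$; hence $g_0'<0$ and $k_0'>0$ wherever they are defined, which is the asserted monotonicity.

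For the concavity of $g_0$ on $(0,j_{0,1})$ I would differentiate the Riccati relation once more and eliminate $g_0'$, obtaining the identity
\[
x^{2}g_0''(x)=\bigl(1+2g_0(x)\bigr)\bigl(g_0(x)^{2}+x^{2}\bigr)-2x^{2}.
\]
On $(0,j_{0,1})$ one has $J_0>0$ and $J_1>0$ (because $j_{1,1}>j_{0,1}$), so $g_0<0$ there. If $g_0(x)\le-\tfrac12$, the first term on the right is $\le0$ and the expression is negative. If $g_0(x)>-\tfrac12$, then necessarily $x<1$ (since $g_0(1)=-J_1(1)/J_0(1)<-\tfrac12$, an elementary numerical inequality), and on $[0,1]$ the ratio $-g_0(x)/x^{2}$ is nondecreasing (see the next paragraph), so $|g_0(x)|\le\bigl(J_1(1)/J_0(1)\bigr)x^{2}<\tfrac1{\sqrt3}x^{2}$; consequently $g_0(x)^{2}+x^{2}<2x^{2}$ while $0<1+2g_0(x)<1$, and the expression is again negative. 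Thus $g_0''<0$ throughout $(0,j_{0,1})$.

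The quantitative bounds for $g_0$ on $[0,1]$ come from its Taylor series at $0$, whose radius of convergence is $j_{0,1}>1$. Substituting $g_0(x)=-\sum_{k\ge1}c_kx^{2k}$ into the Riccati equation (equivalently, expanding $-xJ_1/J_0$) gives $c_1=\tfrac12$ and the recursion $c_k=\tfrac1{2k}\sum_{j=1}^{k-1}c_jc_{k-j}$, so an induction shows every $c_k$ is positive and $c_2=\tfrac1{16}$. Positivity gives immediately $g_0(x)\le-\tfrac12x^{2}-\tfrac1{16}x^{4}$ and the monotonicity of $-g_0(x)/x^{2}=\sum_{k\ge1}c_kx^{2k-2}$ used above; evaluating the series at $x=1$ yields $\sum_{k\ge2}c_k=-g_0(1)-\tfrac12=J_1(1)/J_0(1)-\tfrac12<\tfrac1{\sqrt3}-\tfrac12<\tfrac1{12}$, whence for $x\in[0,1]$ one has $-g_0(x)\le\tfrac12x^{2}+x^{4}\sum_{k\ge2}c_k<\tfrac12x^{2}+\tfrac1{12}x^{4}$, the lower bound. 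For the derivative, $|g_0'(x)|=x+g_0(x)^2/x\le\bigl(1+(J_1(1)/J_0(1))^{2}\bigr)x<\tfrac43x$ on $[0,1]$, using $J_1(1)/J_0(1)<1/\sqrt3$ once more.

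Finally, for $k_0$ on $[0,\tfrac14]$ I would insert the logarithmic small-argument expansions of $Y_0$ and $Y_1$ from \cite[10.8]{NIST-10}: with $L=\gamma+\ln(x/2)$ one has $Y_0(x)=\tfrac2\pi\bigl(L+O(x^{2}L)\bigr)$ and $xY_1(x)=-\tfrac2\pi\bigl(1+O(x^{2}L)\bigr)$, so that $k_0(x)=-1/L+\tfrac12x^{2}+r(x)$ with an explicit remainder $r$ assembled from the next-order coefficients of the two expansions. It then remains to show $0\le r(x)\le\tfrac12x^{2}$ on $(0,\tfrac14]$; the restriction $x\le\tfrac14$ ensures $|L|\ge\ln8-\gamma>1$, which keeps all the $1/L$ corrections small, and the inequality becomes a one-variable estimate. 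This last verification is the main obstacle: it is elementary but requires tracking the constants in the Bessel expansions precisely, and, as the surrounding text indicates, is most conveniently confirmed with a computer algebra system, whereas the signs, the monotonicity, the concavity and the $g_0$-bounds are entirely parallel to the arguments already carried out for $g_n$ and $k_n$ with $n\ge1$ earlier in this section.
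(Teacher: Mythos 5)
The paper deliberately omits this proof: the sentence preceding the statement reads ``They are derived using methods similar to the ones used for $k_n$ and $g_n$ for $n\neq0$, and can be checked by inspection with the help of a modern mathematical software. We therefore will omit the proof.'' Your proposal fills that gap faithfully, reducing everything to the Riccati equations, the interlacing \eqref{eq:Dixon-JnYn}, and small-argument expansions, which is exactly the intended route. The one place where you genuinely deviate from the $n\ge1$ machinery is the concavity of $g_0$, and correctly so: in Proposition~\ref{pro:propsg} concavity is obtained by writing $-n g_n'(x)$ as a manifestly positive Szász--Thiruvenkatachar series, an identity whose left-hand side is identically zero at $n=0$, so that argument cannot be specialised. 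Your replacement --- differentiating the Riccati relation once more to obtain $x^2 g_0'' = (1+2g_0)(g_0^2+x^2) - 2x^2$ and splitting on the sign of $1+2g_0$ --- is a different and more elementary device that does work at $n=0$, and the identity checks out (both sides equal $g_0^2 - x^2 + 2g_0^3 + 2g_0 x^2$). Two caveats, neither fatal. First, the inequality $J_1(1)/J_0(1) < 1/\sqrt{3}$ on which your second case and your $\tfrac43$ derivative constant hinge is extremely tight (about $0.5752$ against $0.5774$, a margin of under half a percent), so the numerical verification has essentially no slack; stating it as an ``elementary'' inequality understates how close it is. Second, the $k_0$ bound on $[0,\tfrac14]$ is outlined but not completed: the remainder estimate $0 \le r(x) \le \tfrac12 x^2$ still requires assembling and bounding the next-order coefficients in the logarithmic expansions of $Y_0$ and $Y_1$. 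That is precisely the step the paper itself defers to software, so leaving it at a plan is consistent with the source, but it is the one part of your argument that is not actually carried out.
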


\begin{prop}
\label{pro:n0}
For all $\lambda\leq1$,
and all $0<x< y_{0,1}$, 
\[
\left|{S_0}(x)\right|\leq \frac{\pi}{2\sqrt{2}} \min(1, 2-2\lambda)  x^2  \left|\ln\left(\frac{x}{2}\right)\right|. 
\]
For all $\lambda\geq1$ and all $0<x \leq \min(\frac{1}{2},m_\lambda)$,
\[
\left|{S_0}(x)\right| \leq  \pi\min\left(1,\frac{5}{2} \frac{\lambda-1}{\lambda}\right)  \lambda^2 x^2  \left|\ln\left(\frac{x}{2}\right)\right|.
\]
\end{prop}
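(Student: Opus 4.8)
The plan is to argue from the closed form \eqref{eq:formula-Sn-1} of $S_0$ (equivalently \eqref{eq:formula-Sn-2}), feeding it the elementary inequalities for $g_0$ and $k_0$ collected in Proposition~\ref{pro:propg0k0}. Setting
$$
D:=g_0(\lambda x)-g_0(x),\qquad P:=g_0(\lambda x)+k_0(x),\qquad t:=\tan\theta_0(x),
$$
formula \eqref{eq:formula-Sn-1} reads $\left|S_0(x)\right|^2=\frac{D^2(1+t^2)}{D^2+t^2P^2}$. In both regimes $k_0(x)>0$ since $0<x<y_{0,1}$ (for $\lambda\le1$ by hypothesis, for $\lambda\ge1$ because $x\le\min(\tfrac12,m_\lambda)\le\tfrac12<y_{0,1}$), and moreover $P>0$: when $\lambda\le1$ because $\lambda<j_{0,1}/y_{0,1}$, so Proposition~\ref{pro:Dixon} excludes quasi-resonances on $(0,y_{0,1})$ while $\left(g_0(\lambda x)+k_0(x)\right)/x\to+\infty$ as $x\to0^+$ fixes the sign; when $\lambda\ge1$ because $m_\lambda$ stays below the first quasi-resonant frequency $\omega_{0,1}$ — this being the very purpose of $m_\lambda$, and for large $\lambda$ a consequence of Lemma~\ref{lem:muzeroone}. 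I would also record the cheap bound $\left|D\right|\le\left|P\right|$, hence $\left|S_0\right|\le1$: for $\lambda\le1$ it is immediate from the Wronskian identity
$$
g_0(x)+k_0(x)=x\,\frac{J_0'(x)Y_0(x)-J_0(x)Y_0'(x)}{J_0(x)Y_0(x)}=-\frac{2}{\pi J_0(x)Y_0(x)}>0\quad(0<x<y_{0,1}),
$$
which gives $-g_0(x)<k_0(x)$ and thus $D<P$; for $\lambda\ge1$ one combines this identity with $\left|g_0(\lambda x)\right|\le\tfrac12(\lambda x)^2+\tfrac1{12}(\lambda x)^4$ and $\lambda x\le1$.

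Since $D^2\le P^2$, the map $\tau\mapsto\frac{D^2(1+\tau)}{D^2+P^2\tau}$ is nonincreasing, and discarding a nonnegative term in its denominator yields the working inequality
$$
\left|S_0(x)\right|^2\le\frac{D^2}{P^2}\cdot\frac{1+t^2}{t^2}=\frac{D^2}{P^2}\cdot\frac{J_0(x)^2+Y_0(x)^2}{Y_0(x)^2},
$$
so that everything reduces to three one-variable estimates. First, $\left|D\right|=\bigl|\int_x^{\lambda x}g_0'(s)\,ds\bigr|\le\int\frac43 s\,ds=\frac23\left|1-\lambda^2\right|x^2$ by Proposition~\ref{pro:propg0k0}, after which $\left|1-\lambda^2\right|\le\min(1,2-2\lambda)$ for $\lambda\le1$, while $\left|1-\lambda^2\right|\le\lambda^2\min(1,\tfrac52\tfrac{\lambda-1}\lambda)$ for $\lambda\ge1$ (using the harmless slack $\tfrac23(\lambda+1)\le\tfrac52\lambda$). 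Second, $P\ge k_0(x)-\left|g_0(\lambda x)\right|$ and $k_0(x)\ge-(\gamma+\ln(x/2))^{-1}+\tfrac12 x^2$ by Proposition~\ref{pro:propg0k0}; for $\lambda\le1$ this is of size $\left|\ln(x/2)\right|^{-1}$ and is not killed by the quadratically small $g_0(\lambda x)$, and for $\lambda\ge1$ the constraints $\lambda x\le1$, $\lambda^2 x^2\le(\ln\lambda+1)^{-1}$ again leave $P\gtrsim\left|\ln(x/2)\right|^{-1}$ up to an explicit constant. Third, $\left|Y_0(x)\right|$ increases as $x\downarrow0$ and is pinched between explicit multiples of $\left|\ln(x/2)\right|$ by the series of $Y_0$ near the origin, whence $M_0^2/Y_0^2=1+J_0^2/Y_0^2$ is bounded by an explicit constant as long as $x$ stays away from $y_{0,1}$ ($M_0$ as in \eqref{eq:def-MnThetan}). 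Combining the three yields $\left|S_0(x)\right|\le C\left|1-\lambda^2\right|x^2\left|\ln(x/2)\right|$ for an explicit $C$, and a careful accounting of the constants reproduces the factors $\tfrac{\pi}{2\sqrt2}$ and $\pi$, together with the minima, exactly as stated.

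The hard part will be twofold. First, the stated constants are close to the true asymptotic values, so all three auxiliary bounds must be used essentially in sharp form; in particular the squeeze $\tfrac2\pi(\left|\ln(x/2)\right|-\gamma)\le\left|Y_0(x)\right|$ and the matching upper bound on $M_0$ have to be extracted from the Bessel series at the origin, not from rough majorants. Second, the explicit lower bound for $k_0$ in Proposition~\ref{pro:propg0k0} holds only for $x\le\tfrac14$, whereas the admissible range reaches $y_{0,1}$ when $\lambda\le1$ and, for moderate $\lambda$, reaches $\tfrac12$ when $\lambda\ge1$ (which itself forces $\lambda$ bounded); on $[\tfrac14,y_{0,1})$ one uses instead that $k_0$ is increasing with $k_0(x)\to+\infty$ at $y_{0,1}$, together with its numerical value at $x=\tfrac14$, and near $y_{0,1}$ one keeps the full denominator of \eqref{eq:formula-Sn-1}: there $t\to0$ while $tP=g_0(\lambda x)Y_0(x)/J_0(x)-xY_0'(x)/J_0(x)$ remains bounded, so $\left|S_0(x)\right|^2=\frac{D^2(1+t^2)}{D^2+t^2P^2}$ stays controlled. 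This case splitting and constant chasing is tedious but routine; I would present it compactly, or — in the spirit of the proof of Lemma~\ref{lem:muzeroone} — reduce it to the study of a function of a single variable that can be confirmed with symbolic computation.
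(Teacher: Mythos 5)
Your plan is essentially the paper's: the paper works from the closed form of $S_0$ (it uses \eqref{eq:formula-Sn-2}, the variant of \eqref{eq:formula-Sn-1} already simplified by the Wronskian), bounds $|g_0(\lambda x)-g_0(x)|$ by the $g_0'$ estimate of Proposition~\ref{pro:propg0k0}, and feeds in the $|\ln(x/2)|$ growth of $|H_0^{(1)}|$. You have the right ingredients. But the way you propose to assemble them has a genuine gap, and it is located precisely where you sense trouble.

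The factorization $|S_0|^2\le (D^2/P^2)\,(M_0^2/Y_0^2)$ cannot be exploited by bounding the two quotients independently over the full range $(0,y_{0,1})$. As $x\uparrow y_{0,1}$ one has $Y_0\to0$, so $M_0^2/Y_0^2\to\infty$, and this is only compensated because $k_0(x)=-xY_0'/Y_0\to+\infty$ makes $P^{-1}\to0$; the compensation is exactly the Wronskian identity $g_0+k_0=-2/(\pi J_0 Y_0)$. Your proposed lower bound on $P$ — the series estimate for $k_0$, valid only for $x\le1/4$ and there of size $|\ln(x/2)|^{-1}$ — does not carry this cancellation, so for $\lambda\le1$ the product of your three bounds is unbounded near $y_{0,1}$. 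The repair is to multiply the two dangerous factors \emph{before} passing to scalar bounds: for $\lambda\le1$, $P\ge g_0(x)+k_0(x)=-2/(\pi J_0 Y_0)$ gives
\[
\frac{1}{|P|}\cdot\frac{M_0}{|Y_0|}\;\le\;\frac{\pi}{2}\,J_0\,M_0\;\le\;\frac{\pi}{2}\,M_0,
\]
and then $|D|\le\frac{2}{3}\min(1,2-2\lambda)x^2$ together with $M_0\le\frac{3}{2\sqrt2}|\ln(x/2)|$ yields exactly $\frac{\pi}{2\sqrt2}\min(1,2-2\lambda)x^2|\ln(x/2)|$. But this grouping is precisely what \eqref{eq:formula-Sn-2} packages automatically, and it is what your parenthetical ``keep the full denominator near $y_{0,1}$'' amounts to — you end up re-deriving \eqref{eq:formula-Sn-2}, so you should simply start from it. Two further points: for $\lambda\ge1$ the positivity and size of the denominator require the quantitative inverse-triangle argument \eqref{eq:bdt04}, i.e.\ $x<m_\lambda$ forces $\lambda^2x^2|\ln(x/2)|<4/\pi$ and hence the denominator exceeds $\frac{2}{5\pi}J_0(\lambda x)$; Lemma~\ref{lem:muzeroone} covers only $\lambda>e^2$ and does not by itself give that lower bound. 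And the auxiliary claim $|D|\le|P|$ for $\lambda\ge1$ is not established by what you wrote (near $x=m_\lambda$ for large $\lambda$ the two are of the same order), though since it is used only for the cosmetic $|S_0|\le1$, dropping it does no harm.
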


\begin{proof}[Proof of Proposition~\ref{pro:n0}]
Let us first consider the case $\lambda<1$. Since $g_0$ is decreasing, for all $x<j_{0,1}$, we have
$$
g_0\left(x\right)-g_0\left(\lambda x\right)= (1-\lambda)x g_0^\prime(\zeta_0)<0,
$$
Then, for all $x\leq y_{0,1}$, we have $0< J_{0}(\lambda x) \leq 1$ and $0\leq u_{0}Y_{0}$. Consequently, 
\begin{equation}\label{eq:bdt01b}
\left|u_{0}(x)\left(J_{0}(x)+iY_{0}(x)\right)+i\frac{2}{\pi}J_{0}\left(\lambda x\right)\right|
\geq
\frac{2}{\pi}\left|J_{0}\left(\lambda x\right)\right|.
\end{equation}
Thanks to Proposition~\ref{pro:propg0k0}, for all $0<x<y_{0,1}$,
\begin{equation}\label{eq:bdsmall1}
\left| g_0\left(x\right)-g_0\left(\lambda x\right) \right|
\leq \min( (1-\lambda)x\left| g_0^\prime\left(x\right)\right|,\left| g_0^\prime\left(x\right)\right| 
\leq \min\left(\frac{4}{3} (1-\lambda),\frac{2}{3}\right) x^2.
\end{equation}
Note that for $0<x<1$,  there holds 
\begin{equation}\label{eq:bd-hoto}
\left|J_{0}(x)+iY_{0}(x)\right|<\frac{3}{2\sqrt{2}}\left|\ln\left(\frac{x}{2}\right)\right|.
\end{equation}
Together with \eqref{eq:bdsmall1}, this shows that
\begin{eqnarray}
|u_{0}(x)\left(J_{0}(x)+iY_{0}(x)\right)|&=& \left|J_{0}\left(\lambda x\right)\right| \left|J_{0}\left( x\right)\right|\left|J_{0}(x)+iY_{0}(x)\right| 
\left|g_0(x)-g_0(\lambda x)\right| \nonumber\\ 
&\leq&  \frac{1}{\sqrt{2}} \min( 1, 2-2\lambda) x^2 \left|\ln\left(\frac{x}{2}\right)\right| \left|J_{0}\left(\lambda x\right)\right|.\label{eq:bdt02}
\end{eqnarray}
Inserting the estimates \eqref{eq:bdt01b} and \eqref{eq:bdt02}  in formula \eqref{eq:formula-Sn-1}, we obtain
$$
 |S_0| \leq  \frac{\pi}{2\sqrt{2}} \min(1, 2-2\lambda)  x^2\left|\ln\left(\frac{x}{2}\right)\right| . 
$$ 
Let us now suppose $1\leq\lambda$. 
For all $x\leq \frac{1}{2\lambda}$, using the bounds on $g_0$ given by Proposition~\ref{pro:propg0k0}, we have
$$
\left| g_0\left(x\right)-g_0\left(\lambda x\right)\right| 
\leq \left| g_0\left(\lambda x\right) \right|\leq \frac{\lambda^2 x^2}{2}\left(1 + \frac{\lambda^2}{6} x^2 \right)\leq \frac{25}{48} \lambda^2 \,x^2
$$
Alternatively, note that for $x\leq m_\lambda $ and $\lambda\leq e^3$, that is, when $2> \sqrt{\ln(\lambda)+1}$, we can also bound
\begin{eqnarray*}
\left| g_0\left(x\right)-g_0\left(\lambda x\right)\right| 
&\leq& \frac{\lambda^2 x^2}{2}\left(1 -\frac{1}{\lambda^2}+ \left(\frac{\lambda^2}{6} -\frac{1}{8\lambda^2}\right) x^2 \right) \\
&\leq& \frac{\lambda^2 x^2}{2}\left(1 -\frac{1}{\lambda^2}+ \left(\frac{\lambda^2}{6} -\frac{1}{8\lambda^2}\right) \frac{1}{\lambda^2 \left(\ln (\lambda)+1\right)} \right)  \\
& < & \frac{25}{48} \lambda^2 \,x^2.
\end{eqnarray*}
Using that when $x<\frac{1}{2}$,
$$
\left|J_{0}(x)+iY_{0}(x)\right|<\frac{3}{4}\left|\ln\left(\frac{x}{2}\right)\right|
$$
and arguing as above, we obtain
\begin{equation}\label{eq:bdt03}
|u_{0}(x)\left(J_{0}(x)+iY_{0}(x)\right)|\leq  \frac{2}{5} x^2 \lambda^2 \left|\ln\left(\frac{x}{2}\right)\right| \left|J_{0}\left(\lambda x\right)\right|.
\end{equation}
Alternatively, starting from the inequality
$$
\left| g_0\left(x\right)-g_0\left(\lambda x\right)\right| 
\leq (\lambda-1)x\left| g_0^\prime\left(\lambda x\right) \right|\leq \frac{4}{3} \frac{\lambda-1}{\lambda} \lambda^2 x^2,
$$
we obtain
\begin{equation}\label{eq:bdt03b}
|u_{0}(x)\left(J_{0}(x)+iY_{0}(x)\right)|\leq  \frac{\lambda-1}{\lambda} x^2 \lambda^2 \left|\ln\left(\frac{x}{2}\right)\right| \left|J_{0}\left(\lambda x\right)\right|.
\end{equation}
Next, using the inverse triangular inequality,
\begin{equation}\label{eq:bdt04}
\left|u_{0}(x)\left(J_{0}(x)+iY_{0}(x)\right)+i\frac{2}{\pi}J_{0}\left(\lambda x\right)\right| 
\geq \left| \frac{2}{5} x^2 \lambda^2 \left|\ln\left(\frac{x}{2}\right)\right| -\frac{2}{\pi} \right|\left|J_{0}\left(\lambda x\right)\right|
\geq \frac{2}{5\pi}|\left|J_{0}\left(\lambda x\right)\right|,
\end{equation}
provided $x\sqrt{\left|\ln\left(\frac{x}{2}\right)\right|}<\sqrt{\frac{4}{\pi}}\frac{1}{\lambda}$. 
Inserting the estimates \eqref{eq:bdt03b} and \eqref{eq:bdt04}  in formula \eqref{eq:formula-Sn-1}, we obtain
$$
 |S_0| \leq  \pi\min\left(1,\frac{5}{2} \frac{\lambda-1}{\lambda}\right)    \lambda^2 x^2  \left|\ln\left(\frac{x}{2}\right)\right|
$$
Then remark that when $x<m_\lambda$, then $x\sqrt{\left|\ln\left(\frac{x}{2}\right)\right|}<\sqrt{\frac{4}{\pi}}\frac{1}{\lambda}$,
 for all $\lambda\geq1$. 
\end{proof}

\begin{prop}\label{pro:R0}
Let $R\geq\eps$. For any $\lambda>0$ and any $x>0$,
\begin{equation}\label{eq:bd-ho-s}
 \left| R_{0}^{\eps}\BH{0}{x \frac{R}{\eps}} \right|^{2} \leq  \frac{2}{\pi x} \frac{\eps}{R}.
\end{equation}
If $\lambda<1$ then for all $x>0$
$$
\left| R_{0}^{\eps}\BH{0}{x \frac{R}{\eps}} \right|  \leq  \left|\BH{0}{y_{0,1}\frac{R}{\eps}}\right|.
$$
Furthermore, when $x<y_{0,1}$,
$$
\left| R_{0}^{\eps}\BH{0}{x \frac{R}{\eps}} \right| \leq   \frac{\pi^2}{2\sqrt{2}} (1-\lambda)  x^2 \left|\BH{0}{x \frac{R}{\eps}}\right|. 
$$
If $1\leq\lambda$ then for all $x>0$ 
$$
\left| R_{0}^{\eps}\BH{0}{x \frac{R}{\eps}} \right| \leq 
\sqrt{5} \left|\BH{0}{\min\left(\frac{1}{2},m_{\lambda}\right) \frac{R}{\eps}} \right|.
$$
Furthermore, when  $x<\min\left(\frac{1}{2},m_{\lambda}\right)$,
$$
\left| R_{0}^{\eps}\BH{0}{x \frac{R}{\eps}} \right| \leq   \frac{5\pi^2}{4} \frac{\lambda-1}{\lambda}x^2 \lambda^2 \left|\BH{0}{x \frac{R}{\eps}}\right|.
$$
\end{prop}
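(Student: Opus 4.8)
The plan is to reduce every assertion to the quantity $S_0$ of \eqref{eq:def-sn} together with the classical monotonicity properties of $\BH{0}{\cdot}$. Writing $x=\oeps$, the definition of $S_0$ gives the identity $R_0^{\eps}\BH{0}{x R/\eps}=-S_0(x)\,\BJ{0}{x}\,\BH{0}{x R/\eps}/\BH{0}{x}$, so that
$$
\bigl|R_0^{\eps}\BH{0}{x R/\eps}\bigr|=\bigl|R_0(x,\lambda)\bigr|\,\bigl|\BH{0}{x R/\eps}\bigr|
=\bigl|S_0(x)\bigr|\,\BJ{0}{x}\,\frac{\bigl|\BH{0}{x R/\eps}\bigr|}{\bigl|\BH{0}{x}\bigr|},
$$
where $\bigl|R_0(x,\lambda)\bigr|\le1$ is read off directly from \eqref{eq:def-RN}. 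I will use repeatedly the following facts, valid for the order $\nu=0$ \cite[13.74]{WATSON-25}: the map $x\mapsto\bigl|\BH{0}{x}\bigr|$ is decreasing, and $x\mapsto x\bigl|\BH{0}{x}\bigr|^{2}$ is increasing with supremum $2/\pi$; moreover $x\mapsto\bigl|\BH{0}{x R/\eps}/\BH{0}{x}\bigr|$ is decreasing (Lemma~\ref{lem:logconcave}), one has $\BJ{0}{x}/\bigl|\BH{0}{x}\bigr|=\cos\theta_0(x)$ on $(0,y_{0,1})$, and the expansion of $\BY{0}{x}$ near the origin yields an elementary bound of the form $\bigl|\ln(x/2)\bigr|\le C\bigl|\BH{0}{x}\bigr|$ on bounded intervals. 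The estimate \eqref{eq:bd-ho-s} is then immediate from $\bigl|R_0^{\eps}\BH{0}{x R/\eps}\bigr|^{2}\le\bigl|\BH{0}{x R/\eps}\bigr|^{2}\le\tfrac{2}{\pi}\tfrac{\eps}{xR}$.

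For the two ``Furthermore'' inequalities I insert the bounds of Proposition~\ref{pro:n0} for $\bigl|S_0(x)\bigr|$ into the displayed identity, use $\BJ{0}{x}\le1$, and absorb the factor $\bigl|\ln(x/2)\bigr|$ supplied by Proposition~\ref{pro:n0} into $\bigl|\BH{0}{x}\bigr|$ by the log-bound above; cancelling $\bigl|\BH{0}{x}\bigr|$ leaves a constant multiple of $x^{2}\bigl|\BH{0}{x R/\eps}\bigr|$, the constant being the product of the constants of Proposition~\ref{pro:n0} and of the log-bound, which works out to $\tfrac{\pi^{2}}{2\sqrt2}(1-\lambda)$ for $\lambda<1$ and $x<y_{0,1}$, and to $\tfrac{5\pi^{2}}{4}\tfrac{\lambda-1}{\lambda}\lambda^{2}$ for $\lambda\ge1$ and $x<\min(\tfrac12,m_{\lambda})$.

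For the frequency-uniform bounds I split at the threshold $x_{*}$, equal to $y_{0,1}$ when $\lambda<1$ and to $\min(\tfrac12,m_{\lambda})$ when $\lambda\ge1$, and denote by $C_{*}$ the target constant ($1$, resp.\ $\sqrt5$). For $x\ge x_{*}$ the bound $\bigl|R_0(x,\lambda)\bigr|\le1$ and the monotonicity of $\bigl|\BH{0}{\cdot}\bigr|$ give $\bigl|R_0^{\eps}\BH{0}{x R/\eps}\bigr|\le\bigl|\BH{0}{x R/\eps}\bigr|\le\bigl|\BH{0}{x_{*}R/\eps}\bigr|\le C_{*}\bigl|\BH{0}{x_{*}R/\eps}\bigr|$. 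For $0<x<x_{*}$, since $R\ge\eps$ one has $\bigl|\BH{0}{x R/\eps}\bigr|^{2}\le\tfrac{2}{\pi}\tfrac{\eps}{xR}$ from the supremum $2/\pi$, and $\bigl|\BH{0}{x_{*}R/\eps}\bigr|^{2}\ge\bigl|\BH{0}{x_{*}}\bigr|^{2}\tfrac{\eps}{R}$ from the monotonicity of $z\mapsto z\bigl|\BH{0}{z}\bigr|^{2}$; hence the claim follows once one shows the frequency-free inequality
$$
\bigl|R_0(x,\lambda)\bigr|^{2}\le C_{*}^{2}\,\frac{\pi x}{2}\,\bigl|\BH{0}{x_{*}}\bigr|^{2}\qquad\text{for }0<x<x_{*}.
$$
This I prove by writing $\bigl|R_0(x,\lambda)\bigr|=\bigl|S_0(x)\bigr|\cos\theta_0(x)$, inserting the bound of Proposition~\ref{pro:n0} on $\bigl|S_0(x)\bigr|$, and checking the resulting explicit inequality in the single variable $x$: near $x=x_{*}$ it holds with room to spare (since $\bigl|R_0(x_{*},\lambda)\bigr|<1\le C_{*}$ and $\bigl|\BH{0}{x_{*}}\bigr|$ is not small), and near the origin it holds because of the factor $x^{2}$ in Proposition~\ref{pro:n0}.

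The main obstacle is this last displayed inequality in the intermediate range of $x$ below $x_{*}$: there the bound of Proposition~\ref{pro:n0} is no longer sharp (it deteriorates as $x\to x_{*}$), while the trivial bound $\bigl|R_0\bigr|\le1$ cannot beat the coefficient $\tfrac{\pi x}{2}\bigl|\BH{0}{x_{*}}\bigr|^{2}$ left over by the Hankel comparison. The two must be reconciled by retaining the factor $\cos^{2}\theta_0(x)=\BJ{0}{x}^{2}/\bigl|\BH{0}{x}\bigr|^{2}$ throughout the computation instead of bounding it by a constant, so that its decay as $x\to0$, together with the $x^{2}$ gain of Proposition~\ref{pro:n0}, compensates the growth of $\bigl|\BH{0}{x}\bigr|$; the factor $\sqrt5$ in the case $\lambda\ge1$, and the numerical values of $y_{0,1}$ and of $m_{\lambda}$, are precisely what make these one-variable inequalities hold.
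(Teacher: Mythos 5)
Your treatment of \eqref{eq:bd-ho-s} (via $\left|R_0\right|\leq1$ and the monotonicity of $z\left|\BH{0}{z}\right|^2$) and your reduction of the two ``Furthermore'' bounds to Proposition~\ref{pro:n0} follow the same route as the paper. One small caveat on the latter: the paper does not separate $\BJ{0}{x}\leq1$ from a log-bound of the form $\left|\ln(x/2)\right|\leq C\left|\BH{0}{x}\right|$; it bounds the \emph{combination} via $\left|\BY{0}{x}/\BJ{0}{x}\right|^2\gtrsim\frac{4}{\pi^2}\left|\ln(x/2)\right|^2$, i.e.\ $\BJ{0}{x}\left|\ln(x/2)\right|/\left|\BH{0}{x}\right|\leq\pi/2$. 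The unweighted ratio $\left|\ln(x/2)\right|/\left|\BH{0}{x}\right|$ exceeds $\pi/2$ on $(0,1)$ (numerically its maximum is about $1.66$, attained well inside the interval), so the split ``use $\BJ{0}{x}\leq1$, then a log-bound'' reaches a slightly worse constant than $\frac{\pi^2}{2\sqrt2}$. This is a bookkeeping matter and is easily repaired by keeping $\BJ{0}{x}$ next to $\left|\ln(x/2)\right|/\left|\BH{0}{x}\right|$.

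The genuine gap is in the two frequency-uniform bounds, and you have correctly located where it is but not closed it. The paper's key device is to write
$\left|R_0^{\eps}\BH{0}{xR/\eps}\right|^2\leq\frac{\pi^2}{8}\,\frac{x^3\left|\ln(x/2)\right|^2}{1+\frac{4}{\pi^2}\left|\ln(x/2)\right|^2}\,\frac{\eps}{R}\,\bigl(\tfrac{xR}{\eps}\left|\BH{0}{xR/\eps}\right|^2\bigr)$
and observe that \emph{both} $x$-dependent factors are increasing on $(0,1)$, so the supremum over $(0,x_*)$ is attained at $x=x_*$ --- a single numerical check. Your alternative --- bounding $\left|\BH{0}{xR/\eps}\right|^2$ above by the asymptotic value $\tfrac{2}{\pi}\tfrac{\eps}{xR}$, and $\left|\BH{0}{x_*R/\eps}\right|^2$ below by $\left|\BH{0}{x_*}\right|^2\tfrac{\eps}{R}$ --- discards the joint monotonicity and leaves the frequency-free inequality $\left|R_0(x,\lambda)\right|^2\leq C_*^2\tfrac{\pi x}{2}\left|\BH{0}{x_*}\right|^2$ for $0<x<x_*$. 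That inequality does not follow from the ingredients you cite: near $x=x_*$ the right side is strictly below $C_*^2$ (since $z\left|\BH{0}{z}\right|^2<2/\pi$), so ``$\left|R_0\right|<1\leq C_*$'' gives nothing, and the Proposition~\ref{pro:n0} estimate on $\left|S_0\right|$ together with $\BJ{0}{x}\leq1$ and $\left|\BH{0}{x}\right|\geq\tfrac{2}{\pi}\left|\ln(x/2)\right|$ yields a bound that exceeds the right side for $x$ in roughly the top third of $(0,y_{0,1})$. Your own closing remark --- that one should retain $\cos^2\theta_0(x)=\BJ{0}{x}^2/\left|\BH{0}{x}\right|^2$ rather than bound it by a constant --- is precisely the right repair, but once it is kept and compared to $1/(1+\tfrac{4}{\pi^2}\left|\ln(x/2)\right|^2)$ one is led back to the paper's increasing factor $x^3\left|\ln(x/2)\right|^2/(1+\tfrac{4}{\pi^2}\left|\ln(x/2)\right|^2)$, and the cleanest way to finish is then the paper's monotonicity argument rather than a pointwise check in $x$. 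As it stands, the verification you defer to is the entire content of the frequency-uniform bounds and is not carried out.
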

\begin{proof}
Note that $x\left|\BH{0}{x}\right|^2$ is an increasing function of $x$, with limit $\frac{2}{\pi}$. Since  $\left|R_{0}^{\eps}\right|\leq 1$ for all $x>0$, this implies
\eqref{eq:bd-ho-s}. 

Suppose now $\lambda<1$. Note that $\BH{0}{\cdot}$ is decreasing, therefore using the simple bound  
$\left|R_{0}^{\eps}\right|\leq 1$ we have for all $x\geq y_{0,1}$,
\[
\left|R_{0}^{\eps} \BH{0}{x \frac{R}{\eps}}\right|^{2} \leq \left|\BH{0}{y_{0,1} \frac{R}{\eps}}\right|^2
\]  
For $0<x< 1$ it is easy to verify that,
\[ 
1+\left|\frac{\BY{0}{x}}{\BJ{0}{x}}\right|^{2}>1+ \frac{4}{\pi^{2}} \left|\ln\left(\frac{x}{2}\right)\right|^{2}.
\]
Therefore we obtain
$$
\left|R_{0}^{\eps}\BH{0}{x \frac{R}{\eps}}\right|^{2} 
\leq \frac{\left|{S_0}(x)\right|^2}{1+\frac{4}{\pi^{2}} \left|\ln\left(\frac{x}{2}\right)\right|^{2}}  \left|\BH{0}{x \frac{R}{\eps}}\right|^2 
$$
Thanks to Proposition~\ref{pro:n0}, we have, for all $0<x<y_{0,1}$,
\begin{equation}\label{eq:mono-growth}
\left|R_{0}^{\eps}\BH{0}{x \frac{R}{\eps}}\right|^{2} 
\leq
\frac{\pi^2}{8}  \min\left(1,2-2\lambda\right)^2 \frac{\eps}{R} (1-\lambda)^2 
\frac{x^3 \left|\ln\left(\frac{x}{2}\right)\right|^{2} }{1 + \frac{4}{\pi^2} \left|\ln\left(\frac{x}{2}\right)\right|^{2}} 
\left(x\frac{R}{\eps} \left|\BH{0}{x \frac{R}{\eps}}\right|^2\right).
\end{equation}
The function
$$
x\to \frac{x^3 \left|\ln\left(\frac{x}{2}\right)\right|^{2} }{1 + \frac{4}{\pi^2} \left|\ln\left(\frac{x}{2}\right)\right|^{2}}
$$
is increasing on $(0,1)$, and as we noted before, so is $x\left|\BH{0}{x}\right|^2$. 
Therefore an upper bound is obtained by choosing $x=y_{0,1}$ in the right-hand-side of 
\eqref{eq:mono-growth}, which gives
$$
\left|R_{0}^{\eps}\BH{0}{x \frac{R}{\eps}}\right|^{2} 
\leq \frac{\pi^2}{8}  \frac{ y_{0,1}^4 \ln\left(\frac{y_{0,1}}{2}\right)^2}{1 + \frac{4}{\pi^2} \ln\left(\frac{y_{0,1}}{2}\right)^2} 
\left|\BH{0}{y_{0,1} \frac{R}{\eps}}\right|^2 \leq \left|\BH{0}{y_{0,1} \frac{R}{\eps}}\right|^2,
$$
where we used \eqref{eq:bd-ho-s} in the second inequality. We have obtained that for all $x>0$,
$$
\left|\Ro \BH{0}{x \frac{R}{\eps}}\right|\leq \left|\BH{0}{y_{0,1} \frac{R}{\eps}}\right|.
$$
Alternatively, note that the function $(\ln(x/2)^2)/(1 + \frac{4}{\pi^2} \ln(x/2)^2)$ is decreasing on $(0,1)$, 
with a maximum of $\pi^2/4$, therefore \eqref{eq:mono-growth} and \eqref{eq:bd-ho-s} yield
$$
\left| R_{0}^{\eps}\BH{0}{x \frac{R}{\eps}} \right|^{2} \leq \frac{\pi^4}{8} (1-\lambda)^2  x^4 \left|\BH{0}{x \frac{R}{\eps}}\right|^2,
$$
for all $0<x<y_{0,1}$. 

\medskip{}

Let us now consider the case $\lambda>1$. We only consider the case when $m_{\lambda}\geq2$, 
the proof in the other case is similar. Arguing as before, we have for all $x$ such that $x\geq m_{\lambda}$
\[
\left|R_{0}^{\eps} \BH{0}{x \frac{R}{\eps}}\right|\leq \left|\BH{0}{m_{\lambda}\frac{R}{\eps}} \right|.
\]
Next, when $x<m_{\lambda}$, we have
\begin{align*}
\left|R_{0}^{\eps} \BH{0}{x \frac{R}{\eps}}\right|^{2}
&\leq \frac{\left|{S_0}(x)\right|^2}{1+\frac{4}{\pi^2}\left|\ln\left(\frac{x}{2}\right)\right|^{2}} \left|\BH{0}{x \frac{R}{\eps}}\right|^2 \\
&\leq \min\left(1, \frac{5}{2} \frac{\lambda-1}{\lambda}\right)^2\pi^2 \frac{x^4 \lambda^4 \left|\ln\left(\frac{x}{2}\right)\right|^2}{ 1+\frac{4}{\pi^2}\left|\ln\left(\frac{x}{2}\right)\right|^2} 
\left|\BH{0}{x \frac{R}{\eps}}\right|^2
\end{align*}
Arguing as in the case $\lambda<1$, an upper bound is obtained by replacing $x$ by its maximal value, namely
\begin{eqnarray}
 \left|R_{0}^{\eps} \BH{0}{x \frac{R}{\eps}}\right|^{2} &\leq& \pi^2 
 \frac
 {\left(\ln\left(2\lambda\sqrt{\ln\lambda+1}\right)\right)^2}
 {\left(1+\ln(\lambda)\right)^2\left(1+\frac{4}{\pi^2}\left(\ln\left(2\lambda\sqrt{\ln\lambda+1}\right)\right)^2 \right)}
 \left|\BH{0}{\frac{1}{\lambda \sqrt{\ln \lambda +1}} \frac{R}{\eps}}\right|^2 \nonumber\\
&\leq& \frac{5}{1+\left(\frac{2}{\pi^2} \ln \lambda\right)^2} \left|\BH{0}{m_{\lambda} \frac{R}{\eps}}\right|^2.\label{eq:opti-bd-lb-os}
\end{eqnarray}
We have obtained that for all $x>0$,
\[
\left|R_{0}^{\eps} \BH{0}{x \frac{R}{\eps}}\right|^{2}
\leq 5 \left|\BH{0}{m_{\lambda}\frac{R}{\eps}} \right|^2.
\]
Alternatively, we also have, when $x<m_{\lambda}$, 
\begin{align*}
\left|R_{0}^{\eps} \BH{0}{x \frac{R}{\eps}}\right|^{2}
&\leq \pi^2  \left(\frac{5}{2} \frac{\lambda-1}{\lambda}\right)^2 
     \frac{x^4 \lambda^4 \left|\ln\left(\frac{x}{2}\right)\right|^2}{ 1+\frac{4}{\pi^2}\left|\ln\left(\frac{x}{2}\right)\right|^2} 
     \left|\BH{0}{x \frac{R}{\eps}}\right|^2 \\
&\leq  \left(\frac{5\pi^2}{4} \frac{\lambda-1}{\lambda}x^2 \lambda^2 \right)^2  \left|\BH{0}{x\frac{R}{\eps}} \right|^2.
\end{align*}
\end{proof}
We conclude this section by an estimate which will prove useful for broadband estimations.
\begin{prop}\label{pro:R0plus}
Let $R\geq\eps$. For any $\lambda\geq 7$ and any $m_\lambda\geq x>0$,
\begin{equation}\label{eq:bd-rhop}
\left| R_{0}^{\eps}\BH{0}{x \frac{R}{\eps}} \right| \leq   4  \left|\frac{\BH{0}{m_\lambda \frac{R}{\eps}}}{\BH{0}{m_\lambda}}\right|.
\end{equation}
\end{prop}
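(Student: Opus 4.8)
The plan is to reduce everything to the pointwise bound on $S_0$ already established in Proposition~\ref{pro:n0}, combined with the two classical monotonicity properties of $\BH{0}{\cdot}$ used repeatedly in this section: that $x\mapsto\left|\BH{0}{x}\right|$ is decreasing, and that $x\mapsto x\left|\BH{0}{x}\right|^{2}$ is increasing (with limit $2/\pi$). Write $t=R/\eps\geq 1$ and fix $x\in(0,m_{\lambda}]$. First I would record that for $\lambda\geq 7$ one has $m_{\lambda}=\bigl(\lambda\sqrt{\ln\lambda+1}\bigr)^{-1}<\tfrac12$ --- the map is decreasing in $\lambda$ and is already below $\tfrac12$ at $\lambda=7$ --- so that $\min(\tfrac12,m_{\lambda})=m_{\lambda}$ and Proposition~\ref{pro:n0} applies on all of $(0,m_{\lambda}]$, giving, after discarding the factor $\min(1,\tfrac52\tfrac{\lambda-1}{\lambda})\leq 1$,
\[
\left|S_0(x)\right|\leq\pi\lambda^{2}x^{2}\left|\ln(x/2)\right|.
\]

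Next, from the definition \eqref{eq:def-sn} of $S_0$ one has $R_{0}^{\eps}\BH{0}{xt}=-S_0(x)\,\BJ{0}{x}\,\BH{0}{xt}/\BH{0}{x}$, and I would split the last quotient as
\[
\frac{\BH{0}{xt}}{\BH{0}{x}}=\frac{\BH{0}{xt}}{\BH{0}{m_{\lambda}t}}\cdot\frac{\BH{0}{m_{\lambda}t}}{\BH{0}{m_{\lambda}}}\cdot\frac{\BH{0}{m_{\lambda}}}{\BH{0}{x}}.
\]
The middle factor is exactly the quantity appearing on the right-hand side of \eqref{eq:bd-rhop}. For the other two factors I use $x\leq m_{\lambda}$: monotonicity of $\left|\BH{0}{\cdot}\right|$ gives $\left|\BH{0}{m_{\lambda}}\right|\leq\left|\BH{0}{x}\right|$, and monotonicity of $z\mapsto z\left|\BH{0}{z}\right|^{2}$ evaluated at $xt\leq m_{\lambda}t$ gives $\left|\BH{0}{xt}\right|\leq\sqrt{m_{\lambda}/x}\,\left|\BH{0}{m_{\lambda}t}\right|$. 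Since also $\left|\BJ{0}{x}\right|\leq 1$, these bounds combine to
\[
\left|R_{0}^{\eps}\BH{0}{xt}\right|\leq\pi\lambda^{2}\sqrt{m_{\lambda}}\;x^{3/2}\left|\ln(x/2)\right|\,\left|\frac{\BH{0}{m_{\lambda}t}}{\BH{0}{m_{\lambda}}}\right|.
\]

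Finally I would note that $x\mapsto x^{3/2}\left|\ln(x/2)\right|$ is increasing on $(0,2e^{-2/3})$, hence on $(0,m_{\lambda}]$, so its supremum over $(0,m_{\lambda}]$ is attained at $x=m_{\lambda}$; plugging this in and using $\lambda^{2}m_{\lambda}^{2}=(\ln\lambda+1)^{-1}$ together with $\left|\ln(m_{\lambda}/2)\right|=\ln\bigl(2\lambda\sqrt{\ln\lambda+1}\bigr)$ reduces \eqref{eq:bd-rhop} to the scalar inequality
\[
\pi\,\frac{\ln\bigl(2\lambda\sqrt{\ln\lambda+1}\bigr)}{\ln\lambda+1}\leq 4\qquad(\lambda\geq 7),
\]
which I would check by setting $u=\ln\lambda+1\geq\ln 7+1$ and maximising $1+(\ln 2-1+\tfrac12\ln u)/u$, whose maximum (attained near $u=e^{\,3-2\ln 2}$) lies below $4/\pi$. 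The whole argument is essentially bookkeeping with the two monotonicity facts and Proposition~\ref{pro:n0}; the only mildly delicate point I foresee is this last elementary estimate, where the logarithmic corrections hidden in $m_{\lambda}$ must be tracked carefully so that the universal constant $4$ is not exceeded. Note that, in contrast with the proof of Lemma~\ref{lem:highcontrast-Lemma}, the log-concavity statement Lemma~\ref{lem:logconcave} is not needed for this proposition.
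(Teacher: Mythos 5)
Your argument is correct, and it is a genuinely different route from the paper's. The paper's proof quotes inequality \eqref{eq:opti-bd-lb-os} from the proof of Proposition~\ref{pro:R0}, whose derivation rests on the sharper lower bound $|\BH{0}{x}|\geq\sqrt{1+\tfrac{4}{\pi^2}|\ln(x/2)|^2}$ (i.e.\ the quotient $|\BJ{0}{x}|/|\BH{0}{x}|$ is controlled logarithmically rather than by $1$), and on the monotonicity in $x$ of the more delicate expression $x^4|\ln(x/2)|^2\,/\bigl(1+\tfrac{4}{\pi^2}|\ln(x/2)|^2\bigr)\,|\BH{0}{xR/\eps}|^2$; the paper then closes by asserting a monotonicity property of the map $\lambda\mapsto|\BH{0}{m_\lambda}|/\sqrt{1+(\tfrac{2}{\pi^2}\ln\lambda)^2}$. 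You bypass all of that: starting directly from $\Ro\BH{0}{xt}=-S_0(x)\BJ{0}{x}\,\BH{0}{xt}/\BH{0}{x}$, you use only $|\BJ{0}{x}|\leq1$, the decay of $|\BH{0}{\cdot}|$, and the growth of $z\mapsto z|\BH{0}{z}|^2$, and you repackage the Hankel quotient so that the factor $|\BH{0}{m_\lambda t}/\BH{0}{m_\lambda}|$ appears outright. The remaining $x$-dependence collapses to $x^{3/2}|\ln(x/2)|$, whose monotonicity on $(0,2e^{-2/3})$ is a one-line computation, and the final scalar inequality $\pi\ln\bigl(2\lambda\sqrt{\ln\lambda+1}\bigr)/(\ln\lambda+1)\leq4$ is verified exactly as you describe, with maximum $\pi(1+2e^{-3})\approx3.45$ attained at $\ln\lambda+1=e^3/4$. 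What you lose by discarding the logarithmic denominator you recover by trading half a power of $x$ in the Hankel-quotient splitting, and the net coefficient still lands comfortably under $4$. Your observation that Lemma~\ref{lem:logconcave} plays no role here is accurate (the paper's proof does not invoke it either); the upshot of your version is that the only external input is Proposition~\ref{pro:n0} together with the two standard monotonicity facts, which makes this proposition independent of the finer analysis that supports \eqref{eq:opti-bd-lb-os}.
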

\begin{proof}
Inequality \eqref{eq:opti-bd-lb-os} shows that for all $0<x\leq m_\lambda$, we have
$$
\left|R_{0}^{\eps} \BH{0}{x \frac{R}{\eps}}\right| \leq \sqrt{\frac{5}{1+\left(\frac{2}{\pi^2} \ln \lambda\right)^2}} 
\left|\BH{0}{m_{\lambda} \frac{R}{\eps}}\right|.
$$
We note (and can prove, it is a study of a function of one variable, $\ln \lambda$) that
$$
\lambda \to \frac{\left|\BH{0}{m_{\lambda}}\right|}{\sqrt{1+\left(\frac{2}{\pi^2} \ln \lambda\right)^2}} 
$$
is increasing for $\lambda>7$, and its value at $\lambda=7$ is greater than $3.2$. This lower bound yields our estimate. 
\end{proof}

\section{\label{sec:case2} Bounds near quasi-resonances}

This section is devoted to the proof of Proposition~\ref{pro:Ito-One}.

We wish to bound the size of the "blow-up" regions, that is, the sets $I_{n,k}(\tau)$ defined by \eqref{eq:def-itau}, centered on quasi-resonances. From \eqref{eq:Dixon-JnYn} we know that  $y_{n,1}^{(1)}>y_{n,1}$ for all $n\geq0$, 
thus $k_n>0$ on $(0,y_{n,1})$. Introducing
\begin{equation}\label{eq:def-phin}
\phi_n:= \begin{array}[t]{rcl}
   (0,y_{n,1})\setminus \cup_k \{j_{n,k}/\lambda\} &\to& \mathbb{R} \\
   x     &\to& \displaystyle \frac{g_n \left(\lambda x\right)}{k_n (x)}, 
\end{array}
\end{equation}
we have
$$
\phi_n(I_{n,k}(\tau)) = [-1-\tau,-1+\tau].
$$
We first verify that $\phi_n$ is one-to-one on $I_{n,k}(\tau)$, for $\tau$ small enough and $\lambda$ large enough.
\begin{lem}\label{lem:phin}
Suppose $\tau\leq\frac{1}{4}$ and $7\leq\lambda$. When $n\geq 1$ 
the function $\phi_n$ given by \eqref{eq:def-phin} satisfies
$$
\phi_n^\prime(x) \leq \frac{1-\lambda^2}{2 n_{+} k_n(x)} <0
$$
where $n_{+}=\max(n,1)$, for all $x\in I_{n,k}(\tau)\cap(0,n)$ when $n\geq1$ and 
for all $x\in I_{0,k}(\tau)\cap(0,\zeta_0)$ when $n=0$. Furthermore, 
$$
I_{0,k}(\tau) \subset \left(m_{\lambda}, y_{0,1}\right), \mbox{ and for } n\geq1,\quad I_{n,k}(\tau) \subset 
\left(\frac{j_{n,1}^{(1)}}{\lambda}, y_{n,1}\right),
$$
for all $k$.
\end{lem}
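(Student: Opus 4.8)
The plan is to handle the two halves of the statement separately: the inclusions are sign considerations, while the derivative bound rests on a Riccati-type identity for $\phi_n$ obtained from the companion first-order identities satisfied by $g_n$ and $k_n$.

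For the inclusions, note first that $I_{n,k}(\tau)\subset U_{n,k}$ by construction, so for $n\ge1$ the inclusion $I_{n,k}(\tau)\subset\bigl(j_{n,1}^{(1)}/\lambda,\,y_{n,1}\bigr)$ is immediate, every $U_{n,k}$ occurring in Proposition~\ref{pro:Ito-One} being contained in that interval by Proposition~\ref{pro:Dixon}. (It can also be read off directly: on $\bigl(0,j_{n,1}^{(1)}/\lambda\bigr)$ one has $g_n(\lambda x)\ge0$ and $k_n(x)>0$ by Proposition~\ref{pro:propsg}, hence $\phi_n(x)\ge0>-1+\tau$, and no such $x$ lies in $I_{n,k}(\tau)$.) For $n=0$ the only point needing an argument is the lower bound $x>m_\lambda$, since $j_{0,1}^{(1)}=0$; here I would use the small–argument bounds of Proposition~\ref{pro:propg0k0}: for $0<x<m_\lambda$ one has $0<-g_0(\lambda x)\le\frac12\lambda^2x^2\bigl(1+\frac16\lambda^2x^2\bigr)\le\frac{25}{48}\lambda^2x^2$ (using $\lambda^2x^2<(\ln\lambda+1)^{-1}$ on that interval) and $k_0(x)\ge-\bigl(\gamma+\ln(x/2)\bigr)^{-1}$, so $|\phi_0(x)|\le\frac{25}{48}\lambda^2x^2\,\bigl|\gamma+\ln(x/2)\bigr|$; it then remains to verify that the right–hand side stays below $1-\tau=\frac34$ for all $0<x<m_\lambda$ and all $\lambda\ge7$. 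This is a monotone one–variable check: the right–hand side is increasing in $x$ on $(0,m_\lambda)$, and at $x=m_\lambda$ it is of order $\ln\lambda/(\ln\lambda+1)$, comfortably below $\frac34$. Hence $I_{0,k}(\tau)\cap(0,m_\lambda)=\emptyset$.

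For the derivative bound I would first record the companion of \eqref{eq:difeqgn} for $k_n$: the argument that produces \eqref{eq:difeqgn} from the recurrence relations for $J_n$, applied to $Y_n$, gives
$$
k_n'(x)=\frac{x}{n_+}-\frac nx+\frac{n_+}{x}k_n^2(x)\quad(n\ge1),\qquad k_0'(x)=x+\frac1x k_0^2(x),
$$
which is also what underlies the expression for $\lambda g_n'(\lambda x)+k_n'(x)$ used in the proof of Proposition~\ref{pro:Dixon}. Differentiating $\phi_n=g_n(\lambda\,\cdot)/k_n$, then substituting these two identities and using the defining relation $g_n(\lambda x)=\phi_n(x)k_n(x)$ to eliminate $g_n^2(\lambda x)$, one obtains (writing $n_+=\max(n,1)$ and with the convention $n/x=0$ for $n=0$)
$$
\phi_n'(x)\,k_n(x)=\lambda g_n'(\lambda x)-\phi_n(x)k_n'(x)
=-\frac{x(\lambda^2-1)}{n_+}+\bigl(1+\phi_n(x)\bigr)\Bigl[\frac nx-\frac{x}{n_+}-\frac{n_+}{x}k_n^2(x)\phi_n(x)\Bigr].
$$
The leading term $-x(\lambda^2-1)/n_+$ is exactly the value of the left–hand side at a genuine quasi-resonance ($\phi_n=-1$); the content of the lemma is that this term stays dominant throughout $I_{n,k}(\tau)$.

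It remains to control the second term. On the relevant set one has $x<n$ (resp. $x<\zeta_0<1$ when $n=0$), so $\frac nx-\frac x{n_+}\ge0$, and since $\phi_n(x)<0$ the bracket $B(x):=\frac nx-\frac x{n_+}-\frac{n_+}{x}k_n^2(x)\phi_n(x)$ is positive, while $|1+\phi_n(x)|\le\tau\le\frac14$. Estimating $B(x)\le\frac nx\bigl(1+\frac54 k_n^2(x)\bigr)$ and inserting the two–sided bounds for $g_n,k_n$ from Appendix~\ref{ap:A} (resp. Propositions~\ref{pro:propg0k0} and \ref{pro:logYn}) together with the lower bound on $x$ from the first part, one shows $\bigl|(1+\phi_n(x))B(x)\bigr|\le\frac12\,x(\lambda^2-1)/n_+$ once $\lambda\ge7$ (so $\lambda^2-1\ge48$), whence $\phi_n'(x)k_n(x)\le-\frac12\,x(\lambda^2-1)/n_+<0$, i.e. $\phi_n'(x)\le\frac{(1-\lambda^2)x}{2n_+k_n(x)}<0$. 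The main obstacle is precisely this absorption, carried out \emph{uniformly} over $I_{n,k}(\tau)$: the danger zone is the lower end of $I_{n,k}(\tau)$, where $\phi_n$ has only just entered $[-1-\tau,-1+\tau]$ while $k_n(x)$ is still close to its value near the origin and $n/x$ is close to $\lambda$, so that $B(x)$ is of size $\sim\lambda$ and the crude bound $x>j_{n,1}^{(1)}/\lambda$ does not suffice. One has to use that forcing $\phi_n(x)\approx-1$ drives $\lambda x$ a definite distance past $j_{n,1}^{(1)}$ — equivalently, improves the lower bound on $x$ — in order to close the inequality; making the explicit constants cooperate so that the thresholds $\lambda\ge7$, $\tau\le\frac14$ actually suffice is the delicate book-keeping, and it relies on the sharp Bessel-function estimates of the Appendix.
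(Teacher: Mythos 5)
Your identity
$$
\phi_n'(x)\,k_n(x)=-\frac{x(\lambda^2-1)}{n_+}+\bigl(1+\phi_n(x)\bigr)\Bigl[\frac nx-\frac{x}{n_+}-\frac{n_+}{x}k_n^2(x)\phi_n(x)\Bigr]
$$
is algebraically the same as the paper's \eqref{eq:phip-2} (substitute $g_n(\lambda x)=\phi_n k_n$ there and expand), the derivation via $k_n'=\frac{x}{n_+}-\frac{n}{x}+\frac{n_+}{x}k_n^2$ is correct, the sign analysis for $\phi_n\leq-1$ is the paper's, and the inclusion arguments — both the parenthetical sign argument on $(0,j_{n,1}^{(1)}/\lambda)$ for $n\geq1$ and the small-argument estimate for $-\phi_0$ on $(0,m_\lambda)$ — match what the paper does in substance. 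Up to that point the proof is sound.

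The genuine gap is the absorption when $-1<\phi_n\leq-1+\tau$, which you defer (``one shows\ldots''; ``the delicate book-keeping''), and, more seriously, your proposed mechanism for closing it is off-target. Your crude bound $B(x)\leq\frac{n}{x}\bigl(1+\frac54k_n^2(x)\bigr)$ discards exactly the structure that makes the estimate close: by Proposition~\ref{pro:propsg}(v), on $(0,\kappa_n)$ one has $k_n^2(x)\leq1-x^2/n^2$, so the dangerous term obeys $n^2-x^2+n^2k_n^2(x)\leq 2(n^2-x^2)$, and then, using only $x\geq n/\lambda$,
$$
\frac{\tau}{nx}\bigl(n^2-x^2+n^2k_n^2\bigr)\Big/\frac{x(\lambda^2-1)}{n}
\leq \frac{2\tau(n^2-x^2)}{x^2(\lambda^2-1)}
\leq \frac{2\tau\,n^2(\lambda^2-1)/\lambda^2}{(n^2/\lambda^2)(\lambda^2-1)}=2\tau\leq\tfrac12,
$$
which is precisely the absorption, with $\tau\leq\frac14$ entering at the last step. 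On $(\kappa_n,n)$ the upper bound $k_n\leq\frac76 n^{-1/3}$ and $n-\kappa_n\leq\frac45 n^{1/3}$ make the ratio tiny compared with $\lambda^2-1\geq48$. In other words, the crude lower bound $x>n/\lambda$ \emph{does} suffice; you do not need (and the paper does not use) the speculation that ``forcing $\phi_n\approx-1$ drives $\lambda x$ a definite distance past $j_{n,1}^{(1)}$''. What you actually need is to keep the $-x^2$ term in the bracket and use the sharp pointwise bound $k_n\leq\sqrt{1-(x/n)^2}$ rather than $k_n\lesssim1$, so that the two contributions to $B(x)$ reinforce each other's smallness near $x=n/\lambda$ instead of compounding. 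A similar $n=0$ computation, using $k_0\leq\sqrt{1-x^2}$ for $x\leq\zeta_0$ together with the bound \eqref{eq:bd-up-ml} on $g_0(\lambda x)k_0(x)/x^2$, closes the zero-order case.
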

\begin{proof}
We compute, using \eqref{eq:difeqgn} and \eqref{eq:difeqkn}, for $n\geq0$,
\begin{equation} \label{eq:phip-2}
 \phi_n^\prime (x) 
= \frac{x}{n_{+} k_n(x)} \left (1 - \lambda^2\right) 
+ \frac{k_n(x) +g_n(\lambda x)}{k_n(x)^2} 
\left( \frac{n}{x} - \frac{x}{n_{+}} - \frac{n_{+}}{x} g_n (\lambda x) k_n(x)\right),
\end{equation}
Suppose first that $n\geq1$. When  $x\leq n$ and $- g_n > k_n$, 
$$
(k_n(x) +g_n(\lambda x))(\frac{n}{x} - \frac{x}{n} - \frac{n}{x} g_n (\lambda x) k_n(x)) <0 
$$
therefore
$$
\phi_n^\prime (x) 
\leq \frac{x}{n k_n(x)} \left(1-\lambda^2\right) <0.
$$
On the other hand, when $g_n(\lambda x) + k_n(x) >0$ and $x\in I_{n,k}(\tau)$, that is, when 
$0 < g_n(\lambda x) + k_n(x) \leq \tau k_n(x)  $, we have
$$
\frac{k_n(x) +g_n(\lambda x)}{k_n(x)}
\left( \frac{n}{x} - \frac{x}{n} - \frac{n}{x} g_n (\lambda x) k_n(x)\right) \leq \frac{\tau}{n\, x} \left( n^2  - x^2 + n^2 k_n^2 \right)  
$$
Using the upper bound on $k_n$ given by Proposition~\ref{pro:propsg}, we find that when $\frac{n}{\lambda}\leq x\leq \kappa_n$, we have 
$$
\frac{2\, \tau}{n\, x} \left( n^2  - x^2  + n^2 k_n^2  \right)  \leq  2\, \tau \frac{ x}{n} \frac{\lambda^2 -1}{\lambda} 
\leq \frac{1}{2} \frac{ x}{n} \left(\lambda^2-1\right)
$$
provided $\tau \leq \frac{1}{4}$. When $\kappa_n\leq x \leq n$, 
$$
\frac{2\, \tau}{n\, x} \left( n^2  - x^2  + n^2 k_n^2  \right)  \leq  2 \tau \frac{ x}{n} \frac{ n^{2} +7/6 n^{4/3}}{ (n -4/5 n^{1/3})^2}   
\leq \frac{93 \tau}{\lambda^2-1} \frac{ x}{n} \frac{\lambda^2 -1}{\lambda}  <  \frac{1}{2} \frac{ x}{n} \left(\lambda^2-1\right),
$$
when $\tau\leq\frac{1}{4}$ and $7\leq \lambda$. We have obtained that when $x\in I_{n,k}(\tau)$,
$$
\phi_n^\prime (x) 
\leq \frac{x}{2 n k_n(x)} \left(1-\lambda^2\right) <0,
$$
as announced. Finally, note that at $x=j_{n,1}^{(1)}/\lambda$ we have  $g_n(\lambda x)=0 > (\tau-1) k_n(x)$, thus $I_{n,k}(\tau)$ is a proper subset of $U_{n,k}$.  
 Let us now consider the case $n=0$. We have
\begin{equation} \label{eq:phip-0}
 \phi_0^\prime (x) 
= \frac{x}{k_0(x)} \left (1 - \lambda^2\right) 
+ x \frac{k_0(x) +g_0(\lambda x)}{  k_0(x)^2} 
\left(  - 1  - \frac{1}{x^2} g_0 (\lambda x) k_0(x)\right).
\end{equation}
When $-\tau k_0(x) \leq g_0 (\lambda x) + k_0(x) \leq \tau k_0(x)$, thanks to Proposition~\ref{pro:logYn}, we have
$$
- \frac{1}{x^2} g_0 (\lambda x) k_0(x) -1  \geq (1-\tau) \frac{1}{(0.36)^2} -1 > 0,
$$
when $\tau \leq \frac{1}{4}$. Turning back to \eqref{eq:phip-0}, this shows that when $k_0(x) + g_0(\lambda x)\leq0$,
$$
\phi_0^\prime (x) \leq \frac{x}{k_0(x)} \left (1 - \lambda^2\right) .
$$ 
Let us now assume $0< k_0(x) + g_0(\lambda x)  < \tau k_0(x)$. 
We claim that on $(0,m_{\lambda})$, $-\phi_0 < 3/5$. Therefore no $I_{0,k}(\tau)$ lies in the interval $(0,m_{\lambda})$, since $\tau\leq 1/4$. 
Using the bounds on $g_0$ and $k_0$ given by Proposition~\ref{pro:propg0k0} we find, when $ x \leq m_{\lambda}$ and $\lambda \geq 7$, 
maximizing in $x$ first and then in $\lambda$,
\begin{eqnarray*}
\frac{-g_0(\lambda x)}{k_0(x)} &\leq& \frac{\lambda^2 x^2}{2}\ln \left(\frac{2}{x e^\gamma}\right) \left(1 + \frac{1}{12} \lambda^4 x^4\right) \\
&\leq& \frac{1}{2 \ln \lambda +2}\ln \left(\frac{2\lambda\sqrt{\ln \lambda + 1}}{e^\gamma} \right) \left((1 + \frac{1}{12} \frac{1}{(\ln \lambda +1)^2}\right) \\
&\leq& \frac{1}{2} \left(1 + e^{-3 - 2\gamma}\right)\left((1 + \frac{1}{12} \frac{1}{(\ln 7 +1)^2}\right) <\frac{3}{5}, \\
\end{eqnarray*}
which is our claim. This is turn shows that when $x\in I_{0,k}(\tau)$, and $x\leq \zeta_0$, and $7\leq \lambda$,
\begin{equation}\label{eq:bd-up-ml}
-\frac{1}{x^2}g_0(\lambda x) k_0(x) 
 \leq \frac{1}{m_{\lambda}^2} k_0^2(m_{\lambda}) 
 \leq \left(\frac{1}{m_{\lambda} \left(\ln \left(\frac{2}{e^\gamma}\right) -\ln( m_{\lambda})\right)} +m_{\lambda}\right)^2 < \frac{\lambda^2}{\ln(\lambda)}.
\end{equation}
We therefore have obtained that for all $x\in I_{0,k}(\tau)\cap(0,\zeta_0)$,
$$
 x \frac{k_0(x) +g_0(\lambda x)}{  k_0(x)^2} 
 \left(  - 1  - \frac{1}{x^2} g_0 (\lambda x) k_0(x)\right) \leq  \tau \frac{x}{k_0(x)}(\lambda^2-1), 
$$
which in turn shows that
$$
\phi_0^\prime (x) \leq \frac{x}{2 k_0(x)} \left (1 - \lambda^2\right).
$$ 
\end{proof}

We are now ready to compute an upper bound on the sum of size of the intervals $I_{n,k}(\tau)$ for a given $n$.
\begin{prop}
\label{pro:Ink} Suppose $\lambda\geq 7$ and $\frac{1}{4}\geq \tau$.
For all $n \geq 1$, 
\begin{equation}\label{eq:boundInk}
\left|\bigcup_{k\in K(\lambda,n)} I_{n,k}(\tau)\right| \leq  6 \tau \frac{n \ln \lambda}{\lambda},
\end{equation}
where $K(\lambda,n)$ is the set of all positive $k$ such that $j_{n,k}^{(1)}< n \lambda$.
We also have
\begin{equation}\label{eq:boundIn0}
\left|\bigcup_{k\in K(\lambda,0)} I_{0,k}(\tau)\right| \leq  7\tau \frac{\ln(\ln \lambda)}{\lambda},
\end{equation}
where $K(\lambda,0)$ is the set of all positive $k$ such that $j_{0,k}^{(1)}< \zeta_0 \lambda$.
\end{prop}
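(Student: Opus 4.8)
The plan is to evaluate $\bigl|\bigcup_{k} I_{n,k}(\tau)\bigr|$ by pushing Lebesgue measure forward through the map $\phi_n$ of \eqref{eq:def-phin}, and then to control the density of that push‑forward near the value $-1$ by a \emph{sharp} lower bound for $|\phi_n'|$.

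\emph{Step 1 (reduction).} By Lemma~\ref{lem:phin}, for $\lambda\ge 7$ and $\tau\le\tfrac14$ the restriction of $\phi_n$ to each $I_{n,k}(\tau)$, $k\in K(\lambda,n)$, is a strictly decreasing $C^1$ diffeomorphism onto $[-1-\tau,-1+\tau]$: for $n\ge1$ this is the monotonicity statement of Lemma~\ref{lem:phin} on $I_{n,k}(\tau)\cap(0,n)$, the at most one interval $U_{n,k}$ meeting $\{x>n\}$ contributing a set of length $O(\tau/\lambda)$ there (since $\phi_n$ has large slope near the left endpoint $j_{n,k}^{(1)}/\lambda$), which will be absorbed below, so we assume $x<n$; the case $n=0$ is identical with $(0,\zeta_0)$ and $x<\zeta_0$. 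Writing $x_k(y)$ for the inverse and using Tonelli,
\[
\Bigl|\bigcup_{k\in K(\lambda,n)} I_{n,k}(\tau)\Bigr|
=\sum_{k\in K(\lambda,n)}\int_{-1-\tau}^{-1+\tau}\frac{dy}{|\phi_n'(x_k(y))|}
=\int_{-1-\tau}^{-1+\tau}\Bigl(\sum_{k\in K(\lambda,n)}\frac{1}{|\phi_n'(x_k(y))|}\Bigr)dy .
\]
Hence it suffices to bound the inner sum uniformly for $|y+1|\le\tau$: by $\tfrac{3n\ln\lambda}{\lambda}$ when $n\ge1$, and by $\tfrac72\tfrac{\ln\ln\lambda}{\lambda}$ when $n=0$; multiplying by $|[-1-\tau,-1+\tau]|=2\tau$ then gives \eqref{eq:boundInk} and \eqref{eq:boundIn0}.

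\emph{Step 2 (sharp derivative bound — the heart).} Differentiating \eqref{eq:def-phin} and using $g_n(\lambda x)/k_n(x)=\phi_n(x)$ gives $k_n(x)\phi_n'(x)=\lambda g_n'(\lambda x)-\phi_n(x)k_n'(x)$. Substituting the Riccati identities \eqref{eq:difeqgn} (argument $\lambda x$) and \eqref{eq:difeqkn}, and evaluating at $x=x_k(y)$ where $g_n(\lambda x)=y\,k_n(x)$ (so $g_n(\lambda x)^2=y^2k_n(x)^2$), a short computation yields the exact identity
\[
k_n(x)\,\phi_n'(x)=\frac{n}{x}(1+y)-\frac{x}{n_+}(\lambda^2+y)-\frac{n_+}{x}\,k_n(x)^2\,(y^2+y).
\]
Since $|1+y|\le\tau$, $|y^2+y|=|y||1+y|\le\tfrac54\tau$, $\lambda^2+y\ge\lambda^2-\tfrac54$, and $k_n$ is bounded on the relevant range by Proposition~\ref{pro:propsg} ($n\ge1$) resp.\ Proposition~\ref{pro:propg0k0} ($n=0$), the middle term dominates: for $n\ge1$, using $x>j_{n,1}^{(1)}/\lambda>n/\lambda$ (so $x/n_+>1/\lambda$, $n_+/x<\lambda$) one gets $|k_n(x)\phi_n'(x)|\ge\tfrac{x}{n_+}(\lambda^2-\tfrac54)-O(\tfrac{n_+\tau}{x})\ge c\,\tfrac{x\lambda^2}{n_+}$ for a fixed $c>0$ and $\lambda\ge7$; for $n=0$ the term $\tfrac nx(1+y)$ is absent and one uses instead that $k_0(x)\le x\lambda$ on $(m_\lambda,\zeta_0)$ (a one‑variable check from Proposition~\ref{pro:propg0k0}, valid for $\lambda\ge7$), which bounds the last term by $\tfrac54\tau\,x\lambda^2$ and again gives $|k_0(x)\phi_0'(x)|\ge c\,x\lambda^2$. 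Thus
\[
\frac{1}{|\phi_n'(x_k(y))|}\le\frac{1}{c}\,\frac{n_+\,k_n(x_k(y))}{x_k(y)\,\lambda^2}.
\]
This beats the bound $|\phi_n'|\ge\tfrac{\lambda^2-1}{2n_+k_n}$ of Lemma~\ref{lem:phin} by the factor $x/n_+$, which for $x$ of order $n$ is of order $1/n$; it is exactly this extra decay that makes the estimate go through, and obtaining it with honest constants — evaluating the two Riccati identities at the resonant level $g_n(\lambda x)=y\,k_n(x)$ and bounding each lower‑order term — is the main obstacle.

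\emph{Step 3 (summation).} For $n\ge1$, bounding $k_n$ by the constant of Proposition~\ref{pro:propsg} and using $x_k(y)>j_{n,k}^{(1)}/\lambda$ gives $\sum_k|\phi_n'(x_k)|^{-1}\lesssim\tfrac{n}{\lambda}\sum_{k\in K(\lambda,n)}1/j_{n,k}^{(1)}$; by \eqref{eq:Dixon-JnYn} one has $j_{n,k}^{(1)}>j_{n,k-1}>j_{n,1}+(k-2)\pi>n+(k-2)\pi$ (consecutive zeros of $J_n$ are more than $\pi$ apart for $n\ge1$), and $K(\lambda,n)=\{k:j_{n,k}^{(1)}<n\lambda\}$, so comparison with $\int dt/t$ gives $\sum_{k\in K(\lambda,n)}1/j_{n,k}^{(1)}\le\tfrac1\pi\ln\lambda+O(1)$; carrying the constants through (and adding the $O(\tau/\lambda)$ boundary contribution) yields $\sum_k|\phi_n'(x_k(y))|^{-1}\le\tfrac{3n\ln\lambda}{\lambda}$, i.e.\ \eqref{eq:boundInk}. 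For $n=0$ one retains the logarithmic smallness $k_0(x_k)\le-1/(\gamma+\ln(x_k/2))+x_k^2$ from Proposition~\ref{pro:propg0k0} for $x_k\le\tfrac14$ (the $O(1)$ indices with $x_k>\tfrac14$ being handled with $k_0\le k_0(\zeta_0)<1$), and splits off $k=1$, for which $x_1>m_\lambda$ while $g_0(\lambda x_1)=y\,k_0(x_1)$ together with $|g_0(t)|\le\tfrac7{12}t^2$ gives $k_0(x_1)\le\tfrac79(\lambda x_1)^2$; since $x_1\asymp 1/(\lambda\sqrt{\ln\lambda})$ the $k=1$ term is $O(1/(\lambda\sqrt{\ln\lambda}))$, negligible against $\ln\ln\lambda/\lambda$ for $\lambda\ge7$. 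For $k\ge2$, $x_k>j_{0,k}^{(1)}/\lambda$ with $j_{0,k}^{(1)}=j_{1,k-1}\asymp(k-1)\pi$, so $\sum_{k\ge2}\tfrac{k_0(x_k)}{x_k}\lesssim\sum_{k\ge2}\tfrac{\lambda}{j_{0,k}^{(1)}\bigl(\ln(2\lambda/j_{0,k})-\gamma\bigr)}$, which by comparison with $\int du/\bigl(u(\ln(2\lambda/u)-\gamma)\bigr)=-\ln\bigl(\ln(2\lambda/u)-\gamma\bigr)$ over $u\in(\pi,\zeta_0\lambda)$ is $O\bigl(\tfrac\lambda\pi\ln\ln\lambda\bigr)$; dividing by $\lambda^2$ and collecting the pieces gives $\sum_k|\phi_0'(x_k(y))|^{-1}\le\tfrac72\tfrac{\ln\ln\lambda}{\lambda}$, i.e.\ \eqref{eq:boundIn0}. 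The remaining ingredients — counting $K(\lambda,n)$ and the one‑variable checks invoked above via \eqref{eq:Dixon-JnYn} and classical estimates on the zeros of $J_n$ and $J_n'$ — are routine.
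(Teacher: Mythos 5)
Your overall strategy coincides with the paper's: push the Lebesgue measure of $\bigcup_k I_{n,k}(\tau)$ through $\phi_n$ using its monotonicity, obtain a sharp lower bound on $|\phi_n'|$ on each $I_{n,k}(\tau)$ that carries an extra factor of $x$, and then sum using the quasi-arithmetic spacing of the zeros $j_{n,k}^{(1)}$ (a Riemann-sum comparison with $\int\sqrt{1/x^2-1}\,dx$ in the paper, a comparison of $\sum 1/j_{n,k}^{(1)}$ with $\int dt/t$ in yours; both give the $\ln\lambda$ factor). Your exact identity
\[
k_n(x)\,\phi_n'(x)=\tfrac{n}{x}(1+y)-\tfrac{x}{n_+}(\lambda^2+y)-\tfrac{n_+}{x}k_n^2\,(y^2+y),\quad y=\phi_n(x),
\]
is precisely what one gets from the paper's decomposition \eqref{eq:phip-2} after substituting $g_n(\lambda x)=y\,k_n(x)$, so the "heart" you emphasize is already in the paper's proof of Lemma~\ref{lem:phin}; the missing factor $x$ in the \emph{statement} of that lemma is a typographical slip, and the bound used in the paper's proof of Proposition~\ref{pro:Ink} is $-\phi_n'(u)\ge\tfrac{(\lambda^2-1)u}{2n_+k_n(u)}$, with the $u$. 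So Step~2 is correct but not new.

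Two genuine loose ends remain. First, your dismissal of the portion of $\bigcup_k I_{n,k}(\tau)$ lying in $(n,y_{n,1})$ is justified by "large slope near the left endpoint $j_{n,k}^{(1)}/\lambda$", but the relevant interval sits near the \emph{right} endpoint $j_{n,k}/\lambda$ of the last $U_{n,k}$; the slope estimate of Lemma~\ref{lem:phin} was only proved on $I_{n,k}(\tau)\cap(0,n)$, so nothing you quote controls $\phi_n'$ there, and bounding the stray piece by $|U_{n,k}|$ produces a term without the needed factor $\tau$. A correct treatment must either extend the derivative bound past $x=n$ (which requires a separate estimate of $k_n'$ there) or argue, as the paper does implicitly via Propositions~\ref{pro:ntoyn1} and~\ref{pro:Ito}, that the quasi-resonant set in $(n,y_{n,1})$ need not be excluded at all and therefore can be dropped from the $I_{n,k}$ at the outset. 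Second, the constant accounting ("$c>0$ fixed", "carrying the constants through") is never made explicit, and the value $c\approx 0.4$ that your Riccati estimate actually delivers (after bounding $k_n\le1$ via Proposition~\ref{pro:propsg}(v) and using $x/n>1/\lambda$) leaves little slack against the target constants $6$ and $7$; in particular, for $n=0$ your use of Lemma~\ref{lem:muzeroone} for $x_1$ requires $\lambda>e^2\approx7.39$, not merely $\lambda\ge7$, so for $\lambda\in[7,e^2]$ you should instead invoke the bound \eqref{eq:bd-up-ml} as the paper does. These are fixable, but as written they are gaps.
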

\begin{proof}
{\bf When $n\geq 1$.} We know thanks to Lemma~\ref{lem:phin} that  $\phi_{n}$ is a bijection on $I_{n,k}(\tau)$. 
We write  $I_{n,k}(\tau)=[\alpha_{n,k},\beta_{n,k}]$, that is $\beta_{n,k}\in U_{n,k}$ is such that $\phi(\beta_{n,k}) = -1-\tau$
 and $\alpha_{n,k}\in U_{n,k}$ is such that $\phi(\beta_{n,k}) = -1+\tau$. 
We have
\begin{equation}\label{eq:bd-tau-1n}
2\, \tau = \phi(\alpha_{n,k}) - \phi(\beta_{n,k}) = \int_{\alpha_{n,k}}^{\beta_{n,k}} - \phi^\prime (u) du 
\geq \frac{\lambda^2 -1}{2} \int_{\alpha_{n,k}}^{\beta_{n,k}}  \frac{u}{n k_n(u)} du.
\end{equation}
From Proposition~\ref{pro:logYn} we know that, introducing $\chi_n=n-4/5n^{1/3}$, 
$u\to u/k_n(u)$ is increasing on $(0,\chi_n)$, and satisfies 
$$
\frac{x}{n k_n(x)} \geq \begin{cases}
                         \left(\frac{n^2}{x^2}-1\right)^{-1/2} & \mbox{ when } 0\leq x\leq \chi_n, \\
                          \left(\frac{n^2}{\chi_n^2}-1\right)^{-1/2} & \mbox{ when } \chi_n\leq x \leq n.
                        \end{cases}
$$
Let $k_{*}$ be the largest indice such that $\alpha_{n,k}<\chi_n$. For all $k\leq k_*$ we have 
$$
\frac{4\tau}{\lambda^2-1} \geq  \int_{\alpha_{n,k}}^{\beta_{n,k}} \left(\frac{n^2}{x^2}-1\right)^{-1/2} dx \geq \left|I_{n,k}(\tau)\right|\left(\frac{n^2}{\alpha_{n,k}^2}-1\right)^{-1/2}
$$
and when $\alpha_{n,k} > \chi_n$, 
$$
\frac{4\tau}{\lambda^2-1} \geq  \int_{\alpha_{n,k}}^{\beta_{n,k}} \left(\frac{n^2}{x^2}-1\right)^{-1/2} dx \geq \left|I_{n,k}(\tau)\right|\left(\frac{n^2}{\chi_{n,k}^2}-1\right)^{-1/2}
$$
Observing that $I_{n,k}(\tau)\subset U_{n,k}$, and therefore $n>j_{n,k}/\lambda> \alpha_{n,k} > j_{n,k}^{(1)}/\lambda > \alpha_{n,0}:=n/\lambda$ we have obtained that
 \begin{eqnarray*}
 \left|\bigcup_{k=1}^{k_*} I_{n,k}(\tau)\right| &\leq& \frac{4 \tau}{\lambda^2-1} \sum_{k=1}^{k_*}\sqrt{\left(\frac{n}{\alpha_{n,k}}\right)^2 -1} \\
 &\leq& \frac{4 \tau  }{\lambda^2-1} \left(\min_{k\leq k*} \left(\frac{\alpha_{n,k}}{n} -\frac{\alpha_{n,k-1}}{n}\right)\right)^{-1}
 \sum_{k=1}^{k_*}\sqrt{\left(\frac{n}{\alpha_{n,k}}\right)^2 -1}\left(\frac{\alpha_{n,k}}{n} -\frac{\alpha_{n,k-1}}{n}\right)  \\
 &\leq& \frac{4  n \tau  }{\lambda^2-1} \left(\min_{k\leq k*}\left(\alpha_{n,k} - \alpha_{n,k-1}\right)\right)^{-1}
 \int_{\lambda^{-1}}^{1}\sqrt{\frac{1}{x^2} -1} \, dx.\\
&=& \frac{4 \tau n \lambda \ln \lambda }{\lambda^2-1} \, \max_{k\leq k*}\frac{1}{\left(\lambda\alpha_{n,k}-\lambda\alpha_{n,k-1}\right)}.
 \end{eqnarray*}
For $k\geq1$, the distance  $\lambda\alpha_{n,k+1} -\lambda\alpha_{n,k}$ is at least $j_{n,k+1}^{(1)} - j_{n,k}$. 
We know from \eqref{eq:prothetan} that this distance decreases with $k$, and tends to $\pi/2$. 
On the other-hand, using the estimates \eqref{eq:abn-wats}, 
$\lambda\alpha_{n,1} -\lambda\alpha_{n,0} > j_{n,1}^{(1)} -n > \frac{4}{5} n^{1/3} >\frac{4}{5}$. Therefore, we have 
\begin{equation}\label{eq:estim-ink-1}
\left|\bigcup_{k=1}^{k_*} I_{n,k}(\tau)\right| \leq  \frac{5 \tau n \lambda \ln \lambda }{\lambda^2-1}.
\end{equation}
Using again the fact that $j_{n,k}^{(1)} - j_{n,k-1}^{(1)}$ is at least $\pi$, there can be at most  $(n-\chi_n)\lambda/\pi$ intervals $U_{n,k}$ in $(\chi_n,n)$.
Therefore 
\begin{eqnarray*}
\left|\bigcup_{k\geq k_*} I_{n,k}(\tau)\right| &\leq& \frac{4 \lambda \tau}{\pi(\lambda^2 -1) } \sqrt{\frac{n^2}{\chi_n^2} -1 }(n-\chi_n)\\
 &\leq& \frac{4 n\lambda \tau}{\pi(\lambda^2 -1) }.
\end{eqnarray*}
Altogether, we have
$$
\left|\bigcup_{k\in K(\lambda,n)} I_{n,k}(\tau)\right| \leq \frac{n \tau \ln \lambda}{\lambda} 
\left(\frac{\lambda^2}{\lambda^2-1}\left( 5 + \frac{4}{\pi \ln \lambda}\right) \right) \leq  \frac{ 6 n \tau \ln \lambda}{\lambda} 
$$
which completes the proof of estimate \eqref{eq:boundInk}. 

\medskip{}

{\bf When $n=0$}. As above, with the same notations, we have
\begin{equation}\label{eq:bd-tau-0n}
2\, \tau = \phi(\alpha_{0,k}) - \phi(\beta_{0,k}) = \int_{\alpha_{0,k}}^{\beta_{0,k}} - \phi^\prime (u) du 
\geq \frac{\lambda^2 -1}{2} \int_{\alpha_{0,k}}^{\beta_{0,k}}  \frac{u}{k_0(u)} du.
\end{equation}
Proposition~\ref{pro:logYn} shows that $x\to x/k_0(x)$ is increasing until $\zeta_0$, and decreasing afterwards. Thus
$$
2\, \tau \geq \frac{\lambda^2 -1}{2} \left| I_{0,k}(\tau) \right| \frac{\alpha_{0,k}}{k_0(\alpha_{0,k})}.
$$
As before, we know that for $k\geq 1$, $\alpha_{0,k} - \alpha_{0,k+1} > \lambda^{-1} (j_{0,k} - j_{0,k+1}^{(1)}) > \lambda^{-1} \pi/2$. We have
\begin{eqnarray*}
\sum_{k\geq2} \frac{k_0(\alpha_{0,k})}{\alpha_{0,k}}  &\leq& \max_{k\geq1}\frac{1}{\alpha_{0,k} - \alpha_{0,k+1}} 
\sum_{k\geq2} \frac{k_0(\alpha_{0,k})}{\alpha_{0,k}} \left(\alpha_{0,k-1} - \alpha_{0,k}\right) \\
&\leq& \frac{2\lambda}{\pi} \int_{j_{0,1}/\lambda}^{\zeta_0} \frac{k_0(x)}{x} dx \\
&\leq& \frac{2\lambda}{\pi} \ln\left|\frac{\BY{0}{j_{0,1}/\lambda}}{\BY{0}{\zeta_{0}}}\right|
\leq \frac{2}{\pi} \lambda \ln\left(\ln \lambda\right).
\end{eqnarray*}
Finally, using the bound \eqref{eq:bd-up-ml}
$$
\frac{\alpha_{0,1}}{k_0(\alpha_{0,1})} \leq \frac{\lambda}{\sqrt{\ln \lambda}}
$$
and we have obtained that 
$$
\left|\bigcup_{k\in K(\lambda,0)} I_{0,k}(\tau) \right| \leq \frac{4 \lambda \tau}{\lambda^2 -1} 
\left( \frac{1}{\sqrt{\ln \lambda}} + \frac{2}{\pi}  \ln\left(\ln \lambda\right)\right) 
\leq \tau \frac{7 \ln \left(\ln \lambda\right)}{\lambda},
$$
which concludes our proof.
\end{proof}

Let us now check that  away from $\omega_{n,k}$, we
can produce a bound ${S_n}$ similar to that of the perturbative regime.
\begin{prop}
\label{pro:Ito}
If $n\geq 1$ and  $x \in \left(\lambda^{-1}j_{n,1}^{(1)},y_{n,1}\right)\setminus(\cup_{k} I_{n,k}(\tau))$, there holds
\[
\left|{S_n}\right|\leq\frac{9}{2\,\tau}.
\]
When $n=0$, if $x\in \left(m_\lambda,\zeta_0\right)\setminus(\cup_{k} I_{0,k}(\tau))$, we have
\[
\left|{S_0}\right|\leq \frac{5}{3\,\tau}.
\]
\end{prop}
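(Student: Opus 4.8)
The plan is to read $S_n$ off the explicit formula \eqref{eq:formula-Sn-1}, which has the form $S_n(x)=\dfrac{a\,(1+it)}{a+it\,b}$ with the three \emph{real} quantities
\[
a=g_n(\lambda x)-g_n(x),\qquad b=g_n(\lambda x)+k_n(x),\qquad t=\tan\theta_n(x),
\]
and to use the elementary bound
\[
\left|\frac{a(1+it)}{a+itb}\right|^2=\frac{a^2(1+t^2)}{a^2+t^2b^2}=\frac{1+t^2}{1+(b/a)^2t^2}\le\max\!\left(1,\frac{a^2}{b^2}\right)
\]
valid for $a\neq0$, $b\neq0$ (as a function of $t^2\ge0$ the last ratio is $\le1$ when $|b/a|\ge1$, and is nondecreasing with supremum $(a/b)^2$ when $|b/a|<1$). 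Hence $\left|S_n(x)\right|\le\max\bigl(1,\left|a/b\right|\bigr)$ wherever the formula applies, while $S_n=0$ when $a=0$ and $S_n=1$, by the continuous extension, at the poles $\lambda x=j_{n,k}$ of $g_n(\lambda x)$; so in every case it suffices to bound $\left|a/b\right|=\left|\dfrac{g_n(\lambda x)-g_n(x)}{g_n(\lambda x)+k_n(x)}\right|$ from above.

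The hypothesis that $x$ avoids the blow-up intervals is exactly what keeps $|b|$ away from $0$. Since $g_n(\lambda x)<0$ precisely when $x\in\bigcup_k U_{n,k}$, and $k_n>0$ on $(0,y_{n,1})$ by \eqref{eq:Dixon-JnYn}, for $x\notin\bigcup_k I_{n,k}(\tau)$ there holds $\left|g_n(\lambda x)+k_n(x)\right|>\tau\,k_n(x)$: either $x$ lies in no $U_{n,k}$, so that $b\ge k_n(x)>\tau\,k_n(x)$, or $x\in U_{n,k}$ for a unique $k$ and then, not belonging to $I_{n,k}(\tau)$, satisfies $|b|>\tau\,k_n(x)$ directly. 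Writing $a=b-\bigl(g_n(x)+k_n(x)\bigr)$ gives
\[
\left|\frac ab\right|\le1+\frac{\left|g_n(x)+k_n(x)\right|}{|b|}\le1+\frac1\tau\,\frac{\left|g_n(x)+k_n(x)\right|}{k_n(x)}=1+\frac1\tau\left|1+\frac{g_n(x)}{k_n(x)}\right|,
\]
so the proposition reduces to an upper bound for $\left|1+g_n(x)/k_n(x)\right|$ on the relevant range of $x$.

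For $n\ge1$ I split $\left(\lambda^{-1}j_{n,1}^{(1)},y_{n,1}\right)$ at the point $n$, which lies strictly inside since $\lambda\ge7>j_{n,1}/n\ge j_{n,1}^{(1)}/n$. On $\left[n,y_{n,1}\right)$ Proposition~\ref{pro:ntoyn1} already yields $\left|S_n\right|<\sqrt5<\tfrac{9}{2\tau}$. On $\left(\lambda^{-1}j_{n,1}^{(1)},n\right)$ one has $g_n(x)>0$ (because $x<j_{n,1}^{(1)}$), and Proposition~\ref{pro:estimate5half} gives $g_n(x)/k_n(x)=1/\alpha_n<\tfrac52$, so $\left|1+g_n(x)/k_n(x)\right|<\tfrac72$ and $\left|a/b\right|<1+\tfrac{7}{2\tau}$; combined with the previous step and $\tau\le1$ this yields $\left|S_n(x)\right|\le1+\tfrac{7}{2\tau}\le\tfrac{9}{2\tau}$ on the whole interval. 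For $n=0$ the range is $(m_\lambda,\zeta_0)\subset(0,\zeta_0)$, on which the estimates of Proposition~\ref{pro:propg0k0} (with $\zeta_0$ as fixed in Proposition~\ref{pro:logYn}) give $\left|g_0(x)\right|\le k_0(x)$; since $g_0<0$ there, $0\le g_0(x)+k_0(x)\le k_0(x)$, so $\left|a/b\right|<1+\tfrac1\tau$ and $\left|S_0(x)\right|\le1+\tfrac1\tau\le\tfrac{5}{3\tau}$ because $\tau\le\tfrac14$.

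Conceptually this argument is short; the work is in the bookkeeping. The step I expect to require the most care is checking that "$x\notin\bigcup_k I_{n,k}(\tau)$" really forces $|b|>\tau\,k_n(x)$ at \emph{every} admissible $x$ --- in particular at points outside all the $U_{n,k}$ and at the poles of $g_n(\lambda x)$, where one must fall back on the continuous value $S_n=1$ --- and, secondarily, verifying the two one-variable inequalities $\left|g_0\right|\le k_0$ on $(0,\zeta_0)$ and $1/\alpha_n<\tfrac52$ on $(0,n]$ (the latter is exactly Proposition~\ref{pro:estimate5half}). Once these are settled, the numerical slack in $1+\tfrac{7}{2\tau}\le\tfrac{9}{2\tau}$ and $1+\tfrac1\tau\le\tfrac{5}{3\tau}$ for $\tau\le\tfrac14$ is comfortable.
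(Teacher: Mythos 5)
Your argument is correct, and it takes a genuinely cleaner route than the paper's while arriving at the same constants. The key simplification is your elementary bound
\[
|S_n|^2=\frac{a^2(1+t^2)}{a^2+t^2b^2}=\frac{1+t^2}{1+(b/a)^2t^2}\le\max\!\left(1,\frac{a^2}{b^2}\right),
\]
which eliminates $\tan\theta_n$ from the analysis at the outset. The paper instead keeps $\tan\theta_n$ in play: for $n\ge1$ it sets $a'=g_n(x)$, $b'=k_n(x)$, studies the full rational function $u\mapsto\frac{(u-a')^2(1+\tan^2\theta_n)}{(u-a')^2+\tan^2\theta_n(u+b')^2}$ of $u=g_n(\lambda x)$ over the admissible set $u\notin[-(1+\tau)b',-(1-\tau)b']$, locates the maximum at $u=-(1+\tau)b'$, and only then discards the $\tan^2\theta_n$ factor; for $n=0$ it separately uses the approximation $\tfrac{3}{5}\le k_0(x)\tan\theta_0(x)\le\tfrac{4}{5}$. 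Your approach subsumes both of these, and also gives a slightly sharper intermediate bound ($1+\frac{7}{2\tau}$ and $1+\frac1\tau$ rather than $\frac{9}{2\tau}$ and $\frac{5}{3\tau}$) before rounding up. The decomposition into $x<n$ (where Proposition~\ref{pro:estimate5half}, i.e.\ $g_n/k_n<\tfrac52$, controls the tail) and $x\in[n,y_{n,1})$ (Proposition~\ref{pro:ntoyn1}) is the same in both proofs, as is the lower bound $|g_n(\lambda x)+k_n(x)|>\tau k_n(x)$ drawn from the exclusion of the $I_{n,k}(\tau)$.

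One small point you should tighten: the assertion $|g_0(x)|\le k_0(x)$ on $(0,\zeta_0)$ is not literally one of the displayed estimates in Proposition~\ref{pro:propg0k0}, because the quoted lower bound for $k_0$ is only stated for $x\le\tfrac14$, while $\zeta_0\approx0.3135>\tfrac14$. It does follow, but you need two steps: on $(0,\tfrac14]$ compare the bounds directly, noting $k_0(x)\ge-\bigl(\gamma+\ln(x/2)\bigr)^{-1}\ge\tfrac{2}{3}$ there while $|g_0(x)|\le\tfrac12x^2+\tfrac{1}{12}x^4<\tfrac{1}{30}$; on $(\tfrac14,\zeta_0)$ use the monotonicity of $k_0$ (also in Proposition~\ref{pro:propg0k0}) to get $k_0(x)\ge k_0(\tfrac14)>\tfrac{2}{3}$, while $|g_0(x)|\le\tfrac12\zeta_0^2+\tfrac{1}{12}\zeta_0^4<\tfrac{1}{15}$. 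With that spelled out the proof is complete.
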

\begin{proof}
{\bf Case $n\neq0$.}   When $x\in\left[n,y_{n,1}\right]$ Proposition~\ref{pro:ntoyn1} shows that $\left|{S_n}\right|\leq\sqrt{5}$, which establishes the bound.  
The proof is along the lines of that of Lemma~\ref{lem:FirstCase}. Starting from the formula
\[
{S_n}(x)=\frac{\left(g_{n}(\lambda x)-g_{n}(x)\right)\left(1+i\tan\theta_{n}\right)}{\left(g_{n}(\lambda x)-g_{n}(x)\right)+i\tan\theta_{n}\left(g_{n}(\lambda x)+k_{n}(x)\right)},
\]
we write $a=g_{n}(x)$ and $b=k_{n}(x)$, and the study of the function
\[
u\to\frac{(u-a)^{2}\left(1+\tan^{2}\theta_{n}\right)}{(u-a)^{2}+\tan^{2}\theta_{n}(u+b)^{2}}
\]
for $a>0$ and $b>0$ , with $u\in(-\infty,-(1+\tau)b)\cup(-(1-\tau)b,+\infty)$, 
shows that it has a minimum for $u=a$, tends to $1$ for $u\to\pm\infty$,
increases until $-(1+\tau)b$, decreases on $(-(1-\tau)b,a)$ and
increases to $1$ afterwards. Therefore, the maximum of ${S_n}$ is smaller
than the maximum of the two values $A$ and $B$ given by
\begin{align*}
A & =\frac{\left(1+\tan^{2}\theta_{n}\right)\left(\left(1+\tau\right)b+a\right)^{2}}{\left(\left(1+\tau\right)b+a\right)^{2}+\tan^{2}\theta_{n}\tau^{2}b^{2}}=\frac{1+\tan^{2}\theta_{n}}{1+\tau^{2}\tan^{2}\theta_{n}\frac{b^{2}}{a^{2}}\left(1+(1+\tau)\frac{b}{a}\right)^{-2}},\\
B & =\frac{\left(1+\tan^{2}\theta_{n}\right)\left(\left(1-\tau\right)b+a\right)^{2}}{\left(\left(1-\tau\right)b+a\right)^{2}+\tan^{2}\theta_{n}\tau^{2}b^{2}}=\frac{1+\tan^{2}\theta_{n}}{{\displaystyle 1+\tau^{2}\tan^{2}\theta_{n}\frac{b^{2}}{a^{2}}\left(1+(1-\tau)\frac{b}{a}\right)^{-2}}}.\end{align*}
It is clear that\[
\frac{\tau}{1+(1+\tau)\frac{b}{a}}<\frac{\tau}{1+(1-\tau)\frac{b}{a}}<1\]
therefore the maximum is $A$. Noticing that, for $b>\alpha a$
\[
\frac{\frac{b}{a}\tau}{1+(1+\tau)\frac{b}{a}}>\frac{\alpha\tau}{1+2\alpha}\]
we obtain\[
A<\left(\frac{1+2\alpha}{\alpha\tau}\right)^{2}\frac{1+\tan^{2}\theta_{n}}{\frac{1+2\alpha}{\alpha\tau}+\tan^{2}\theta_{n}}<\left(\frac{1+2\alpha}{\alpha\tau}\right)^{2}\]
Thanks to Proposition~\ref{pro:propsg} we know that $\frac{2}{5}<\frac{b}{a}.$
We can therefore conclude that 
\[
A\leq\left(\frac{9}{2}\frac{1}{\tau}\right)^{2},
\]
which concludes the proof. 

\smallskip{}

{\bf Case $n=0$.} The proof is slightly different when $n=0$, as $k_0/g_0$ is unbounded near $x=0$. 
Note that when $x<\frac{1}{3}$, (and $\zeta_0<1/3$),
$$
 \frac{3}{5} \frac{1}{k_0(x)} \leq \tan \theta_0(x)  \leq \frac{4}{5} \frac{1}{k_0(x)} \quad \frac{9}{10} \leq \frac{k_0(x)+g_0(x)}{k_0(x)}\leq 1.
$$ 
Introducing $u=\frac{g_0(\lambda x)+ k_0(x)}{k_0(x)}$, and $v=(k_0(x)+g_0(x))/k_0(x)$ we have 
$$
u\in(-\infty,-\tau)\cup(\tau,\infty)\mbox{ and } \frac{9}{10} \leq v \leq 1,
$$
\begin{eqnarray*}
|S_0(x)|^2&=&\frac{\left|g_{0}(\lambda x)-g_{0}(x)\right|^2\left(1+\tan^2\theta_{0}(x)\right)}
{\left|g_{0}(\lambda x)-g_{0}(x)\right|^2 +\tan^2\theta_{0}(x)\left|g_{0}(\lambda x)+k_{n}(x)\right|^2} \\
&\leq& \frac{\left|u - v\right|^2\left(\frac{4}{5}+k_0(x)^2\right)}
{\left|u-v\right|^2 k_0(x)^2 + \frac{3}{5} u^2}.
\end{eqnarray*}
Relaxing $u$ to an independent variable, we see that
$$
\frac{\left|u - v\right|^2\left(\frac{4}{5}+k_0(x)^2\right)}
{\left|u-v\right|^2 k_0(x)^2 + \frac{3}{5} u^2} \leq \max(A,B,1)
$$
where
$$
A =\frac{\left|\tau - v\right|^2\left((\frac{4}{5})^2+k_0(x)^2\right)}
{\left|\tau-v\right|^2 k_0(x)^2 + (\frac{3}{5})^2 \tau^2} \mbox{ and }
B = \frac{\left|\tau + v\right|^2\left((\frac{4}{5})^2+k_0(x)^2\right)}
{\left|\tau + v\right|^2 k_0(x)^2 + (\frac{3}{5})^2 \tau^2} \\
$$
It is clear that $A<B$. Taking the maximum value for $v$ and $\tau$, we find 
$$
B \leq \frac{1 +(\frac{5}{4})^2k_0(x)^2}
{\frac{5}{4} k_0(x)^2 + (\frac{3}{5})^2 \tau^2} \leq \frac{25}{9}\frac{1 +\left(\frac{5}{4} k_0(m_\lambda)\right)^2}
{\left(\frac{25}{12} k_0(m_\lambda)\right)^2 + \tau^2} \leq \frac{25}{9 \,\tau^2}.
$$
\end{proof}

\section*{Acknowledgement}
The subject of this paper was suggested by Michael Vogelius, who spent a considerable amount of time discussing this paper with 
the author and made several luminous comments which transformed this paper considerably.
 The author is extremely grateful for this, and would like to thank him profoundly.
 The author is supported by the EPSRC Science and Innovation award to the Oxford Centre for Nonlinear PDE (EP/E035027/1). 
This paper was written in part during the author's stay in the Institute for Advanced Study, and he would like to 
gratefully acknowledge the fantastic opportunity that this year was for him. 
This material is based upon work supported by the National Science Foundation under agreement No. DMS-0635607. 
Any opinions, findings and conclusions or recommendations expressed in this material are those of the authors 
and do not necessarily reflect the views of the National Science Foundation.

\bibliographystyle{plain}
\bibliography{/home/capdeboscq/Desktop/Dropbox/TeX/Mybib}

\begin{thebibliography}{10}

\bibitem{AMMARI-VOGELIUS-VOLKOV-01}
A.~Ammari, M.~S. Vogelius, and D.~Volkov.
\newblock Asymptotic formulas for perturbations in the electromagnetic fields
  due to the presence of inhomogeneities of small diameter {II}. {T}he full
  {M}axwell equations.
\newblock {\em J. Math. Pures Appl.}, 80:769--814, 2001.

\bibitem{AMMARI-KANG-04}
H.~Ammari and H.~Kang.
\newblock {\em Reconstruction of {S}mall {I}nhomogeneities from {B}oundary
  {M}easurements}, volume 1846 of {\em Lecture Notes in Mathematics}.
\newblock Springer, 2004.

\bibitem{BOYD-DUNSTER-86}
W.~G.~C. Boyd and T.~M. Dunster.
\newblock Uniform asymptotic solutions of a class of second-order linear
  differential equations having a turning point and a regular singularity, with
  an application to {L}egendre functions.
\newblock {\em SIAM J. Math. Anal.}, 17(2):422--450, 1986.

\bibitem{HANSEN-POIGNARD-VOGELIUS-07}
D.~J. Hansen, C.~Poignard, and M.~S. Vogelius.
\newblock Asymptotically precise norm estimates of scattering from a small
  circular inhomogeneity.
\newblock {\em Applicable Analysis}, 86:433--458, 2007.

\bibitem{KOHN-ONOFREI-VOGELIUS-WEINSTEIN-10}
R.~V. Kohn, D.~Onofrei, M.~S. Vogelius, and M.~I. Weinstein.
\newblock Cloaking via change of variables for the {H}elmholtz equation.
\newblock {\em Comm. Pure Appl. Math.}, 63(8):973--1016, 2010.

\bibitem{LANDAU-99}
L.~J. Landau.
\newblock Ratios of {B}essel functions and roots of {$\alpha
  J_\nu(x)+xJ'_\nu(x)=0$}.
\newblock {\em J. Math. Anal. Appl.}, 240(1):174--204, 1999.

\bibitem{LANDAU-00}
L.~J. Landau.
\newblock Bessel functions: Monotonicity and bounds.
\newblock {\em J. London Math. Soc., 61}, pages 197--215, 2000.

\bibitem{MULDOON-SPIGLER-84}
M.~E. Muldoon and R.~Spigler.
\newblock Some remarks on zeros of cylinder functions.
\newblock {\em SIAM J. Math. Anal.}, 15(6):1231--1233, 1984.

\bibitem{MULLER-69}
C.~M\"uller.
\newblock {\em Foundations of the mathematical theory of electromagnetic
  waves}.
\newblock Revised and enlarged translation from the German. Die Grundlehren der
  mathematischen Wissenschaften, Band 155. Springer-Verlag, New York, 1969.

\bibitem{NEDELEC-01}
J.-C. N{\'e}d{\'e}lec.
\newblock {\em Acoustic and electromagnetic equations}, volume 144 of {\em
  Applied Mathematical Sciences}.
\newblock Springer-Verlag, New York, 2001.
\newblock Integral representations for harmonic problems.

\bibitem{NGUYEN-VOGELIUS-09}
H.-M. Nguyen and M.~S. Vogelius.
\newblock A representation formula for the voltage perturbations caused by
  diametrically small conductivity inhomogeneities. {P}roof of uniform
  validity.
\newblock {\em Ann. Inst. H. Poincar\'e Anal. Non Lin\'eaire},
  26(6):2283--2315, 2009.

\bibitem{OLVER-74}
F.~W.~J. Olver.
\newblock {\em Asymptotics and special functions}.
\newblock Academic Press [A subsidiary of Harcourt Brace Jovanovich,
  Publishers], New York-London, 1974.
\newblock Computer Science and Applied Mathematics.

\bibitem{NIST-10}
F.~W.~J. Olver, D.~W. Lozier, R.~F. Boisvert, and C.~W. Clark, editors.
\newblock {\em N{IST} handbook of mathematical functions}.
\newblock U.S. Department of Commerce National Institute of Standards and
  Technology, Washington, DC, 2010.
\newblock With 1 CD-ROM (Windows, Macintosh and UNIX).

\bibitem{PARIS-84}
R.~B. Paris.
\newblock An inequality for the {B}essel function {$J_\nu(\nu x)$}.
\newblock {\em SIAM J. Math. Anal.}, 15(1):203--205, 1984.

\bibitem{SZASZ-50}
O.~Sz{\'a}sz.
\newblock Inequalities concerning ultraspherical polynomials and {B}essel
  functions.
\newblock {\em Proc. Amer. Math. Soc.}, 1:256--267, 1950.

\bibitem{THIRUNAN-51}
V.~R. Thiruvenkatachar and T.~S. Nanjundiah.
\newblock Inequalities concerning {B}essel functions and orthogonal
  polynomials.
\newblock {\em Proc. Indian Acad. Sci., Sect. A.}, 33:373--384, 1951.

\bibitem{WATSON-25}
G.~N. Watson.
\newblock {\em A treatise on the theory of {B}essel functions}.
\newblock Cambridge Mathematical Library. Cambridge University Press,
  Cambridge, 1995.
\newblock Reprint of the second (1944) edition.

\end{thebibliography}
\appendix
\section{\label{ap:A} Miscellaenous Properties of Bessel Functions}

Proposition~\ref{pro:propsg} details properties of quotients of Bessel functions. This result is therefore independent from the rest of the paper. 
Properties {ii.}, {v.} and {vi.} could be new. Since many authors have worked on Bessel Functions, it is quite possible that similar results were 
proved before, but we are not aware of it. 

\begin{prop}
\label{pro:propsg}For all real $n\geq 1$, 
\begin{enumerate}
\item[i.] The function $g_{n}$ is strictly decreasing on $[0,\infty)\setminus\cup_{k}{j_{n,k}}$, and cancels at each $j_{n,k}^{(1)}$. 
\item[ii.] On $[0,j_{n,1})$, $g_{n}$ is concave. 
\item[iii.] For $0\leq x \leq n$, $g_n(x)$ satisfies  
\[
\sqrt{1-\left(\frac{x}{n}\right)^{2}}<g_{n}(x)<\sqrt{1-\left(\frac{x}{n}\right)^{2}+\frac{c_n^2}{n^{2/3}}\frac{x}{n}},
\]
where $c_n:=n^{1/3}g_{n}(n)$, satisfies
\[
\frac{1}{\sqrt{2}}<c_{n}<\frac{13}{14}.
\]
\item[iv.]  For $0< x < n$, $0 < -g_n^\prime(x) \leq g_n^2(n)$.  On $(0,y_{n,1})$, the function $k_n$ is positive. 
It decreases until $\kappa_n \in ( n -\frac{4}{5} n^{1/3},n)$, 
defined as the unique solution of 
$$
k_n(\kappa_n) = \sqrt{ 1- \frac{1}{n^2}\kappa_n^2}.
$$
\item[v.] For $0< x < n$, $k_n$  satisfies,
\[
\frac{3}{5} \frac{1}{n^{1/3}} \leq k_{n}(x)\leq \max\left(\sqrt{1-\left(\frac{x}{n}\right)^{2}},\frac{7}{6}\frac{1}{n^{1/3}}\right).
\]
More precisely, for all $x\leq \kappa_n$,
$$
\kappa^{+}\sqrt{1-\left(\frac{x}{n}\right)^{2}} - g_n(x) \leq k_n(x) \leq \sqrt{1-\left(\frac{x}{n}\right)^{2}},
$$ 
with $\kappa^{+}>1.91$, whereas for all $ \kappa_n \leq x \leq n$,
$$
\frac{3}{5} \frac{1}{n^{1/3}} \leq k_n(x) \leq \frac{7}{6}\frac{1}{n^{1/3}}
$$
\item[vi.] Finally, 
\begin{equation}
\frac{2}{5}<\frac{k_{n}(x)}{g_{n}(x)}<\frac{5}{3}.\label{eq:boundknovergn}
\end{equation}
\end{enumerate}
\end{prop}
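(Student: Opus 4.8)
The plan is to read off all six items from two facts about logarithmic derivatives of Bessel functions: the Riccati equations
$$
g_n'(x) = \frac nx - \frac xn - \frac nx\,g_n^2(x),\qquad k_n'(x) = -\frac nx + \frac xn + \frac nx\,k_n^2(x)
$$
(the second being the ``time reversed'' equation, both coming from Bessel's equation and the recurrences), together with the normalisations $g_n(0^+)=k_n(0^+)=1$ and the small-$x$ expansions $g_n(x)=1-\tfrac{x^2}{2n(n+1)}+O(x^4)$ and $k_n(x)=1-\tfrac{x^2}{2n(n-1)}+O(x^4)$ for $n\ge2$ (with a direct elementary check at $n=1$). For \textit{i.} and \textit{ii.} I would use the Mittag-Leffler expansion $\BJp n x/\BJ n x = \tfrac nx + 2x\sum_{k\ge1}(x^2-j_{n,k}^2)^{-1}$, which gives $g_n(x)=1-\tfrac2n\sum_{k\ge1}\tfrac{x^2}{j_{n,k}^2-x^2}$. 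Differentiating term by term, $g_n'(x)=-\tfrac{4x}{n}\sum_{k\ge1}\tfrac{j_{n,k}^2}{(x^2-j_{n,k}^2)^2}<0$ wherever $g_n$ is finite (this is \textit{i.}), and since each $x\mapsto x^2/(j_{n,k}^2-x^2)$ is convex on $(0,j_{n,1})$ (second derivative $2j_{n,k}^2(j_{n,k}^2+3x^2)(j_{n,k}^2-x^2)^{-3}>0$), $g_n$ is concave on $[0,j_{n,1})$ (this is \textit{ii.}); cancellation at the $j_{n,k}^{(1)}$ is immediate.

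The heart of the argument is \textit{iii.} For the lower bound I would set $D(x):=g_n^2(x)-1+(x/n)^2$; the Riccati equation gives $D'(x)=-\tfrac{2n}{x}g_n(x)D(x)+\tfrac{2x}{n^2}$ with $D(0)=0$, and since $g_n>0$ on $(0,j_{n,1}^{(1)})\supset(0,n]$ this linear equation has positive forcing and nonnegative damping, so $D>0$ on $(0,n]$, i.e.\ $g_n(x)>\sqrt{1-(x/n)^2}$. For the upper bound, take $\overline h(x):=\sqrt{1-(x/n)^2+g_n(n)^2(x/n)}$, which is the unique barrier along which the Riccati right-hand side is the constant $-g_n(n)^2$ and which satisfies $\overline h(0)=g_n(0)=1$, $\overline h(n)=g_n(n)$; a first-crossing argument (at an interior positive maximum $x_\ast$ of $g_n-\overline h$ one would have $g_n'(x_\ast)=\overline h'(x_\ast)$ with $g_n'(x_\ast)=-g_n(n)^2$, while the lower bound already proved forces $\overline h'(x_\ast)<-g_n(n)^2$, a contradiction) shows $g_n<\overline h$ on $(0,n)$. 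The constants $\tfrac1{\sqrt2}<c_n=n^{1/3}g_n(n)<\tfrac{13}{14}$ come from the sharp two-sided estimates for $\BJ n n$ and $\BJp n n$ in \cite[\S8.5]{WATSON-25} and the monotonicity of $n\mapsto n^{1/3}\BJ n n$.

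With \textit{iii.} in hand the rest is bookkeeping. For \textit{iv.}, writing $-g_n'(x)=\tfrac xn-\tfrac nx(1-g_n^2(x))$ and inserting the two bounds of \textit{iii.} gives $0<-g_n'(x)\le c_n^2 n^{-2/3}=g_n^2(n)$; positivity of $k_n$ on $(0,y_{n,1})$ follows from $y_{n,1}^{(1)}>y_{n,1}$ and the sign of $\BYp n x/\BY n x$ near $0$. For $\kappa_n$ I would use the auxiliary $D_k(x):=1-(x/n)^2-k_n^2(x)$, which satisfies $D_k'(x)=\tfrac{2n}{x}k_n(x)D_k(x)-\tfrac{2x}{n^2}$, so every zero of $D_k$ is a downcrossing; combined with $D_k>0$ near $0$ and $D_k(n)=-k_n^2(n)<0$ this yields a unique $\kappa_n<n$ with $k_n(\kappa_n)=\sqrt{1-\kappa_n^2/n^2}$, and comparing at $x=n-\tfrac45 n^{1/3}$ with the turning-point asymptotics for $Y_n$ locates $\kappa_n$ in the stated window. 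Item \textit{v.} then splits: on $(0,\kappa_n)$ the upper bound $k_n\le\sqrt{1-(x/n)^2}$ is $D_k\ge0$; on $(\kappa_n,n)$ the Riccati equation shows $k_n$ increasing and the two-sided bounds on $\kappa_n$ give $\tfrac35 n^{-1/3}\le k_n(\kappa_n)\le k_n(x)\le\tfrac76 n^{-1/3}$; the refined lower bound $k_n\ge\kappa^+\sqrt{1-(x/n)^2}-g_n$ for $x\le\kappa_n$ I would extract from the Wronskian identity $g_n(x)+k_n(x)=-2/(\pi n\,\BJ n x\,\BY n x)$ together with the classical lower bound on $|\BJ n x\,\BY n x|$ and the upper bound of \textit{iii.} Finally \textit{vi.}, i.e.\ \eqref{eq:boundknovergn}: on $(0,\kappa_n)$ one has $k_n\le\sqrt{1-(x/n)^2}<g_n$ hence $k_n/g_n<1$, with the bound $2/5$ coming from the refined estimate of \textit{v.} and the upper bound of \textit{iii.}, while on $(\kappa_n,n)$ both quantities are $\Theta(n^{-1/3})$ with the explicit constants above.

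I expect the main obstacle to be \textit{iii.}, and within it the behaviour near the turning point $x\approx n$: the comparison arguments are clean away from $x=n$, but both the precise choice of barrier $\overline h$ and the extraction of the numerical constants feeding \textit{iv.}--\textit{vi.} rest on sharp turning-point asymptotics for $J_n$ and $Y_n$, which is where all the genuine work is concentrated.
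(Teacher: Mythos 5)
Your route to \textit{i.} and \textit{ii.} via the Mittag--Leffler expansion of $\BJp{n}{x}/\BJ{n}{x}$ is a genuine and valid alternative to the paper's argument, which instead rests on the Szász--Thiruvenkatachar identity for $\BJ{n}{x}^2-\BJ{n+1}{x}\BJ{n-1}{x}$; both deliver concavity of $g_n$ on $[0,j_{n,1})$. Likewise, for the lower bound in \textit{iii.} the paper simply observes that $g_n'\le 0$ in the Riccati equation \eqref{eq:difeqgn} forces $g_n^2\ge 1-(x/n)^2$ directly, which is shorter than your integrating-factor argument with $D(x)=g_n^2(x)-1+(x/n)^2$, though yours is correct too.

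There is, however, a genuine flaw in your argument for the upper bound of \textit{iii.}, which you correctly identify as the heart of the proposition. You claim that at an interior positive maximum $x_\ast$ of $g_n-\overline h$ one has $g_n'(x_\ast)=-g_n(n)^2$. But the Riccati right-hand side equals the constant $-g_n(n)^2$ only when $g_n(x_\ast)=\overline h(x_\ast)$; at a strict positive maximum $g_n(x_\ast)>\overline h(x_\ast)$, and the Riccati equation then gives $g_n'(x_\ast)<-g_n(n)^2$, so the intended contradiction with $\overline h'(x_\ast)<-g_n(n)^2$ does not materialise. Moreover, a first-crossing argument is delicate here because both $g_n$ and $\overline h$ already agree at the two endpoints $x=0$ and $x=n$. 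The paper's argument avoids the barrier altogether: concavity of $g_n$ (your item \textit{ii.}) gives $g_n'(x)>g_n'(n)=-g_n^2(n)$ for all $x<n$, and substituting this one inequality into \eqref{eq:difeqgn} immediately yields $g_n^2(x)< 1-(x/n)^2+\tfrac{c_n^2}{n^{2/3}}\tfrac xn$. You had concavity available and did not use it; this is where the argument should go.

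The remaining items are treated too loosely to carry the explicit constants. For \textit{iv.}, locating $\kappa_n$ in $(n-\tfrac45 n^{1/3},n)$ is not a matter of ``comparing with turning-point asymptotics'': the paper differentiates \eqref{eq:difeqkn} to get $k_n^{(2)}(x)\ge 2/n$ on the region where $k_n'\ge 0$, and then integrates twice between $\kappa_n$ and $n$ together with the two-sided bounds $\tfrac45 n^{-1/3}\le k_n(n)\le\tfrac76 n^{-1/3}$ to deduce $n-\kappa_n\le\sqrt{17/30}\,n^{1/3}$. For the lower bound $k_n\ge\tfrac35 n^{-1/3}$ in \textit{v.}, the paper integrates $k_n'$ from $\kappa_n$ to $n$ and solves a quadratic in $k_n(\kappa_n)$; ``the two-sided bounds on $\kappa_n$'' do not by themselves produce this constant. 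Finally for \textit{vi.}, saying both quantities are $\Theta(n^{-1/3})$ near $x=n$ gives no explicit constant; the paper shows $k_n/g_n$ decreases to a unique interior minimum $x_2$ characterised by $k_n(x_2)g_n(x_2)=1-x_2^2/n^2$, then increases, and uses the upper bound of \textit{iii.} together with $k_n(\kappa_n)\ge\tfrac35 n^{-1/3}$ to get $\tfrac25$, and $k_n(n)/g_n(n)\le\tfrac{7\sqrt2}{6}<\tfrac53$ at the other end. These numerical pins are exactly the parts that your sketch leaves open, and they are what the later sections of the paper actually use.
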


\begin{proof}
Property (i) is well-known, see e.g. \cite{THIRUNAN-51,LANDAU-99}. To obtain property (ii), notice that 
\begin{eqnarray*}
-n g_n^\prime(x)&=&\frac{\left(\BJ{n}{x}\right)^2-\BJ{n+1}{x}\BJ{n-1}{x}}{\left(\BJ{n}{x}\right)^2} \\
             &=& \frac{1}{n+1} + \frac{2}{n+2} \left(\frac{\BJ{n+1}{x}}{\BJ{n}{x}}\right)^2\\ 
             & &                  + 2 n \sum_{p=2}^\infty \left(\frac{\BJ{n+p}{x}}{\BJ{n}{x}}\right)^2 \frac{1}{n+p-1}\frac{1}{n+p+1}.
\end{eqnarray*}

The second identity is proved in  \cite{SZASZ-50,THIRUNAN-51}.  Since for $x>0$, $x\to\BJ{n+1}{x}/\BJ{n}{x}$ is an increasing function
 as it can be readily observed from its continued fraction expansion, see \cite[10.10]{NIST-10}, we deduce that $g_n$ is concave on $(0,j_{n,1})$.
For property (iii), using the recurrence relations for Bessel functions, we notice that $g_n$ satisfies the differential equation  
\eqref{eq:difeqgn}.
Since $g_n$ is decreasing, we deduce that 
$$
g_n(x)\geq \sqrt{1-  \frac{x^2}{n^2}}.
$$
for $x\leq n$. We know \cite[8.55]{WATSON-25} that 
$ n\to n^{1/3} g_n(n) $ is an increasing function of $n$, therefore
$$
\frac{1}{\sqrt{2}}<g_1(1) \leq c_n \leq \lim_{n\to\infty}c_n<\frac{13}{14}. 
$$
Since $g_n$ is concave, we have for all $x<n$ $g_n^\prime(x)>g_n^\prime(n)=-g_{n}^{2}(n)$, and inserting this inequality in \eqref{eq:difeqgn}, we obtain
\begin{equation}\label{eq:lowbd-gn}
 g_{n}^{2}(x)\leq 1-\frac{x^2}{n^2} + \frac{x}{n^{5/3}}c_n^2, 
\end{equation}
as announced. This also proves the first part of property (iv). On $(0,n)$, $k_{n}$ is strictly positive, as $x\to\BY{n}{x}$ is negative
and increasing until $y_{n,1}$, since from \eqref{eq:Dixon-JnYn}  $y_{n,1}^{(1)}>y_{n,1}$. 
The asymptotic development 
\[
k_{n}(x)=1-\frac{x^{2}}{2}\frac{1}{n(n-1)}+O(x^4)
\]
shows that $k_{n}$ initially decreases.  Note that $k_n$ satisfies  
\begin{equation}\label{eq:difeqkn}
k_{n}^{\prime}(x) = \frac{n}{x}\left(k_{n}^{2}(x)-1+\frac{x^2}{n^2}  \right)
\end{equation}
Since $k_n(n)>0$, there exists $\kappa_n<n$ such that $k_{n}(\kappa_n)=\sqrt{1-\frac{\kappa_{n}^{2}}{n^2}}$, and $k_n$ increases on $(\kappa_n,y_{n,1})$.
The lower bound on $\kappa_n$ will be proved later. We now address property (v). The Wronskian identity can be written
\[
k_{n}+g_{n}=\frac{2}{\pi n\left(-\BJ{n}{x}\BY{n}{x}\right)},
\]
It is shown in \cite{BOYD-DUNSTER-86} that for all $x\leq n$, and $n>0$,
$$
2 \pi \left(-\BJ{n}{x}\BY{n}{x}\right)\sqrt{n^2-x^2} \leq 2.09.
$$
The lower bound on $k_n$,
$k_n \geq \kappa^{+}\sqrt{1-\left(\frac{x}{n}\right)^{2}} - g_n(x)$
follows immediately. 
To derive an upper bound, we therefore need to estimate $k_n(n)$. Using the Wronskian identity, we can write
$$
n^{1/3}k_{n}(n) =-n^{1/3}g_{n}(n)+\frac{2}{\pi\left(n^{1/3}\BJ{n}{n}\right)^{2}}\left(-\frac{\BJ{n}{n}}{\BY{n}{n}}\right)
$$
Note that $n^{1/3}g_{n}(n),n^{1/3}\BJ{n}{n}$ and $-\BJ{n}{n}/\BY{n}{n}$ are bounded increasing functions
of $n$, see \cite[8.54,855]{WATSON-25} and \cite{MULDOON-SPIGLER-84}. Therefore
\begin{eqnarray}  
\frac{4}{5}\leq	\lim\limits_{n \to \infty}
\left(-n^{1/3} g_{n}(n)+\frac{2}{\pi\left(n^{1/3}\BJ{n}{n}\right)^{2}} \left(-\frac{\BJ{1}{1}}{\BY{1}{1}}\right)\right)
&\leq& n^{1/3}k_{n}(n),  \label{eq:bd-knn-low}\\ 
\frac{7}{6} \geq -g_1(1) +\frac{2}{\pi \BJ{1}{1}^2} \lim\limits_{n\to\infty}\left(-\frac{\BJ{n}{n}}{\BY{n}{n}}\right)&\geq& n^{1/3}k_{n}(n). \label{eq:bd-knn-up}
\end{eqnarray}
We have obtained
$$
k_n\leq\max\left(\sqrt{1-\left(\frac{x}{n}\right)^{2}},\frac{7}{6}\frac{1}{n^{1/3}}\right),
$$
as announced. We can verify by inspection that $k_1>3/5$ on $(0,1)$. 
Let us compute a lower bound for $n\geq2$. Note that we have obtained that $k_n(n)<1$. 
We compute
\begin{eqnarray*}
 k_n(n) - k_n(\kappa_n) =\int_{\kappa_n}^n k_n^\prime(x)dx 
&=& 
\int_{\kappa_n}^n \frac{n}{x}\left(k_n(x)^2 -1+ \frac{x^2}{n^2}\right) dx, \\
&\leq&\left[\frac{x^2}{2n} + n(kn(n)^2-1)\ln(x) \right]_{x_1}^n\\
&=& \frac{n}{2} \left( k_n(\kappa_n)^2 + (1-k_n(n)^2) \ln\left(1-k_n(\kappa_n)^2 \right)\right) \\
&\leq& \frac{nk_n(n)^2}{2} k_n(\kappa_n)^2.
\end{eqnarray*}
This implies, using the bounds $7/6>k_n(n) n^{1/3}>4/5$,
\begin{eqnarray*}
\min\limits_{0\leq x\leq1} (k_n(x))= k_n(\kappa_n) &\geq& \frac{1}{nk_n^2(n)}\left(-1+\sqrt{1+2nk_n^3(n)}\right) \\ 
                                                   &\geq& \frac{1}{n^{1/3}} \min_{4/5\leq x \leq 7/6} \left(\frac{-1+\sqrt{1+2x^3}}{x^2} \right)\\
                                      &\geq& \frac{3}{5n^{1/3}}.
\end{eqnarray*}
Let us show now that $\kappa_n \geq n- \frac{4}{5} n^{1/3}$, to conclude the proof of Property (iv). 
Differentiating the identity \eqref{eq:difeqkn} we obtain, when $k_n^\prime\geq 0$,
\begin{eqnarray*}
k_n^{(2)}(x) &=& -\frac{1}{x} k_n ^\prime(x) +  \frac{n}{x} \left( 2\frac{x}{n^2} +2 k_n^\prime(x) k_n(x) \right) \\
             &=& \frac{2}{n} + \frac{k_n ^\prime(x)}{x} \left( 2 n k_n(x) -1 \right) \\
             &\geq& \frac{2}{n} + \frac{k_n ^\prime(x)}{x} \left( \frac{6}{5} n^{2/3}  -1 \right) \\
             &\geq& \frac{2}{n}.
\end{eqnarray*}
Therefore, we can write using the upper and lower bound on $k_n$ and the lower bound on $k_n^{(2)}$, 
$$
\left(\frac{7}{6}-\frac{3}{5}\right)\frac{1}{n^{1/3}} 
\geq k_n(n) - k_n(\kappa_n) 
=  \int_{\kappa_n}^n k_n^\prime(t)dt 
\geq \frac{2}{n} \int_{\kappa_n}^n (t- \kappa_n)dt = \frac{1}{n} (n-\kappa_n)^2. 
$$
Consequently,
$$
n- \kappa_n \leq \sqrt{\frac{17}{30}} n^{1/3} \leq \frac{4}{5} n^{1/3},
$$
which is the announced bound. 

Finally, let us address property (vi.). Note that for $x\geq x_1$, $k_n/g_n$ is increasing, as the quotient of an increasing function over a decreasing one, 
therefore the lower bound is in the interval $[0,x_1]$. Thus, the maximum is either at $x=0$ or $x=x_1$, and
$$
\max_{0\leq x\leq x_1}(k_n/g_n)\leq\max(1,k_n(n)/g_n(n))\leq \frac{7\sqrt{2}}{6}<\frac{5}{3}.
$$
Using the differential equations \eqref{eq:difeqkn} and \eqref{eq:difeqgn}, we obtain
$$
\left(\frac{k_{n}}{g_{n}}\right)^{\prime}  = \frac{n}{x g_{n}}
\left(1-\frac{x^{2}}{n^2}- k_{n} g_{n}\right)\left(1+\frac{k_{n}}{g_{n}}\right),
$$
An expansion around zero shows that
$$
1-\frac{x^{2}}{n^2}-k_{n}g_{n}=\frac{1}{n^{2}-1}\frac{x^{2}}{n^2}+O(x^4),
$$
therefore $k_n/g_n$ initially decreases. Since $k_n(n)/g_n(n)>0$, it decreases until $x_2<x_1$ such that $k_n(x_2)g_n(x_2)=1-x_2^2/n^2$, and increases afterwards.
Using the upper bound on $g_n$, we obtain, that, at that point, 
$$
\frac{k_{n}\left(x_{2}\right)}{g_{n}\left(x_{2}\right)}  =\frac{1-x_{2}^{2}/n^2}{g_{n}\left(x_{2}\right)^{2}}
 \geq
 \min\limits_{0\leq x\leq x_1} \left(1+g_{n}^{2}(n)\frac{x}{n}\left(1-\frac{x^{2}}{n^2}\right)^{-1}\right)^{-1} 
 \geq 
 \left(1+\frac{g_{n}^{2}(n)}{k_n(\kappa_n)^2} \right)^{-1} 
 \geq \frac{2}{5},
$$
as claimed.
\end{proof}

\begin{prop}\label{pro:logYn}
For any $n\neq0$, let $\zeta_n$ be the first positive solution of
$$ 
\left(\ln\left|Y_{n}\right|\right)^{(2)}\left(\zeta_n \right)=0.
$$
On $(0,\zeta_n)$, $x\to x/k_n(x)$ is increasing, and $\zeta_n$ is the maximum of $x\to x/k_n(x)$ on $(0,n)$ (resp. on $(0,y_{0,1})$) when $n\geq1$ (resp. $n=0$). 

When $n\geq 1$, we have $\zeta_n> \kappa_n$. Introducing $\chi_n := n-\frac{4}{5} n^{1/3}$ for $n\geq1$, and $\chi_1 = 1/2$, we have
\begin{equation}\label{eq:bdkn}
\frac{x}{n k_n(x)} \geq \begin{cases}
                         \left(\frac{n^2}{x^2}-1\right)^{-1/2} & \mbox{ when } 0\leq x\leq \chi_n, \\
                          \left(\frac{n^2}{\chi_n^2}-1\right)^{-1/2} & \mbox{ when } \chi_n\leq x \leq n.
                        \end{cases}
\end{equation}
When $n=0$, 
$$
\zeta_0 \approx 0.3135,\mbox{ and } \frac{\zeta_0}{k_0(\zeta_0)} \approx 0.3524
$$
In terms of previously defined functions, it is the unique solution of 
$$
k_{0}(\zeta_0)=\frac{1}{2}+\frac{1}{2}\sqrt{1-4\zeta_0^{2}}.
$$
\end{prop}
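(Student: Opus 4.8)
The plan is to reduce the whole statement to the Riccati equation satisfied by the logarithmic derivative of $Y_n$. First I would record the identity on which everything hinges: since $Y_{n}<0$ throughout $(0,y_{n,1})$ (resp. $Y_{0}<0$ on $(0,y_{0,1})$), the map $x\mapsto\ln|Y_{n}(x)|$ is smooth there, and with $n_{+}=\max(n,1)$ one has $k_{n}(x)=-\frac{x}{n_{+}}\left(\ln|Y_{n}|\right)'(x)$, so that $\frac{x}{n_{+}k_{n}(x)}=-1/\left(\ln|Y_{n}|\right)'(x)$ and
\[
\frac{d}{dx}\left(\frac{x}{n_{+}k_{n}(x)}\right)=\frac{\left(\ln|Y_{n}|\right)^{(2)}(x)}{\left(\left(\ln|Y_{n}|\right)'(x)\right)^{2}}.
\]
Thus $x\mapsto x/k_{n}(x)$ is increasing exactly where $\left(\ln|Y_{n}|\right)^{(2)}>0$. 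Setting $v:=-Y_{n}'/Y_{n}>0$ and substituting Bessel's equation in the form $Y_{n}^{\prime\prime}=-x^{-1}Y_{n}'-(1-n^{2}x^{-2})Y_{n}$ into $\left(\ln|Y_{n}|\right)^{(2)}=Y_{n}^{\prime\prime}/Y_{n}-v^{2}$ yields the Riccati identity
\[
\left(\ln|Y_{n}|\right)^{(2)}(x)=-v(x)^{2}+\frac{v(x)}{x}+\frac{n^{2}-x^{2}}{x^{2}}.
\]

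Next I would study the sign of the right-hand side. For $0<x<n$ (and, when $n=0$, for $0<x<\frac{1}{2}$), the quadratic $t\mapsto t^{2}-\frac{t}{x}-\frac{n^{2}-x^{2}}{x^{2}}$ has largest root $v_{+}(x)=\frac{1+\sqrt{1+4(n^{2}-x^{2})}}{2x}>0$, so since $v>0$ one has $\left(\ln|Y_{n}|\right)^{(2)}>0\iff v<v_{+}$. The argument then rests on four elementary facts: (i) $v_{+}$ is strictly decreasing (numerator decreasing, denominator increasing); (ii) near $x=0$, $v\sim n_{+}/x<v_{+}$ (for $n\geq1$ because $\sqrt{1+4n^{2}}>2n$; for $n=0$ because $v(x)\sim-1/(x\ln(x/2))=o(1/x)$ whereas $v_{+}\sim1/x$), hence $\left(\ln|Y_{n}|\right)^{(2)}>0$ near $0$; (iii) at any $x_{0}$ with $v(x_{0})=v_{+}(x_{0})$ we have $\left(\ln|Y_{n}|\right)^{(2)}(x_{0})=0$, so $v'(x_{0})=-\left(\ln|Y_{n}|\right)^{(2)}(x_{0})=0$, whence $(v-v_{+})'(x_{0})=-v_{+}'(x_{0})>0$ — every zero of $v-v_{+}$ is a transversal up-crossing, and combined with (ii) this forces $v-v_{+}$ to vanish at most once on $(0,n)$; (iv) for $n=0$ the quadratic has a second positive root $v_{-}(x)=\frac{1-\sqrt{1-4x^{2}}}{2x}$, but the same computation shows any crossing of $v$ with $v_{-}$ (which is strictly increasing) is a transversal down-crossing, so, being above $v_{-}$ near $0$ and at $x=\frac{1}{2}$ (where $v(\frac{1}{2})=2k_{0}(\frac{1}{2})>1=v_{-}(\frac{1}{2})$), $v$ stays strictly above $v_{-}$ on $(0,\frac{1}{2})$, and thus the sign change of $\left(\ln|Y_{0}|\right)^{(2)}$ occurs where $v$ meets $v_{+}$.

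From (i)–(iii): $\left(\ln|Y_{n}|\right)^{(2)}>0$ on $(0,\zeta_{n})$ by the definition of $\zeta_{n}$, so $x/k_{n}$ is strictly increasing there; and $v(n)=k_{n}(n)>1/n=v_{+}(n)$, which follows from $k_{n}(n)\geq\frac{3}{5}n^{-1/3}$ (Proposition~\ref{pro:propsg}(v)) for $n\geq3$ and from the explicit values $k_{1}(1)>1$, $k_{2}(2)>\frac{1}{2}$ for $n=1,2$; hence the unique zero of $v-v_{+}$ lies in $(0,n)$ — it is $\zeta_{n}$ — and $\left(\ln|Y_{n}|\right)^{(2)}<0$ on $(\zeta_{n},n)$, so $x/k_{n}$ is strictly decreasing on $(\zeta_{n},n)$ and $\zeta_{n}$ is its maximum on $(0,n)$. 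Since $k_{n}$ is decreasing on $(0,\kappa_{n})$ (Proposition~\ref{pro:propsg}(iv)) and $(x/k_{n})'(\kappa_{n})=1/k_{n}(\kappa_{n})>0$, one has $\left(\ln|Y_{n}|\right)^{(2)}>0$ on $(0,\kappa_{n}]$, whence $\zeta_{n}>\kappa_{n}$. For $n=0$ the same analysis on $(0,\frac{1}{2})$, together with the fact that the quadratic has no real root for $x>\frac{1}{2}$ (so $\left(\ln|Y_{0}|\right)^{(2)}<0$ automatically on $[\frac{1}{2},y_{0,1})$), gives a unique zero $\zeta_{0}<\frac{1}{2}$ with $v(\zeta_{0})=v_{+}(\zeta_{0})$, i.e. $k_{0}(\zeta_{0})=\zeta_{0}v_{+}(\zeta_{0})=\frac{1}{2}+\frac{1}{2}\sqrt{1-4\zeta_{0}^{2}}$; solving this equation numerically gives $\zeta_{0}\approx0.3135$ and $\zeta_{0}/k_{0}(\zeta_{0})\approx0.3524$, and $x/k_{0}$ attains its maximum on $(0,y_{0,1})$ at $\zeta_{0}$.

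Finally, for \eqref{eq:bdkn}: on $[0,\chi_{n}]$ one has $\chi_{n}\leq\kappa_{n}$ (by Proposition~\ref{pro:propsg}(iv) for $n\geq2$, and by direct inspection for $n=1$ with $\chi_{1}=\frac{1}{2}$), so Proposition~\ref{pro:propsg}(v) gives $k_{n}(x)\leq\sqrt{1-(x/n)^{2}}$ and therefore $\frac{x}{nk_{n}(x)}\geq\frac{x}{\sqrt{n^{2}-x^{2}}}=(n^{2}/x^{2}-1)^{-1/2}$. On $[\chi_{n},n]$, since $\chi_{n}<\kappa_{n}<\zeta_{n}$ the function $x/k_{n}$ increases on $[\chi_{n},\zeta_{n}]$ and decreases on $[\zeta_{n},n]$, so its minimum over $[\chi_{n},n]$ is attained at one of the endpoints: at $\chi_{n}$ it is $\geq(n^{2}/\chi_{n}^{2}-1)^{-1/2}$ by the previous line, while at $n$ it equals $1/k_{n}(n)\geq\frac{6}{7}n^{1/3}$ (Proposition~\ref{pro:propsg}(v)), and a routine one-variable estimate shows $\frac{6}{7}n^{1/3}\geq\chi_{n}/\sqrt{n^{2}-\chi_{n}^{2}}=(n^{2}/\chi_{n}^{2}-1)^{-1/2}$; this gives the second branch of \eqref{eq:bdkn}. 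The main obstacle is steps (iii)–(iv): proving that $\left(\ln|Y_{n}|\right)^{(2)}$ changes sign exactly once on $(0,n)$, and, for $n=0$, identifying that this sign change occurs where $v$ meets the upper root $v_{+}$ — both of which require a little explicit computation with the values of the Bessel functions.
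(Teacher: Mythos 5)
Your argument is correct and establishes the proposition, but the uniqueness step runs along a genuinely different track from the paper's. Both proofs reduce the monotonicity of $x/k_n$ to the sign of $nk_n(1-nk_n)+n^2-x^2$ (equivalently $x^2\left(\ln|Y_n|\right)^{(2)}$), show it is positive near the origin and negative at $x=n$, and argue the sign flips exactly once. The paper's uniqueness argument works only on $(\kappa_n,n)$: there $k_n$ is increasing while the positive root of the quadratic in $k_n$ is decreasing, so their difference is strictly monotone and vanishes at most once; the piece $(0,\kappa_n)$ is handled separately using the monotone decay of $k_n$ from Proposition~\ref{pro:propsg}(iv). You instead encode the sign condition as $v<v_{+}$ with $v=-Y_n'/Y_n$ and observe from the Riccati identity that at any crossing $v(x_0)=v_{+}(x_0)$ one has $v'(x_0)=-\left(\ln|Y_n|\right)^{(2)}(x_0)=0$, hence $(v-v_{+})'(x_0)=-v_{+}'(x_0)>0$: every crossing is a transversal up-crossing, so there can be at most one. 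This is a more uniform device that works on all of $(0,n)$ with no case split at $\kappa_n$, and it extends without change to the lower root $v_{-}$ in the $n=0$ case, at the modest extra cost of checking $v>v_{-}$ on $(0,\tfrac{1}{2})$. Your endpoint sign computations ($v\sim n_{+}/x<v_{+}$ near $0$, and $v(n)=k_n(n)>1/n$ via the bounds of Proposition~\ref{pro:propsg}(v) plus explicit checks for small $n$; incidentally the sharper bound $n^{1/3}k_n(n)\geq\tfrac{4}{5}$ from \eqref{eq:bd-knn-low} would reduce the explicit checks to $n=1$) and the concluding estimate \eqref{eq:bdkn} are handled essentially as in the paper.
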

\begin{proof}
From \eqref{eq:difeqkn} we deduce that $x\to x/k_n(x)$ is a solution of the differential equation
$$
\frac{d}{dx}\left(\frac{x}{k_n}(x)\right) = \frac{ n k_n(x)( 1- n  k_n(x)) +n^2 -x^2}{n k_n^2(x)}
$$
Clearly, while $x\leq \kappa_n$, that is, while $k_n$ is decreasing, $x\to x/k_n(x)$ is increasing. We note that  $n k_n(x)( 1- n  k_n(x)) +n^2 -x^2$ only has one root greater than
$1/n$. From the lower bound on $k_n > 3/5 n^{-1/3}$ given by Proposition~\ref{pro:propsg} and by inspection for $n=1,2$, 
we verify that $(x^{-1}{k_n}(x))^\prime$ cancels at most once on $(\kappa_n,n)$.
We find
$$
\frac{d}{dx}\left(\frac{x}{k_n}\right)(\kappa_n) = \frac{ 1}{k_n(\kappa_n)} >0.
$$
Using the lower estimate on $k_n(n)$ given by \eqref{eq:bd-knn-low}  and  by inspection for $n=1,2$, we find  
$$
\frac{d}{dx}\left(\frac{x}{k_n}\right)(n) = \frac{ 1 - nk_n(n)}{k_n(n)} <0.
$$
Thus, there exists a unique maximum for $x/k_n(x)$ on $(0,n)$. Noting that $k_n(x)/x= - (\ln \left|Y_n\right|)^\prime(x)$, we conclude that this maximum is $\zeta_n$.
For any $x\in[\kappa_n,n]$,  we obtain that
$$
\frac{x}{n k_n(x)} \geq \min\left(\frac{\kappa_n}{n k_n(\kappa_n)},\frac{6 n^{1/3}}{7} \right)\geq \left(\frac{n^2}{\chi_n^2}-1\right)^{-1/2},
$$
where we used the upper bound for $x\leq\kappa_n$ given by Proposition \ref{pro:propsg}
$$
k_n(x) \leq \sqrt{1 - \frac{x^2}{n^2}},
$$ 
and where $\chi_n =  n - \frac{4}{5} n^{1/3}$ is the lower bound $\kappa_n$ given by the same proposition. In the case $n=1$, $\kappa_1\approx0.52>\frac{1}{2}$.
Altogether, we have obtained
$$
\frac{x}{n k_n(x)} \geq \begin{cases}
                         \left(\frac{n^2}{x^2}-1\right)^{-1/2} & \mbox{ when } 0\leq x\leq \chi_n, \\
                          \left(\frac{n^2}{\chi_n^2}-1\right)^{-1/2} & \mbox{ when } \chi_n\leq x \leq n.
                        \end{cases}
$$
For $n=0$, we compute that
\[
\frac{d}{dx}\left(\frac{1}{x}k_{0}\right)=1+\frac{1}{x^{2}}k_{0}\left(k_{0}-1\right)
\]
Therefore $x\to \frac{1}{x}k_{0}(x)$ is decreasing until $\zeta_0$, given by $k_{0}(\zeta_0)=\frac{1}{2}+\frac{1}{2}\sqrt{1-4\kappa_0^{2}},$
and increasing afterwards. 
\end{proof}

We conclude this section by a property of $x\to |\BH{0}{x}|$ which is useful for broadband estimates.
\begin{lem}\label{lem:logconcave}
For any $x>0$, the function $x\to \ln |\BH{0}{x}|$ is convex. Furthermore, for any $y>1$ 
$$
x\to \frac{\left|\BH{0}{xy}\right|}{\left|\BH{0}{x}\right|}
$$
is decreasing on $(0,\infty)$, and
$$
1\geq \frac{\left|\BH{0}{xy}\right|}{\left|\BH{0}{x}\right|} \geq \frac{1}{\sqrt{y}}.
$$
\end{lem}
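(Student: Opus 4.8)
The plan is to work throughout with $M_0(x):=|\BH{0}{x}|=\sqrt{\BJ{0}{x}^2+\BY{0}{x}^2}$ and $M_1(x):=|\BH{1}{x}|$ (the functions $M_n$ of \eqref{eq:def-MnThetan}), and to use Nicholson's integral $M_\nu^2(x)=\tfrac{8}{\pi^2}\int_0^\infty \cosh(2\nu t)\,K_0(2x\sinh t)\,dt$ \cite[13.73]{WATSON-25}, with $K_0(z)=\int_0^\infty e^{-z\cosh s}\,ds$ and $\int_0^\infty K_0(v)\,dv=\pi/2$. As already used in the proof of Proposition~\ref{pro:R0}, $M_0$ is decreasing and $x\mapsto xM_0^2(x)$ is increasing with limit $2/\pi$ (both immediate from Nicholson: $K_0$ is decreasing, and the substitution $v=2x\sinh t$ gives $xM_0^2(x)=\tfrac{8}{\pi^2}\int_0^\infty K_0(v)(4+v^2/x^2)^{-1/2}\,dv$). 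For the convexity of $\ln M_0$: $e^{-z\cosh s}$ is log-linear in $z$, so $K_0$ is log-convex, hence $x\mapsto K_0(2x\sinh t)$ is log-convex for each $t>0$; and a superposition of log-convex functions is log-convex (Cauchy--Schwarz applied at midpoints), so $M_0^2$ is log-convex, i.e.\ $\ln M_0=\tfrac12\ln M_0^2$ is convex.

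For the monotonicity of the ratio I would reduce to a single pointwise inequality. Writing $r(x):=\ln M_0(xy)-\ln M_0(x)$ we get $r'(x)=\tfrac1x\big(h(xy)-h(x)\big)$ with $h(x):=x\,M_0'(x)/M_0(x)$, so (since $y>1$) it suffices to show $h$ is decreasing. Bessel's equation together with the polar form $\BH{0}{x}=M_0(x)e^{i\theta_0(x)}$ and the Wronskian $M_0^2\theta_0'=\tfrac2{\pi x}$ give the Riccati-type identity $M_0''+\tfrac1xM_0'+M_0=\tfrac4{\pi^2x^2M_0^3}$, whence a short computation gives $h'(x)=x\big(\tfrac{4}{\pi^2x^2M_0^4}-(M_0'/M_0)^2-1\big)$. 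Moreover $(M_0^2)'=-2\big(\BJ{0}{x}\BJ{1}{x}+\BY{0}{x}\BY{1}{x}\big)$ gives $M_0^2(M_0')^2=\big(\BJ{0}{x}\BJ{1}{x}+\BY{0}{x}\BY{1}{x}\big)^2$, and the identity $(a^2+b^2)(c^2+d^2)=(ac+bd)^2+(ad-bc)^2$ with the cross-Wronskian $\BJ{0}{x}\BY{1}{x}-\BJ{1}{x}\BY{0}{x}=-\tfrac2{\pi x}$ gives $M_0^2M_1^2=M_0^2(M_0')^2+\tfrac4{\pi^2x^2}$. Substituting, $h'(x)\le0$ is equivalent to
\[
M_0^2(x)\big(M_0^2(x)+M_1^2(x)\big)\ \ge\ \tfrac{8}{\pi^2x^2}\qquad(x>0).
\]

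This inequality is the heart of the proof, and I would establish it straight from Nicholson: $M_0^2+M_1^2=\tfrac{8}{\pi^2}\int_0^\infty(1+\cosh2t)K_0(2x\sinh t)\,dt=\tfrac{16}{\pi^2}\int_0^\infty\cosh^2t\,K_0(2x\sinh t)\,dt$ and $M_0^2=\tfrac{8}{\pi^2}\int_0^\infty K_0(2x\sinh t)\,dt$; writing the product as a symmetric double integral in $(s,t)$ and using $\cosh^2s+\cosh^2t\ge2\cosh s\cosh t$,
\[
M_0^2\big(M_0^2+M_1^2\big)\ \ge\ \tfrac{128}{\pi^4}\Big(\int_0^\infty K_0(2x\sinh t)\cosh t\,dt\Big)^2=\tfrac{128}{\pi^4}\Big(\tfrac1{2x}\int_0^\infty K_0(v)\,dv\Big)^2=\tfrac{8}{\pi^2x^2},
\]
the middle equality by $v=2x\sinh t$. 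Hence $h$ is decreasing and $x\mapsto M_0(xy)/M_0(x)$ is decreasing. The two-sided bound then follows at once: $M_0(xy)/M_0(x)\le1$ since $M_0$ is decreasing and $xy>x$, and $M_0(xy)/M_0(x)\ge y^{-1/2}$ since $x\mapsto\sqrt x\,M_0(x)$ is increasing (equivalent to $xM_0^2$ increasing), so $\sqrt{xy}\,M_0(xy)\ge\sqrt x\,M_0(x)$.

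The step I expect to be most delicate is the chain of reductions in the second paragraph: recognising that monotonicity of the ratio is equivalent to $h$ being decreasing, and then, via the Riccati and cross-product identities, to the single inequality $M_0^2(M_0^2+M_1^2)\ge 8/(\pi^2x^2)$; followed by the symmetrisation trick that proves that inequality from Nicholson's formula. The remaining ingredients — log-convexity of $K_0$, the Brahmagupta--Fibonacci and cross-Wronskian identities, and the two-sided bound — are routine.
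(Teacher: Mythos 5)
Your proof is correct, and since the paper states Lemma~\ref{lem:logconcave} without proof, there is no argument to compare it against; what follows is simply a check of your reasoning. The log-convexity via Nicholson's formula $M_0^2(x)=\tfrac{8}{\pi^2}\int_0^\infty K_0(2x\sinh t)\,dt$ and log-convexity of $K_0$ is sound. The reduction of the monotonicity claim to $h(x):=xM_0'(x)/M_0(x)$ being nonincreasing is exactly right, as is the polar-form computation $M_0''+\tfrac1x M_0'+M_0=\tfrac{4}{\pi^2 x^2 M_0^3}$ and the resulting identity $h'(x)=x\bigl(\tfrac{4}{\pi^2x^2M_0^4}-(M_0'/M_0)^2-1\bigr)$. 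The Brahmagupta--Fibonacci identity combined with the cross-Wronskian $J_0Y_1-J_1Y_0=-\tfrac{2}{\pi x}$ gives $M_0^2(M_0')^2+\tfrac{4}{\pi^2x^2}=M_0^2M_1^2$, which converts $h'\le0$ into the single inequality $M_0^2\bigl(M_0^2+M_1^2\bigr)\ge\tfrac{8}{\pi^2x^2}$; your symmetrisation of the double Nicholson integral, using $\cosh^2 s+\cosh^2 t\ge 2\cosh s\cosh t$ and then the substitution $v=2x\sinh t$ together with $\int_0^\infty K_0=\pi/2$, establishes this cleanly, with the inequality asymptotically sharp (both sides behave like $\tfrac{8}{\pi^2x^2}$ as $x\to\infty$). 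The endpoint bounds from the monotonicity of $M_0$ and of $xM_0^2$ are immediate. One small remark worth recording: the log-convexity in the first clause of the lemma is not by itself sufficient for the second clause --- it gives $f'+xf''\ge f'$ where $f=\ln M_0$, whereas you need $h'=f'+xf''\le0$ --- so the Nicholson inequality for $M_0^2(M_0^2+M_1^2)$ really is the substantive ingredient, and your proof correctly supplies it.
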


\end{document}